\newcommand\reallywidetilde[1]{\ThisStyle{%
  \setbox0=\hbox{$\SavedStyle#1$}%
  \stackengine{-.1\LMpt}{$\SavedStyle#1$}{%
    \stretchto{\scaleto{\SavedStyle\mkern.2mu\AC}{.5150\wd0}}{.6\ht0}%
  }{O}{c}{F}{T}{S}%
}}
\newcommand*\patchAmsMathEnvironmentForLineno[1]{%
  \expandafter\let\csname old#1\expandafter\endcsname\csname #1\endcsname
  \expandafter\let\csname oldend#1\expandafter\endcsname\csname end#1\endcsname
  \renewenvironment{#1}%
     {\linenomath\csname old#1\endcsname}%
     {\csname oldend#1\endcsname\endlinenomath}}%
\newcommand*\patchBothAmsMathEnvironmentsForLineno[1]{%
  \patchAmsMathEnvironmentForLineno{#1}%
  \patchAmsMathEnvironmentForLineno{#1*}}%
\numberwithin{equation}{section}
\newtheorem{theorem}{Theorem}[section]
\newtheorem{lemma}[theorem]{Lemma}
\theoremstyle{definition}
\newtheorem{definition}[theorem]{Definition}
\newtheorem{example}[theorem]{Example}
\newtheorem{corollary}[theorem]{Corollary}
\newtheorem{assumption}[theorem]{Assumption}
\newtheorem{proposition}[theorem]{Proposition}
\theoremstyle{remark}
\newtheorem{remark}[theorem]{Remark}
\numberwithin{equation}{section}
\newcommand{\thmref}[1]{Theorem~\ref{#1}}
\newcommand{\lemref}[1]{Lemma~\ref{#1}}
\newcommand{\corref}[1]{Corollary~\ref{#1}}
\newcommand{\propref}[1]{Proposition~\ref{#1}}
\newcommand{\defref}[1]{Definition~\ref{#1}}
\newcommand{\eqnref}[1]{(\ref{#1})}
\newcommand{\assref}[1]{Assumption~\ref{#1}}
\newcommand{\secref}[1]{Section~\ref{#1}}
\newcommand{\tmL}{\mathcal{L}} 
\newcommand{\mT}{\mathcal{T}} 
\newcommand{\id}{\mathcal{I}} 
\newcommand{\ind}{{\bf 1}} 
\newcommand{\cz}{\mathcal{C}_0 } 
\newcommand{\ninf}[1]{\|#1\|_{\infty}} 
\newcommand{\cc}{\mathcal{C}_c}	
\newcommand{\C}{\mathcal{C}}	
\newcommand{\R}{\mathbb{R}} 
\newcommand{\sE}{\mathsf{E}} 
\newcommand{\sO}{\sE_\partial}
\newcommand{\diff}{\mathrm{d}}
\newcommand{\tmP}{\mathcal{P}} 
\newcommand{\tmR}{\mathcal{R}} 
\newcommand{\mF}{\mathcal{F}}	
\newcommand{\tmG}{\mathcal{G}}
\newcommand{\mE}{\mathcal{E}}
\newcommand{\mN}{\mathbb{N}^+}
\newcommand{\mO}{\mathcal{O}}
\newcommand{\mA}{\mathcal{A}}
\newcommand{\ti}[1]{\tilde{#1}} 
\newcommand{\mP}{\tilde{\mathcal{P}}} 
\newcommand{\mL}{\tilde{\mathcal{L}}} 
\newcommand{\mG}{\tilde{\mathcal{G}}}
\newcommand{\Rom}[1]{\uppercase\expandafter{\romannumeral#1}}
\newcommand{\resp}[1]{\textit{respectively}, #1}
\def\bfE{\mbox{\boldmath$E$}}
\def\bfP{\mbox{\boldmath$P$}}
\begin{document}

\title[Optimal Stopping Problems for Feller Processes]
{Viscosity Solution for Optimal Stopping Problems of Feller Processes}

\author{Suhang Dai}
\address{ Institute for Financial and Actuarial Mathematics, Department of Mathematical Sciences, University of Liverpool, L69 7ZL, United Kingdom}
\email{sgsdai@liverpool.ac.uk}

\author{Olivier Menoukeu-Pamen }
\address{ African Institute for Mathematical Sciences, Ghana\\University of Ghana, Ghana \\ Institute for Financial and Actuarial Mathematics, Department of Mathematical Sciences, University of Liverpool, L69 7ZL, United Kingdom}
\email{menoukeu@liv.ac.uk}
\thanks{The project on which this publication is based has been carried out with funding provided by the Alexander von Humboldt Foundation, under the programme financed by the German Federal Ministry of Education and Research entitled German Research Chair No 01DG15010.\\The authors would like to thank Bernt \O ksendal for his comments and for suggesting problem in Section \ref{refsecappli22}}

\subjclass[2010]{Primary 60G40;  60J25; 47D07, Secondary 60J35.}

\maketitle

\begin{abstract}
We study an optimal stopping problem when the state process is governed by a general Feller process. In particular, we examine viscosity properties of the associated value function with no a priori assumption on the stochastic differential equation satisfied by the state process. Our approach relies on properties of the Feller semigroup. We present conditions on the state process under which the value function is the unique viscosity solution to an Hamilton-Jacobi-Bellman (HJB) equation associated with a particular operator. More specifically, assuming that the state process is a Feller process, we prove uniqueness of the viscosity solution which was conjectured in \cite{palczewski2014infinite}. We then apply our results to study viscosity property of optimal stopping problems for some particular Feller processes, namely diffusion processes with piecewise coefficients and semi-Markov processes. Finally, we obtain explicit value functions for optimal stopping of straddle options, when the state process is a reflected Brownian motion, Brownian motion with jump at boundary and regime switching Feller diffusion, respectively (see Section \ref{secexpsolu}).
\end{abstract}

\keywords{Optimal stopping; Feller process; Viscosity solutions; Hamilton-Jacobi-Bellman equation; Penalty method.}

\section{Introduction}

Optimal stopping problems for Markov processes have been extensively studied in the literature using various methods; see for example \cite{peskirshiryaev06}. Such problems are very important due to their various applications in engineering, physics, mathematical finance and insurance. Assuming that the state process is given by a diffusion process (with non degenerate diffusion coefficient), the pioneering book \cite[Chapter 3]{bensoussan1978applications} introduces a variational inequality approach to solve optimal stopping problems. Under some weak regularity of the data the authors prove the regularity of the value function. Since then, there have been many studies on optimal stopping problems for Markov processes using the variational inequality approach, with the aim of relaxing the assumptions on the class of Markov processes and/or on the reward functional and also studying the properties of the value function. The variational inequality associated to the optimal stopping problem is often difficult to solve, unless one allows a notion of  weak solution, called viscosity solution, to the Hamilton-Jacobi-Bellman (HJB) equation. In the case of a diffusion process, this approach is used for example  in \cite{bassan2002optimal,bassan2002regularity,fleming2006controlled}; see also \cite{oksendal2005applied} for the jump-diffusion case.

In studying the viscosity properties of the value function, the traditional approach assumes that the generator associated with the state Markov process is given by parabolic or elliptic differential operators. Hence, one can use tools from partial differential equations to solve the problem. A natural question is what happens when the state process is given by a Markov process (for example a Feller process) for which the generator is not given by a partial differential operator but only derived from its semigroup. To the best of our knowledge, only \cite{palczewski2014infinite} deals with existence of viscosity solution of an HJB equation when the generator is derived from a Feller semigroup.

One of the main motivation of this paper is to provide a general analytical approach that extends earlier results on properties of the value function to a more general class of processes. As such, we do not assume that the generator of the process is given by a partial differential operator. The other motivation is to establish a framework that enables to find the value function of an optimal stopping problem for a general class of processes (Feller processes) by analytically deriving the unique viscosity solution to the associated HJB equation (compare with \cite{palczewski2014infinite}). Thus, our result completes the previous studies, in the sense that, we derive the existence and uniqueness of viscosity solutions to the HJB equation. The uniqueness was conjectured in \cite{palczewski2014infinite}. To our knowledge, we do not know of any existing results on uniqueness of viscosity solution in this framework.

In this paper, we consider an infinite time horizon optimal stopping problems with fixed discount rate. We use the penalty method introduced in \cite{robin1978controle} and the general setting in \cite{Stettner1980}. Contrary to the traditional method which is based on  calculations of the (integro) differential operators, this method is based on an efficient approximation of the value function by smooth functions. Although there are several extensions of the penalty method (see for example \cite{ palczewski2010finite, palczewski2011stopping, palczewski2014infinite, janluk2010, stettner2011penalty}), most of them focus on the study of the continuity of the value function except work \cite{palczewski2014infinite} which investigates the existence of viscosity solution to the associated HJB equation. In this paper, under slightly different conditions, we show that the value function is the unique viscosity solution to the HJB equation associated with the optimal stopping problem. 

We apply our result to study viscosity properties of the value functions for optimal stopping problems of L\'evy processes, reflected Brownian motion, sticky Brownian motion, diffusion with piecewise coefficients and semi-Markov processes. We show that depending on the choice of the operator and its domain, the value function is the unique viscosity solution associated with the HJB equation. Let us mention that our viscosity analysis on diffusion with piecewise coefficients and semi-Markov processes are typically not investigated in the current literature on optimal stopping problems. In the former case, we will see later (confer \corref{cor:skew1} and \corref{cor:skew2}) that the value function is a viscosity solution associated with a particular operator to an HJB equation. In the latter case, we first use perturbation theory (confer \cite{bottcher2013levy}) to transform the one-dimensional semi-Markov process to a two-dimensional Markov process. Then, we show that the value function of the problem is the unique viscosity solution to the associated HJB equation. Similar optimal stopping problem was studied in \cite{boshuizen1993general,muciek2002optimal} using iterative approach. We also use our results to explicitly derive the value function and the optimal stopping time in the case of a straddle option for the subsequent state processes: reflected Brownian motion (see \corref{cor:exvfrefc1c2}); Brownian motion with jump at boundary (see Proposition \ref{explisoltBMjb}) and regime switching Feller diffusion  (see \corref{corregFdi}). 

The rest of this paper is organized as follows. \secref{sec:p0} introduces terminologies used throughout this paper and then formulate the optimal stopping problem. In \secref{sec:p1}, we study the value function as a viscosity solution to an HJB equation. \secref{uninoncomp} investigates uniqueness of the viscosity solution  and its link to the value function under the assumption that the state space is compact. The proof relies on the comparison theorem (\thmref{thm:comparisonprinciple}). \secref{sec:notc1} examines the extension of the uniqueness to the case of non compact state space.  \secref{structsec} studies the structure of the viscosity solution and its link to the martingale approach. 
In \secref{sec:levyprocess}, we apply our results to study viscosity properties of value functions of optimal stopping problems for some processes satisfying our key assumptions. \secref{secexpsolu} is devoted to the derivation of explicit value function for optimal stopping of a straddle option.

%


\section{Preliminaries and problem formulation}\label{sec:p0}

In this section, we first present some basic definitions and properties of Feller processes and Feller semigroups. Then, we formulate the optimal stopping problems and introduce our main assumptions. For more information on Feller processes, the reader may consult for example \cite[Chapter 17]{kallenberg2006foundations} or  \cite[Chapter 1]{bottcher2013levy}.

\subsection{Preliminaries}
Throughout this paper, we suppose that $\sE$ is a locally compact,
separable metric space with metric $\rho$.  $\mE$ is the $\sigma$-algebra of  the Borel sets of $\sE$. If $\sE$ is not compact, we define $\sO:=\sE\cup\{\partial\}$ as the one point (Alexandorff) compactification of $\sE$, where $\{\partial\}$ is the point at infinity; otherwise, $\{\partial\}$ is an isolated point from $\sE$. In both cases, $\sO$ is compact and metrizable and $\mE_\partial$ denotes the $\sigma$-algebra in  $\sE_\partial$ generated by $\mE$.
We will use the following notations:
\begin{itemize}
\item $B(\sE)$ is the space of all bounded Borel measurable functions on $\sE$;
\item $\C(\sE)$ is the space of all continuous functions on $\sE$;
\item $\cc(\sE):=  \{w\in\C(\sE);  w \mbox{ has compact support}\}$;
\item $\cz(\sE):=  \{w\in\C(\sE);  w \mbox{ vanishes at infinity}\}$;
\item $\C_*(\sE):=  \{w\in\C(\sE);  w \mbox{ converges at infinity}\}$;
\item $\C_b(\sE):= \C(\sE)\cap B(\sE)$;
\item $USC(\sE)$ (\resp{$LSC(\sE)$}) denotes the space Borel-measurable upper (\resp{lower}) semicontinuous function on $\sE$.
\end{itemize}
\begin{remark}
The above definitions imply that $\cc(\sE)\subseteq \cz(\sE)\subseteq \C_*(\sE)\subseteq \C_b(\sE)$. Moreover, if $\sE$ is compact, these spaces coincide.
\end{remark}

Let $\ninf{\cdot}$ be the supremum norm that is for any $w\in B(\sE)$,
$$\ninf{f} := \sup_{x\in \sE}|f(x)|.$$
Equipped with the above norm, $\left(\cz(\sE),\ninf{\cdot}\right)$, $\left(\C_*(\sE),\ninf{\cdot}\right)$ and $\left(\C_b(\sE),\ninf{\cdot}\right)$  are Banach spaces. The relation $``\leq"$ is a partial order on the space of real valued functions on $\sE$ and we have $f\leq g$ if and only if $f(x)\leq g(x)$ for all $x\in \sE$. We now give a series of definitions. 
\begin{definition}(\textsl{Feller Semigroup)}\label{def:c0semigroup}
A collection of bounded linear operators $\{\tmP_t\}_{t\geq 0}$ is called \textsl{Feller semigroups on $\cz(\sE)$}, if it satisfies the following four properties:
\begin{itemize}
\item $\tmP_{t+s} = \tmP_{t}\circ\tmP_{s}$, for all $t,s\geq 0$; $\tmP_0 = \id$, where $\id$ is the identity operator.
\item For each $t\geq 0$, if $w\in \cz(\sE)$, $0\leq w \leq 1$, then, $0\leq \tmP_t w\leq 1$.
\item (\textsl{Feller Property}) $\tmP_t:\cz(\sE)\rightarrow\cz(\sE)$ for all $t\geq 0$.
\item (\textsl{Strong Continuous Property}) $\lim_{t\rightarrow 0^+}\ninf{\tmP_{t}w-w}=0$ for $w\in\cz(\sE)$.
\end{itemize}
Furthermore, a semigroup $\{\tmP_t\}_{t\geq 0}$ is \textsl{conservative} if $\tmP_t 1 = 1$ for all $t\geq 0$.

\end{definition}
\begin{definition}\textup{(Feller Process)}\label{def:fellp}
A \textsl{Feller process} $\{X(t)\}_{t\geq 0}$ is a Markov process whose transition semigroup defined by
$$\tmP_t w(x) := \bfE^x\left[w(X(t))\right]\,\, \mbox{ for any } x\in \sE\mbox{ and }w \in B(\sE)$$
 is a Feller semigroup.
\end{definition}

Based on \defref{def:fellp}, the transition semigroup of a Feller process is conservative.
\begin{definition}\textup{(Infinitesimal Generator)}\label{def:infgendef}
An \textsl{infinitesimal generator} of a Feller semigroup $\{\tmP_t\}_{t\geq 0}$ or a Feller process $\{X(t)\}_{t\geq 0}$ is a linear operator $(\tmL, D(\tmL))$, with $\tmL:D(\tmL)\subseteq \cz(\sE)\rightarrow \cz(\sE)$ defined by
\begin{equation}\label{eqn:infgendef}
\tmL w := \lim_{t\rightarrow 0^+}\frac{\tmP_t w-w}{t}\, \mbox{ for } w\in D(\tmL) ,
\end{equation}
where the domain
$$D(\tmL) := \left\{ w\in\cz(\sE); \mbox{such that the limit in \eqnref{eqn:infgendef} exists in } \cz(\sE) \right\}.$$
\end{definition}
\begin{definition}\textup{(Resolvent)}
A \textsl{resolvent} $\{\tmR_{\lambda}\}_{\lambda>0}$ is defined by
$$\tmR_\lambda w(x):=\int_0^\infty e^{-\lambda t}\tmP_t w(x) \,\diff t \,\mbox{ for }  x\in \sE\mbox{ and }w\in \cz(\sE).$$
\end{definition}

The following resolvent identity equation is satisfied: for any $\lambda,\mu>0$ and $w\in\cz(\sE)$
\begin{equation}\label{eqn:resolventidentity}
\tmR_\lambda w-\tmR_\mu w = (\mu-\lambda)\tmR_\lambda\tmR_\mu w.
\end{equation}

\begin{definition} \textup{(}\textsl{Postive Maximum Principle}\textup{)}
	An operator $(\tmL, D(\tmL))$ satisfies \textsl{positive maximum principle} if $\tmL w(x_0)\leq 0$ for any $w\in D(\tmL )$ with $w(x_0) = \sup_{x\in\sE}w(x)\geq 0$.
\end{definition}

We now state the Hille-Yosida-Ray theorem for strongly continuous semigroup. This theorem gives the relationships among Feller semigroup, generator and resolvent (see \cite[Theorem 1.30]{ bottcher2013levy}) and will play a key role in proving the uniqueness of the viscosity solution.

\begin{theorem} \label{thm:hyr}
Let $(\tmG,D(\tmG))$ be a linear operator on $\cz(\sE)$. $(\tmG,D(\tmG))$ is closable and its closure $(\tmG,D(\tmG))$ is the infinitesimal generator of a Feller semigroup if and only if:
\begin{enumerate}
\item $D(\tmG)$ is dense in $\cz (\sE)$.
\item The range of $\lambda-\tmG$ is dense in $\cz (\sE)$ for all $\lambda>0$.
\item $(\tmG,D(\tmG))$ satisfies the positive maximum principle.
\end{enumerate}
\end{theorem}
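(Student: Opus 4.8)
The plan is to treat the two implications separately, with the bulk of the work in the sufficiency direction, where the three conditions must be assembled into a genuine Feller semigroup. Throughout I would rely on the classical Hille--Yosida and Lumer--Phillips machinery for strongly continuous contraction semigroups on the Banach space $(\cz(\sE),\ninf{\cdot})$, writing $\bmG$ for the closure and $\tmR_\lambda=(\lambda-\bmG)^{-1}$ for the resolvent.

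For the necessity, suppose $\bmG$ is the generator of a Feller semigroup $\{\tmP_t\}$. Density of $D(\bmG)$ and surjectivity of $\lambda-\bmG$ onto $\cz(\sE)$ (hence dense range) are standard properties of generators of strongly continuous semigroups, and that the core $\tmG$ already has dense domain and dense range then follows from the definition of a core. For the positive maximum principle I would take $w\in D(\tmG)\subseteq D(\bmG)$ with $w(x_0)=\sup_x w(x)\ge 0$; since $\tmP_t$ is positivity preserving and sub-Markovian, $\tmP_t w(x_0)\le w(x_0)$, whence $\tmG w(x_0)=\lim_{t\to 0^+}t^{-1}(\tmP_t w(x_0)-w(x_0))\le 0$.

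For sufficiency, the first step is to convert condition (3) into dissipativity: for $w\in D(\tmG)$ and $\lambda>0$ I claim $\ninf{\lambda w-\tmG w}\ge\lambda\ninf{w}$. Indeed, as $w\in\cz(\sE)$ it attains $\pm\ninf{w}$; choosing the sign so that $w$ (or $-w$) has a nonnegative maximum at some $x_0$ and applying the positive maximum principle to $\pm w$ gives $\tmG w(x_0)\le 0$ (resp. $\ge 0$), from which the inequality is immediate. A densely defined dissipative operator is closable with dissipative closure $\bmG$, and since the range of $\lambda-\tmG$ is dense the range of $\lambda-\bmG$ is all of $\cz(\sE)$; Lumer--Phillips then yields that $\bmG$ generates a strongly continuous contraction semigroup $\{\tmP_t\}$ on $\cz(\sE)$. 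This already delivers the semigroup law, strong continuity, the Feller property $\tmP_t:\cz(\sE)\to\cz(\sE)$, and the contraction bound.

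The crux is the positivity/sub-Markovian property, and this is where I expect the main difficulty. I would first show the resolvent is positive: given $g\ge 0$, write $w=\tmR_\lambda g$ and, using density of the range, pick $w_n\in D(\tmG)$ with $g_n:=(\lambda-\tmG)w_n\to g$ and hence $w_n\to w$ uniformly. If $-w_n$ attains a positive maximum, the positive maximum principle applied at that point forces $\inf_x w_n\ge -\lambda^{-1}\sup_x(-g_n)^+$; letting $n\to\infty$ and using $g\ge 0$ gives $\inf_x w\ge 0$, i.e. $\tmR_\lambda g\ge 0$. Positivity of $\tmP_t$ then follows from the Yosida approximation $\tmP_t=\lim_{\lambda\to\infty}e^{t\lambda(\lambda\tmR_\lambda-\id)}$, since each exponential is a norm-convergent series of the positive operators $\tmR_\lambda^n$. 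Finally, for $w\in\cz(\sE)$ with $0\le w\le 1$, positivity gives $\tmP_t w\ge 0$ and contractivity gives $\tmP_t w\le\ninf{\tmP_t w}\le\ninf{w}\le 1$, so $0\le\tmP_t w\le 1$; combined with the previous step, $\{\tmP_t\}$ is a Feller semigroup with generator $\bmG$. Particular care is needed in the non-compact case, where $1\notin\cz(\sE)$, so the upper bound must be read off from contractivity rather than by testing against the constant function.
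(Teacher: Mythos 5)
Your proposal is correct and follows the classical Hille--Yosida--Ray/Lumer--Phillips route: the positive maximum principle yields dissipativity, dense range upgrades to surjectivity of $\lambda-\bmG$ for the closure, Lumer--Phillips produces the contraction semigroup, and positivity is recovered from resolvent positivity via the Yosida approximation. The paper does not prove this statement at all; it is quoted as Theorem~1.30 of \cite{bottcher2013levy}, so there is no in-paper argument to compare against --- your sketch is essentially the standard proof found in that reference. The one place worth tightening if you wrote this out in full is the resolvent-positivity step: you should note explicitly that injectivity of $\lambda-\bmG$ (which follows from dissipativity of the closure) is what guarantees that the preimages $w_n$ of the approximating $g_n$ actually lie in $D(\tmG)$ rather than merely in $D(\bmG)$, since the positive maximum principle is only assumed on $D(\tmG)$. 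Your handling of the non-compact case, reading the upper bound $\tmP_t w\le 1$ off contractivity rather than testing against the constant function $1\notin\cz(\sE)$, is exactly the right care to take.
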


The following corollary is from the Hille-Yosida theorem (see for example \cite[ Proposition~4.9 and Theorem~4.10]{taira2004semigroups} ).
\begin{corollary}\label{cor:hy}
Let $(\tmL,D(\tmL))$ be the infinitesimal generator of some Feller semigroup. Then,
\begin{enumerate}
\item $(\tmL,D(\tmL))$ is closed.
\item For each $\lambda>0$, the operator $(\lambda-\tmL)$ is a bijection of $D(\tmL) $ onto $\cz(\sE)$ and its inverse is the resolvent $\tmR_\lambda$, that is for all $w\in\cz(\sE)$ and $v\in D(\tmL)$, we have
\begin{equation}\label{eqn:resolventproperties2}
(\lambda-\tmL)\tmR_{\lambda}w = w\, \mbox{ and }\, \tmR_{\lambda}(\lambda-\tmL)v = v.
\end{equation}
\item For each $\lambda>0$, we have the inequality
\begin{align}\label{eqn:resolventproperties233}
\ninf{\tmR_\lambda}:=\sup_{w\in \cz(\sE)}\frac{\ninf{\tmG_\lambda w}}{\ninf{w}}\leq \frac{1}{\lambda}.
\end{align}
\end{enumerate}
\end{corollary}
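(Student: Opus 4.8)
The plan is to deduce all three statements from the semigroup properties in \defref{def:c0semigroup} together with the Laplace-transform link between $\{\tmP_t\}_{t\ge0}$ and $\tmR_\lambda$, with part (2) as the core rather than invoking the full strength of \thmref{thm:hyr}. First I would show that $\tmR_\lambda$ maps $\cz(\sE)$ into $D(\tmL)$ and compute $\tmL\tmR_\lambda$. The key is the semigroup shift: for $h>0$,
$$\tmP_h\tmR_\lambda w=\int_0^\infty e^{-\lambda t}\tmP_{t+h}w\,\diff t=e^{\lambda h}\int_h^\infty e^{-\lambda s}\tmP_s w\,\diff s=e^{\lambda h}\Bigl(\tmR_\lambda w-\int_0^h e^{-\lambda s}\tmP_s w\,\diff s\Bigr).$$
Dividing by $h$ and letting $h\to0^+$, using strong continuity (so that $\tfrac1h\int_0^h e^{-\lambda s}\tmP_s w\,\diff s\to w$) and $\tfrac{e^{\lambda h}-1}{h}\to\lambda$, yields $\tmL\tmR_\lambda w=\lambda\tmR_\lambda w-w$, i.e. $(\lambda-\tmL)\tmR_\lambda w=w$ for all $w\in\cz(\sE)$, the first identity in \eqnref{eqn:resolventproperties2}. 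For the second identity I would note that $\tmR_\lambda$ commutes with each $\tmP_h$ (both are built from the same semigroup), so for $v\in D(\tmL)$ the limit defining $\tmL\tmR_\lambda v$ equals $\tmR_\lambda\tmL v$; hence $\tmR_\lambda v\in D(\tmL)$ and $\tmR_\lambda(\lambda-\tmL)v=\lambda\tmR_\lambda v-\tmR_\lambda\tmL v=\lambda\tmR_\lambda v-\tmL\tmR_\lambda v=(\lambda-\tmL)\tmR_\lambda v=v$. Together these show $(\lambda-\tmL)$ is a bijection of $D(\tmL)$ onto $\cz(\sE)$ with inverse $\tmR_\lambda$.

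Part (1) then follows almost for free: since $\tmR_\lambda$ is a bounded operator on $\cz(\sE)$ it is closed, and as $\lambda-\tmL=\tmR_\lambda^{-1}$ is the inverse of a closed injective operator it is itself closed, whence $\tmL$ is closed. Alternatively, one argues directly from the identity $\tmP_t v-v=\int_0^t\tmP_s\tmL v\,\diff s$ for $v\in D(\tmL)$: given $w_n\in D(\tmL)$ with $w_n\to w$ and $\tmL w_n\to g$, passing to the limit under the integral using $\ninf{\tmP_s}\le1$ gives $\tmP_t w-w=\int_0^t\tmP_s g\,\diff s$, and dividing by $t$ as $t\to0^+$ yields $w\in D(\tmL)$ with $\tmL w=g$.

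For part (3) I would use the contraction property of the Feller semigroup. Inserting the bound $\ninf{\tmP_t w}\le\ninf{w}$ into the resolvent integral gives
$$\ninf{\tmR_\lambda w}\le\int_0^\infty e^{-\lambda t}\ninf{\tmP_t w}\,\diff t\le\ninf{w}\int_0^\infty e^{-\lambda t}\,\diff t=\frac{1}{\lambda}\ninf{w},$$
which is exactly \eqnref{eqn:resolventproperties233} (the operator written there should read $\tmR_\lambda$).

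The main obstacle I anticipate is establishing the contraction $\ninf{\tmP_t w}\le\ninf{w}$ on $\cz(\sE)$ in the non-compact case, since the constant $1$ need not lie in $\cz(\sE)$ and one cannot argue directly from $-1\le w\le1$. I would resolve this through the sub-probability kernel representation: the second bullet of \defref{def:c0semigroup} makes $w\mapsto\tmP_t w(x)$ a positive linear functional on $\cz(\sE)$ of norm at most $1$, so by Riesz representation $\tmP_t w(x)=\int_{\sE}w(y)\,p_t(x,\diff y)$ with $p_t(x,\sE)\le1$, whence $\abs{\tmP_t w(x)}\le\ninf{w}\,p_t(x,\sE)\le\ninf{w}$ pointwise. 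A minor secondary point is the interchange of limit and integral in part (2), justified by dominated convergence against the uniform bound $\ninf{\tmP_s w}\le\ninf{w}$.
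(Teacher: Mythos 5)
Your proposal is correct, and it supplies the standard semigroup-theoretic derivation (Laplace transform of $\{\tmP_t\}$, the shift identity for $\tmP_h\tmR_\lambda$, commutation of $\tmR_\lambda$ with the semigroup, and the sub-Markov contraction bound) that the paper itself omits, since the paper merely cites Taira's Hille--Yosida results for this corollary; your handling of the contraction estimate $\ninf{\tmP_t w}\leq\ninf{w}$ in the noncompact case, where constants are not in $\cz(\sE)$, is the right point to worry about and is resolved correctly. You are also right that $\tmG_\lambda$ in \eqnref{eqn:resolventproperties233} is a typo for $\tmR_\lambda$.
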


Subsequently, we give the definition of the core, which enables to uniquely characterize a Feller semigroup.

\begin{definition}(\textsl{Core})
$(\tmG,D(\tmG))$ is called a \textsl{core} of an infinitesimal generator $(\tmL,D(\tmL))$ if it is a linear closable operator which satisfies $D(\tmG)\subseteq D(\tmL)$ is dense in $\cz(\sE)$ and the closure of $(\tmG,D(\tmG))$ is $(\tmL,D(\tmL))$, that is for any $w\in D(\tmL)$, there exists a sequence $\{w_n\}_{n \in \mN}$ in $D(\tmG)$ such that
$$\lim_{n\rightarrow \infty}(\ninf{w_n-w}+\ninf{\tmG w_n-\tmL w})=0.$$
\end{definition}
By (1) in \corref{cor:hy}, it follows that the infinitesimal generator of a Feller semigroup is its the core.

\subsection{Problem Formulation}

In this paper, we study an optimal stopping problem for a normal Markov process $X:=(\Omega,\mF,\mF_t,X_t,\theta_t,\bfP^x)$ on the state space $(E,\mE)$, where $(\Omega,\mF)$ is a measurable space, $\{\mF_t\}_{t\geq 0}$ is a right continuous and completed filtration, $\{X(t)\}_{t\geq 0}$ is a \textit{c\`{a}dl\`{a}g} stochastic process, $\{\theta_t\}_{t\geq 0}$ is the shift operator and $\bfP^x$ denotes the probability measure on $(\Omega,\mF)$ for $x\in \sE$. Let $\mT$ be the family of all $\mF_t$-stopping times. Let $f$ and $g$ be two real-valued Borel measurable functions on $\sE$. Define the objective function $J_x(\tau)$ by
\begin{equation}\label{eqn:payofffunction}
J_x(\tau) := \bfE^x\Big[\int_{0}^{\tau}{e^{-as}f(X(s))\, \mathrm{d}s}+e^{-a\tau}g(X(\tau))\Big]  \text{ for } x\in \sE \text{ and } \tau \in \mT,
\end{equation}
 where $f$ is a running benefit function, $g$ is a terminal reward function and $a>0$ is a constant discount factor.

We consider the following optimal stopping problem: find $\tau^* \in \mathcal{T}$ such that
\begin{equation}\label{eqn:valuefunction}
V(x) := \sup_{\tau\in\mathcal{T}}{J_x(\tau)} =J_x(\tau^\ast),
\end{equation}
for each  $x\in \sE$. Our main goal is to study properties of the value function $V$.

The following assumptions holds throughout this paper.
\begin{assumption} \label{ass:fscompact}\leavevmode
\begin{enumerate}
\item $\sE$ is a locally compact, separable metric space with metric $\rho$.
\item $X:=(\Omega,\mF,\mF_t,X_t,\theta_t,\bfP^x)$ is a Feller process with the state space $(\sE,\mathcal{E})$, which has a Feller semigroup $\{\tmP_t\}_{t\geq 0}$, whose generator is $(\tmL,D(\tmL))$ with a core $(\tmG,D(\tmG))$.
\item $a>0$ and $f,g\in \C_b(\sE)$.
\end{enumerate}
\end{assumption}

\assref{ass:fscompact} does not make any a priori supposition on the partial differential equation satisfied by the generator of the Feller process. We first recall a result on the continuity of the value function $V$ given by \eqnref{eqn:valuefunction}. The proof of the continuity is based on the penalty method which consists in finding a sequence $\{v_\lambda\}_{\lambda >0}$ in $\cz(\sE)$ that converges uniformly to the value function $V$.  More precisely, the penalty function $v_\lambda$ is defined as the solution to the following equation
\begin{equation}\label{eqn:osipenvar}
av_\lambda-\tmL v_\lambda-f = \lambda{(g-v_\lambda)}^+,
\end{equation}
where $\lambda>0$.
The next results which are similar to \cite[Theorem I.2.1 and Theorem I.3.1]{robin1978controle} provide the continuity of the value function.
\begin{theorem}\label{thm:osi}
Under \assref{ass:fscompact},
\begin{enumerate}
\item Equation \eqref{eqn:osipenvar} admits a unique solution $v_\lambda\in D(\tmL )$ for each $\lambda>0$.
\item The value function $V$ defined by \eqref{eqn:valuefunction} is in $\cz(\sE)$. In addition, $\{v_\lambda\}_{\lambda>0}$ defined by \eqref{eqn:osipenvar} converges uniformly to $V$ from below as $\lambda\rightarrow \infty$.
\end{enumerate}
\end{theorem}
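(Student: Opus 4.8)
The plan is to handle the two assertions in turn, first solving the penalised equation \eqref{eqn:osipenvar} by a contraction argument and then studying the limit of the family $\{v_\lambda\}$ as $\lambda\to\infty$. For (1), I would first rewrite \eqref{eqn:osipenvar} in resolvent form: adding $\lambda v_\lambda$ to both sides and using the elementary identity $v+(g-v)^+=v\vee g$ turns \eqref{eqn:osipenvar} into
\begin{equation*}
(a+\lambda)v_\lambda-\tmL v_\lambda=f+\lambda\,(v_\lambda\vee g).
\end{equation*}
By \corref{cor:hy}(2) the operator $(a+\lambda)-\tmL$ is a bijection of $D(\tmL)$ onto $\cz(\sE)$ with inverse $\tmR_{a+\lambda}$, so $v_\lambda\in D(\tmL)$ solves this equation if and only if it is a fixed point of $Tv:=\tmR_{a+\lambda}\big(f+\lambda\,(v\vee g)\big)$. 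Since $v\mapsto v\vee g$ is $1$-Lipschitz for $\ninf{\cdot}$ and $\ninf{\tmR_{a+\lambda}}\le(a+\lambda)^{-1}$ by \corref{cor:hy}(3), the map $T$ is a contraction with constant $\lambda/(a+\lambda)<1$. Banach's fixed point theorem then gives a unique $v_\lambda$, automatically in $D(\tmL)$ as it lies in the range of $\tmR_{a+\lambda}$. The technical point to verify is that $T$ is a genuine self-map of the relevant Banach space: for non-compact $\sE$ with $f,g\in\C_b(\sE)$ one must use the probabilistic resolvent on an appropriate closed subspace (\resp{on $\cz(\sE)$ when $\sE$ is compact}) and check that continuity is preserved.

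For (2) I would first establish monotonicity in $\lambda$. For $\lambda_1<\lambda_2$, suppose $\sup(v_{\lambda_1}-v_{\lambda_2})>0$; this supremum is attained since the difference lies in $\cz(\sE)$ and vanishes at infinity, and at the maximiser $x_0$ the positive maximum principle gives $\tmL(v_{\lambda_1}-v_{\lambda_2})(x_0)\le0$. Since $v_{\lambda_1}(x_0)>v_{\lambda_2}(x_0)$ forces $(g-v_{\lambda_1})^+(x_0)\le(g-v_{\lambda_2})^+(x_0)$, subtracting the two penalised equations at $x_0$ and using $\lambda_1<\lambda_2$ yields $a\,(v_{\lambda_1}-v_{\lambda_2})(x_0)\le0$, contradicting $a>0$. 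Hence $v_{\lambda_1}\le v_{\lambda_2}$. The same maximum principle gives the uniform bound $\ninf{v_\lambda}\le\max\{\ninf{f}/a,\ninf{g}\}$, so the increasing family has a pointwise limit $v_\infty:=\lim_{\lambda\to\infty}v_\lambda$.

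Next I would identify $v_\infty$ with $V$. Setting $k_\lambda:=\lambda\,\ind_{\{v_\lambda<g\}}$ recasts \eqref{eqn:osipenvar} as $(a+k_\lambda)v_\lambda-\tmL v_\lambda=f+k_\lambda g$, and Dynkin's formula for $v_\lambda\in D(\tmL)$ yields the Feynman--Kac representation
\begin{equation*}
v_\lambda(x)=\bfE^x\Big[\int_0^\infty e^{-as-\int_0^s k_\lambda(X(u))\,\diff u}\big(f(X(s))+k_\lambda(X(s))g(X(s))\big)\,\diff s\Big].
\end{equation*}
Reading $k_\lambda$ as the rate of an independent exponential clock identifies the right-hand side with $\bfE^x\big[\int_0^{\tau_\lambda}e^{-as}f(X(s))\,\diff s+e^{-a\tau_\lambda}g(X(\tau_\lambda))\big]$ for a randomised stopping time $\tau_\lambda$, so $v_\lambda\le V$ and thus $v_\infty\le V$ (the convergence being ``from below''). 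For the reverse inequality I would note that the nonnegative penalty term gives $av_\lambda-\tmL v_\lambda\ge f$, whence $e^{-at}v_\lambda(X(t))+\int_0^t e^{-as}f(X(s))\,\diff s$ is a supermartingale; passing to the limit together with $v_\infty\ge g$ (the penalty term vanishing in the limit) gives $v_\infty(x)\ge J_x(\tau)$ for every $\tau\in\mT$, hence $v_\infty\ge V$ and therefore $v_\infty=V$.

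The main obstacle is the last point: upgrading the monotone pointwise convergence $v_\lambda\uparrow V$ to \emph{uniform} convergence and simultaneously obtaining $V\in\cz(\sE)$. For compact $\sE$ this follows from Dini's theorem once continuity of the limit is known, but in general I expect to need a quantitative penalisation estimate bounding $\ninf{V-v_\lambda}$ through $\lambda$ and a modulus of continuity supplied by the smoothing of $\tmR_{a+\lambda}$, combined with equicontinuity of $\{v_\lambda\}$. Making rigorous the limit of the randomised stopping times $\tau_\lambda$ and justifying the supermartingale manipulations for the a priori only lower semicontinuous function $v_\infty$ are the delicate steps that such an estimate is designed to bypass.
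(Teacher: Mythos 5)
Your part (1) is essentially the paper's argument: the paper also passes to the resolvent form $v_\lambda=\tmR_{a+\lambda}(f+\lambda(g-v_\lambda)^+ +\lambda v_\lambda)$, uses $(g-v)^++v=\max(g,v)$ and $\ninf{\tmR_{a+\lambda}}\le (a+\lambda)^{-1}$ to get a contraction with constant $\lambda/(a+\lambda)$, and then invokes \corref{cor:hy} to place the fixed point in $D(\tmL)$. That part stands.

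Part (2), however, has a genuine gap, and you have in effect flagged it yourself: you prove (modulo some delicate Feynman--Kac and supermartingale manipulations) only that $v_\lambda$ increases pointwise to some $v_\infty$ and that $v_\infty=V$ pointwise, and then say you ``expect to need a quantitative penalisation estimate'' to get uniform convergence and $V\in\cz(\sE)$. That estimate is not a technical afterthought; it is the core of the proof, and without it neither the uniformity nor the continuity of $V$ (which Dini's theorem would itself require as an input) is established. The paper obtains it in two steps. First, a probabilistic representation of $v_\lambda$ (Dynkin plus optional stopping applied to \eqref{eqn:osipenvar}) shows $v_\lambda$ is the value function with terminal reward $g-(g-v_\lambda)^+=\min(g,v_\lambda)$, which gives both $v_\lambda\le V$ and the key inequality $\ninf{V-v_\lambda}\le\ninf{(g-v_\lambda)^+}$. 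Second, for $g\in D(\tmL)$ one writes $g-v_\lambda=\tmR_{a+\lambda}\big((a-\tmL)g-f-\lambda(g-v_\lambda)^++\lambda(g-v_\lambda)\big)\le\tmR_{a+\lambda}\big((a-\tmL)g-f\big)$, whence $\ninf{(g-v_\lambda)^+}\le\ninf{(a-\tmL)g-f}/(a+\lambda)\to 0$; the general case follows by density of $D(\tmL)$ in $\cz(\sE)$ together with the stability bounds $\ninf{v_{n,\lambda}-v_\lambda}\le\ninf{g_n-g}$ (from the contraction) and $\ninf{V_n-V}\le\ninf{g_n-g}$ (from an $\varepsilon$-optimal stopping time). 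Since each $v_\lambda\in D(\tmL)\subseteq\cz(\sE)$, uniform convergence then delivers $V\in\cz(\sE)$ for free. A secondary weak point in your sketch: the claim $v_\infty\ge g$ ``because the penalty term vanishes in the limit'' is itself unproved without such an estimate (or without the observation $v_\lambda\ge\tmR_{a+\lambda}f+\lambda\tmR_{a+\lambda}g\to g$), so even your pointwise identification of $v_\infty$ with $V$ is not closed as written.
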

\begin{proof}
See Appendix A.
\end{proof}
For more information on the continuity of the value function and its extensions;  readers are referred to \cite{ palczewski2010finite, palczewski2011stopping,  palczewski2014infinite, robin1978controle, stettner2011penalty, stettner1983optimal}).
The optimal stopping time for the above optimal stopping problem is obtained using \cite[Theorem~ I.3.3]{robin1978controle} as follows.
\begin{theorem}\label{thm:obtainost}
Under \assref{ass:fscompact}, the optimal stopping time for problem \eqref{eqn:valuefunction} is
\begin{align}\label{eqn:taustar}
\tau^*:=\inf\{t\geq 0; V(X(t))=g(X(t))\}.
\end{align}
\end{theorem}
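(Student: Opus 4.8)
The plan is to build, from the penalty approximation of Theorem~\ref{thm:osi}, a supermartingale that becomes a genuine martingale up to $\tau^*$, and then to read off optimality by optional sampling. Since each penalty function $v_\lambda\in D(\tmL)$ solves \eqref{eqn:osipenvar}, i.e. $(\tmL-a)v_\lambda=-f-\lambda(g-v_\lambda)^+$, the discounted Dynkin formula for the Feller generator $\tmL$ shows that
\begin{equation*}
M^\lambda_t:=e^{-at}v_\lambda(X(t))+\int_0^t e^{-as}f(X(s))\,\diff s+\lambda\int_0^t e^{-as}(g-v_\lambda)^+(X(s))\,\diff s
\end{equation*}
is a $\bfP^x$-martingale with $M^\lambda_0=v_\lambda(x)$. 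As the last integral is nondecreasing, $Z^\lambda_t:=e^{-at}v_\lambda(X(t))+\int_0^t e^{-as}f(X(s))\,\diff s$ is a c\`adl\`ag supermartingale; the uniform convergence $v_\lambda\to V$ from Theorem~\ref{thm:osi} then transfers this to the limit, so that $Z_t:=e^{-at}V(X(t))+\int_0^t e^{-as}f(X(s))\,\diff s$ is a c\`adl\`ag supermartingale with $Z_0=V(x)$.

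I would next check that $\tau^*$ is well defined and record the easy inequality. Taking $\tau=0$ in \eqref{eqn:valuefunction} gives $V\geq g$, and $V-g$ is continuous because $V\in\cz(\sE)$ and $g\in\cb(\sE)$; hence $\{V=g\}$ is closed and $\tau^*$, being the first hitting time of a closed set by a c\`adl\`ag process adapted to a right-continuous completed filtration, is an $\{\mF_t\}$-stopping time by the d\'ebut theorem. Optional sampling of the supermartingale $Z$ already yields $V(x)\geq\bfE^x[Z_\tau]\geq J_x(\tau)$ for all $\tau$ (using $V\geq g$), consistent with the definition of $V$; the content of the theorem is the reverse, namely that $Z_{\,\cdot\,\wedge\tau^*}$ is a genuine martingale.

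To obtain this, apply optional sampling to $M^\lambda$ at $\tau^*$ (first at $\tau^*\wedge n$, then $n\to\infty$):
\begin{equation*}
v_\lambda(x)=\bfE^x\big[Z^\lambda_{\tau^*}\big]+\bfE^x\Big[\lambda\int_0^{\tau^*}e^{-as}(g-v_\lambda)^+(X(s))\,\diff s\Big].
\end{equation*}
The crux is to show that the penalty compensator on the right tends to $0$ as $\lambda\to\infty$. For $s<\tau^*$ one has $V(X(s))>g(X(s))$ by definition of $\tau^*$, and since $v_\lambda\uparrow V$ uniformly, writing $\epsilon_\lambda:=\ninf{V-v_\lambda}\downarrow 0$ the factor $(g-v_\lambda)^+(X(s))$ can be nonzero only where $V(X(s))-g(X(s))\leq\epsilon_\lambda$. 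Combining this with the a priori bound $\ninf{\lambda(g-v_\lambda)^+}\leq C$ furnished by the penalty method (part of the estimates behind Theorem~\ref{thm:osi}), the integrand is dominated by $Ce^{-as}\ind_{\{V(X(s))-g(X(s))\leq\epsilon_\lambda\}}$, whose pathwise time-integral over $[0,\tau^*)$ vanishes as $\epsilon_\lambda\downarrow0$ by dominated convergence; a second dominated-convergence step removes the expectation. Passing to the limit and using $\ninf{v_\lambda-V}\to0$ to replace $Z^\lambda_{\tau^*}$ by $Z_{\tau^*}$ gives $V(x)=\bfE^x[Z_{\tau^*}]$.

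Finally I would identify $\bfE^x[Z_{\tau^*}]$ with $J_x(\tau^*)$. On $\{\tau^*<\infty\}$ the right-continuity of $X$ together with the continuity of $V-g$ gives $V(X(\tau^*))=g(X(\tau^*))$, while on $\{\tau^*=\infty\}$ the factor $e^{-a\tau^*}$ forces $e^{-a\tau^*}V(X(\tau^*))\to0$, matching the terminal convention for $e^{-a\tau}g(X(\tau))$; hence $Z_{\tau^*}=e^{-a\tau^*}g(X(\tau^*))+\int_0^{\tau^*}e^{-as}f(X(s))\,\diff s$ and therefore $V(x)=J_x(\tau^*)$, so $\tau^*$ is optimal. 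The main obstacle is precisely the vanishing of the penalty compensator along $\tau^*$: it is here that the uniform control of $\lambda(g-v_\lambda)^+$ and the identification of the continuation region with the open set $\{V>g\}$ are indispensable, and this is exactly the point addressed by \cite[Theorem~I.3.3]{robin1978controle}.
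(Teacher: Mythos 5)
Your overall strategy (penalization, passage to the limit in the Dynkin/martingale identity, and identification of the compensator's limit) is the classical Bensoussan--Lions/Robin route, and most of the individual steps are sound: the martingale property of $M^\lambda$ for each fixed $\lambda$, the stopping-time property of $\tau^*$, the fact that $X(\tau^*)\in\{V=g\}$ on $\{\tau^*<\infty\}$, and the reduction of the whole theorem to showing that $\lambda\,\bfE^x\bigl[\int_0^{\tau^*}e^{-as}(g-v_\lambda)^+(X(s))\,\diff s\bigr]\to 0$. Note that the paper does not argue this way at all: \thmref{thm:obtainost} is obtained by citing \cite[Theorem I.3.3]{robin1978controle}, and where the paper needs the martingale property of $Z_{\cdot\wedge\tau^*}$ (\lemref{lem:osiex1}) it invokes the general Snell-envelope theorem from \cite{peskir2006optimal} rather than penalization.

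The genuine gap is the a priori bound $\ninf{\lambda(g-v_\lambda)^+}\leq C$ uniformly in $\lambda$, on which your domination argument hinges. This estimate is \emph{not} available under the standing hypothesis $g\in\C_b(\sE)$: the appendix inequality \eqref{eqn:corprf}--\eqref{eqn:vnulambdatovlambda2} gives $\ninf{(g-v_\lambda)^+}\leq \ninf{(a-\tmL)g-f}/(a+\lambda)$, hence the uniform bound you need, only under the extra assumption $g\in D(\tmL)$ (\lemref{lem:gdvltov}). For general $g$ the paper approximates by $g_n\in D(\tmL)$ and only obtains $\ninf{(g-v_\lambda)^+}\leq 2/n+M_n/(a+\lambda)$ with $M_n=\ninf{(a-\tmL)g_n-f}$ typically unbounded in $n$; multiplying by $\lambda$ destroys the estimate, and only $\ninf{(g-v_\lambda)^+}\to 0$ survives. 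Without the uniform bound your argument is circular: by the very Dynkin identity you use, $\lambda\bfE^x\bigl[\int_0^{\tau^*}e^{-as}(g-v_\lambda)^+(X(s))\,\diff s\bigr]=v_\lambda(x)-\bfE^x\bigl[Z^\lambda_{\tau^*}\bigr]\to V(x)-J_x(\tau^*)$, so asserting that the penalty compensator vanishes along $[0,\tau^*)$ is exactly asserting the conclusion. To repair the proof you must either (i) first establish the theorem for $g\in D(\tmL)$ and then treat general $g\in\C_b(\sE)$ by a separate approximation in which the stopping region, and hence $\tau^*$ itself, varies with $g$ --- a nontrivial additional step --- or (ii) replace this step by the Snell-envelope martingale characterization of $Z_{\cdot\wedge\tau^*}$, which is precisely what the paper's citation of \cite{robin1978controle} supplies.
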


Let $(\mA,D(\mA))$ denotes an operator with its domain. Recall that, we wish to study the link between the value function $V$ defined by \eqref{eqn:valuefunction} and the unique viscosity solution associated with  $(\mA,D(\mA))$ to the corresponding Hamilton-Jacob-Bellman (HJB) equation
\begin{equation}\label{eqn:defvs1}
\min{(aw-\mA w-f,w-g)}=0.
\end{equation}
Thus, we first give the definition of viscosity solution:
\begin{definition}(\textsl{Viscosity Solution})\label{def:vs}
Given an operator with domain $(\mA, D(\mA))$, a function $w\in USC(\sE)$ (\resp{$w\in LSC(\sE)$}) is a \textsl{viscosity subsolution} (\resp{\textsl{supersolution}}) associated with $(\mA,D(\mA))$ to \eqnref{eqn:defvs1} if for all $\phi\in D(\mA )$ such that  $\phi-w$ has a global minimum (\resp{maximum}) at $x_0\in\sE$ with $\phi(x_0) = w(x_0)$,
\begin{equation}\label{eqn:defvs1def}
\min{(a\phi(x_0)-\mA \phi(x_0)-f(x_0),\phi(x_0)-g(x_0))} \leq (\geq)0.
\end{equation}
Furthermore, $w\in \C(\sE)$ is a \textsl{viscosity solution} associated with $(\mA,D(\mA))$ to \eqnref{eqn:defvs1} if it is both a viscosity supersolution and a viscosity subsolution.
\end{definition}

Next, let us introduce the notion of $a$-generator:

\begin{definition}(\textsl{$a$-generator})\label{def:agen}
Let $X=(\Omega,\mF,\mF_t,X_t,\theta_t,\bfP^x)$ be a Markov process on the state space $(\sE,\mathcal{E})$. Set $a> 0$. An operator $(\mA,D(\mA))$ is called an \textsl{a-supergenerator} (\resp{\textsl{a-subgenerator}, \textsl{a-generator}}) of $X$, if for any $w\in D(\mA)$, the process $\{S_w(t)\}_{t\geq 0}$ defined by
\begin{equation}\label{eqn:defasup}
S_w(t):=w(X(0))-e^{-at}w(X(t))-\int_0^t e^{-as}(aw-\mA w)(X(s))\diff s,
\end{equation}
is a $(\mF_t,\bfP^x)$ uniformly integrable supermartingale (\resp{submartingale, martingale}) for all $x\in \sE$.
\end{definition}

\section{Existence of viscosity solution}\label{sec:p1}

In this section, we show that the value function defined by \eqnref{eqn:valuefunction} can be described as a viscosity solution associated with the generator $(\tmL,D(\tmL))$ of the Feller process or its core $(\tmG,D(\tmG))$.
We prove that the value function defined by \eqnref{eqn:valuefunction} is a viscosity  supersolution (\resp{subsolution, solution}) associated with an extended generator of the Feller process.

\begin{theorem}\label{thm:vsexist}
Suppose \assref{ass:fscompact} holds. Suppose $(\mA, D(\mA))$ is an $a$-supergenerator \textup{(}\resp{$a$-subgenerator, $a$-generator}\textup{)} of $X$ and $\mA:D(\mA)\subseteq \C(\sE)\rightarrow \C(\sE)$. Then the value function $V$ defined by \eqref{eqn:valuefunction} is a viscosity supersolution \textup{(}\resp{subsolution, solution}\textup{)} associated with $(\mA, D(\mA))$  to
\begin{equation}
\min(aw-\mA w-f,w-g)=0.
\end{equation}
\end{theorem}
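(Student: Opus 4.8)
The plan is to connect the martingale structure built into the definition of an $a$-generator with the pointwise viscosity inequalities in \defref{def:vs}. The key observation is that the value function $V$ satisfies a dynamic programming principle: for any stopping time $\tau$, the discounted running value $e^{-at}V(X(t)) + \int_0^t e^{-as} f(X(s))\,\diff s$ is a supermartingale under each $\bfP^x$, and it is a martingale up to the optimal stopping time $\tau^*$ given by \eqnref{eqn:taustar}. Simultaneously, since $V \geq g$ everywhere (taking $\tau = 0$ in \eqnref{eqn:valuefunction}), the obstacle part $w - g \geq 0$ of the HJB equation holds trivially in the supersolution case. Thus the whole proof reduces to extracting the sign of $aw - \mA w - f$ at a test point from the (sub/super)martingale property of the process $S_\phi$ in \eqnref{eqn:defasup}.

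\textbf{Supersolution case.} Let $\phi \in D(\mA)$ be a test function with $\phi - V$ attaining a global minimum at $x_0$ and $\phi(x_0) = V(x_0)$, so that $\phi \geq V$ everywhere. First I would handle the obstacle term: since $V(x_0) \geq g(x_0)$ we get $\phi(x_0) - g(x_0) \geq 0$. It then suffices to show $a\phi(x_0) - \mA\phi(x_0) - f(x_0) \geq 0$. Here I would argue by contradiction: suppose this quantity is strictly negative; by continuity of $\mA\phi$, $\phi$ and $f$ (note $\mA:D(\mA)\to\C(\sE)$), there is a neighborhood and a time interval on which $(a\phi - \mA\phi - f)(X(s)) < 0$. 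Starting the process at $x_0$ and using that $(\mA,D(\mA))$ is an $a$-supergenerator, the process $S_\phi$ of \eqnref{eqn:defasup} is a supermartingale, which forces
\begin{equation*}
\bfE^{x_0}\Big[e^{-a\tau}\phi(X(\tau)) + \int_0^\tau e^{-as}(a\phi - \mA\phi)(X(s))\,\diff s\Big] \leq \phi(x_0)
\end{equation*}
for every stopping time $\tau$. Combining the strict negativity of $a\phi - \mA\phi - f$ near $x_0$ with the martingale/supermartingale property of $V$ and the inequality $\phi \geq V$, I expect to derive a strict improvement of $\phi(x_0)$ over $V(x_0)$ for a suitably chosen small exit-time $\tau$, contradicting $\phi(x_0) = V(x_0)$.

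\textbf{Subsolution case.} This is the harder direction and where I expect the main obstacle. Let $\phi \in D(\mA)$ with $\phi - V$ attaining a global maximum at $x_0$, $\phi(x_0)=V(x_0)$, so $\phi \leq V$. I must show $\min(a\phi(x_0)-\mA\phi(x_0)-f(x_0),\, \phi(x_0)-g(x_0)) \leq 0$. If $\phi(x_0) - g(x_0) \leq 0$ we are done, so assume $V(x_0) = \phi(x_0) > g(x_0)$; this places $x_0$ strictly inside the continuation region, where the dynamic programming principle gives that the discounted value of $V$ is a genuine \emph{martingale} for small times (it is not optimal to stop immediately near $x_0$). Using that $(\mA,D(\mA))$ is now an $a$-\emph{sub}generator, $S_\phi$ is a submartingale; combining the submartingale inequality for $\phi$ with the local martingale property of $V$ and $\phi \leq V$ should yield $a\phi(x_0) - \mA\phi(x_0) - f(x_0) \leq 0$ by again arguing by contradiction and localizing on a small neighborhood of $x_0$. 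The delicate point is to make the localization rigorous: I need the exit time from a small ball around $x_0$ to be strictly positive $\bfP^{x_0}$-a.s. and to control the behavior of $V$ (only continuous, via \thmref{thm:osi}) against the smooth test function $\phi$ on that ball, using right-continuity of the c\`adl\`ag paths to pass to the limit as the ball shrinks. The \textbf{solution} case then follows immediately by combining the two, since an $a$-generator is simultaneously an $a$-supergenerator and $a$-subgenerator, and $V \in \cz(\sE) \subseteq \C(\sE)$ by \thmref{thm:osi}.
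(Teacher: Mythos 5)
Your overall strategy (the Snell--envelope super/martingale property of $V$, the process $S_\phi$ from \eqref{eqn:defasup}, localization at the exit time of a small ball, and continuity to pass to the limit) is the same as the paper's, but as written the argument cannot be closed because you have swapped the test-function configurations of \defref{def:vs}. In that definition a \emph{supersolution} is tested by $\phi$ such that $\phi-V$ has a global \emph{maximum} at $x_0$, i.e.\ $\phi\le V$, while a \emph{subsolution} is tested by $\psi$ with $\psi-V$ having a global \emph{minimum}, i.e.\ $\psi\ge V$. You take $\phi\ge V$ in the supersolution case and $\phi\le V$ in the subsolution case. This is not a harmless relabelling: the chain of inequalities only closes with the paper's orientation. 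For the supersolution one needs $V(x_0)\ \ge\ \bfE^{x_0}\big[\int_0^{\tau_\delta}e^{-as}f(X(s))\,\diff s+e^{-a\tau_\delta}V(X(\tau_\delta))\big]\ \ge\ \bfE^{x_0}\big[\int_0^{\tau_\delta}e^{-as}f(X(s))\,\diff s+e^{-a\tau_\delta}\phi(X(\tau_\delta))\big]$, and the second step requires $\phi\le V$; with $\phi\ge V$ the DPP bound and the optional-sampling bound for $S_\phi$ become one-sided estimates of the same quantity in the same direction and yield no contradiction (the same mismatch occurs in your subsolution case). Relatedly, your displayed inequality is reversed: since $S_\phi$ is a uniformly integrable supermartingale with $S_\phi(0)=0$, optional sampling gives $\bfE^{x_0}[S_\phi(\tau)]\le 0$, i.e.\ $\bfE^{x_0}\big[e^{-a\tau}\phi(X(\tau))+\int_0^\tau e^{-as}(a\phi-\mA\phi)(X(s))\,\diff s\big]\ \ge\ \phi(x_0)$, not $\le$.

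A second, smaller gap: your localization tacitly assumes that the exit time $\tau_\delta$ of a small ball has finite (and positive) expectation, so that one can divide by $\bfE^{x_0}[\tau_\delta]$ or choose a ``suitably small'' stopping time with positive expected length. This fails at absorbing states, where $\tau_\delta=\infty$ almost surely for every $\delta>0$ (\lemref{thm:absorbingcharacterics}). The paper treats absorbing $x$ separately: there $V(x)=\max(f(x)/a,g(x))$ by \lemref{lem:valueabosrbed}, and the super/submartingale property of $S_\phi$ degenerates to $\mA\phi(x)\le 0$ (resp.\ $\ge 0$), which gives the viscosity inequality directly. You would need to add this case for completeness. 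The remaining ingredients you cite --- $V\ge g$, the martingale property of the Snell envelope on $\{V>g\}$ as in \lemref{lem:osiex1}, and continuity of $f$, $\phi$ and $\mA\phi$ to let $\delta\to 0$ --- are exactly what the paper uses.
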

\begin{proof}
The method used to show the existence is based on the probabilistic description of the extended generator of the Feller process $\{X(t)\}_{t\geq 0}$. See \secref{sec:proofex} for a detailed proof.
\end{proof}
\begin{remark}
As seeing later, enlarging the domain $D(\mA)$ has the advantage that it allows to exclude functions which are viscosity solutions. Hence, we use the solution of the martingale problem to define the extended generator instead of the infinitesimal generator $(\tmL,D(\tmL))$. The former enables us to provide more choices on the test function in $D(\mA)$. For example, $D(\mA)$ can be chosen to be $\C_b^2(\sE)$ or could even include an unbounded function space; see  for instance \secref{exa:levy}.
\end{remark}

One can also show as in \cite[Lemma~2.9]{costantini2015} that if the process $\{S^{(0)}_w(t)\}_{t\geq 0}$ defined by $$S^{(0)}_w(t):=w(X_0)-w(X_t)+\int_0^t \mA w(X(s))\diff s$$
is a $(\mF_t,\bfP^x)$-martingale for any $x\in \sE$ and $w\in D(\mA)$, then $(\mA,D(\mA))$ is an $a$-generator for $a>0$ when $\mA:D(\mA)\subseteq B(\sE)\rightarrow B(\sE)$.  Therefore, by Dynkin's formula, the infinitesimal generator $(\tmL,D(\tmL))$ of the Feller process or its core $(\tmG,D(\tmG))$  is an $a$-generator for all $a>0$. In this case, \thmref{thm:vsexist} implies the following corollary:

\begin{corollary}\label{cor:lviscositysolutionexists}
Under \assref{ass:fscompact}, the value function $V$ defined by \eqnref{eqn:valuefunction} is a viscosity solution associated with the infinitesimal generator $(\tmL,D(\tmL))$ to
\begin{equation}\label{eqn:lviscositysolution}
\min{(aw-\tmL w-f,w-g)}=0.
\end{equation}
\end{corollary}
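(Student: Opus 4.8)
The plan is to deduce \corref{cor:lviscositysolutionexists} directly from \thmref{thm:vsexist} by verifying that the infinitesimal generator $(\tmL, D(\tmL))$ fits the hypotheses of that theorem, namely that it is both an $a$-generator of $X$ and maps into $\C(\sE)$. The latter is immediate: by \defref{def:infgendef} we have $\tmL : D(\tmL) \subseteq \cz(\sE) \to \cz(\sE)$, and since $\cz(\sE) \subseteq \C(\sE)$, the codomain condition $\tmL : D(\tmL) \subseteq \C(\sE) \to \C(\sE)$ holds automatically. The crux is therefore to check that $(\tmL, D(\tmL))$ is an $a$-generator, i.e.\ that for every $w \in D(\tmL)$ the process $\{S_w(t)\}_{t \geq 0}$ from \eqnref{eqn:defasup} is a uniformly integrable $(\mF_t, \bfP^x)$-martingale under each $\bfP^x$.

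First I would record the (already-cited) reduction: as noted in the remark preceding the corollary, it suffices to show that the process $S^{(0)}_w(t) = w(X_0) - w(X_t) + \int_0^t \tmL w(X(s))\,\diff s$ is an $(\mF_t, \bfP^x)$-martingale for every $w \in D(\tmL)$ and every $x \in \sE$; this is precisely Dynkin's formula in its martingale form for the Feller generator $\tmL$. The passage from $S^{(0)}_w$ being a martingale to $S_w$ being an $a$-generator (for any $a>0$) is the integration-by-parts / discounting argument indicated in the excerpt, which converts the undiscounted Dynkin martingale into the discounted expression \eqnref{eqn:defasup}. I would then invoke the martingale property of $S^{(0)}_w$ as a standard consequence of the semigroup identity $\tmP_{t+s} = \tmP_t \circ \tmP_s$ together with $\frac{\diff}{\diff t}\tmP_t w = \tmP_t \tmL w$ for $w \in D(\tmL)$, applying the Markov property to handle conditioning on $\mF_s$.

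The main obstacle, and the only nontrivial point, is the uniform integrability of $S_w(t)$. Here I would use that $w \in D(\tmL) \subseteq \cz(\sE)$ is bounded and that $\tmL w \in \cz(\sE)$ is bounded, together with $a > 0$: the bound $|S_w(t)| \leq 2\ninf{w} + \frac{1}{a}\ninf{aw - \tmL w}$ (using $\int_0^t e^{-as}\,\diff s \leq 1/a$ and $|e^{-at}w(X(t))| \leq \ninf{w}$) gives a uniform almost-sure bound on $S_w(t)$, which immediately yields uniform integrability. This is why the discounting $a>0$ in \assref{ass:fscompact}(3) is essential: it turns the integral term into an absolutely bounded quantity. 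Once the $a$-generator property is established, \thmref{thm:vsexist} applies to $(\mA, D(\mA)) = (\tmL, D(\tmL))$ in all three roles simultaneously — supergenerator, subgenerator, and generator — so $V$ is both a viscosity supersolution and a viscosity subsolution associated with $(\tmL, D(\tmL))$, hence a viscosity solution to \eqnref{eqn:lviscositysolution}, which is exactly the claim.
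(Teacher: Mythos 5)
Your proposal is correct and follows essentially the same route as the paper: the paper's argument (given in the paragraph preceding the corollary) likewise establishes that $(\tmL,D(\tmL))$ is an $a$-generator via Dynkin's formula and the reduction from the undiscounted martingale $S^{(0)}_w$, and then invokes \thmref{thm:vsexist}. Your explicit uniform-integrability bound $|S_w(t)|\leq 2\ninf{w}+\frac{1}{a}\ninf{aw-\tmL w}$ just fills in a detail the paper delegates to the cited lemma.
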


The proofs of the above results is given by the following section.

\subsection{Proof of \thmref{thm:vsexist}}\label{sec:proofex}
This section is devoted to the proof of \thmref{thm:vsexist}. The proof is standard with a modification due to the presence of the absorbing state. The proof will be given for two classes of the initial state $x\in E$: the absorbing and the non-absorbing states.

We say that $x\in \sE$ is an absorbing state if and only if $X_t = x$ for all $t\in [0,\infty)$ almost surely under $\bfP^{x}$. Let $\tau_\delta$ be an $\mF_t$-stopping time  defined by
\begin{align}\label{eqn:taudelta}
\tau_\delta := \inf\{s\geq 0; X(s)\not\in \bar{B}(X(0),\delta)\},
\end{align}
where $\delta>0$ and ${B}(x,\delta):=\{y\in \sE; \; \rho(x,y)< \delta\}$.
 The following lemma that can be found in \cite[Lemma 17.22]{kallenberg2006foundations} provides information on the stopping time $\tau_\delta$ when the initial state $x$ is absorbing or not.
\begin{lemma} \label{thm:absorbingcharacterics} Let $X$ be a Feller process.
\begin{enumerate}
 \item Assume $x\in\sE$ is not absorbing. Then $\bfE^x[\tau_\delta]<\infty$ for all sufficiently small $\delta>0$,
 \item $x\in \sE$ is absorbing if and only if $\bfP(\tau_\delta=\infty)=1$ for all $\delta>0$.
\end{enumerate}
\end{lemma}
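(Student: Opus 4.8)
The plan is to prove the two assertions separately, disposing of the characterization (2) first and then reducing (1) to a uniform one‑step exit estimate. For (2), the implication "$x$ absorbing $\Rightarrow\bfP^x(\tau_\delta=\infty)=1$" is immediate, since if $X_s=x$ for all $s$ almost surely then the path never leaves $\bar{B}(x,\delta)$. For the converse, if $\bfP^x(\tau_\delta=\infty)=1$ for every $\delta>0$, I would apply this along $\delta=1/n$ and intersect the resulting full‑probability events: then $X_s\in\bigcap_n\bar{B}(x,1/n)=\{x\}$ for all $s$, so $x$ is absorbing.

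For (1) I would first extract a quantitative consequence of non‑absorption. Since $\bfP^x(X_s=x\text{ for all }s)<1$ and the paths are c\`adl\`ag, right‑continuity lets me pass to a countable dense set of times, so there is a (rational) $t_0>0$ with $\bfP^x(X_{t_0}\neq x)>0$; decomposing $\{X_{t_0}\neq x\}$ over the events $\{\rho(X_{t_0},x)>1/n\}$ then yields $r>0$ with $\bfP^x(\rho(X_{t_0},x)>r)=:2c>0$.

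The crux — and the step I expect to be the main obstacle — is to upgrade this single‑point statement into a lower bound on the one‑step exit probability that is uniform over all starting points in a small ball, while keeping the centre $x$ fixed. Here I would fix $\delta$ with $3\delta\le r$ and choose $h\in\cc(\sE)$ with $h=1$ on $\bar{B}(x,2\delta)$ and $h=0$ off $B(x,3\delta)$. The Feller property makes $\tmP_{t_0}h$ continuous, and at the centre $\tmP_{t_0}h(x)\le\bfP^x(\rho(X_{t_0},x)<3\delta)\le 1-2c$; by continuity there is $\delta'\in(0,\delta]$ with $\tmP_{t_0}h(y)\le 1-c$ for all $y\in\bar{B}(x,\delta')$. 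Since $\bar{B}(x,\delta')\subseteq\bar{B}(x,2\delta)$ and $\ind_{\bar{B}(x,2\delta)}\le h$, for such $y$ this gives $\bfP^y\big(X_{t_0}\notin\bar{B}(x,\delta')\big)\ge 1-\tmP_{t_0}h(y)\ge c$. The technical care is precisely in replacing the discontinuous indicators of balls by the continuous compactly supported $h$ so that the Feller property applies, and in referring everything to the single fixed ball $\bar{B}(x,\delta')$.

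With this uniform estimate the conclusion follows by a submultiplicativity argument. Writing $T:=\inf\{t:X_t\notin\bar{B}(x,\delta')\}$ and $\phi_n:=\sup_{y\in\bar{B}(x,\delta')}\bfP^y(T>nt_0)$, the inclusion $\{X_{t_0}\notin\bar{B}(x,\delta')\}\subseteq\{T\le t_0\}$ gives $\phi_1\le 1-c<1$, and the Markov property at the deterministic time $t_0$ (on $\{T>t_0\}$ one has $X_{t_0}\in\bar{B}(x,\delta')$) yields $\phi_{n+1}\le\phi_1\phi_n$, hence $\phi_n\le(1-c)^n$. Integrating the tail, $\bfE^x[\tau_{\delta'}]=\bfE^x[T]\le t_0\sum_{n\ge0}\phi_n\le t_0/c<\infty$. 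Finally, since $\tau_{\delta''}\le\tau_{\delta'}$ whenever $\delta''\le\delta'$, the bound persists for all sufficiently small $\delta$, which completes (1). (The statement is classical; compare \cite[Lemma 17.22]{kallenberg2006foundations}.)
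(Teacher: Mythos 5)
Your proof is correct. The paper does not actually prove this lemma --- it simply cites \cite[Lemma 17.22]{kallenberg2006foundations} --- and your argument is essentially the standard textbook one behind that citation: use the Feller property to turn the pointwise non-absorption at $x$ into a one-step exit probability bounded below uniformly on a small ball (via a continuous compactly supported surrogate for the indicator, which exists for small $\delta$ because local compactness makes $\bar{B}(x,3\delta)$ relatively compact), then iterate with the Markov property to get geometric tail decay for the exit time; part (2) via intersecting the events over $\delta=1/n$ is also the standard route.
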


The subsequent lemmas are needed in the proof of the existence of the viscosity solution for absorbing initial state process. Their proofs are standard. However for the sake of completeness, we provide details.

\begin{lemma}\label{lem:valueabosrbed}
Suppose \assref{ass:fscompact} holds. Suppose in addition that the initial state $x\in\sE$ is absorbing. Then the value function satisfies
\begin{equation}\label{eqn:valueabosrbed}
V(x) = \max\Big(\frac{f(x)}{a},g(x)\Big).
\end{equation}
\end{lemma}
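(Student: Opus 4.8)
The plan is to show that when $x$ is absorbing, the objective functional $J_x(\tau)$ simplifies drastically, so that the supremum over $\tau\in\mT$ can be computed explicitly. Since $x$ is absorbing, $X(t)=x$ for all $t\geq 0$ almost surely under $\bfP^x$, so for any stopping time $\tau$ the integrand $f(X(s))$ equals the constant $f(x)$ and the terminal reward $g(X(\tau))$ equals $g(x)$ on the event $\{\tau<\infty\}$. First I would substitute this into \eqnref{eqn:payofffunction} to reduce $J_x(\tau)$ to a functional of the single random variable $\tau$, namely something of the form $\bfE^x\!\big[\tfrac{f(x)}{a}(1-e^{-a\tau})+e^{-a\tau}g(x)\big]$, after carrying out the deterministic time integral $\int_0^\tau e^{-as}\,\diff s=\tfrac{1}{a}(1-e^{-a\tau})$.

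Next I would regroup the resulting expression in the factor $e^{-a\tau}$. Writing $J_x(\tau)=\tfrac{f(x)}{a}+\bfE^x\!\big[e^{-a\tau}\big(g(x)-\tfrac{f(x)}{a}\big)\big]$, the optimization becomes trivial and depends only on the sign of $g(x)-\tfrac{f(x)}{a}$ together with the range of the discount factor $\bfE^x[e^{-a\tau}]\in[0,1]$. Indeed, since $a>0$, the quantity $e^{-a\tau}$ ranges over $(0,1]$, attaining $1$ at $\tau=0$ and having infimum $0$ as $\tau\to\infty$ (note $e^{-a\tau}=0$ on $\{\tau=\infty\}$). Hence if $g(x)\geq \tfrac{f(x)}{a}$ the supremum is achieved at $\tau=0$ and equals $g(x)$, whereas if $g(x)<\tfrac{f(x)}{a}$ the supremum equals $\tfrac{f(x)}{a}$, approached by letting $\tau\to\infty$. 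In both cases $V(x)=\max\big(\tfrac{f(x)}{a},g(x)\big)$, which is \eqnref{eqn:valueabosrbed}.

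The main obstacle, modest as it is, lies in handling the behaviour on the event $\{\tau=\infty\}$: one must verify that $e^{-a\tau}g(X(\tau))\to 0$ and that the running integral $\int_0^\infty e^{-as}f(x)\,\diff s=\tfrac{f(x)}{a}$ is finite there, both of which follow from $a>0$ and boundedness of $f,g$ under \assref{ass:fscompact}. A small care is needed because $\mT$ allows stopping times taking the value $+\infty$; by the usual convention $e^{-a\tau}g(X(\tau))=0$ on $\{\tau=\infty\}$, so the terminal term contributes nothing in the limit and the running benefit contributes exactly $\tfrac{f(x)}{a}$. Strictly, the value $\tfrac{f(x)}{a}$ in the case $g(x)<\tfrac{f(x)}{a}$ is attained only in the limit; since the supremum in \eqref{eqn:valuefunction} is a supremum this causes no difficulty, and the equality \eqnref{eqn:valueabosrbed} holds as stated.
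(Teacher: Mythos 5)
Your proposal is correct and follows essentially the same route as the paper's proof: substitute $X(t)\equiv x$ into $J_x(\tau)$, integrate the deterministic time integral, rewrite as $\tfrac{f(x)}{a}+\bfE^x[e^{-a\tau}(g(x)-\tfrac{f(x)}{a})]$, and optimize over the range of $\bfE^x[e^{-a\tau}]\in[0,1]$ according to the sign of $g(x)-\tfrac{f(x)}{a}$. Your extra care about the convention on $\{\tau=\infty\}$ is a welcome clarification but does not change the argument.
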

\begin{proof}
Since the initial state $x \in \sE$ of the Feller process $X$ is absorbing, we have $X_t=x$ for all $t\in [0,\infty)$ $\bfP^{x}$-a.s. For $x\in \sE$,
\begin{align*}
V(x) =& \sup_{\tau}\bfE^x\Big[\int_0^\tau e^{-as}f(x)\diff s+e^{-a\tau}g(x)\Big]\\
=& \sup_{\tau}\bfE^x\Big[\frac{f(x)(1-e^{-a\tau})}{a}+e^{-a\tau}g(x)\Big]\\
=& \sup_{\tau}\bfE^x\Big[\frac{f(x)}{a}+e^{-a\tau}\Big(g(x)-\frac{f(x)}{a}\Big)\Big].
\end{align*}
If $g(x)>\frac{f(x)}{a}$, then $V(x)\leq g(x)$ and the equality is attained on the set $\{\tau=0\}$, that is, $V(x) = g(x)$, otherwise, $V(x)\leq \frac{f(x)}{a}$ and the equality is attained on the set $\{\tau=\infty\}$, that is, $V(x) = \frac{f(x)}{a}$.
\end{proof}

\begin{lemma}\label{lem:osiex1}
Suppose \assref{ass:fscompact} holds. For $x\in \sE$ and $\delta>0$,
\begin{equation}\label{eqn:valuefunctionlg2}
V(x) \geq\bfE^{x}\Big[\int_{0}^{\tau_\delta}{e^{-as}f(X(s))\diff s} +e^{-a\tau_\delta}V(X(\tau_\delta))\Big].
\end{equation}
Suppose in addition that $V(x)>g(x)$. Then there exists a constant $\Delta>0$ such that for any $\delta\leq \Delta$, we have
\begin{equation}\label{eqn:valuefunctionlg}
V(x) = \bfE^{x}\Big[\int_{0}^{\tau_\delta}{e^{-as}f(X(s))\diff s}+e^{-a\tau_\delta}V(X(\tau_\delta))\Big].
\end{equation}
\end{lemma}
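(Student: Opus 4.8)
The plan is to prove the two assertions of \lemref{lem:osiex1} using the dynamic programming principle together with the strong Markov property of the Feller process $X$. The inequality \eqnref{eqn:valuefunctionlg2} is the ``easy'' direction and follows from a suboptimality argument: for the stopping time $\tau_\delta$ of \eqnref{eqn:taudelta}, one constructs an admissible strategy which first runs up to $\tau_\delta$ and thereafter behaves optimally from the position $X(\tau_\delta)$. First I would take any $\tau\in\mT$ and consider the concatenated stopping time $\tau_\delta + \tau\circ\theta_{\tau_\delta}$; by the additivity of the discounted reward functional \eqnref{eqn:payofffunction} over the two time intervals $[0,\tau_\delta)$ and $[\tau_\delta,\cdot)$, together with the strong Markov property and the shift operator $\theta_{\tau_\delta}$, the objective $J_x$ splits into the running integral up to $\tau_\delta$ plus the discounted factor $e^{-a\tau_\delta}$ times $J_{X(\tau_\delta)}(\tau)$. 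Taking the supremum over $\tau$ inside the conditional expectation and using \eqnref{eqn:valuefunction} then yields \eqnref{eqn:valuefunctionlg2}.

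For the reverse inequality, which gives the equality \eqnref{eqn:valuefunctionlg} under the hypothesis $V(x)>g(x)$, the idea is to exploit the characterization of the optimal stopping time $\tau^\ast$ in \thmref{thm:obtainost}, namely $\tau^\ast=\inf\{t\geq 0; V(X(t))=g(X(t))\}$. Since $V(x)>g(x)$ and $V-g\in\cz(\sE)$ (by \thmref{thm:osi}, $V\in\cz(\sE)$, and $g\in\C_b(\sE)$ by \assref{ass:fscompact}), the function $V-g$ is continuous and strictly positive at $x$; hence there is a radius $\Delta>0$ such that $V(y)>g(y)$ for all $y\in \bar B(x,\Delta)$. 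The key consequence is that for $\delta\leq\Delta$ the process has not yet entered the stopping region $\{V=g\}$ before it leaves $\bar B(x,\delta)$, so $\tau_\delta\leq\tau^\ast$ almost surely under $\bfP^x$. On the event before $\tau^\ast$ the value process behaves like a martingale, which allows us to replace the inequality by an equality.

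Concretely, I would write $V(x)=J_x(\tau^\ast)$ and split the reward of the optimal strategy at the time $\tau_\delta$: using $\tau_\delta\leq\tau^\ast$ and the strong Markov property at $\tau_\delta$, the residual strategy $\tau^\ast-\tau_\delta$ is precisely the optimal stopping time for the shifted problem started at $X(\tau_\delta)$ (because the hitting time of $\{V=g\}$ commutes with the shift), so that
\begin{equation}
\bfE^x\Big[\int_{\tau_\delta}^{\tau^\ast}e^{-as}f(X(s))\diff s+e^{-a\tau^\ast}g(X(\tau^\ast))\,\Big|\,\mF_{\tau_\delta}\Big]=e^{-a\tau_\delta}V(X(\tau_\delta)).
\end{equation}
Combining this with the part of the reward accrued on $[0,\tau_\delta)$ and taking expectations recovers \eqnref{eqn:valuefunctionlg}.

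The main obstacle I anticipate is the careful justification of the strong Markov manipulations, in particular the identity $\tau^\ast=\tau_\delta+\tau^\ast\circ\theta_{\tau_\delta}$ on the event $\{\tau_\delta\leq\tau^\ast\}$ and the interchange of the essential supremum with the conditional expectation; these require that $\tau_\delta$ be a genuine $\mF_t$-stopping time (guaranteed by right-continuity of the filtration and the c\`adl\`ag paths) and that $X(\tau_\delta)\in\bar B(x,\delta)$ lie in the continuation region. One subtlety worth flagging is the possibility that $\bfP^x(\tau_\delta=\infty)>0$; by \lemref{thm:absorbingcharacterics}, this happens precisely when $x$ is absorbing, a case already settled by \lemref{lem:valueabosrbed}, so for the present (non-absorbing) argument one may take $\delta$ small enough that $\bfE^x[\tau_\delta]<\infty$ and $\tau_\delta<\infty$ almost surely, keeping all the integrals and expectations well defined.
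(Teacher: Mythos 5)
Your argument is correct in substance, but it reaches the conclusion by a different mechanism than the paper. The paper's proof is essentially two lines: it introduces the value process $Z(t)=\int_0^t e^{-as}f(X(s))\,\diff s+e^{-at}V(X(t))$ and invokes the Snell envelope theorem to assert that $\{Z(t)\}_{t\geq 0}$ is a supermartingale and $\{Z(t\wedge\tau^*)\}_{t\geq 0}$ is a martingale; optional sampling at $\tau_\delta$ then gives \eqref{eqn:valuefunctionlg2}, and the equality \eqref{eqn:valuefunctionlg} follows once one observes (exactly as you do) that continuity of $V$ and $g$ and $V(x)>g(x)$ force $V>g$ on some $\bar B(x,\Delta)$, hence $\tau_\delta\leq\tau^*$ for $\delta\leq\Delta$. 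You instead re-derive these two facts from first principles via the strong Markov property: concatenated strategies $\tau_\delta+\tau\circ\theta_{\tau_\delta}$ for the supermartingale inequality, and the shift identity $\tau^*=\tau_\delta+\tau^*\circ\theta_{\tau_\delta}$ for the equality. Both routes work, and your treatment of the equality part is essentially identical to the paper's. The one place where your version carries a real burden that the paper's citation absorbs is the step ``taking the supremum over $\tau$ inside the conditional expectation'' in the proof of \eqref{eqn:valuefunctionlg2}: to pass from $\sup_\tau\bfE^x[e^{-a\tau_\delta}J_{X(\tau_\delta)}(\tau)]$ to $\bfE^x[e^{-a\tau_\delta}V(X(\tau_\delta))]$ one needs a measurable selection of $\varepsilon$-optimal stopping times (or a countable covering argument using the continuity of $V$, or an approximation by the penalized solutions $v_\lambda$ of \eqref{eqn:osipenvar}); you flag this but do not carry it out, and it is precisely the content of the supermartingale property of the Snell envelope that the paper cites. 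Your handling of the degenerate cases (absorbing $x$, $\bfP^x(\tau_\delta=\infty)>0$) is consistent with \lemref{thm:absorbingcharacterics} and \lemref{lem:valueabosrbed} and causes no trouble since $a>0$ and $V$ is bounded.
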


\begin{proof}
Let $Z(t):= \int_0^t e^{-as}f(X(s))\diff s + e^{-a t} V(X(t))$ for $t\geq 0$. Then, using Snell envelope (see for example \cite[Theorem 2.4]{peskir2006optimal}), the process $\{Z_t\}_{t\geq 0}$ is a supermartingale and $\{Z_t\}_{t\wedge \tau ^*}$ is a martingale, where $\tau^*$ is defined by \eqref{eqn:taustar}. Therefore, \eqref{eqn:valuefunctionlg2} and \eqref{eqn:valuefunctionlg} follows. In particular, \eqref{eqn:valuefunctionlg} follows from the fact that $\sE$ is a separable metric space and $V$ and $g$ are continuous.
\end{proof}
We now turn to the proof of \thmref{thm:vsexist}.
\begin{proof}[Proof of \thmref{thm:vsexist}] \leavevmode
	
\textbf{(1)} \textbf{Viscosity Supersolution:} Suppose $(\mA,D(\mA))$ is an $a$-supergenerator of the Markov process $X$. Suppose  $x\in \sE$ and $\phi\in D(\mA )$ such that $\phi(x) = V(x)$ and $\phi -V$ has a global maximum at $x\in\sE$. We wish to prove that
$$\min(a\phi(x)-\mA \phi(x)-f(x),\phi(x)-g(x))\geq 0.$$
Since $\phi(x)=V(x) \geq g(x)$, it is sufficient to prove that
\begin{equation}\label{eqn:existencesuper}
a\phi(x)-\mA \phi(x)-f(x)\geq 0.
\end{equation}
\textbf{Case 1.} Assume that $x\in \sE$ is an absorbing initial state, that is $X_t = x$ for all $t\in [0,\infty)$ $\bfP^x$-a.s. 
and define the process $\{S_\phi(t)\}_{t\geq 0}$ by
\begin{eqnarray*}
S_\phi(t):=\phi(X_0)-e^{-at}\phi(X_t)-\int_0^t e^{-as}(a\phi-\mA \phi)(X(s))\diff s.
\end{eqnarray*}
Since $x\in\sE$ is absorbing, $S_\phi(t) = \int_0^t e^{-as}\mA \phi(x)\diff s$ for all $t\in [0,\infty)$ $\bfP^x$-a.s. Since $(\mA,D(\mA))$ is an $a$-supergenerator, it follows that  $\{S_\phi(t)\}_{t\geq 0}$ is a $(\mF_t,\bfP^x)$ uniformly integrable supermartingale, and therefore $\mA\phi(x)\leq 0$. 
In addition, using \lemref{lem:valueabosrbed}, we have $\phi(x)=V(x) = \max(f(x)/a,g(x))$. The latter combines with the fact that $\mA \phi(x)\leq 0$ yields \eqnref{eqn:existencesuper}. \\
\textbf{Case 2.} Assume that $x\in \sE$ is not an absorbing initial value. It follows from \lemref{thm:absorbingcharacterics} that $\bfE^{x}[\tau_\delta]< \infty$ for all small enough $\delta>0$. Since $\phi\in D(\mA)$ and $\phi(y)-V(y)\leq 0$ for any $y\in\sE$, \eqnref{eqn:valuefunctionlg2} implies that
\begin{align}\label{eqvnat1}
V(x)\geq& \bfE^{x}\Big[\int_0^{\tau_\delta}e^{-as}f(X(s))\diff s+e^{-a\tau_\delta}V(X(\tau_\delta))\Big]\notag\\
\geq&\bfE^{x}\Big[\int_0^{\tau_\delta}e^{-as}f(X(s))\diff s+e^{-a\tau_\delta}\phi(X(\tau_\delta))\Big]\notag \\
\geq &\bfE^{x}\Big[\int_0^{\tau_\delta}e^{-as}(f(X(s))+\mA \phi(X(s))-a\phi{(X(s))})\diff s\Big]+\phi(x),
\end{align}
where the last inequality follows from the optional sampling theorem since $(\mA,D(\mA))$ is an $a$-supergenerator.
Since $V(x) = \phi(x)$, dividing both sides of \eqref{eqvnat1} by $\bfE^{x}\big[\tau_\delta\big]$, we obtain
\begin{align}\label{eqvnat2}
0&\geq \frac{\bfE^{x}\big[\int_0^{\tau_\delta}e^{-as}(f(X(s))+\mA \phi(X(s))-a\phi{(X(s))})\diff s\big]}{\bfE^{x}\big[\tau_\delta\big]}\nonumber\\
&\geq \frac{\bfE^{x}\big[\int_0^{\tau_\delta}e^{-as}C_-(x,\delta)\diff s\big]}{\bfE^{x}\big[\tau_\delta\big]}\nonumber\\
&= \frac{1-\bfE^{x}[e^{-a\tau_\delta}]}{\bfE^{x}\big[\tau_\delta\big]}C_-(x,\delta),
\end{align}
where $C_-(x,\delta)=\inf_{y\in B(x,\delta)}(f(y)+\mA \phi(y)-a\phi{(y)})$. Since $\bfE^{x}\big[\tau_\delta\big]$ is bounded such that $\frac{1-\bfE^{x}[e^{-a\tau_\delta}]}{\bfE^{x}\big[\tau_\delta\big]}>0$,  \eqref{eqvnat2} yields $C_-(x,\delta)\leq 0$ for all $\delta>0$. Since $f, \mA\phi$ and $\phi$ are in $\C(\sE)$, $C_-(x,\delta)$ converges pointwise to $f(x)+\mA \phi(x)-a\phi{(x)}$ as $\delta\rightarrow 0$. Hence, \eqnref{eqn:existencesuper} is proved.

\textbf{(2)} \textbf{Viscosity Subsolution:} Assume that $(\mA,D(\mA))$ is an $a$-subgenerator of the process $X$. Choose $\psi\in D(\mA)$, such that $\psi(x )=V(x)$ and $\psi-V$ has a global minimum at $x$. If $V(x)=g(x)$, we find a viscosity subsolution by setting $\psi(x)=V(x)=g(x)$. Since $V\geq g$, we thus only consider the initial state $x\in\sE$ satisfying $V(x)-g(x)>0$. Hence, it is enough to show that
\begin{equation}\label{eqn:existencesub}
a\psi(x)-\mA \psi(x)-f(x)\leq 0.
\end{equation}

\noindent \textbf{Case 1.} Assume that $x\in\sE$ is absorbing. Then by \lemref{lem:valueabosrbed}, $\psi(x) = V(x)=f(x)/a$. Since $(\mA,D(\mA))$ is an $a$-subgenerator, applying similar arguments as in the proof of Case 1 for viscosity supersolution,  we obtain $\mA \psi(x) \geq 0$. Therefore, \eqnref{eqn:existencesub} is satisfied.\\ 
\textbf{Case 2.} Assume that $x$ is not absorbing. Then by \lemref{thm:absorbingcharacterics} and \lemref{lem:osiex1}, there exists a constant $\Delta>0$ such that for any $\delta \leq \Delta$, we have $\bfE^{x}[\tau_\delta]<\infty$ and \eqnref{eqn:valuefunctionlg} holds. Since $\psi\in D(\mA)$ and $\psi\geq V$, we have
\begin{align}\label{eqvnatsb1}
V(x) =& \bfE^{x}\Big[\int_{0}^{\tau_\delta}e^{-as}f(X(s))\diff s+e^{-a{\tau_\delta}}V(X({\tau_\delta}))\Big]\notag \\
\leq& \bfE^{x}\Big[\int_{0}^{\tau_\delta}e^{-as}f(X(s))\diff s+e^{-a{\tau_\delta}}\psi(X({\tau_\delta}))\Big]\notag \\
\leq& \psi(x)+\bfE^{x}\Big[\int_{0}^{\tau_\delta}e^{-as}\big(f(X(s))+\mA \psi(X(s))-a\psi(X(s))\big)\diff s\Big],
\end{align}
where the last inequality follows from the optional sampling theorem since $(\mA,D(\mA))$ is an $a$-subgenerator.
Since $V(x)=\psi (x)$, dividing $\bfE^{x}\big[\tau_\delta\big]$ on both sides of \eqref{eqvnatsb1}, we get
\begin{equation}\label{eqn:existencesub1}
0 \leq \frac{\bfE^{x}\Big[\int_{0}^{\tau_\delta}e^{-as}\big(f(X(s))+\mA \psi(X(s))-a\psi(X(s))\big)\diff s\Big]}{\bfE^{x}\big[\tau_\delta\big]},
\end{equation}
for all $\delta\leq \Delta$. Then, since $f, \mA\phi$ and $\phi$ belong to $\C(\sE)$ and $\bfE^{x}\big[\tau_\delta\big]$ is bounded, taking $\delta \rightarrow 0$, we obtain the desired result.
\end{proof}

\section{Uniqueness of viscosity solution for compact state space $\sE$}\label{uninoncomp}
In this section, we prove that the value function is the uniqueness of viscosity solution under the assumption that the state space is compact. \thmref{thm:comparisonprinciple} gives a comparison principle for viscosity supersolution and subsolution which is needed in the proof of the uniqueness of the viscosity solution associated with $(\tmL,D(\tmL))$ (\resp{$(\tmG,D(\tmG))$}) (see \thmref{thm:gvs=V}).

\begin{theorem}{(Comparison Principle)}\label{thm:comparisonprinciple}. Suppose \assref{ass:fscompact} holds. Suppose $\sE$ is compact and the constant function $1\in D(\tmG)$. Furthermore, suppose $w_1$ is a viscosity supersolution and $w_2$ is a viscosity subsolution associated with $(\tmG,D(\tmG))$ to
\begin{equation}\label{eqn:gviscositysolution}
\min{(aw-\tmG w-f,w-g)}=0.
\end{equation}
Then $w_1\geq w_2$.
\end{theorem}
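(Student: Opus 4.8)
The plan is to establish the two one-sided bounds $w_2\le V$ and $w_1\ge V$, where $V$ is the value function of \eqnref{eqn:valuefunction}; since then $w_1\ge V\ge w_2$, the comparison follows, and these are exactly the bounds needed to identify the unique solution in \thmref{thm:gvs=V}. The engine of the argument is the family of penalty functions $\{v_\lambda\}_{\lambda>0}\subset D(\tmL)$ furnished by \thmref{thm:osi}: they solve $av_\lambda-\tmL v_\lambda-f=\lambda(g-v_\lambda)^+$ and increase uniformly to $V$, and one also has the elementary bound $V\ge g$ (take $\tau=0$ in \eqnref{eqn:valuefunction}). These $v_\lambda$ play the role that classical sub/supersolutions play in a PDE comparison, so that no doubling of variables is required. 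Recall that by \defref{def:vs} the supersolution $w_1$ is in $LSC(\sE)$ and the subsolution $w_2$ is in $USC(\sE)$.

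To prove $w_2\le V$, I fix $\lambda$ and suppose $M_\lambda:=\max_{\sE}(w_2-v_\lambda)>0$, attained at some $\bar x$ by compactness of $\sE$ and upper semicontinuity of $w_2-v_\lambda$. Then $\phi:=v_\lambda+M_\lambda$ touches $w_2$ from above at $\bar x$. Since $\phi\in D(\tmL)$ while the viscosity notion only tests against the core $D(\tmG)$, I approximate $v_\lambda$ in the graph norm by $\psi_n\in D(\tmG)$ (core property), restore the lost global ordering by a vanishing constant shift $c_n\to 0$ -- legitimate precisely because $1\in D(\tmG)$ with $\tmG 1=\tmL 1=0$ by conservativity of $\{\tmP_t\}$, so that $\tmG(\psi_n+c_n)=\tmG\psi_n$ -- and pass to the limit along the resulting touch points. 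This yields the subsolution inequality $\min(a\phi(\bar x)-\tmL\phi(\bar x)-f(\bar x),\phi(\bar x)-g(\bar x))\le 0$ at a maximiser $\bar x$ of $w_2-v_\lambda$. Computing $a\phi-\tmL\phi-f=\lambda(g-v_\lambda)^++aM_\lambda$, the first alternative would force $aM_\lambda\le 0$, which is impossible; hence the obstacle alternative holds, giving $w_2(\bar x)=\phi(\bar x)\le g(\bar x)$ and therefore $M_\lambda=w_2(\bar x)-v_\lambda(\bar x)\le g(\bar x)-v_\lambda(\bar x)\le V(\bar x)-v_\lambda(\bar x)\le\ninf{V-v_\lambda}$. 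Letting $\lambda\to\infty$ and using $v_\lambda\to V$ uniformly gives $w_2\le V$.

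For $w_1\ge V$ I argue symmetrically: if $M_\lambda:=\max_{\sE}(v_\lambda-w_1)>0$ at $\bar x$, then $\phi:=v_\lambda-M_\lambda$ touches $w_1$ from below, and the same core-plus-constant-shift approximation lets me apply the supersolution inequality, so that both $a\phi(\bar x)-\tmL\phi(\bar x)-f(\bar x)\ge 0$ and $\phi(\bar x)-g(\bar x)\ge 0$. The first gives $\lambda(g(\bar x)-v_\lambda(\bar x))^+\ge aM_\lambda>0$, hence $g(\bar x)>v_\lambda(\bar x)$, while the second gives $v_\lambda(\bar x)-M_\lambda\ge g(\bar x)$, hence $v_\lambda(\bar x)>g(\bar x)$ -- a contradiction. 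Thus $v_\lambda\le w_1$ for every $\lambda$, and $V=\lim_\lambda v_\lambda\le w_1$, completing the comparison.

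The delicate point, and the only place the hypotheses that $\sE$ is compact and $1\in D(\tmG)$ are genuinely used, is the passage from the $(\tmG,D(\tmG))$-viscosity inequality (testable only against the core) to an inequality involving $v_\lambda\in D(\tmL)$. One must approximate $v_\lambda$ by $\psi_n\in D(\tmG)$ in graph norm, restore the global ordering lost under approximation by the constant shift (valid exactly because $\tmG 1=0$), and then control the limit of the approximate touch points $x_n$: compactness yields a convergent subsequence, the semicontinuity of $w_1,w_2$ identifies the limit as a genuine touch point, and the uniform convergence $\tmG\psi_n\to\tmL v_\lambda$ together with continuity of $f,g$ lets one pass to the limit in the inequality. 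Because every such touch point is a maximiser of $w_2-v_\lambda$ (\resp{of $v_\lambda-w_1$}) and hence realises the same value $M_\lambda$, it is harmless that the limiting touch point need not coincide with the one initially chosen. This approximation step replaces the doubling-of-variables/regularisation device that is unavailable on a general locally compact metric space, and is where I expect the bulk of the technical work to lie.
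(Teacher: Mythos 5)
Your proposal is correct, and it rests on exactly the same three pillars as the paper's proof: the penalty functions $v_\lambda$ of \thmref{thm:osi}, the core approximation in graph norm to upgrade test functions from $D(\tmL)$ to $D(\tmG)$, and the constant-shift device enabled by $1\in D(\tmG)$ together with compactness of $\sE$. The packaging differs in a way worth noting. The paper factors the argument through an abstract partial comparison principle (classical versus viscosity solutions, \lemref{lem:partialcomparisonprinciple}) and then constructs sequences of genuine classical supersolutions and subsolutions converging uniformly to $V$ (\lemref{lem:lcomparison} and \corref{cor:gcomparison}); in particular, to get classical \emph{super}solutions it must shift $v_{\lambda_n}$ upward by $1/n$ so that $aw_n-\tmL w_n-f>0$ and $w_n\geq g$ both hold. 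You instead work directly at the maximiser of $w_2-v_\lambda$ (\resp{$v_\lambda-w_1$}) and exploit the exact penalty identity $av_\lambda-\tmL v_\lambda-f=\lambda(g-v_\lambda)^+\geq 0$: adding $aM_\lambda>0$ kills the first branch of the min, and the obstacle branch then yields the quantitative bound $M_\lambda\leq\ninf{V-v_\lambda}$ without ever needing $v_\lambda$ shifted into a classical supersolution. Your $w_1\geq V$ direction is essentially the paper's partial comparison specialised to the classical subsolution $v_\lambda$. One small remark: the limit-of-touch-points discussion in your last paragraph is not actually needed — at each fixed $n$ the touch point $x_n$ of $w_2-\psi_n$ already delivers the inequality $w_2(x_n)\leq g(x_n)$ and hence $M_\lambda\leq\ninf{V-v_\lambda}+2/n$ directly, so you may let $n\to\infty$ in the scalar bound rather than in the points themselves; and the paper's weaker fact $\tmG c\leq 0$ for constants $c\geq0$ (positive maximum principle) would suffice in place of $\tmG 1=0$.
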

\begin{proof}
See \secref{sec:proofun}.
\end{proof}
\begin{remark}
Since we suppose in the proof that all constant functions belong to $D(\tmL)$ or $D(\tmG)$, it is natural to assume the compactness of $\sE$. However, the latter is  not a necessary condition to show the uniqueness of the viscosity solution and will be relaxed in the subsequent sections; see for example \secref{sec:notc1} and \propref{prop:g*vs}.
\end{remark}

The following theorem constitutes the second main result of this section

\begin{theorem}\label{thm:gvs=V}
Suppose \assref{ass:fscompact} holds. Suppose $\sE$ is compact and the constant function $1\in D(\tmG)$. Then the value function \eqref{eqn:valuefunction} is the unique viscosity solution associated with  $(\tmG,D(\tmG))$ to \eqref{eqn:gviscositysolution}.
\end{theorem}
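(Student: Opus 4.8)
The strategy is to combine the existence result (\corref{cor:lviscositysolutionexists}, specialized to the core $(\tmG,D(\tmG))$ via the observation following \thmref{thm:vsexist} that the core is an $a$-generator) with the comparison principle (\thmref{thm:comparisonprinciple}). First I would observe that, since $(\tmG,D(\tmG))$ is a core of the infinitesimal generator $(\tmL,D(\tmL))$, by Dynkin's formula it is an $a$-generator of $X$ for every $a>0$, and $\tmG:D(\tmG)\subseteq\cz(\sE)\to\cz(\sE)$ maps into $\C(\sE)$. Hence \thmref{thm:vsexist} applies with $(\mA,D(\mA))=(\tmG,D(\tmG))$, and the value function $V$ defined by \eqref{eqn:valuefunction} is a viscosity solution associated with $(\tmG,D(\tmG))$ to \eqref{eqn:gviscositysolution}. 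This establishes existence and reduces the theorem to uniqueness.

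For uniqueness, suppose $u$ is any viscosity solution associated with $(\tmG,D(\tmG))$ to \eqref{eqn:gviscositysolution}. By \defref{def:vs}, $u\in\C(\sE)$ is simultaneously a viscosity subsolution and supersolution. I would then invoke \thmref{thm:comparisonprinciple} twice. Taking $w_1=V$ (a supersolution) and $w_2=u$ (a subsolution) yields $V\geq u$; taking $w_1=u$ (a supersolution) and $w_2=V$ (a subsolution) yields $u\geq V$. Combining the two inequalities gives $u=V$ on $\sE$. Since $u$ was an arbitrary viscosity solution, $V$ is the unique viscosity solution, which is the assertion of the theorem.

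The only subtlety to verify is that the hypotheses of \thmref{thm:comparisonprinciple} are met in both applications, namely that \assref{ass:fscompact} holds, that $\sE$ is compact, and that $1\in D(\tmG)$ — all of which are exactly the standing assumptions of the present theorem. One should also confirm that $V\in\cz(\sE)$ (guaranteed by \thmref{thm:osi}) so that $V$ is an admissible input to the comparison principle as both a sub- and a supersolution. With these checks in place the argument is complete; no further estimates are required.

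The main obstacle in this chain of reasoning is not in the short argument above but in the comparison principle \thmref{thm:comparisonprinciple} on which it rests: establishing $w_1\geq w_2$ for a subsolution and supersolution of \eqref{eqn:gviscositysolution} is where the real work lies, since the operator $\tmG$ is only characterized through the semigroup rather than as a differential operator, so the usual doubling-of-variables technique is unavailable. Here the assumption $1\in D(\tmG)$ and the Hille-Yosida-Ray structure (\thmref{thm:hyr} and \corref{cor:hy}, in particular the resolvent bound \eqref{eqn:resolventproperties233} and the positive maximum principle) must substitute for the differential-operator machinery. Granting \thmref{thm:comparisonprinciple} as proved in \secref{sec:proofun}, however, the deduction of \thmref{thm:gvs=V} is entirely routine.
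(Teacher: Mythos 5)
Your proposal is correct and follows essentially the same route as the paper: existence of $V$ as a viscosity solution via \thmref{thm:vsexist} (the core being an $a$-generator, equivalently via \corref{cor:lviscositysolutionexists} restricted to the smaller test-function class $D(\tmG)$), and uniqueness by applying the comparison principle \thmref{thm:comparisonprinciple} in both directions to any other solution. The checks you flag (hypotheses of the comparison principle, $V\in\cz(\sE)$ from \thmref{thm:osi}) are exactly the ones the paper relies on implicitly.
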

\begin{proof}
The existence follows from \corref{cor:lviscositysolutionexists}. Using \thmref{thm:comparisonprinciple}, if there exists another viscosity solution, it must coincide with the value function.
\end{proof}

\subsection{Proof of \thmref{thm:comparisonprinciple}}\label{sec:proofun}

We first recall that the state space $\sE$ is compact, $D(\tmG)$ (or $D(\tmL)$) contains constant functions and $(\tmG, D(\tmG))$ is the core of the infinitesimal generator $(\tmL, D(\tmL))$. We prove \thmref{thm:comparisonprinciple} in three steps. In the first step, we define a notion of classical solution to \eqnref{eqn:gviscositysolution} and show a partial comparison principle between a classical subsolution (\resp{supersolution}) and a viscosity supersolution (\resp{subsolution}). Second, we show that there exists a sequence of classical subsolutions (\resp{supersolutions}) that converges from below (\resp{above}) to the value function $V$ defined by \eqnref{eqn:valuefunction}. Finally, we use the results from steps 1 and 2 to prove \thmref{thm:comparisonprinciple}.

\subsubsection*{\textbf{Step 1}} In this step, we first define the notion of classical subsolution (\resp{supersolution}) to \eqnref{eqn:gviscositysolution} and then prove a classical comparison theorem.
\begin{definition}\label{def:l classical solution}
A function $w$ is a classical subsolution (\resp{supersolution}) associated with $(\mA,D(\mA))$ to \eqnref{eqn:gviscositysolution}, if $w\in D(\mA )$ and satisfies
\begin{equation}
\min{(aw-\mA w-f,w-g)} \leq (\geq)0.
\end{equation}
\end{definition}
\begin{lemma}\label{lem:classical is viscosity}
Suppose \assref{ass:fscompact} holds. Then a classical subsolution \textup{(}\resp{supersolution}\textup{)} associated with $(\tmG,D(\tmG))$ to \eqref{eqn:gviscositysolution} is a viscosity subsolution \textup{(}\resp{supersolution}\textup{)} associated with $(\tmG,D(\tmG))$ to \eqref{eqn:gviscositysolution}.
\end{lemma}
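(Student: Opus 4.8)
The plan is to show that a classical subsolution (\resp{supersolution}) associated with $(\tmG,D(\tmG))$ satisfies the defining inequality of the corresponding viscosity notion from \defref{def:vs}. I treat the subsolution case; the supersolution case is symmetric. So let $w\in D(\tmG)$ satisfy $\min(aw-\tmG w-f, w-g)\leq 0$ pointwise on $\sE$, and let $\phi\in D(\tmG)$ be a test function such that $\phi-w$ has a global minimum at some $x_0\in\sE$ with $\phi(x_0)=w(x_0)$. I must verify that
\begin{equation}\label{eqn:goalcv}
\min\bigl(a\phi(x_0)-\tmG\phi(x_0)-f(x_0),\,\phi(x_0)-g(x_0)\bigr)\leq 0.
\end{equation}

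The key step is to extract information about $\tmG\phi(x_0)$ from the fact that $\phi-w$ attains its global minimum at $x_0$. First I would observe that since $\phi(x_0)=w(x_0)$, the inequality $\phi(y)-w(y)\geq \phi(x_0)-w(x_0)=0$ means $\psi:=\phi-w$ attains its global minimum, equal to $0$, at $x_0$; equivalently $-\psi=w-\phi$ attains its global maximum, equal to $0$, at $x_0$. Now $\tmG$ is the core of the Feller generator $(\tmL,D(\tmL))$, which satisfies the positive maximum principle (this is part of the Hille-Yosida-Ray characterization in \thmref{thm:hyr}, and the core generates the same closure). The plan is to apply the positive maximum principle to $w-\phi\in D(\tmG)$: since $(w-\phi)(x_0)=\sup_{x\in\sE}(w-\phi)(x)= 0\geq 0$, the positive maximum principle gives $\tmG(w-\phi)(x_0)\leq 0$, hence
\begin{equation}\label{eqn:pmpapp}
\tmG\phi(x_0)\geq \tmG w(x_0).
\end{equation}

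With \eqnref{eqn:pmpapp} in hand the conclusion follows quickly. Using $\phi(x_0)=w(x_0)$ and \eqnref{eqn:pmpapp}, I compute
\begin{align*}
a\phi(x_0)-\tmG\phi(x_0)-f(x_0) &\leq a w(x_0)-\tmG w(x_0)-f(x_0),\\
\phi(x_0)-g(x_0) &= w(x_0)-g(x_0),
\end{align*}
so that $\min\bigl(a\phi(x_0)-\tmG\phi(x_0)-f(x_0),\phi(x_0)-g(x_0)\bigr)\leq \min\bigl(aw(x_0)-\tmG w(x_0)-f(x_0),w(x_0)-g(x_0)\bigr)$. Since $w$ is a classical subsolution, the right-hand side is $\leq 0$ at $x_0$, which establishes \eqnref{eqn:goalcv} and thus that $w$ is a viscosity subsolution. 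The supersolution case is entirely analogous, applying the positive maximum principle instead to $\phi-w$ (which attains a global \emph{maximum} value $0$ at the point where $\phi-w$ has a global maximum), yielding $\tmG\phi(x_0)\leq \tmG w(x_0)$ and the reversed inequalities.

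The main obstacle is the careful justification that the positive maximum principle is available for the core $(\tmG,D(\tmG))$ at the contact point, and that the global extremum of $\phi-w$ translates correctly into a nonnegative-supremum condition of the form required by the positive maximum principle (matching the convention $w(x_0)=\sup_x w(x)\geq 0$). Here compactness of $\sE$ is convenient since it guarantees global extrema are attained and the supremum is finite; I would also note that the positive maximum principle, stated in the excerpt for a generator, passes to its core since the core shares the closure and the defining inequality $\tmG u(x_0)\leq 0$ depends only on values of $u\in D(\tmG)$ and is preserved under the approximation defining the closure. Everything else is the short algebraic comparison above.
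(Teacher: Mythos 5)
Your proof is correct and uses the same key idea as the paper: apply the positive maximum principle to $w-\phi$ (respectively $\phi-w$) at the contact point, where it attains a nonnegative global maximum equal to $0$, to get $\tmG w(x_0)\leq\tmG\phi(x_0)$ and then conclude by the short algebraic comparison. The paper phrases this as a proof by contradiction while you argue directly, but that is an immaterial reformulation; note also that compactness of $\sE$ is not needed here, since the existence of the global extremum of $\phi-w$ is already part of the definition of the test function.
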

\begin{proof}
Let $w$ be a classical subsolution to \eqnref{eqn:gviscositysolution}. By contradiction, assume that $w$ is not a viscosity subsolution to \eqnref{eqn:gviscositysolution}. Then, there exists a function $\phi\in D(\tmG )$ such that $\phi-w$ has a global minimum at $x_0$ with $(\phi-w)(x_0) = 0$ and
\begin{equation}\label{eqn:proof classical is viscosity}
\min{(a\phi(x_0)-\tmG \phi(x_0)-f(x_0),w(x_0)-g(x_0))} > 0.
\end{equation}
Since $w-\phi$ has a global nonnegative maximum at $x_0$, the positive maximum principle yields $\tmG (w-\phi)(x_0)\leq0$, that is, $\tmG w(x_0)\leq \tmG \phi(x_0)$. This together with $w(x_0)=\phi(x_0)$ and \eqnref{eqn:proof classical is viscosity} gives
$$\min{(a w(x_0)-\tmG w(x_0)-f(x_0),w(x_0)-g(x_0))} > 0,$$
hence contradicting the assumption that $w$ is a classical subsolution to \eqnref{eqn:gviscositysolution}. Therefore $w$ is a viscosity subsolution to \eqnref{eqn:gviscositysolution}.
The proof for the supersolution follows in the same way.
\end{proof}

We will also need the following partial comparison theorem.
\begin{lemma}[{\textsl{Partial Comparison Principle}}]\label{lem:partialcomparisonprinciple} Suppose that conditions of Theorem \ref{thm:gvs=V} hold. Let $w_1$ be a supersolution associated with $(\tmG,D(\tmG))$ to \eqref{eqn:gviscositysolution} and $w_2$ be a subsolution associated with $(\tmG,D(\tmG))$ to \eqref{eqn:gviscositysolution}, where one of the solutions is in the classical sense and the other is in the viscosity sense. Then, $w_1\geq w_2$.
\end{lemma}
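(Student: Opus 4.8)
The plan is to prove the Partial Comparison Principle (\lemref{lem:partialcomparisonprinciple}) by reducing it to a statement that can be attacked with the positive maximum principle of the core $(\tmG, D(\tmG))$, exploiting crucially that one of the two functions is a \emph{classical} solution (hence lies in $D(\tmG)$ and is smooth enough to act as a test function). The two symmetric cases are: (i) $w_1$ is a classical supersolution and $w_2$ a viscosity subsolution; (ii) $w_1$ is a viscosity supersolution and $w_2$ a classical subsolution. I will treat case (i) in detail; case (ii) is entirely analogous by interchanging the roles and reversing inequalities.

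First I would argue by contradiction: suppose $\sup_{x\in\sE}(w_2-w_1)(x)=M>0$. Since $\sE$ is compact and $w_2\in USC(\sE)$, $w_1\in LSC(\sE)$, the upper semicontinuous function $w_2-w_1$ attains its maximum $M$ at some point $x_0\in\sE$. The key observation is that, because $w_1$ is a classical supersolution, $w_1\in D(\tmG)$, so $w_1$ itself is an admissible test function. The plan is to perturb $w_1$ by the constant $M$ (here is where the hypothesis $1\in D(\tmG)$ enters, guaranteeing $w_1+M\in D(\tmG)$ and $\tmG(w_1+M)=\tmG w_1$ since $\tmG 1 = 0$): set $\phi:=w_1+M$. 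Then $\phi-w_2$ has a global minimum at $x_0$ with $\phi(x_0)=w_2(x_0)$, so $\phi$ qualifies as a test function in the definition of viscosity subsolution for $w_2$, yielding
\begin{equation}\label{eqn:pcp-visc}
\min\big(a\phi(x_0)-\tmG\phi(x_0)-f(x_0),\ \phi(x_0)-g(x_0)\big)\leq 0.
\end{equation}
On the other hand, since $w_1$ is a classical supersolution and $\phi=w_1+M$, I would compute $a\phi(x_0)-\tmG\phi(x_0)-f(x_0)=aw_1(x_0)+aM-\tmG w_1(x_0)-f(x_0)\geq aM>0$ using $a>0$, $M>0$ and the classical supersolution inequality $aw_1-\tmG w_1-f\geq 0$. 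Similarly $\phi(x_0)-g(x_0)=w_1(x_0)+M-g(x_0)\geq (g(x_0)-w_1(x_0))+\text{(something)}$; more carefully, from the supersolution property $w_1(x_0)\geq g(x_0)$, so $\phi(x_0)-g(x_0)\geq M>0$. Both arguments of the minimum in \eqref{eqn:pcp-visc} are then strictly positive, contradicting \eqref{eqn:pcp-visc}. Hence $M\leq 0$, i.e. $w_1\geq w_2$.

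The main subtlety I anticipate is ensuring that $\phi=w_1+M$ is genuinely a legitimate test function for the viscosity subsolution $w_2$, which requires $\phi-w_2$ to have a \emph{global} minimum at $x_0$ with matching values; this is exactly arranged by the choice of $M$ as the supremum of $w_2-w_1$, and compactness of $\sE$ is what makes this supremum attained so that a genuine touching point $x_0$ exists. A second point to verify is the behaviour at the adjoined boundary point $\partial$ when $\sE$ is non-compact, but under the present hypotheses $\sE$ is compact, so this difficulty does not arise here and functions in $D(\tmG)\subseteq\cz(\sE)=\C(\sE)$ are automatically well-behaved. The use of $\tmG 1 = 0$ (a consequence of the semigroup being conservative, $\tmP_t 1 = 1$) together with linearity of $\tmG$ to handle the constant shift is the one place where the structural assumptions on the core are indispensable, and I would state this step explicitly rather than leave it implicit.
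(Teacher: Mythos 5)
Your proposal is correct and follows essentially the same route as the paper: contradiction via the attained maximum $M$ of $w_2-w_1$ on the compact space, using the shifted classical solution $w_1+M$ as a test function for the viscosity subsolution. The only cosmetic difference is that you invoke $\tmG 1=0$ (via conservativeness) to handle the constant shift, whereas the paper only needs the weaker fact $\tmG M\leq 0$ from the positive maximum principle; either suffices here.
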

\begin{proof}
Let $w_1$ be a classical supersolution to \eqnref{eqn:gviscositysolution} and $w_2$ be a viscosity subsolution to \eqnref{eqn:gviscositysolution}. Since $D(\tmG)\subseteq \cz(\sE)$, we have that $w_1 \in \cz(\sE)$. Since $w_2\in USC(\sE)$ and $\sE$ is compact, there exists $x_0\in \sE$ such that
$$
\delta := \sup_{y\in\sE}(w_2-w_1)(y)=(w_2-w_1)(x_0).
$$
 By contradiction, assume that $\delta >0$ and define $w_1^* $ by
 $$w_1^* := w_1+\delta.$$
  Since $w_1, \delta \text{ (as a constant function) are in }D(\tmG )$ and $w_1^*-w_2$ has a global minimum at $x_0$ with  $(w_1^*-w_2)(x_0)=0$, it follows that $w_1^*$ is a well defined test function for the viscosity subsolution $w_2$. Moreover, by the positive maximum principle, we have $\tmG \delta\leq 0$. Hence,
\begin{align*}\label{eqn:osipcppglobal}
\min(aw_1^*-\tmG w_1^*-f,w_1^*-g)(x_0)
=&\min(a(w_1+\delta)-\tmG w_1-\tmG\delta-f,w_1+\delta-g)(x_0) \\
\geq& \min(aw_1-\tmG w_1-f,w_1-g)(x_0)+\min(a,1)\delta.
\end{align*}
Since $w_1$ is a classical supersolution, we have
$$\min(aw_1-\tmG w_1-f,w_1-g)(x_0)\geq 0.$$
 Therefore,
$$\min(aw_1^*-\tmG w_1^*-f,w_1^*-g)(x_0)\geq \min(a,1)\delta>0.$$
This contradicts the fact that $w_2$ is a viscosity subsolution to \eqnref{eqn:gviscositysolution}. Thus $\sup_{x\in\sE}(w_2-w_1)(x_0)=\delta\leq0$, that is, $w_2\leq w_1$ on $\sE$.
Similar arguments can be used to show that $w_1\geq w_2$, if $w_1$ is a viscosity supersolution to \eqnref{eqn:gviscositysolution} and $w_2$ is a classical subsolution to \eqnref{eqn:gviscositysolution}.
\end{proof}

\begin{corollary}{(\textsl{Classical Comparison Principle})}\label{lem:classicalcomparisonprinciple}
Suppose that conditions of Theorem \ref{thm:gvs=V} hold. Let $w_1$ be a classical supersolution and $w_2$ be a classical subsolution associated with $(\tmG,D(\tmG))$ to \eqnref{eqn:gviscositysolution}. Then, $w_1\geq w_2$.
\end{corollary}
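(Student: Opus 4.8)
The plan is to deduce this corollary immediately from the two lemmas that precede it, so that essentially no fresh argument is needed. First I would recall that by \lemref{lem:classical is viscosity} a classical subsolution (\resp{supersolution}) associated with $(\tmG,D(\tmG))$ to \eqref{eqn:gviscositysolution} is automatically a viscosity subsolution (\resp{supersolution}). Applying this to the hypothesis of the corollary, the classical subsolution $w_2$ is in particular a viscosity subsolution associated with $(\tmG,D(\tmG))$ to \eqref{eqn:gviscositysolution}.

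Next I would invoke the Partial Comparison Principle (\lemref{lem:partialcomparisonprinciple}), whose statement compares a supersolution $w_1$ and a subsolution $w_2$ precisely when one of the two is understood in the classical sense and the other in the viscosity sense. Here $w_1$ is a classical supersolution and, by the previous paragraph, $w_2$ is a viscosity subsolution, so this mixed classical/viscosity configuration is exactly the one covered by \lemref{lem:partialcomparisonprinciple}. The lemma then yields the desired conclusion $w_1 \geq w_2$ directly.

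Since the corollary is an immediate consequence of \lemref{lem:classical is viscosity} and \lemref{lem:partialcomparisonprinciple}, there is no genuine obstacle to overcome; the only thing worth checking is that the standing hypotheses of \lemref{lem:partialcomparisonprinciple}, namely the compactness of $\sE$ and the requirement $1\in D(\tmG)$ inherited from the conditions of \thmref{thm:gvs=V}, are in force, and they are assumed by hypothesis. I would also note that the argument is symmetric: one could instead upgrade the classical supersolution $w_1$ to a viscosity supersolution via \lemref{lem:classical is viscosity} while keeping $w_2$ classical, and either pairing meets the mixed requirement of the partial comparison principle, giving the same conclusion.
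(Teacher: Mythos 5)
Your proposal is correct and follows essentially the same route as the paper: upgrade one of the two classical solutions to a viscosity solution via \lemref{lem:classical is viscosity} and then conclude with the Partial Comparison Principle (\lemref{lem:partialcomparisonprinciple}). If anything, your explicit check that exactly one of the pair is kept classical so as to match the mixed hypothesis of \lemref{lem:partialcomparisonprinciple} is slightly more careful than the paper's one-line argument.
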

\begin{proof}
By \lemref{lem:classical is viscosity}, we know that a classical supersolution (\resp{subsolution}) to \eqnref{eqn:gviscositysolution} is also a viscosity supersolution (\resp{subsolution}) to \eqnref{eqn:gviscositysolution}. Then, by partial comparison principle, the result follows.
\end{proof}

\subsubsection*{\textbf{Step 2}} We first show that there exists a sequence of classical supersolution (\resp{subsolution}) that converges from above (\resp{below}) to the value function $V$.

\begin{lemma}\label{lem:lcomparison}
Suppose that conditions of Theorem \ref{thm:gvs=V} hold. Then there exists a sequence of  classical supersolutions (\resp{subsolutions}) associated with $(\tmL,D(\tmL))$ to \eqref{eqn:lviscositysolution} that converges to the value function $V$ defined by \eqref{eqn:valuefunction} uniformly from the above (\resp{below}).
\end{lemma}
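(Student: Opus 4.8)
The plan is to build both sequences directly out of the penalty functions $v_\lambda$. By \thmref{thm:osi}, each $v_\lambda$ solves \eqnref{eqn:osipenvar}, belongs to $D(\tmL)$, and the family $\{v_\lambda\}_{\lambda>0}$ converges to $V$ uniformly from below as $\lambda\to\infty$; fix a sequence $\lambda_n\to\infty$, say $\lambda_n=n$. First I would check that each $v_{\lambda_n}$ is already a classical subsolution in the sense of \defref{def:l classical solution}. Indeed, \eqnref{eqn:osipenvar} gives $av_{\lambda_n}-\tmL v_{\lambda_n}-f=\lambda_n(g-v_{\lambda_n})^+\ge 0$; at any point where $v_{\lambda_n}\ge g$ the penalty term vanishes, forcing $av_{\lambda_n}-\tmL v_{\lambda_n}-f=0$, while at any point where $v_{\lambda_n}<g$ the obstacle term $v_{\lambda_n}-g$ is negative. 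In either case $\min(av_{\lambda_n}-\tmL v_{\lambda_n}-f,\,v_{\lambda_n}-g)\le 0$, so $\{v_{\lambda_n}\}$ is the required sequence of classical subsolutions converging to $V$ uniformly from below.

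The main work is the supersolution side, since the penalty functions approach $V$ from below, not from above. The key fact I would exploit is that, under the hypotheses of \thmref{thm:gvs=V}, the constant function $1$ lies in $D(\tmG)\subseteq D(\tmL)$, and since the Feller semigroup is conservative ($\tmP_t 1=1$) one has $\tmL 1=0$; hence adding a constant to an element of $D(\tmL)$ keeps it in $D(\tmL)$ and leaves its image under $\tmL$ unchanged. I would therefore set
\begin{equation*}
w_n:=v_{\lambda_n}+c_n,\qquad c_n:=\ninf{V-v_{\lambda_n}}+\tfrac1n ,
\end{equation*}
so that $w_n\in D(\tmL)$ and, using \eqnref{eqn:osipenvar} together with $\tmL c_n=0$,
\begin{equation*}
aw_n-\tmL w_n-f=\lambda_n(g-v_{\lambda_n})^+ + ac_n\ge ac_n>0 .
\end{equation*}

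It then remains to verify the obstacle inequality and the convergence. Because $v_{\lambda_n}\le V$ gives $v_{\lambda_n}\ge V-\ninf{V-v_{\lambda_n}}$ pointwise, and $V\ge g$, I obtain $w_n\ge V+\tfrac1n\ge g$, so both arguments of the minimum are strictly positive and $w_n$ is a classical supersolution to \eqnref{eqn:lviscositysolution} in the sense of \defref{def:l classical solution}. The two-sided bound $V+\tfrac1n\le w_n\le V+\ninf{V-v_{\lambda_n}}+\tfrac1n$, combined with $\ninf{V-v_{\lambda_n}}\to 0$ from \thmref{thm:osi}, shows $w_n\to V$ uniformly and from above. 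The only genuinely delicate point is the supersolution construction: the penalty functions already satisfy the first inequality $aw-\tmL w-f\ge 0$ but may dip below $g$ and lie below $V$, and the whole difficulty is resolved by the constant shift justified by $\tmL 1=0$. The subsolution half is immediate from the penalty equation.
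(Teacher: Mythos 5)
Your proposal is correct and follows essentially the same route as the paper: the penalty functions $v_{\lambda_n}$ themselves serve as classical subsolutions, and shifting them up by a small positive constant (legitimate because constants lie in $D(\tmL)$ and $\tmL$ applied to a positive constant is nonpositive --- the paper invokes the positive maximum principle where you invoke conservativity, but either works under the compactness hypothesis) produces classical supersolutions converging to $V$ from above. The only cosmetic difference is that the paper extracts a subsequence with $\ninf{V-v_{\lambda_n}}\leq 1/n$ and shifts by $1/n$, whereas you shift by $\ninf{V-v_{\lambda_n}}+1/n$ directly.
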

\begin{proof}\leavevmode

\textbf{(1)} \textbf{Classical Supersolutions.} It follows from \thmref{thm:osi} that the sequence $\{v_{\lambda}\}_{\lambda> 0}\in D{(\tmL )}$ defined by \eqnref{eqn:osipenvar}  converges uniformly to $V$ from below when $\lambda\rightarrow \infty$. Thus, there exists a subsequence $\{\lambda_n\}_{n\in\mN}$ such that $0\leq V-v_{\lambda_n} \leq \frac{1}{n}.$ Define the sequence $\{w_n\}_{n\in\mN}$ by
 $$w_n := v_{\lambda_n}+\frac{1}{n} \text{ for } n\in\mN.$$
  Then for $n\in\mN$
\begin{equation}\label{eqn:ccpproof1}
0\leq w_n- V= v_{\lambda_n}+\frac{1}{n}-V\leq \frac{1}{n}.
 \end{equation}
 Combining \eqnref{eqn:osipenvar} and \eqref{eqn:ccpproof1} and using the fact that $\tmL(1/n)\leq 0$ (positive maximum principle), we obtain
\begin{align*}
aw_n-\tmL w_n-f =& a\left(v_{\lambda_n}+\frac{1}{n}\right)-\tmL\left(v_{\lambda_n}+\frac{1}{n}\right)-f\\
=& av_{\lambda_n}-\tmL v_{\lambda_n}-f+\frac{a}{n}-\tmL\frac{1}{n}\\
\geq & \lambda_n(g-v_{\lambda_n})^+ +\frac{a}{n}>0.
\end{align*}
Since $w_n-g\geq w_n-V\geq 0$ by \eqref{eqn:ccpproof1}, the above inequalities imply $\min(aw_n-\tmL w_n-f,w_n-g)\geq 0$, that is, $w_n\in D(\tmL)$ is a classical supersolution to \eqnref{eqn:lviscositysolution}. Furthermore, by \eqnref{eqn:ccpproof1}, $\{w_n\}_{n\in \mN}$ is a sequence of classical supersolutions to \eqnref{eqn:lviscositysolution} that  converges  uniformly to $V$ from above as $n\rightarrow \infty$.

\textbf{(2)} \textbf{Classical Subsolutions.}  Choose once more the sequence $\{v_{\lambda}\}_{\lambda> 0}$ defined by \eqnref{eqn:osipenvar}. For  any $\lambda>0$ or $x\in \sE$, one of the following two expressions $v_\lambda(x)-g(x)$ and $\lambda (g(x)-v_\lambda(x))^+$ is non-positive. Then,
$$\min(av_\lambda-\tmL v_\lambda-f,v_\lambda-g)(x) = \min(\lambda(g-v_\lambda)^+,v_\lambda-g)\leq 0.$$
Hence, $\{v_\lambda\}_{\lambda>0}$ is a sequence of classical subsolutions to  \eqnref{eqn:lviscositysolution} and its uniform convergence from below becomes straightforward by \thmref{thm:osi}.
\end{proof}

\begin{corollary}\label{cor:gcomparison}
Suppose that conditions of Theorem \ref{thm:gvs=V} are in force. Then there exists a sequence of classical  supersolutions (\resp{subsolutions}) associated with $(\tmG,D(\tmG))$ to \eqref{eqn:gviscositysolution} that converges to the value function $V$ defined by \eqref{eqn:valuefunction} uniformly from above (\resp{below}).
\end{corollary}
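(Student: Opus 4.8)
The plan is to transfer the approximating sequences of classical supersolutions and subsolutions associated with the infinitesimal generator $(\tmL,D(\tmL))$, which were constructed in \lemref{lem:lcomparison}, over to sequences associated with the core $(\tmG,D(\tmG))$. The obstacle is that the functions $w_n = v_{\lambda_n}+\frac{1}{n}$ and $v_\lambda$ from \lemref{lem:lcomparison} live in $D(\tmL)$, which need not be contained in $D(\tmG)$; so they are classical sub/supersolutions associated with $(\tmL,D(\tmL))$ but are not, a priori, classical sub/supersolutions associated with $(\tmG,D(\tmG))$ as demanded by \defref{def:l classical solution}. The natural remedy is to use the defining property of the core: by the definition of a core, for each function $v\in D(\tmL)$ there exists a sequence in $D(\tmG)$ converging to $v$ in the graph norm, i.e. with both the function values and the image under the operator converging uniformly.

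First I would take the sequence of classical supersolutions $\{w_n\}_{n\in\mN}\subseteq D(\tmL)$ from part (1) of \lemref{lem:lcomparison}. Each $w_n$ satisfies the strict inequality $aw_n-\tmL w_n-f \geq \lambda_n(g-v_{\lambda_n})^+ + \frac{a}{n} \geq \frac{a}{n}>0$ together with $w_n - g \geq \frac{1}{n}>0$ (from \eqnref{eqn:ccpproof1}); note the strictness with an explicit margin of order $\frac{1}{n}$. For each fixed $n$, invoke the core property to pick $u_{n,k}\in D(\tmG)$ with $\ninf{u_{n,k}-w_n}+\ninf{\tmG u_{n,k}-\tmL w_n}\to 0$ as $k\to\infty$. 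Because the two quantities controlling the minimum, namely $a u_{n,k}-\tmG u_{n,k}-f$ and $u_{n,k}-g$, converge uniformly to $aw_n-\tmL w_n-f$ and $w_n-g$ respectively, and the latter two are bounded below by the strictly positive margin $\frac{a}{n}\wedge\frac{1}{n}$, one can choose $k=k(n)$ large enough that $\min(a u_{n,k(n)}-\tmG u_{n,k(n)}-f,\, u_{n,k(n)}-g)\geq 0$, so that $\tilde w_n:=u_{n,k(n)}\in D(\tmG)$ is a genuine classical supersolution associated with $(\tmG,D(\tmG))$ to \eqnref{eqn:gviscositysolution}, and moreover $\ninf{\tilde w_n - w_n}\to 0$, hence $\tilde w_n\to V$ uniformly from above (up to absorbing the small perturbation into the margin).

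For the subsolutions the argument is the mirror image, but it requires a little care because the subsolution inequality in part (2) of \lemref{lem:lcomparison}, namely $\min(av_\lambda-\tmL v_\lambda-f,\,v_\lambda-g)\leq 0$, holds only with a nonstrict sign and no quantitative margin. The clean way around this is to perturb downward: replace each $v_\lambda$ by $v_\lambda-\frac{1}{n}$ (along a suitable subsequence $\lambda_n$), which strictly decreases the $v_\lambda-g$ term by $\frac{1}{n}$ while, since $\tmL$ annihilates constants up to the positive maximum principle giving $-\tmL\frac{1}{n}\leq 0$, keeps the structure of a subsolution and in fact produces a strict gap of size $\frac{1}{n}$ in the relevant branch of the minimum. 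With this strict margin in hand, the same core-approximation and index-selection procedure as above yields functions in $D(\tmG)$ that remain classical subsolutions associated with $(\tmG,D(\tmG))$ and converge uniformly to $V$ from below. The main obstacle throughout is precisely the bookkeeping of these margins: one must ensure the uniform graph-norm error from the core approximation is dominated by the $O(1/n)$ strictness of the inequality before passing to $(\tmG,D(\tmG))$, and I would organize the proof so that the perturbation by $\pm\frac1n$ and the choice of $k(n)$ are coordinated to guarantee this domination.
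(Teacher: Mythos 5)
Your overall strategy is exactly the paper's: start from the $D(\tmL)$-approximants of \lemref{lem:lcomparison}, use the graph-norm density of the core to replace them by elements of $D(\tmG)$, and use a quantitative margin in the two branches of the minimum to absorb the approximation error. The subsolution half of your argument is correct, including the downward shift by $\frac1n$ to manufacture the missing margin. However, the supersolution half has a genuine gap: the claim that $w_n-g\geq \frac1n$ ``from \eqnref{eqn:ccpproof1}'' is false. From $w_n=v_{\lambda_n}+\frac1n$ and $0\leq V-v_{\lambda_n}\leq\frac1n$ one only gets $w_n-V\geq 0$, hence $w_n-g\geq 0$; at a point where $V=g$ and the penalty error is exactly $\frac1n$ one has $w_n-g=0$, with no positive margin. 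Consequently, after the core approximation the branch $u_{n,k}-g$ is only bounded below by $-\frac1k$, so $u_{n,k}$ need not be a classical supersolution, and likewise $u_{n,k}\geq V$ (needed for convergence ``from above'') fails, since $u_{n,k}\geq w_n-\frac1k\geq V-\frac1k$ only.

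The repair is precisely the mirror image of what you already do for subsolutions, and is what the paper does: before invoking the core, shift upward by a constant, setting $w^{(\varepsilon)}:=w_{n_0}+\varepsilon/4$ with $n_0\geq 4/\varepsilon$. This uses $1\in D(\tmG)\subseteq D(\tmL)$ (a hypothesis of \thmref{thm:gvs=V}) and the positive maximum principle ($\tmL(\varepsilon/4)\leq 0$) to get simultaneously $w^{(\varepsilon)}-V\geq\varepsilon/4$, hence $w^{(\varepsilon)}-g\geq\varepsilon/4$, and $aw^{(\varepsilon)}-\tmL w^{(\varepsilon)}-f\geq \min(a,1)\varepsilon/4$; with this genuine two-sided margin your index-selection argument for $k(n)$ goes through verbatim and also restores the ``from above'' property. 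With that single correction your proof coincides with the paper's.
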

\begin{proof}
We know from \lemref{lem:lcomparison} that there exists a sequence of classical supersolutions associated with $(\tmL, D(\tmL))$ to \eqnref{eqn:lviscositysolution} such that $\{w_n\}_{n\in \mN}$ satisfies $0\leq w_n-V\leq 1/n$ for $n\in \mN$.
Let $\varepsilon>0$ and choose an integer $n_0$ such that $n_0\geq \frac{4}{\varepsilon}$. Set $$w^{(\varepsilon)}:=w_{n_0}+\varepsilon/4.$$
Then,
\begin{align}\label{eqn:prooflgtransform234}
 w^{(\varepsilon)}-V =& (w^{(\varepsilon)}-w_{n_0})+(w_{n_0} - V)\leq \frac{\varepsilon}{4} + \frac{1}{n_0} \leq \frac{\varepsilon}{2},\\
\mbox{and }\quad w^{(\varepsilon)}-V=& w_{n_0}+ \frac{\varepsilon}{4}-V \geq \frac{\varepsilon}{4}.
\end{align}

Since $w_{n_0}$ is  a classical supersolution to \eqnref{eqn:lviscositysolution}, we have
	\begin{align}\label{eqn:prooflgtransform134}
	 \min(aw^{(\varepsilon)}-\tmL w^{(\varepsilon)}-f,w^{(\varepsilon)}-g)
	=&\min(a(w_{n_0}+\varepsilon/4)-\tmL(w_{n_0}+\varepsilon/4)-f,w_{n_0}+\varepsilon/4-g)\notag  \\
	\geq&\frac{ \min(a,1)\varepsilon}{4}+\min(a w_{n_0}-\tmL w_{n_0}-f,w_{n_0}-g)\notag \\
	\geq&\frac{ \min(a,1)\varepsilon}{4}.
	\end{align}
Therefore, $w^{(\varepsilon)}$ is also a classical supersolution associated with $(\tmL,D(\tmL))$ to \eqnref{eqn:lviscositysolution}. Since $(\tmG,D(\tmG))$ is the core of $(\tmL,D(\tmL))$, it follows that for $w^{(\varepsilon)}\in D(\tmL)$, there exists a sequence $\{u^{(\varepsilon)}_m\}_{m\in \mN}$ in $D(\tmG)$ such that
\begin{eqnarray}\label{eqn:prooflgtransform12313}
\ninf{u^{(\varepsilon)}_m-w^{(\varepsilon)}}\leq \frac{1}{m} \;\mbox{ and }\; \ninf{\tmG u^{(\varepsilon)}_m- \tmL w^{(\varepsilon)}}\leq \frac{1}{m} \mbox{ for any } m\in\mN.
\end{eqnarray}
In the following, we will construct a sequence $\{u_\varepsilon\}_{\varepsilon>0}$ of classical supersolution associated with $(\tmG,D(\tmG))$ to \eqref{eqn:gviscositysolution} that converges to $V$ from above. Since $u^{(\varepsilon)}_m\geq w^{(\varepsilon)}-1/m$ and $\tmG u^{(\varepsilon)}_m\leq \tmL w^{(\varepsilon)}+1/m$, we have
\begin{align}\label{eqn:prooflgtransform12134}
 \min(au^{(\varepsilon)}_m-\tmG u^{(\varepsilon)}_m-f, u^{(\varepsilon)}_m -g)
\geq & \min(a(w^{(\varepsilon)}-\frac{1}{m})-(\tmL w^{(\varepsilon)}+\frac{1}{m})-f, w^{(\varepsilon)}-\frac{1}{m} -g)\notag\\
 \geq & -(a+1)\frac{1}{m} +  \min(aw^{(\varepsilon)}-\tmL w-f,w^{(\varepsilon)}-g).
\end{align}

Choose $m_0:=m_0(\varepsilon) \in \mN$ such that $m_0\geq \max(\frac{4}{\varepsilon}, \frac{4 (a+1)}{\min(a,1)\varepsilon})$, then for $m\geq m_0$, we have: on the one hand, using \eqnref{eqn:prooflgtransform134} and \eqnref{eqn:prooflgtransform12134},  $u_{m}^{(\varepsilon)}$ is a classical supersolution to \eqnref{eqn:gviscositysolution}; on the other hand, using \eqnref{eqn:prooflgtransform234} and \eqnref{eqn:prooflgtransform12313}
\begin{eqnarray*}
0\leq u^{(\varepsilon)}_m-V= (u^{(\varepsilon)}_m-w)+(w-V)\leq \frac{1}{m}+\frac{\varepsilon}{2}\leq \varepsilon.
\end{eqnarray*}
Define a new sequence $\{u_\varepsilon\}_{\varepsilon>0}$ by setting $u_{\varepsilon}:=u^{(\varepsilon)}_{m_0(\varepsilon)}$. Then $u_\varepsilon$  is a classical supersolution associated with $(\tmG,D(\tmG))$ to \eqnref{eqn:gviscositysolution} satisfying $0\leq u_\varepsilon-V\leq \varepsilon$ for any arbitrary $\varepsilon>0$. Therefore, $\{u_\varepsilon\}_{\varepsilon>0}$ converges uniformly to the value function $V$ from above as $\varepsilon\rightarrow 0$.

The case of subsolutions can be proved in a similar way.
\end{proof}
\subsubsection*{\textbf{Step 3}} Finally, we prove the comparison principle stated in \thmref{thm:comparisonprinciple}.
\begin{proof}[Proof of \thmref{thm:comparisonprinciple}]
Define the sets of classical supersolutions and subsolutions associated with $(\tmG,D(\tmG))$ to  \eqnref{eqn:gviscositysolution} as follows,
\begin{align}
H_{sup}&:= \left\{u\in D(\tmG );u \hbox{ is a classical supersolution associated with }(\tmG,D(\tmG))\hbox{ to } \eqnref{eqn:gviscositysolution} \right\} \label{eqn:domain12l}\\
H_{sub}&:= \left\{v\in D(\tmG ); v\hbox{ is a classical subsolution associated with }(\tmG,D(\tmG))\hbox{ to } \eqnref{eqn:gviscositysolution} \right\}.\label{eqn:domain12l11}
\end{align}

Let $w_1$ be a viscosity supersolution associated with $(\tmG,D(\tmG))$ to \eqnref{eqn:gviscositysolution}. By \lemref{lem:partialcomparisonprinciple}, it is true that $w_1\geq u$ for any $u\in H_{sub}$, and then $w_1(x) \geq \sup_{v\in H_{sub}}v(x)$. Similarly,  let $w_2$ be a viscosity subsolution associated with $(\tmG,D(\tmG))$ to \eqnref{eqn:gviscositysolution}, then, $w_2\leq u$  for any $u\in H_{sup}$ and $w_2(x) \leq \inf_{u\in H_{sup}}u(x)$. By \corref{cor:gcomparison}, there exists a sequence of classical supersolutoins $\{u_n\}_{n\in \mN}$ (\resp{subsolutions $\{v_n\}_{n\in \mN}$}) associated with  $(\tmG,D(\tmG))$ to \eqnref{eqn:gviscositysolution} converging uniformly to the value function $V$  from  above (\resp{below}) as $n\rightarrow \infty$. Then for any $x\in \sE$, we have
\begin{eqnarray*}
w_1(x)&\geq&\sup_{v\in H_{sub}}v(x)\geq \limsup\limits_{n\rightarrow\infty}v_n(x)=V(x),\\
w_2(x)&\leq&\inf_{u\in H_{sup}}u(x)\leq \liminf\limits_{n\rightarrow\infty}u_n(x)=V(x).
\end{eqnarray*}
Therefore, $w_1\geq V\geq w_2$. The proof is completed.
\end{proof}

\section{Uniqueness of Viscosity Solution for noncompact state space}\label{sec:notc1}

Both \assref{ass:fscompact} and compactness condition in \thmref{thm:gvs=V} give sufficient conditions to prove the existence and uniqueness of the viscosity solution using probabilistic and analytical techniques. However, the compactness of $\sE$ is not always satisfied for some interesting Feller processes used in practice, for example L\'{e}vy processes on $\R^n$ and one dimension diffusions on $[0,\infty)$; see \secref{exa:levy} and \secref{exa:onedimdiff}. Thus, \thmref{thm:gvs=V} is not immediately applicable for such processes. In addition, since a Feller semigroup is not necessarily conservative, its generator $(\tmL,D(\tmL))$ may not have a corresponding Feller process $X$. In this section, we do not assume the existence of a Feller process 
(confer conditions (2) and (3) in \assref{ass:fscompact}) and neither do we assume the compactness of $\sE$. We first extend the given Feller semigroup on $\cz(\sE)$ to a conservative Feller semigroup on $\C(\sO)$. From this we construct an associated Feller process with the aim of characterizing a viscosity solution associated with a core $(\tmG,D(\tmG))$ of any infinitesimal generator.

Recall that $\sO:=\sE\cup\{\partial\}$ is the one point compactification of $\sE$. We now extend the Feller semigroup $\{\tmP_t\}_{t\geq 0}$ on $\cz(\sE)$ to a semigroup $\{\mP_t\}_{t\geq 0}$ on $\C(\sO)$ defined by
\begin{eqnarray}\label{eqn:extfs}
\mP_t w(x) :=\left\{
                        \begin{array}{ll}
                        \tmP_t (w-w(\partial))|_\sE(x)+w(\partial)&\hbox{ for any } x\in \sE,\\
                        w(\partial) & \hbox{ otherwise,}
                        \end{array}
                 \right.
\end{eqnarray}
where $w\in \C(\sO)$ and $t\geq 0$. Here $f|_\sE$ is the restriction of the function $f$ on $\sE$. It follows from  \cite[Lemmas 17.13 and 17.14]{kallenberg2006foundations} that $\{\ti{\tmP}_t\}_{t\geq 0}$ is a conservative Feller semigroup. Furthermore by \cite[Theorem I.9.4]{blumenthal2007markov}, for a conservative Feller semigroup, there always exists a Feller process $X = (\Omega,\mF,\mF_t,X_t,\theta_t,\bfP^x)$ on the state space $(\sO,\mathcal{E}_\partial)$ such that
\begin{eqnarray}\label{eqn:conservativefeller}
\mP_t w(x) := \bfE^x\left[w(X_t)\right] \quad\mbox{ for } w\in B(\sE) \mbox{ and } x \in \sO.
\end{eqnarray}
This enables to link any Feller semigroup on $\cz(\sE)$ with a Feller process whose state space $\sO$  is the one-point compactification of $\sE$. Hence, \thmref{thm:gvs=V} could also be useful in this case. We first show the relation between the infinitesimal generator of the Feller semigroup $\{\tmP_t\}_{t\geq 0}$ and that of its extension $\{\mP_t\}_{t\geq 0}$. We recall the definition of $\C_*(\sE)$:
$$\C_* (\sE) := \{w\in \C(\sE); w \mbox{ is converges at infinity}\}.$$
For any $w\in \C_*(\sE)$, $w$ has a continuous extension $\ti{w}$ in $\sO$. Assume that $\sE$ is not compact, then by one-point compactification technique, $\sE$ is a dense open subset of $\sO$ and $w$ converges to a unique limit $C$ at infinity. Thus, we can define the unique continuous extension $\ti{w}\in \C(\sO)$ of $w\in \C_*(\sE)$ by 
\begin{equation}\label{ext00}
\ti{w}(x) :=\left\{
                        \begin{array}{ll}
                        w(x)&\hbox{ for any } x\in \sE,\\
                        C & \hbox{ for }x=\partial.
                        \end{array}
                 \right.
\end{equation}
If $\sE$ is compact and $\partial$ is an isolated point, we simply define the continuous extension of $w\in \C_*(\sE)$ by
\begin{equation}\label{ext1}
\ti{w}(x) :=\left\{
                        \begin{array}{ll}
                        w(x)&\hbox{ for } x\in \sE,\\
                        0 &  \hbox{ for }x=\partial.
                        \end{array}
                 \right.
\end{equation}

\subsection{Main Results} In this section, we present the main results. We first define the following operator $(\tmG^*, D(\tmG^*))$ defined by
\begin{align}
\label{eqn:g*define}
\begin{split}
D(\tmG^*)&:= \{ u\in \C_*(\sE); u-\ti{u}(\partial) \in D(\tmG)\},\\
\tmG^* u &:= \tmG (u-\ti{u}(\partial)) \mbox{ for each } u\in D(\tmG^*).
\end{split}
\end{align}
When $\sE$ is compact, it follows from \eqref{ext1} that $(\tmG^*,D(\tmG^*))=(\tmG,D(\tmG))$.  The proof of \thmref{thm:maintheorem} relies on \thmref{thm:gvs=V} and the following key result.
\begin{proposition}\label{prop:g*vs}
	Suppose that $(\tmG,D(\tmG))$ is a core of the Feller semigroup $\{\tmP_t\}_{t\geq 0}$, $a>0$ and $f,g\in \C_*(\sE)$ (When $\sE$ is compact, we additionally assume that $1\in D(\tmG)$.) Then there exists a unique function $w\in \C_*(\sE)$ with boundary condition $\ti{w}(\partial) = \max(\ti{f}(\partial),\ti{g}(\partial))$ such that $w$ is a viscosity solution associated with $(\tmG^*,D(\tmG^*))$ to
	\begin{equation}\label{eqn:vsg*}
	\min(a w - \tmG^* w- f, w-g)=0.
	\end{equation}
	Moreover, the extension $\ti{w}\in \C(\sO)$ is the unique viscosity solution associated with $(\mG,D(\mG))$ (defined by \eqref{eqn:tgdefine0}) to
	\begin{equation}\label{eqn:vslt}
	\min(a \ti{w} - \mG \ti{w}- \ti{f}, \ti{w}-\ti{g})=0,
	\end{equation}
	where $(\mG,D(\mG))$ is the core of Feller semigoup $\{\mP_t\}_{t\geq 0}$ on $\C(\sO)$ defined by \eqnref{eqn:extfs}.
\end{proposition}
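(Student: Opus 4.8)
The plan is to transport the whole problem onto the compact one-point compactification $\sO$, solve it there by \thmref{thm:gvs=V}, and then push the unique solution back to $\sE$ through the canonical identification of $\C(\sO)$ with $\C_*(\sE)$. Recall that restriction $w\mapsto w|_\sE$ and continuous extension $u\mapsto\ti u$ are mutually inverse, isometric and order-preserving isomorphisms between $\C(\sO)$ and $\C_*(\sE)$. I would first record, from \eqnref{eqn:extfs} and \eqnref{eqn:g*define}, how the two operators intertwine: for $u\in\C_*(\sE)$ one has $\ti u\in D(\mG)$ if and only if $u\in D(\tmG^*)$, and then $\mG\ti u|_\sE=\tmG^* u=\tmG(u-\ti u(\partial))$ while $\mG\ti u(\partial)=0$, the last equality because $\tmG^* u\in\cz(\sE)$ vanishes at infinity. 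In particular $\mG$ annihilates constants (for a constant $c$ one has $c-\ti c(\partial)=0$, whence $\mG c=0$), so $1\in D(\mG)$ automatically. Finally, since the generator of $\{\mP_t\}$ is the image of $(\tmL,D(\tmL))$ under $u\mapsto\ti u$ modulo the additive constant $\ti u(\partial)$, the operator $(\mG,D(\mG))$, being the image of the core $(\tmG,D(\tmG))$, is itself a core of $\{\mP_t\}$.

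With this preparation I would apply \thmref{thm:gvs=V} to $\sO$. Indeed $\sO$ is compact metrizable, $\{\mP_t\}$ is a conservative Feller semigroup with associated Feller process $X$ on $(\sO,\mE_\partial)$ by \eqnref{eqn:conservativefeller}, its core $(\mG,D(\mG))$ contains $1$, and $\ti f,\ti g\in\C(\sO)=\cb(\sO)$; thus \assref{ass:fscompact} holds for the compactified problem. \thmref{thm:gvs=V} then produces a unique viscosity solution $\ti w$ associated with $(\mG,D(\mG))$ to \eqnref{eqn:vslt}, and $\ti w$ coincides with the value function $V_\sO$ of the stopping problem driven by $X$ on $\sO$ with data $\ti f,\ti g$. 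Since $\mP_t$ fixes $\partial$, the state $\partial$ is absorbing for $X$, so \lemref{lem:valueabosrbed} evaluates $\ti w(\partial)=V_\sO(\partial)=\max(\ti f(\partial)/a,\ti g(\partial))$, which is precisely the prescribed boundary value. Setting $w:=\ti w|_\sE\in\C_*(\sE)$ gives the candidate solution of the proposition.

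The heart of the argument is an equivalence lemma: a function $\ti w\in\C(\sO)$ is a viscosity solution of \eqnref{eqn:vslt} associated with $(\mG,D(\mG))$ if and only if its restriction $w$ is a viscosity solution of \eqnref{eqn:vsg*} associated with $(\tmG^*,D(\tmG^*))$ and $\ti w(\partial)=\max(\ti f(\partial)/a,\ti g(\partial))$. I would test at interior points $x_0\in\sE$ and at $\partial$ separately. At $\partial$, every admissible test function $\Phi\in D(\mG)$ satisfies $\mG\Phi(\partial)=0$, so the inequality of \defref{def:vs} collapses to $\min(a\ti w(\partial)-\ti f(\partial),\ti w(\partial)-\ti g(\partial))$, and a short case check shows this equals $0$ exactly when $\ti w(\partial)=\max(\ti f(\partial)/a,\ti g(\partial))$; thus the boundary condition is nothing but the viscosity equation at the point at infinity. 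At an interior $x_0\in\sE$ the test functions correspond bijectively under extension/restriction, and the intertwining identifies the two inequalities term by term since $\mG\ti\phi|_\sE=\tmG^*\phi$, $\ti f|_\sE=f$, $\ti g|_\sE=g$. The delicate point, which I expect to be the main obstacle, is that a test function touching $w$ globally only on $\sE$ need not, after extension, touch $\ti w$ globally on $\sO$, because its value at $\partial$ may break global extremality. I would bypass this exactly as in \thmref{thm:vsexist}: for interior $x_0$ one has $\bar B(x_0,\delta)\subseteq\sE$ for small $\delta$, and the optional-sampling argument there is stopped at $\tau_\delta$ while the trajectory stays in $\bar B(x_0,\delta)$, so it sees only the local data of $\phi$ and $\tmG^*\phi$ near $x_0$ and never refers to $\partial$. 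Since $(\mG,D(\mG))$ is an $a$-generator for $X$ (Dynkin's formula applied to $\ti\phi\in D(\mG)$), this reproduces the interior inequality verbatim with $\tmG^*$ in place of $\mA$, avoiding any perturbation of test functions inside the constrained domain $D(\mG)$.

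Granting the equivalence lemma, the proposition follows at once. For existence, $\ti w=V_\sO$ from \thmref{thm:gvs=V} restricts to a viscosity solution $w$ of \eqnref{eqn:vsg*} with the required boundary value. For uniqueness, if $w'\in\C_*(\sE)$ is any viscosity solution of \eqnref{eqn:vsg*} with $\ti{w'}(\partial)=\max(\ti f(\partial)/a,\ti g(\partial))$, then by the equivalence lemma $\ti{w'}$ is a viscosity solution of \eqnref{eqn:vslt} associated with $(\mG,D(\mG))$, hence equals the unique such solution $\ti w$ by \thmref{thm:gvs=V}; therefore $w'=w$. In summary, the compactification reduces everything to \thmref{thm:gvs=V}, and the only genuinely new work is the transfer of the viscosity notion across $\sO$ and $\sE$ — pinning the value at $\partial$ through $\mG\Phi(\partial)=0$ together with the absorbing-state formula of \lemref{lem:valueabosrbed}, and handling interior touching through the intrinsically local stopping argument.
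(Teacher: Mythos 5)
Your proposal is correct and takes essentially the same route as the paper: compactify to $\sO$, apply \thmref{thm:gvs=V} to the conservative extension $\{\mP_t\}$ with core $(\mG,D(\mG))$, read off the boundary value at the absorbing point $\partial$ from \lemref{lem:valueabosrbed}, and transfer viscosity solutions between $\sO$ and $\sE$ via the intertwining of $(\mG,D(\mG))$ and $(\tmG^*,D(\tmG^*))$ — the paper's \lemref{lem:compric*} is exactly the $\sE\to\sO$ half of your equivalence lemma. The only divergence is that your ``delicate point'' is a non-issue: since $\sE$ is dense in $\sO$, the inequality $\ti{\psi}-\ti{V}\geq 0$ persists at $\partial$ by continuity, so the extended test function still touches globally and the paper disposes of this in one line, making your probabilistic workaround (which, incidentally, still refers to $\partial$ through the terminal term $\ti{\phi}(X(\tau_\delta))$) unnecessary.
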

\begin{proof} See \secref{sec: proofmain1}.
\end{proof}
\begin{remark}
The above proposition is used to show uniqueness of viscosity solution when the generator is given by a infinitesimal generator of a Feller semigroup rather than a Feller process. Let us emphasize that we need not this Feller semigroup to be conservative nor on a compact state space $\sE$; see for example \corref{cor:visckil1}.
	\end{remark}
The main results of this section are the following.
\begin{theorem}\label{thm:maintheorem}
Suppose \assref{ass:fscompact} holds. Then the value function $V$ defined by \eqref{eqn:valuefunction} is the unique viscosity solution $w\in \C_b(\sE)$ associated with  $(\tmG^*,D(\tmG^*))$ to
	\begin{equation}\label{eqn:vsaaa}
	\min(aw-\tmG^* w-f,w-g)=0.
	\end{equation}
\end{theorem}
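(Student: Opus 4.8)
The plan is to prove the two halves of the statement---that $V$ is a viscosity solution and that it is the only one---by separate reductions to results already in hand. For existence I would verify that $(\tmG^*,D(\tmG^*))$ is an $a$-generator of $X$ in the sense of \defref{def:agen} and then quote \thmref{thm:vsexist}; note that I cannot simply reuse \corref{cor:lviscositysolutionexists}, since shifting a test function in $D(\tmL)$ by a nonzero constant takes it out of $\cz(\sE)$. Given $u\in D(\tmG^*)$, I would write $u=u_0+c$ with $u_0:=u-\ti u(\partial)\in D(\tmG)\subseteq D(\tmL)$ and $c:=\ti u(\partial)$ constant, so that $\tmG^* u=\tmG u_0\in\cz(\sE)\subseteq\C(\sE)$. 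A direct computation then shows that the process $S_u$ of \eqref{eqn:defasup} equals $S_{u_0}$, because the constant part contributes $c(1-e^{-at})-\int_0^t a c\,e^{-as}\,\diff s=0$. Since $(\tmG,D(\tmG))$ is an $a$-generator for every $a>0$ (Dynkin's formula; see the discussion following \corref{cor:lviscositysolutionexists}), $S_{u_0}$, and hence $S_u$, is a uniformly integrable martingale. Thus $(\tmG^*,D(\tmG^*))$ is an $a$-generator and \thmref{thm:vsexist} gives that $V$ is a viscosity solution associated with $(\tmG^*,D(\tmG^*))$ to \eqref{eqn:vsaaa}.

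For uniqueness I would move the problem onto the one-point compactification $\sO$, where compactness is restored. By \eqref{eqn:extfs} the extended family $\{\mP_t\}_{t\geq0}$ is a conservative Feller semigroup on $\C(\sO)$ with core $(\mG,D(\mG))$ and $1\in D(\mG)$, and $\partial$ is an absorbing state for the associated Feller process. The pivotal tool is \propref{prop:g*vs}: it identifies viscosity solutions associated with $(\tmG^*,D(\tmG^*))$ on $\sE$ with viscosity solutions associated with $(\mG,D(\mG))$ on $\sO$, the value at $\partial$ being forced by the absorbing-state computation of \lemref{lem:valueabosrbed}. On the compact space $\sO$, with $1\in D(\mG)$, \thmref{thm:gvs=V} applies and yields at most one viscosity solution there; transporting this uniqueness back along the correspondence shows the viscosity solution on $\sE$ is unique, and since $V$ is one such solution, every viscosity solution must equal $V$.

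To make this transfer legitimate I would first record two structural facts about $\tmG^*$. First, every constant $c$ lies in $D(\tmG^*)$ with $\tmG^* c=0$, because $c-\ti c(\partial)=0\in D(\tmG)$; this is the non-compact substitute for the hypothesis $1\in D(\tmG)$ used in \thmref{thm:comparisonprinciple}. Second, $(\tmG^*,D(\tmG^*))$ inherits the positive maximum principle: if $u\in D(\tmG^*)$ has a global maximum at $x_0$ with $u(x_0)\geq0$, then $\ti u(\partial)\leq\sup_{\sE}u=u(x_0)$, so $u_0=u-\ti u(\partial)$ attains a nonnegative maximum at $x_0$ and the positive maximum principle for the core gives $\tmG^* u(x_0)=\tmG u_0(x_0)\leq0$. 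With these two facts, the three-step scheme of \secref{sec:proofun}---partial comparison between classical and viscosity solutions, approximation of $V$ from above and below by classical sub/supersolutions in $D(\tmG^*)$, and their combination---can be replayed for $\tmG^*$ once the argument is carried out on the compact $\sO$.

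The hard part, I expect, is precisely the non-compactness of $\sE$, which concentrates at the added point $\partial$. In \thmref{thm:comparisonprinciple} compactness was used only to ensure that $\sup_{\sE}(w_2-w_1)$ is attained; for a candidate solution merely in $\C_b(\sE)$ this supremum may instead be approached along infinity and fail to be attained at a finite point. The delicate step is therefore to show that a bounded viscosity solution in fact converges at infinity---so that it extends continuously to $\sO$ with the correct value $\max(\ti f(\partial)/a,\ti g(\partial))$ at the absorbing point---thereby bridging the gap between the class $\C_b(\sE)$ in the statement and the class $\C_*(\sE)$ handled by \propref{prop:g*vs}. Once the maximiser is known to lie in the compact $\sO$, the comparison closes either through the positive maximum principle at an interior maximiser or through the common boundary value at $\partial$. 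Establishing this behaviour at $\partial$ is the step I would expect to demand the most care, and it is exactly what \propref{prop:g*vs} is designed to package.
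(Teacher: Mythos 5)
Your existence argument is essentially the paper's: decompose $u\in D(\tmG^*)$ as $u_0+c$ with $u_0\in D(\tmG)$, observe that the constant part contributes nothing to $S_u$ in \eqref{eqn:defasup}, conclude that $(\tmG^*,D(\tmG^*))$ is an $a$-generator, and invoke \thmref{thm:vsexist}. That half is correct and matches the paper.

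The uniqueness half has a genuine gap, and you have put your finger on it yourself without closing it. \propref{prop:g*vs} gives uniqueness only within the class $\C_*(\sE)$ with the prescribed value $\max(\ti f(\partial)/a,\ti g(\partial))$ at $\partial$; it does \emph{not} show that an arbitrary bounded viscosity solution $w\in\C_b(\sE)$ converges at infinity, so it cannot "package" the bridging step from $\C_b(\sE)$ to $\C_*(\sE)$ as you claim in your last sentence. A function in $\C_b(\sE)$ need not extend continuously to $\sO$, the supremum of $w_2-w_1$ may be approached only along a sequence escaping to infinity, and nothing in your outline rules this out. The paper avoids the issue entirely by a truncation-and-squeeze argument: it takes an increasing sequence $\phi_n\in\cz(\sE)$ with $\phi_n\uparrow 1$, forms modified data $f_n^\pm,g_n^\pm\in\C_*(\sE)$ by $f_n^-=\phi_n(f+C)-C$, $f_n^+=\phi_n(f-C)+C$ (and similarly for $g$) with $C\geq\max(\ninf f,\ninf g)$, lets $w_n^\pm$ be the corresponding value functions (which \emph{are} in $\C_*(\sE)$, so the compactified comparison applies to them), and uses the comparison of \lemref{lem:compric*} together with the bounds $w\geq -C$ and $w\leq C$ to trap any bounded solution as $w_n^-\leq w\leq w_n^+$. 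Uniqueness then reduces to showing $w_n^\pm\to V$ locally uniformly, which the paper does via the resolvent estimate $\tmR_a(1-\phi_n)\to 0$ of \cite{schilling1998conservativeness} and the compact-containment estimate of \cite[Proposition 2.1]{palczewski2010finite}. Without either this squeeze or an actual proof that every bounded viscosity solution converges at infinity, your argument establishes uniqueness only in $\C_*(\sE)$, which is strictly weaker than the statement of the theorem.
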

\begin{proof}
See \secref{sec: proofmain2}.
\end{proof}

\begin{theorem}\label{thm:maintheoremcompariosncb}
Suppose \assref{ass:fscompact} holds. Let $w_1\in USC(\sE)$ and $w_2\in LSC(\sE)$ be the viscosity subsolution and supersolution associated with $(\tmG^*,D(\tmG^*))$ to \eqref{eqn:vsaaa}, respectively. If $w_1$ and $w_2$ are bounded from above and below,  respectively, then, $w_1\leq w_2$.
\end{theorem}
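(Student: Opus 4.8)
\textbf{Proof proposal for \thmref{thm:maintheoremcompariosncb}.}
The plan is to reduce this comparison statement for the noncompact setting to the compact-space machinery already developed, namely \propref{prop:g*vs} together with the partial and classical comparison principles proved in Step~1 and Step~2 of the proof of \thmref{thm:comparisonprinciple}, but now carried out for the operator $(\tmG^*, D(\tmG^*))$ rather than $(\tmG, D(\tmG))$. The key observation is that, by \propref{prop:g*vs}, the value function $V$ is itself the unique viscosity solution associated with $(\tmG^*, D(\tmG^*))$, and along the way that proposition produces approximating sequences of classical supersolutions and subsolutions converging uniformly to $V$ from above and below. So the real content is to show that any viscosity subsolution $w_1$ bounded from above lies below every classical supersolution, and dually that any viscosity supersolution $w_2$ bounded from below lies above every classical subsolution; chaining these through the approximating sequences gives $w_1 \leq V \leq w_2$.

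First I would establish the partial comparison principle in this setting: if $u$ is a classical supersolution associated with $(\tmG^*, D(\tmG^*))$ to \eqnref{eqn:vsaaa} and $w_1 \in USC(\sE)$ is a viscosity subsolution bounded from above, then $w_1 \leq u$. The argument mirrors \lemref{lem:partialcomparisonprinciple}, but the noncompactness of $\sE$ means the supremum $\delta := \sup_{y \in \sE}(w_1 - u)(y)$ need not be attained at an interior point. This is where the boundedness hypothesis and the structure of $(\tmG^*, D(\tmG^*))$ enter. I would pass to the one-point compactification $\sO$ and work with the extensions: since $u \in D(\tmG^*) \subseteq \C_*(\sE)$ it extends continuously to $\ti{u} \in \C(\sO)$, and the boundedness of $w_1$ from above lets one form a well-defined upper semicontinuous extension on the compact space $\sO$. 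On $\sO$ the supremum of the difference \emph{is} attained; if it is attained at an interior point $x_0 \in \sE$ one reruns the positive-maximum-principle contradiction exactly as in \lemref{lem:partialcomparisonprinciple} (using that the constant shift $u^* := u + \delta$ is again in $D(\tmG^*)$ and that $\tmG^*$ annihilates the constant, so the $\min(a,1)\delta$ gain survives), and if the maximum occurs only at $\partial$ one uses the prescribed boundary value $\ti{w}(\partial) = \max(\ti f(\partial), \ti g(\partial))$ from \propref{prop:g*vs} to force $\delta \leq 0$.

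With the partial comparison principle in hand, the rest is routine. By \propref{prop:g*vs} (applied as in \corref{cor:gcomparison} but for $(\tmG^*,D(\tmG^*))$) there is a sequence $\{v_n\}_{n \in \mN}$ of classical subsolutions converging uniformly to $V$ from below and a sequence $\{u_n\}_{n \in \mN}$ of classical supersolutions converging uniformly to $V$ from above. Applying the partial comparison principle to the viscosity subsolution $w_1$ against each $u_n$ gives $w_1 \leq u_n$, whence $w_1 \leq \lim_{n \to \infty} u_n = V$; dually, comparing the viscosity supersolution $w_2$ against each $v_n$ gives $w_2 \geq \lim_{n \to \infty} v_n = V$. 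Combining, $w_1 \leq V \leq w_2$, which yields the claim.

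I expect the main obstacle to be precisely the passage to the compactification in the partial comparison step: one must verify that the upper/lower semicontinuous extensions of $w_1, w_2$ to $\sO$ are well-defined and that the test-function argument remains valid when the extremum sits at the boundary point $\partial$. The boundedness hypotheses are exactly what guarantee these extensions exist and have finite, controllable boundary behavior; reconciling the viscosity inequalities at $\partial$ with the boundary condition $\ti{w}(\partial) = \max(\ti f(\partial), \ti g(\partial))$ is the delicate bookkeeping that the rest of the proof depends on.
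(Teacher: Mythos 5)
Your overall architecture---compactify, compare against classical super/subsolutions that approximate $V$, and conclude $w_1\le V\le w_2$---is the right one, and it is essentially the paper's: no separate proof of \thmref{thm:maintheoremcompariosncb} is written out, the argument being embedded in the uniqueness part of the proof of \thmref{thm:maintheorem} in \secref{sec: proofmain2}. But the step you yourself flag as delicate, the behaviour at $\partial$, is where your proposal breaks down, for two reasons. First, under \assref{ass:fscompact} the data satisfy only $f,g\in\C_b(\sE)$, not $\C_*(\sE)$, so $\ti{f}(\partial)$ and $\ti{g}(\partial)$ are undefined and neither \propref{prop:g*vs} (hence neither your approximating sequences) nor your boundary bookkeeping can be invoked for the original equation. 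Second, even when $f,g\in\C_*(\sE)$, an arbitrary bounded-above subsolution $w_1$ carries no prescribed value at $\partial$: the condition $\ti{w}(\partial)=\max(\ti{f}(\partial)/a,\ti{g}(\partial))$ in \propref{prop:g*vs} is a property of the particular solution constructed there, not of $w_1$. When $\sup_{\sO}(\bar{w}_1-\ti{u})$ is attained only at $\partial$ you have no viscosity inequality for $w_1$ there (its subsolution property holds only at points of $\sE$), and nothing forces $\limsup_{x\to\partial}w_1(x)\le\max(\ti{f}(\partial)/a,\ti{g}(\partial))$; this is precisely the hypothesis ``$\bar{w}\in USC(\sO)$'' of \lemref{lem:compric*}, and it can fail.

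The missing idea is the paper's truncation of the \emph{data} rather than of the solutions. With $C\ge\max(\ninf{f},\ninf{g})$ also dominating $\sup w_1$ and $-\inf w_2$, and $\phi_n\in\cz(\sE)$ increasing to $1$, set $f_n^+:=\phi_n\cdot(f-C)+C$ and $g_n^+:=\phi_n\cdot(g-C)+C$, which lie in $\C_*(\sE)$. Since $f\le f_n^+$ and $g\le g_n^+$, $w_1$ remains a subsolution for the enlarged data, and now $\max(\ti{f_n^+}(\partial)/a,\ti{g_n^+}(\partial))=\max(C/a,C)\ge C\ge\sup w_1$, so the extension of $w_1$ to $\sO$ required by \lemref{lem:compric*} is automatically upper semicontinuous; the compact-space comparison then yields $w_1\le w_n^+$, where $w_n^+$ is the value function for the data $(f_n^+,g_n^+)$. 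The remaining work---which your proposal omits---is to show $w_n^+\to V$; the paper obtains only locally uniform (not uniform) convergence, via the resolvent estimate of \cite{schilling1998conservativeness} and the compact-containment estimate of \cite{palczewski2010finite}, but pointwise convergence already gives $w_1\le V$, and the symmetric argument with $f_n^-,g_n^-$ gives $V\le w_2$.
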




\begin{remark} The operator $(\tmG^*,D(\tmG^*))$ in \thmref{thm:maintheorem} always contains the constant function by construction. If one chooses an operator that does not contain this function, then the uniqueness might not hold as illustrated below.
	\end{remark}
\begin{example}[Non uniqueness of viscosity solution]
Let $X$ be a standard Brownian motion on $\R$ and choose $\left(\frac{1}{2}D_{xx},\cc^\infty(\R)\right)$ as its core. By definition, the domain of this operator does not contain constant functions. Set $f>0\in \cz(\R)$ and $g=0$ in the optimal stopping problem. Then, the value function defined by \eqref{eqn:valuefunction} is reduced to
\begin{equation}\label{eqn:valfunction}
V(x) := \sup_{\tau\in\mathcal{T}}\bfE^x\Big[\int_{0}^{\tau}{e^{-as}f(X(s))\, \mathrm{d}s}\Big]=J_x(\tau^*)=\tmR_a f(x)
\end{equation}
for $x\in\R$ and the optimal stopping time strategy is $\tau^*=\infty$. By \thmref{thm:vsexist}, $V=\tmR_a f\in \cz(\R)$ is a viscosity solution associated with $(\frac{1}{2}D_{xx},\cc^\infty(\R))$ to
$$\min(aw-\frac{1}{2}D_{xx} w-f,w)=0.$$
Let $c>0$ and set $w=c\tmR_a f>0$. We claim that there is no $\phi\in \cc^\infty(\R)$ such that $\phi-w$ has a global minimum equal $0$ at $x_0\in\R $. Indeed assume that there exists $x_0\in \R$ such that
\begin{align}\label{eqn:remarkares}
\phi(x_0)-w(x_0)=0\leq \phi(x)-w(x) \text{ for all }x\in \R.
\end{align}
Since $\phi$ is of compact support, there exists $y_0\in \R$ such that $\phi(y_0)=0$. Choose $x=y_0$ then $\phi(y_0)-w(y_0)=-w(y_0)<0$. This contradict the fact that $\phi-w$ has a global minimum equal $0$ at $x_0$.
  Since $c>0$ is chosen arbitrarily, it follows that for every strictly positive function $f$, the function $w$ defined by $w:=c\tmR_a f>0$ is a viscosity subsolution. \\
On the other hand, let $(\tmL,D(\tmL))$ be the infinitesimal generator of the standard Brownian motion. Let $c\geq1$ and set $w=c\tmR_a f\in D(\tmL)$. Let us show that $w$ is a classical supersolution associated with $(\tmL,D(\tmL))$ to
\begin{align*}
\min(aw-\tmL w-f,w)= 0,
\end{align*}
Indeed, we have
\begin{align*}
\min(aw-\tmL w-f,w)=\min(cf-f,\tmR_a f)\geq 0.
\end{align*}
The equality follows by \eqref{eqn:resolventproperties2} and the inequality follows since $c\geq 1$. Hence by \lemref{lem:classical is viscosity},  $w=c\tmR_a f\in D(\tmL)$ is a viscosity supersolution associated with $(\tmL,D(\tmL))$. Thus, it is also a viscosity supersolution associated with  $(\frac{1}{2}D_{xx},\cc^\infty(\R))$.\\
Therefore, for $c\geq 1$ the function $w=c\tmR_a f$ is a viscosity solution associated with $(\frac{1}{2}D_{xx},\cc^\infty(\R))$. Since $c\in [1,\infty)$ is arbitrarily chosen, the uniqueness is not satisfied. \qed

	\end{example}
\begin{remark}
It is worth mentioning that by \thmref{thm:maintheorem}, the viscosity solution associated with $(\frac{1}{2}D_{xx},D(\tmG^*))$ (where $D(\tmG^*):=\{v\in \C_*(\R);v-\ti{v}(\partial)\in \cc^\infty(\R)\}$) is unique (see \corref{cor:sdmex1}).
	\end{remark}

\subsection{Proof of the Main results}

\subsubsection{Proof of \propref{prop:g*vs}}\label{sec: proofmain1}

Before proving the main results, we need some preliminary results. 
 We start with the following lemma that gives the relation between the infinitesimal generator of the Feller semigroup $\{\tmP_t\}_{t\geq 0}$ and that of its extension $\{\mP_t\}_{t\geq 0}$.

\begin{lemma}\label{lem:coreoperator}
Let $\{\tmP_t\}_{t\geq 0}$ be a Feller semigroup on $\cz(\sE)$, whose infinitesimal generator is $(\tmL,D(\tmL))$ with a core $(\tmG,D(\tmG))$. Given  the Feller semigroup $\{\mP_t\}_{t\geq 0}$ defined by \eqref{eqn:extfs}, its infinitesimal generator $(\mL,D(\mL))$ satisfies
\begin{align}
\label{eqn:mLdefine}
\mL w&= \reallywidetilde{\tmL ( (w-w(\partial))|_\sE)} \,\mbox{  for each }w \in D(\mL),
\end{align}
with
\begin{align}\label{eqn:mLdefine1}
D(\mL)&=\{w\in \C(\sO); (w-w(\partial))|_\sE\in D(\tmL)\}.
\end{align}
Furthermore, suppose $(\mG,D(\mG))$ is the restriction of $(\mL,D(\mL))$ on $D(\mG)$ with

 \begin{align}\label{eqn:tgdefine0}
 \begin{split}
 D(\mG) &:= \{w\in \C(\sO); (w-w(\partial))|_{\sE} \in D(\tmG)\}\\
 \mG w &:=\reallywidetilde{\tmG  ((w-w(\partial))|_\sE)} \text{ for } w \in D(\mG)
 \end{split}
 \end{align}

 Then $(\mG,D(\mG))$ is also the core of the Feller semigroup $\{\mP_t\}_{t\geq 0}$.
\end{lemma}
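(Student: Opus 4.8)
The plan is to establish \lemref{lem:coreoperator} in two stages: first identify the infinitesimal generator $(\mL, D(\mL))$ of the extended semigroup $\{\mP_t\}_{t\geq 0}$ via the defining limit, and then verify that $(\mG, D(\mG))$ is a core by exhibiting the required approximating sequences.

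\textbf{Stage 1: Computing the generator $(\mL, D(\mL))$.} First I would fix $w \in \C(\sO)$ and set $v := (w - w(\partial))|_\sE$. Note that $v \in \cz(\sE)$ precisely because $w$ is continuous on the compactification and $v(\partial) = 0$ after subtracting the value at infinity; this is where the shift by $w(\partial)$ is essential. Using the definition \eqnref{eqn:extfs}, for $x \in \sE$ I compute
\begin{equation*}
\frac{\mP_t w(x) - w(x)}{t} = \frac{\tmP_t v(x) + w(\partial) - w(x)}{t} = \frac{\tmP_t v(x) - v(x)}{t},
\end{equation*}
since $w(x) - w(\partial) = v(x)$ on $\sE$, and for $x = \partial$ the difference quotient is identically zero because $\mP_t w(\partial) = w(\partial)$. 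Taking $t \to 0^+$ and using \defref{def:infgendef}, the limit exists in the supremum norm on $\sO$ if and only if $\tmP_t v$ converges, i.e. if and only if $v \in D(\tmL)$, in which case the limit equals $\tmL v$ on $\sE$ and $0$ at $\partial$. This limit is exactly $\reallywidetilde{\tmL v}$ (the continuous extension vanishing at $\partial$, consistent with \eqnref{ext1}), yielding both \eqnref{eqn:mLdefine} and \eqnref{eqn:mLdefine1}. The one technical point to check is that the convergence is uniform over $\sO$ and that the limit function lies in $\C(\sO)$; this follows because $\tmL v \in \cz(\sE)$ and its extension by $0$ is continuous on $\sO$.

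\textbf{Stage 2: Verifying $(\mG, D(\mG))$ is a core.} Here I would appeal directly to the definition of a core and the fact that $(\tmG, D(\tmG))$ is a core of $(\tmL, D(\tmL))$. Take any $w \in D(\mL)$ and set $v := (w - w(\partial))|_\sE \in D(\tmL)$. By the core property of $(\tmG, D(\tmG))$, there is a sequence $\{v_n\} \subseteq D(\tmG)$ with $\ninf{v_n - v} + \ninf{\tmG v_n - \tmL v} \to 0$. I would then lift each $v_n$ to $w_n \in \C(\sO)$ by setting $w_n := \reallywidetilde{v_n} + w(\partial)$ (extending $v_n$ by $0$ at $\partial$ and adding back the constant), so that $(w_n - w_n(\partial))|_\sE = v_n \in D(\tmG)$, giving $w_n \in D(\mG)$ with $\mG w_n = \reallywidetilde{\tmG v_n}$. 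Since the sup-norm on $\sO$ of an extension-by-$0$-plus-constant agrees with the sup-norm of the $\sE$-part (the added constants cancel in differences), one checks $\ninf{w_n - w} = \ninf{v_n - v}$ and $\ninf{\mG w_n - \mL w} = \ninf{\tmG v_n - \tmL v}$, so $\ninf{w_n - w} + \ninf{\mG w_n - \mL w} \to 0$. Combined with the density of $D(\tmG)$ in $\cz(\sE)$ transferring to density of $D(\mG)$ in $\C(\sO)$, and the closability inherited from $(\mL, D(\mL))$, this shows $(\mG, D(\mG))$ is a core.

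The main obstacle I anticipate is the bookkeeping around the constant shift and the extension operator: one must be careful that subtracting $w(\partial)$ genuinely lands $v$ in $\cz(\sE)$ (not merely $\C_*(\sE)$), that the tilde-extension commutes appropriately with $\tmL$ and $\tmG$, and that norms are preserved under the lift so that the approximation estimates transfer verbatim. None of these steps is deep, but getting the domains and the behavior at $\partial$ exactly right — especially reconciling the two extension conventions \eqnref{ext00} and \eqnref{ext1} — is where an error could easily slip in. The genuine content is entirely in Stage 1's identification of the generator; Stage 2 is then a routine transfer of the core property through the norm-preserving bijection $w \leftrightarrow v$.
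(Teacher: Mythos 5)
Your proposal is correct, and Stage 1 (identifying $D(\mL)$ and the formula for $\mL$ via the difference quotient, treating $\sE$ and the point $\partial$ separately) is essentially identical to the paper's argument. Stage 2, however, proves the opposite inclusion from the one the paper writes out. The paper takes an arbitrary sequence $\{w_n\}\subseteq D(\mG)$ with $w_n\to u$ and $\mG w_n\to v$ uniformly, and shows $u\in D(\mL)$ with $v=\mL u$; that is, it establishes that the closure of $(\mG,D(\mG))$ is \emph{contained in} $(\mL,D(\mL))$. You instead start from an arbitrary $w\in D(\mL)$, pass to $v:=(w-w(\partial))|_\sE\in D(\tmL)$, use the core property of $(\tmG,D(\tmG))$ to produce approximants $v_n\in D(\tmG)$, and lift them back via $w_n:=\reallywidetilde{v_n}+w(\partial)$, checking that the sup-norms of $w_n-w$ and $\mG w_n-\mL w$ on $\sO$ coincide with those of $v_n-v$ and $\tmG v_n-\tmL v$ on $\sE$ (which they do, since both differences vanish at $\partial$). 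This establishes that $(\mL,D(\mL))$ is \emph{contained in} the closure of $(\mG,D(\mG))$ — which is precisely the approximation clause in the paper's own Definition of a core — while the paper's inclusion comes to you for free from closability of $\mG$ as a restriction of the closed operator $\mL$, as you note. Both arguments run through the same norm-preserving correspondence $w\leftrightarrow v$ and the core property of $(\tmG,D(\tmG))$; your direction is arguably the more faithful one to the stated definition, and your proof is complete.
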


\begin{proof}
	See Appendix~B.
\end{proof}

 Since $\{\mP_t\}_{t\geq 0}$ defined by \eqnref{eqn:extfs} is a conservative Feller semigroup, we  know \cite[Theorem I.9.4]{blumenthal2007markov} that there exists a corresponding Feller process $\ti{X}$ whose transition semigroup is $\{\mP_t\}_{t\geq 0}$ with the compact state space $\sO$. $\ti{X}$ is also a standard Markov process. Define the value function $\ti{V}$ of $\ti{X}$ by
\begin{align}\label{eqn:valuefunctiongh}
\ti{V}(x) := \sup_{\tau}\ti{\bfE}^x\Big[\int_{0}^{\tau}{e^{-as}\ti{f}(\ti{X}_s)\, \mathrm{d}s}+e^{-a\tau}\ti{g}(\ti{X}(\tau))\Big] \mbox{ for }x\in \sO.
\end{align}
One can check that all the conditions in \assref{ass:fscompact} are fulfilled. In fact, $\sO$ is compact; using \lemref{lem:coreoperator}, $(\mG,D(\mG))$ defined by \eqnref{eqn:tgdefine0} is the core of the Feller process $\ti{X}$ and $f,g\in \C_*(\sE)$ implies $\ti{f},\ti{g}\in \C(\sO)$. Then, by \thmref{thm:gvs=V}, the above value function $\ti{V}\in \C(\sO)$ is the unique viscosity solution associated with $(\mG,D(\mG))$ to
\begin{align}\label{eqn:vsg}
\min(a \ti{w} -\mG \ti{w} -\ti{f} ,\ti{w}- \ti{g}) = 0.
\end{align}

\begin{lemma}\label{lem:compric*}
Suppose the assumptions in \propref{prop:g*vs} hold. Assume that $w\in USC(\sE)$ \textup{(}\resp{$LSC(\sE)$}\textup{)} is a viscosity subsolution \textup{(}\resp{supersolution}\textup{)} associated with $(\tmG^*,D(\tmG^*))$ to \eqref{eqn:vsaaa}. Define the extension $\bar{w}$ on $\sO$ by
\begin{align}
\bar{w} := \begin{cases}
w(x)&\mbox{ for }x\in \sE\\
\max(\frac{\ti{f}(\partial)}{a},\ti{g}(\partial)) &\mbox{ for }x=\partial.
\end{cases}
\end{align}
If $\bar{w}\in USC(\sO)$ \textup{(}\resp{$LSC(\sO)$}\textup{)}, then $\bar{w}$ is a viscosity subsolution \textup{(}\resp{supersolution}\textup{)} associated with $(\mG,D(\mG))$ to \eqref{eqn:vsg}.
\end{lemma}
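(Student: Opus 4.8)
The plan is to verify \defref{def:vs} directly for the extension $\bar w$. I take an arbitrary test function $\Phi\in D(\mG)$ such that $\Phi-\bar w$ has a global minimum (subsolution case) or a global maximum (supersolution case) at some $x_0\in\sO$ with $\Phi(x_0)=\bar w(x_0)$, and I check the inequality defining a viscosity sub/supersolution for \eqnref{eqn:vsg} at $x_0$. The argument splits according to whether $x_0\in\sE$ or $x_0=\partial$, and is driven by transporting test functions between $\sO$ and $\sE$ by restriction.

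First I record the correspondence between test functions. Given $\Phi\in D(\mG)$, set $\phi:=\Phi|_\sE$; since $\Phi$ is continuous on $\sO$ and $\sE$ is dense in $\sO$, we have $\phi\in\C_*(\sE)$ with continuous extension $\Phi$ and $\ti{\phi}(\partial)=\Phi(\partial)$. By \eqnref{eqn:tgdefine0}, $\phi-\ti{\phi}(\partial)=(\Phi-\Phi(\partial))|_\sE\in D(\tmG)$, so $\phi\in D(\tmG^*)$ by \eqnref{eqn:g*define}, and moreover $\tmG^*\phi=\tmG\big((\Phi-\Phi(\partial))|_\sE\big)=(\mG\Phi)|_\sE$. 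Thus restriction sends every test function for $(\mG,D(\mG))$ on $\sO$ to a test function for $(\tmG^*,D(\tmG^*))$ on $\sE$ and intertwines the two operators on $\sE$.

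For the interior case $x_0\in\sE$: since $\bar w|_\sE=w$, restricting the global minimum of $\Phi-\bar w$ to $\sE$ shows that $\phi-w$ has a global minimum over $\sE$ at $x_0$ with $\phi(x_0)=w(x_0)$, so $\phi$ is an admissible test function for the subsolution $w$. Because $w$ is a viscosity subsolution associated with $(\tmG^*,D(\tmG^*))$ to \eqnref{eqn:vsaaa}, this gives $\min\big(a\phi(x_0)-\tmG^*\phi(x_0)-f(x_0),\,\phi(x_0)-g(x_0)\big)\le0$; substituting $\phi(x_0)=\Phi(x_0)$, $\tmG^*\phi(x_0)=\mG\Phi(x_0)$, $f(x_0)=\ti{f}(x_0)$ and $g(x_0)=\ti{g}(x_0)$ yields precisely the subsolution inequality for $\bar w$ at $x_0$. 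The supersolution case (global maximum, reversed inequalities) is verbatim the same.

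The boundary case $x_0=\partial$ is where the construction does its work and is the only delicate point. The key fact is that $\mG\Phi(\partial)=0$ for every $\Phi\in D(\mG)$: since $\tmG$ maps $D(\tmG)$ into $\cz(\sE)$, the function $\tmG\big((\Phi-\Phi(\partial))|_\sE\big)$ vanishes at infinity, so its extension to $\sO$ is $0$ at $\partial$. As $\Phi(\partial)=\bar w(\partial)=\max\big(\ti{f}(\partial)/a,\ti{g}(\partial)\big)$, the quantity to be tested collapses to $\min\big(a\bar w(\partial)-\ti{f}(\partial),\,\bar w(\partial)-\ti{g}(\partial)\big)$. In the subsolution case, whichever argument realizes the maximum makes the corresponding term vanish, so the minimum is $\le0$; in the supersolution case, $\bar w(\partial)\ge\ti{f}(\partial)/a$ and $\bar w(\partial)\ge\ti{g}(\partial)$ force both terms to be $\ge0$, so the minimum is $\ge0$. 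Either way the required inequality holds, completing the verification. I therefore expect the genuine content to be exactly this boundary computation — pinning down $\mG\Phi(\partial)=0$ from the mapping property $\tmG:D(\tmG)\to\cz(\sE)$ together with the extension convention in \eqnref{eqn:tgdefine0}, and recognizing that the prescribed value $\max(\ti{f}(\partial)/a,\ti{g}(\partial))$ is precisely what makes the one-sided inequality hold at $\partial$ — while the interior case is a routine transfer of the viscosity property through the restriction map.
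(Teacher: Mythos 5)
Your argument at $x_0=\partial$ and your treatment of the interior case when $\sE$ is \emph{not} compact coincide with the paper's proof: restriction of test functions, the identity $\tmG^*\phi=(\mG\Phi)|_\sE$, and the observation that $\mG\Phi(\partial)=0$ together with the choice $\bar w(\partial)=\max(\ti f(\partial)/a,\ti g(\partial))$ are exactly the paper's cases (b)(i) and (a). The boundary computation you single out as the ``genuine content'' is indeed there, and your version of it is if anything more explicit than the paper's.

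There is, however, a genuine gap: the lemma is asserted under the hypotheses of \propref{prop:g*vs}, which include the case of compact $\sE$ (with the extra assumption $1\in D(\tmG)$), and your key correspondence fails there. When $\sE$ is compact, $\partial$ is an \emph{isolated} point of $\sO$, so $\sE$ is not dense in $\sO$ and the extension convention \eqref{ext1} forces $\ti{\phi}(\partial)=0$ rather than $\ti{\phi}(\partial)=\Phi(\partial)$. Consequently your chain $\phi-\ti{\phi}(\partial)=(\Phi-\Phi(\partial))|_\sE\in D(\tmG)$ no longer establishes $\phi\in D(\tmG^*)$ directly (one needs $1\in D(\tmG)$ to add back the constant $\Phi(\partial)$), and the intertwining identity $\tmG^*\phi=(\mG\Phi)|_\sE$ is simply false in general: the two sides differ by $\Phi(\partial)\,\tmG 1$, which need not vanish. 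The paper repairs this not with an operator identity but with a one-sided inequality valid only at the touching point: since $\Phi$ touches $\bar w$ from above (subsolution case) and $\bar w(\partial)=0=\ti{\phi}(\partial)$, the function $\ti{\phi}-\Phi$ attains a nonnegative global maximum at $x_0$, and the positive maximum principle for $(\mG,D(\mG))$ gives $\mG\Phi(x_0)\ge\mG\ti{\phi}(x_0)=\tmG^*\phi(x_0)$, which is the direction needed to pass from the inequality for $w$ to the one for $\bar w$ (and symmetrically for supersolutions). Your proof as written silently excludes this case; to be complete you must either restrict the statement to non-compact $\sE$ or add this positive-maximum-principle step.
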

\begin{proof}
 Let $w\in USC(\sE)$ be a viscosity subsolution associated with $(\tmG^*,D(\tmG^*))$ to \eqnref{eqn:vsg*}. We want to show that $\bar{w}\in USC(\sO)$ is also a viscosity subsolution associated with $(\mG,D(\mG))$ to \eqnref{eqn:vsg}. Let $\phi\in D(\mG)$ such that $\phi-\bar{w}$ has a global minimum at $x$ in $\sO$ with $\phi(x) = \bar{w}(x)$, we want to show that
\begin{eqnarray}\label{eqn:pthmg*vs=V2}
\min{(a \phi(x)-\mG \phi(x)-\ti{f}(x),\phi(x)-\ti{g}(x))} \leq 0.
\end{eqnarray}
We distinguish two cases:\\
\textbf{(a)} Assume that $x= \partial$ (an absorbing point). Then, $\mG \phi(\partial) = 0$ for all $\phi \in D(\mG)$. In addition, since $\bar{w}(\partial) \leq \max(\ti{f}(\partial)/a,\ti{g}(\partial))$, \eqnref{eqn:pthmg*vs=V2} is satisfied.\\
\textbf{(b)} Assume that $x\in\sE$. Define $\phi^*\in C_*(\sE)$ by $\phi^*: = \phi|_\sE$. Since $\phi\in D(\mG)$, it follows from \eqnref{eqn:tgdefine0} that $\phi^*-\phi(\partial)\in D(\tmG)$. In addition, we claim that $\phi^*\in D(\tmG^*)$.

To see this, we first assume that $\sE$ is not compact. Then $\phi(\partial) = \ti{\phi^*}(\partial)$ and thus  $\phi^*-\ti{\phi^*}(\partial)=(\phi-\phi(\partial))|_\sE\in D(\tmG) $ (since $\phi \in D(\ti{\tmG}))$. Hence $\phi^*\in D(\tmG^*)$. Next, we assume that $\sE$ is compact. In this case, $\phi \in D(\mG)$ means $\phi\in C(\sE_{\partial})$ and $(\phi-\phi(\partial))|_{\sE}\in D(\tmG)$, that is, $\phi|_{\sE}-\phi(\partial)\in D(\tmG)$. 
Using the fact that $1\in D(\tmG)$, we obtain $\phi^*:=\phi|_{\sE}\in D(\tmG)$. Therefore, since $\ti{\phi^*}(\partial)=0$ by the compactness of $\sE$ and $\phi^*\in D(\tmG)$, it follows from \eqnref{eqn:g*define} that $\phi^*\in D(\tmG^*)$. The claim is thus proved.

Next, recall that $\phi-\bar{w}$ has a global minimum at $x$ in $\sO$ with $\phi(x) = \bar{w}(x)$. Hence, using $\phi^* := \phi|_\sE$ and $w =\bar{w}|_\sE$, it follows that $\phi^*-w$ has a global minimum at $x$ in $\sE$ with $\phi^*(x) = w(x)$. Combining this with the fact that $\phi^*\in D(\tmG^*)$, and since $w$ is viscosity subsolution associated with $(\tmG^*,D(\tmG^*))$ to \eqnref{eqn:vsg*}, we have
\begin{eqnarray}\label{eqn:pthmg*vs=V1}
\min{(a \phi^*(x)-\tmG^* \phi^*(x)-f(x),\phi^*(x)-g(x))} \leq 0.
\end{eqnarray}

Since $f=\ti{f}|_\sE, g=\ti{g}|_\sE$ and  $\phi^* = \phi|_\sE$, in order to prove that \eqnref{eqn:pthmg*vs=V2} holds when $x\in \sE$, it is enough to show that
\begin{eqnarray}\label{eqn:pthmg*vs=V111}
\mG\phi(x) \geq \tmG^*\phi^*(x).
\end{eqnarray}
It follows from \eqnref{eqn:tgdefine0}  (\resp{\eqnref{eqn:g*define}}) that $\mG \ti{\phi^*}(x) = \tmG (\ti{\phi^*}- \ti{\phi^*}(\partial))|_\sE (x)$ (respectively, $\tmG^*\phi^*(x) = \tmG(\phi^*-\ti{\phi^*}(\partial))(x)$) and thus $\mG\ti{\phi^*}(x) = \tmG^* \phi^*(x)$. That is, \eqref{eqn:pthmg*vs=V111} becomes \begin{eqnarray}\label{eqn:pthmg*vs=V112}
\mG\phi(x)\geq \mG\ti{\phi^*}(x).
\end{eqnarray}
 Two cases are distinguished. \\
\textbf{(i)} Assume that $\sE$ is not compact. By the uniqueness of the extension, we have $\phi=\ti{\phi^*}$ and $\mG\phi(x)=\mG\ti{\phi^*}$. \\
\textbf{(ii)} Assume that $\sE$ is compact. By the definition of $\ti{\phi^*}$ (see\eqref{ext1}), we have $\ti{\phi^*}(y)=\phi(y)$ for any $y\in \sE$ and $\ti{\phi^*}(\partial)=0$. In addition, since $\phi-\bar{w}$ has a global minimum at $x$ in $\sO$ and $w\in USC(\sE)$, we have $\phi(y)\geq \bar{w}(y)$ for any $y\in \sE_{\partial}$ and thus $\phi(\partial)\geq \bar{w}(\partial)=\max(\ti{f}(\partial)/a,\ti{g}(\partial))=0=\ti{\phi^*}(\partial)$, since $\sE$ is compact. This indicates that $\ti{\phi^*}-\phi$ has a positive maximum equal $0$ at $x$ in $\sO$. Since $(\mG, D(\mG))$ is the core of $(\mL, D(\mL))$ (see\lemref{lem:coreoperator}), it follows from \thmref{thm:hyr} that, $(\mG, D(\mG))$ satisfies the positive maximum principle and thus $\mG\phi(x)\geq \mG\ti{\phi^*}(x)$.

The viscosity supersolution can be proved in a similar way.
\end{proof}

\begin{proof}[Proof of \propref{prop:g*vs}]
\textbf{(1)} We will prove that $V:=\ti{V}|_{\sE}$ is a viscosity solution in $\C_*(\sE)$ associated with $(\tmG^*,D(\tmG^*))$ to \eqnref{eqn:vsg*}. We first prove that $V$  is a viscosity subsolution associated with  $(\tmG^*,D(\tmG^*))$ to \eqnref{eqn:vsg*}. Let $x\in \sE$ and $\psi \in D(\tmG^*)$  such that $\psi-v_e$ has a global minimum at $x$ with $\psi(x)=V(x)$. There are two cases:\\
\textbf{(i)} Suppose that $\sE$ is compact. Then $\ti{\psi}(\partial)-V(\partial)=0$.\\
\textbf{(ii)} Suppose that $\sE$ in not compact. Since $\sE$ is a dense open subset of $\sO$, we have $\ti{\psi}(\partial)-\ti{V}(\partial)\geq 0$.\\
It follows that $\ti{\psi}-\ti{V}$ has a global minimum at $x$ in $\sO$ with $\ti{\psi}(x) = \ti{V}(x)$. Moreover, since $\psi\in D(\tmG^*)$, then by the definition \eqref{eqn:g*define} of $ D(\tmG^*)$, we have $\psi-\ti{\psi}(\partial)\in D(\tmG)$. Hence, $(\ti{\psi}-\ti{\psi}(\partial))|_{\sE} \in D(\tmG)$. Therefore, $\ti{\psi} \in D(\mG)$ by the definition \eqref{eqn:tgdefine0} of $D(\mG)$. Since the value function $\ti{V}$ defined by \eqnref{eqn:valuefunctiongh} is a viscosity subsolution associated with $(\mG,D(\mG))$ to \eqnref{eqn:vsg}, we have
$$
\min{(a\ti{\psi}(x)-\mG \ti{\psi}(x)-\ti{f}(x),\ti{\psi}(x)-\ti{g}(x))} \leq 0.
$$
Furthermore, since $x\in \sE$, using \eqref{eqn:tgdefine0}, we have $\mG \ti{\psi}(x) = \tmG (\ti{\psi}- \ti{\psi}(\partial))|_\sE (x)$, and using \eqnref{eqn:g*define}, we have $\tmG^*\psi(x) = \tmG(\psi-\ti{\psi}(\partial))(x)$. Hence, $\mG \ti{\psi}(x) = \tmG^*\psi(x)$. Therefore,
$$\min{(a \psi(x)-\tmG^* \psi(x)-f(x),\psi(x)-g(x))} \leq 0.$$

We can also prove in a similar way that $V$ is a viscosity supersolution. The existence is then proved, that is,  $V = \ti{V}|_\sE$  is a viscosity solution associated with  $(\tmG^*,D(\tmG^*))$ to \eqnref{eqn:vsg*}.

\textbf{(2)} Next, we show that $V|_\sE$ is the unique viscosity solution associated with $(\tmG^*,D(\tmG^*))$. The idea here is to prove that if $w\in\C_*(\sE)$ is a viscosity solution associated with $(\tmG^*,D(\tmG^*))$ to \eqnref{eqn:vsg*}, then $\ti{w}\in\C(\sO)$ is a viscosity solution associated with $(\mG,D(\mG))$ to \eqnref{eqn:vsg}. Hence, the result will follow since the viscosity solution associated with $(\mG,D(\mG))$ to \eqref{eqn:vsg} is unique. Using \lemref{lem:compric*}, if $w\in \C_*(\sE)$ is a viscosity solution associated with  $(\tmG^*,D(\tmG^*))$ to \eqnref{eqn:vsg*}, its extension $\bar{w}$ is the unique viscosity solution associated with  $(\mG,D(\mG))$ to \eqnref{eqn:vsg} which is the value function $V$ defined by \eqnref{eqn:valuefunctiongh}. This completes the proof of the uniqueness and the proposition.
\end{proof}

\subsubsection{Proof of \thmref{thm:maintheorem}}\label{sec: proofmain2}


For $\sE$ compact, since $\C_b(\sE) = \cz(\sE)$, the existence and uniqueness follow  \thmref{thm:gvs=V}. Thus, we only need to consider the case $\sE$ not compact.

\textbf{Existence:}
Using \thmref{thm:vsexist}, the viscosity solution associated with $(\tmG^*,D(\tmG^*))$ to \eqref{eqn:vsaaa} is the value function provided that $(\tmG^*,D(\tmG^*))$ is an $a$-generator.

Let us show that $(\tmG^*,D(\tmG^*))$ is an $a$-generator. By Dynkin's formula and the argument preceding \corref{cor:lviscositysolutionexists}, we have $(\tmG,D(\tmG))$ is an $a$-generator. Let us consider the restriction of $(\tmG^*,D(\tmG^*))$ to the space of constant functions. Since $\sE$ is not compact, using \eqref{eqn:g*define}, we have $\tmG^* 1= \tmG 0=0$. Hence $\{S_1(t)\}_{t\geq 0}$ given by \eqref{eqn:defasup} (with $w=1$) is an $(\mF_t,P^x)$ uniformly integrable martingale for $a>0$ and thus $(\tmG^*,D(\tmG^*))$ is an $a$-generator.

\textbf{Uniqueness:}
For the uniqueness, let $\{\phi_n\}_{n\in \mathbb{N}}$ be an increasing sequence in $\cz(\sE)$ converging pointwisely to the constant function $1$. By Dini's theorem, $\{\phi_n\}_{n\in N}$ converges to $1$ locally uniformly. Let $C\geq \max(\ninf{f},\ninf{g})$. Define $f_n^- := \phi_n\cdot (f+C)-C$ and $g_n^- := \phi_n\cdot (g+C)-C$. Then $\{f_n^-\}_{n\in \mathbb{N}}$ and $\{g_n^-\}_{n\in \mathbb{N}}$ are in $\C_*(\sE)$ and increasing.

Let $w\in \C_b(\sE)$ be a viscosity solution  associated with $(\tmG^*,D(\tmG^*))$ to \eqref{eqn:vsaaa}, which satisfies $w \geq -C$, and define
\begin{align*}
    w_n^-(x) := \sup_\tau \bfE^x\big[\int_0^\tau e^{-as}f_n^-(X(s))\diff s+e^{-a\tau}g_n^-(X(\tau))\big].
\end{align*}
By the existence proof, $w_n^-$ is a viscosity solution to
\begin{align}\label{eqn:proofwg*fn-}
\min(aw_n^--\tmG^* w_n^- - f_n^-, w_n^--g_n^-)=0.
\end{align}
Since $f\geq f_n^-$ and $g\geq g_n^-$ in $\sE$, $w$ is a viscosity supersolution to \eqref{eqn:proofwg*fn-}.
Therefore, by \lemref{lem:compric*}, $w\geq w_n^-$ for all $n\in \mathbb{N}$.

Similarly, let $f_n^+ := \phi_n\cdot (f-C)+C$ and $g_n^+ := \phi_n\cdot (g-C)+C$ and $w_n^+$ is
\begin{align*}
    w_n^+(x) := \sup_\tau \bfE^x\big[\int_0^\tau e^{-as}f_n^+(X(s))\diff s+e^{-a\tau}g_n^+(X(\tau))\big].
\end{align*}
It is the viscosity solution to
\begin{align}\label{eqn:proofwg*fn+}
\min(aw_n^+-\tmG^* w_n^+ - f_n^+, w_n^+-g_n^+)=0.
\end{align}
Then, similarly, since $w\leq C$ and $w$ is the viscosity subsolution to \eqref{eqn:proofwg*fn+}, by \lemref{lem:compric*}, then $w\leq w_n^+$.

Therefore, since $w_n^+\geq w\geq w_n^-$ for all $n\in\mathbb{N}$, to prove the uniqueness, it is enough to show that $\lim_{n\rightarrow}w_n^+(x) = \lim_{n\rightarrow}w_n^-(x) = V(x)$. We have the following inequalities,
\begin{align*}
V(x) - w_n^-(x) \leq& \sup_\tau \bfE^x\bigg[\int_0^\tau e^{-as}(1-\phi_n(X(s)))(f(X(s))-\ninf{f})\diff s\\
&+e^{-a\tau}(1-\phi_n(X(\tau)))](g(X(\tau))-\ninf{g})\bigg]\\
\leq & \sup_\tau \bfE^x\bigg[\int_0^\tau e^{-as}(1-\phi_n(X(s)))\ninf{f}\diff s+e^{-a\tau}(1-\phi_n(X(\tau)))\ninf{g}\bigg]\\
\leq & C\Big\{\tmR_a(1-\phi_n)(x)+\sup_{\tau}\bfE^x\big[e^{-a\tau}(1-\phi_n(X(\tau)))\big]\Big\}.
\end{align*}
By \cite[Theorem 3.2]{schilling1998conservativeness}, we know that $\tmR_a(1-\phi_n)$ converges to $0$ locally uniformly. Then, we only need to prove that $\{u_n\}_{n\geq 0}$, with $u_n(x):= \sup_{\tau}\bfE^x\big[e^{-a\tau}(1-\phi_n(X(\tau)))\big]$ converges to $0$ locally uniformly. As shown in \cite[Proposition 2.1]{palczewski2010finite}, for any compact set $K\subseteq \sE$, $T>0$ and $\varepsilon>0$, there exists a compact set $L_\varepsilon\subseteq \sE$ such that
\begin{align}
\sup_{x\in K}\bfP^x(X(s)\not\in L_\varepsilon \mbox{ for some }t\in [0,T])<\varepsilon.
\end{align}
Therefore for any $\mF_t$-stopping time $\tau$, for all $x\in K$, we have 
\begin{align*}
\bfE^x\big[e^{-a\tau}(1-\phi_n(X(\tau)))\big]=&\bfE^x\big[e^{-a\tau}(1-\phi_n(X(\tau)))\ind_{\tau<T}+e^{-a\tau}(1-\phi_n(X(\tau)))\ind_{\tau>T}\big]\\
\leq& \bfE^x\big[e^{-a\tau}(1-\phi_n(X(\tau)))\ind_{\tau<T}\big]+e^{-aT}\\
\leq& \bfE^x\big[e^{-a\tau}(1-\phi_n(X(\tau)))\ind_{\tau<T}\ind_{\{X(s)\not\in L_\varepsilon \mbox{ for some }t\in [0,T]\}}\big]\\
&+\bfE^x\big[e^{-a\tau}(1-\phi_n(X(\tau)))\ind_{\tau<T}\ind_{\{X(s)\in L_\varepsilon \mbox{ for all }t\in [0,T]\}}\big]+e^{-aT}\\
\leq& \varepsilon+\sup_{x\in L_\varepsilon}(1-\phi_n(x))+e^{-aT}.
\end{align*}
Since $L_\varepsilon$ is compact and $\{\phi_n\}_{n\in\mathbb{N}}$ converges to $1$ locally uniformly, $\sup_{x\in L_\varepsilon}(1-\phi_n(x))$ converges to $0$ as $n\rightarrow\infty$. Since $\varepsilon$, $K$ and $T$ are all arbitrarily chosen, $u_n$ converges to $0$ locally uniformly. Therefore, $\{w_n^-\}_{n\in \mathbb{N}}$ converges to $V$ locally uniformly. Similarly, we have $\{w_n^+\}_{n\in \mathbb{N}}$ converges to $V$ locally uniformly. This completes the proof of the uniqueness. \hfill \ensuremath{\Box}

\section{Structure of the optimal stopping value functions}\label{structsec}

In this section, we related the viscosity solution to some existing results, using martingale approach. First, we introduce some preliminary lemmas.

\begin{lemma}\label{lem:matmet}
Given $u\in LSC(\sE)$ \textup{(}\resp{$u\in USC(\sE)$}\textup{)}, define the process $\{M(t)\}_{t\geq 0}$ by
\begin{align}\label{eqn:matdef}
    M(t) := e^{-at} u(X(t))+\int_0^t e^{-as}f(X(s))\diff s.
\end{align}
Suppose there exists an open subset $\mO \subseteq \sE$ such that $\{M(t\wedge\tau_{\mO})\}_{t\geq 0}$ is a $(\mF_t,\bfP^{x})$ uniformly integrable supermartingale \textup{(}\resp{submartingale}\textup{)} for all $x\in \mO$. Then, the following claims hold.
\begin{enumerate}
    \item For all $\phi\in D(\tmG^*)$ such that $\phi-u$ has a global maximum \textup{(}\resp{minimum}) at $x_0\in \mO$ with $\phi(x_0)=u(x_0)$,
    \begin{align}
        a\phi(x_0)-\tmG^*\phi(x_0)-f(x_0)\leq (\geq)0.
    \end{align}
    \item Additionally, suppose there exists a subset $K_0\subseteq \sE$ such that $X$ satisfies $\bfP^{x_0}\big[X(\tau_\mO)\in K_0\big]=1$ for some $x_0\in \mO$. Then for all $\psi\in D(\tmG^*)$ such that $\psi-w$ has a maximum \textup{(}\resp{minimum}\textup{)} in $K_0$ at $x_0$ with $\psi(x_0)=w(x_0)$, we have
    \begin{align}\label{eqn:stsupm}
        a\phi(x_0)-\tmG^*\psi(x_0)-f(x_0)\leq (\geq)0.
    \end{align}
\end{enumerate}
\end{lemma}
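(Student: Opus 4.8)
The plan is to mimic the localization argument in the proof of \thmref{thm:vsexist}, the only new feature being that the (super/sub)martingale $M$ is built from the tested function $u$ rather than from the test function $\phi$; so I would first record a martingale identity for $\phi$ and then compare it with $M$ at the exit time of a small ball. Since $(\tmG^*,D(\tmG^*))$ is an $a$-generator (established in the proof of \thmref{thm:maintheorem}), for every $\phi\in D(\tmG^*)$ the process $S_\phi$ of \eqnref{eqn:defasup} is a uniformly integrable $(\mF_t,\bfP^x)$-martingale, which I rewrite as
\begin{equation*}
M_\phi(t):=e^{-at}\phi(X(t))+\int_0^t e^{-as}f(X(s))\diff s=\phi(X(0))-S_\phi(t)-\int_0^t e^{-as}\big(a\phi-\tmG^*\phi-f\big)(X(s))\diff s .
\end{equation*}
Here $M_\phi$ is the analogue of $M$ with $u$ replaced by $\phi$, and $M_\phi$ differs from a martingale only by the explicit time integral of $a\phi-\tmG^*\phi-f$; this exact decomposition is what converts the qualitative martingale hypothesis on $M$ into a quantitative bound on $a\phi-\tmG^*\phi-f$ near $x_0$.

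For part (1) I fix $\delta>0$ small enough that $\bar{B}(x_0,\delta)\subseteq\mO$, so that $\tau_\delta\le\tau_\mO$ and $\{M(t\wedge\tau_\delta)\}_{t\ge 0}$ inherits the (super/sub)martingale property. The extremum hypothesis yields a one-sided comparison between $\phi$ and $u$ at the exit point $X(\tau_\delta)$, hence between $M_\phi(\tau_\delta)$ and $M(\tau_\delta)$ (the two running integrals coincide, so only the terminal values are compared). As in \thmref{thm:vsexist} I split into two cases. If $x_0$ is absorbing then $\tmG^*\phi(x_0)=0$ and the claim reduces to a sign condition on $a u(x_0)-f(x_0)$, read off directly from the degenerate (super/sub)martingale property of the now deterministic $M$ and matching the stated inequality. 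If $x_0$ is not absorbing, \lemref{thm:absorbingcharacterics} gives $\bfE^{x_0}[\tau_\delta]<\infty$; applying optional sampling to $M$, taking $\bfP^{x_0}$-expectation in the identity above (where $S_\phi$ integrates to $0$), and combining with the terminal comparison produces a one-sided bound on $\bfE^{x_0}\big[\int_0^{\tau_\delta}e^{-as}(a\phi-\tmG^*\phi-f)(X(s))\diff s\big]$. Dividing by $\bfE^{x_0}[\tau_\delta]>0$, estimating the integrand by its supremum/infimum over $\bar{B}(x_0,\delta)$, and letting $\delta\to 0$ (using $a\phi-\tmG^*\phi-f\in\C(\sE)$) gives, after careful sign tracking, $a\phi(x_0)-\tmG^*\phi(x_0)-f(x_0)\le 0$ in the supermartingale/maximum case and the reverse inequality in the submartingale/minimum case.

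Part (2) follows the same scheme with the ball exit time $\tau_\delta$ replaced by the exit time $\tau_\mO$ of the whole set $\mO$. The subtlety is that $\psi$ is now comparable with $u$ only on $K_0$ (the extremum of $\psi-u$ is taken over $K_0$), not on all of $\sE$; but the comparison I actually invoke only involves the terminal value $X(\tau_\mO)$, and the hypothesis $\bfP^{x_0}[X(\tau_\mO)\in K_0]=1$ guarantees $X(\tau_\mO)\in K_0$ $\bfP^{x_0}$-almost surely. Hence the terminal comparison between $\psi$ and $u$ still holds a.s., and the optional-sampling-and-average computation carried out over $[0,\tau_\mO]$ (rather than $[0,\tau_\delta]$) delivers \eqnref{eqn:stsupm}.

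The delicate point is exactly the sign bookkeeping when combining the optional-sampling inequality for $M$ with the exact expectation coming from the martingale $S_\phi$: a single misplaced sign reverses the conclusion, so I would keep the direction of every inequality explicit and cross-check it against the absorbing-state computation, which fixes the orientation unambiguously. The remaining technical points — the uniform integrability required for optional sampling, finiteness of $\bfE^{x_0}[\tau_\delta]$ in the non-absorbing case and the legitimacy of dividing by it, and the almost-sure terminal membership in $K_0$ for part (2) — are routine under \assref{ass:fscompact}.
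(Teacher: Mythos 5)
Your overall scheme for part (1) is precisely the paper's own proof: Dynkin's formula makes the process $S_\phi$ of \eqnref{eqn:defasup} a uniformly integrable $(\mF_t,\bfP^{x_0})$-martingale for $\phi\in D(\tmG^*)$, one applies optional sampling to the stopped process $M$ at the small-ball exit time $\tau_\delta$, splits into absorbing and non-absorbing $x_0$ via \lemref{thm:absorbingcharacterics}, divides by $\bfE^{x_0}[\tau_\delta]$ and lets $\delta\to 0$. However, your final sign is wrong, and wrong in a way your own cross-check exposes. In the supermartingale/maximum case ($\phi\le u$ with $\phi(x_0)=u(x_0)$) the chain reads
\begin{align*}
u(x_0)\;\ge\;\bfE^{x_0}\Big[\int_0^{\tau_\delta}e^{-as}f(X(s))\diff s+e^{-a\tau_\delta}\phi(X(\tau_\delta))\Big]
\;=\;\phi(x_0)+\bfE^{x_0}\Big[\int_0^{\tau_\delta}e^{-as}\big(f+\tmG^*\phi-a\phi\big)(X(s))\diff s\Big],
\end{align*}
so dividing by $\bfE^{x_0}[\tau_\delta]$ and sending $\delta\to 0$ yields $a\phi(x_0)-\tmG^*\phi(x_0)-f(x_0)\ge 0$, not $\le 0$ as you announce. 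Your absorbing-state anchor says the same: there $M$ is deterministic, the supermartingale property means it is non-increasing, which forces $au(x_0)-f(x_0)\ge 0$, and since $\tmG^*\phi(x_0)=0$ this is again the inequality $\ge 0$. The ``$\le(\ge)$'' printed in the lemma is a misprint — it contradicts the displayed chain in the paper's own proof and the use of the lemma in \corref{cor:sup>value}, where the supermartingale hypothesis must deliver the \emph{supersolution} inequality of \defref{def:vs}. You matched the misprint rather than the mathematics, and the cross-check you promise would have refuted, not confirmed, your stated conclusion.

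Part (2) contains the genuine gap: replacing $\tau_\delta$ by $\tau_\mO$ destroys the localization. Optional sampling over $[0,\tau_\mO]$ only gives $\bfE^{x_0}\big[\int_0^{\tau_\mO}e^{-as}(f+\tmG^*\psi-a\psi)(X(s))\diff s\big]\le 0$, an inequality averaged over the entire sojourn in the \emph{fixed} set $\mO$; since nothing shrinks to $x_0$, no pointwise inequality at $x_0$ can be extracted (the integrand may change sign inside $\mO$), and $\bfE^{x_0}[\tau_\mO]$ may even be infinite, so the division step itself can fail. The paper keeps $\tau_\delta$ in part (2) as well; the hypothesis $\bfP^{x_0}\big[X(\tau_\mO)\in K_0\big]=1$ serves only to secure the terminal comparison $\psi(X(\tau_\delta))\le u(X(\tau_\delta))$ almost surely: for $\bar{B}(x_0,\delta)\subseteq\mO$ one has $\tau_\delta\le\tau_\mO$, so either $X(\tau_\delta)\in\mO\subseteq K_0$ (the inclusion $\bar{\mO}\subseteq K_0$ is in force in every application, cf.\ \corref{cor:sub<value} and \thmref{thm:martingaleapproach}), or $X(\tau_\delta)\not\in\mO$, in which case $\tau_\delta=\tau_\mO$ and $X(\tau_\delta)=X(\tau_\mO)\in K_0$ a.s. With that comparison in hand, the $\delta\to 0$ argument of part (1) goes through verbatim. (Incidentally, statement (2) carries its own misprints — $\psi-w$ should be $\psi-u$ and $a\phi(x_0)$ should be $a\psi(x_0)$ — which may have contributed to the confusion.)
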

\begin{proof} The proof is similar as the proof of \thmref{thm:vsexist}. Here, we only prove the statement (2) since the statement (1) follows when $K_0 = \sE$ in the statement (2). Let $\psi\in D(\tmG^*)$ such that $\psi-u$ has a maximum (\resp{minimum}) in $K_0$ at $x_0$ with $\psi(x_0)=u(x_0)$.
Let the process  $\{S(t)\}_{t\geq 0}$ be defined by
\begin{align}\label{eqn:proofstsupm}
    S(t):=e^{-at}\psi(X(t))+\int_0^t e^{-as}(a\psi(X(s))-\tmG^*\psi(X(s)))\diff s.
\end{align}
By Dynkin formula, since $\psi\in D(\tmG^*)$, $\{S(t)\}_{t\geq 0}$
is a $(\mF_t,\bfP^{x_0})$ uniformly integrable martingale. We first assume $x_0$ is a point not absorbing. Let $\delta>0$ and $\tau_\delta$ defined by \eqnref{eqn:taudelta}. Since $\{M(t\wedge\tau_{\mO})\}_{t\geq 0}$ is a $(\mF_t,\bfP^{x_0})$ uniformly integrable supermartingale, we have
\begin{align*}
u(x_0)&\geq \bfE^{x_0}\bigg[\int_0^{\tau_\delta}e^{-as}f(X(s))\diff s+e^{-a \tau_\delta}u(X(\tau_\delta))\bigg] \\
&\geq \bfE^{x_0}\bigg[\int_0^{\tau_\delta}e^{-as}f(X(s))\diff s+e^{-a \tau_\delta}\psi(X(\tau_\delta))\bigg]\\
&\geq \bfE^{x_0}\bigg[\int_0^{\tau_\delta}e^{-as}(f(X(s))+\tmG^*\psi(X(s))-a\psi(X(s))) \diff s\bigg]+\psi(x),
\end{align*}
where the last inequality follows from the optional stopping theorem. Thus \eqref{eqn:stsupm} is proved in an analogous way as \eqref{eqvnat1} in  \thmref{thm:vsexist}. The case the non-absorbing point $x_0$ can be proved the same way as in \thmref{thm:vsexist}.
\end{proof}

\begin{corollary}\label{cor:sup>value}
    Let $u\in LSC(\sE)$ be bounded from the below. Suppose that its corresponding process $\{M(t)\}_{t\geq 0}$ defined by \eqref{eqn:matdef} is a supermartingale. If $u\geq g$, then $u$ is a viscosity supersolution to
    \begin{align}
        \min(aw-\tmG^*w-f,w-g)=0
    \end{align}
    and $u\geq V$, where $V$ is the value function defined by \eqref{eqn:valuefunction}.
\end{corollary}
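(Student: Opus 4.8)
The plan is to verify the two conclusions in turn: first that $u$ is a viscosity supersolution, and then that $u \geq V$; the supermartingale hypothesis drives both. A preliminary observation I would record is that $M$ is bounded from below by a single constant: since $u \geq -C_u$ for some $C_u$ and $f \in \C_b(\sE)$, one has $M(t) = e^{-at}u(X(t)) + \int_0^t e^{-as}f(X(s))\diff s \geq -C_u - \ninf{f}/a$. This lower bound is what will make the various optional-sampling/Fatou passages legitimate even though $u$ need not be bounded above.

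For the supersolution property I would invoke \lemref{lem:matmet} directly with the open set $\mO = \sE$ (so that $\tau_\mO = \infty$ and $M(t\wedge\tau_\mO) = M(t)$). Fix any $\phi \in D(\tmG^*)$ such that $\phi - u$ has a global maximum at some $x_0 \in \sE$ with $\phi(x_0) = u(x_0)$. Statement (1) of \lemref{lem:matmet}, in the $LSC$/supermartingale case, supplies the supersolution-type inequality $a\phi(x_0) - \tmG^*\phi(x_0) - f(x_0) \geq 0$. Moreover, since $\phi(x_0) = u(x_0)$ and $u \geq g$ by hypothesis, $\phi(x_0) - g(x_0) \geq 0$. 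Hence
\begin{equation*}
\min\big(a\phi(x_0) - \tmG^*\phi(x_0) - f(x_0),\, \phi(x_0) - g(x_0)\big) \geq 0,
\end{equation*}
which is precisely the requirement of \defref{def:vs} for $u \in LSC(\sE)$ to be a viscosity supersolution associated with $(\tmG^*, D(\tmG^*))$.

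To prove $u \geq V$ I would argue by optional sampling. Fix $x \in \sE$ and $\tau \in \mT$. Applying the supermartingale inequality at the bounded times $\tau\wedge n$ gives $u(x) = M(0) \geq \bfE^x[M(\tau\wedge n)]$ for every $n$. Since the $M(\tau\wedge n)$ are uniformly bounded below, Fatou's lemma yields $u(x) \geq \bfE^x\big[\liminf_n M(\tau\wedge n)\big]$. On $\{\tau<\infty\}$ one has $\liminf_n M(\tau\wedge n) = e^{-a\tau}u(X(\tau)) + \int_0^\tau e^{-as}f(X(s))\diff s$; on $\{\tau=\infty\}$, the term $e^{-an}u(X(n))$ has nonnegative $\liminf$ (again using $u \geq -C_u$), so $\liminf_n M(n) \geq \int_0^\infty e^{-as}f(X(s))\diff s$. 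Invoking $u \geq g$ in both cases, the liminf dominates $e^{-a\tau}g(X(\tau))\ind_{\{\tau<\infty\}} + \int_0^\tau e^{-as}f(X(s))\diff s$, so taking expectations gives $u(x) \geq J_x(\tau)$; a supremum over $\tau \in \mT$ then gives $u(x) \geq V(x)$. Alternatively, once $u$ is known to be a viscosity supersolution, the bound $u \geq V$ follows at once from the comparison principle \thmref{thm:maintheoremcompariosncb}, since $V \in \C_b(\sE)$ is a viscosity subsolution bounded from above and $u$ is a supersolution bounded from below.

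I expect the only delicate point to be the passage to possibly infinite stopping times: because $u$ is merely bounded below, neither dominated convergence nor genuine uniform integrability of $\{M(t)\}$ is available, so one must rely throughout on the uniform lower bound on $M$ — both to justify Fatou's lemma in the $u \geq V$ step and to control $e^{-an}u(X(n))$ on the event $\{\tau = \infty\}$ (and, in \lemref{lem:matmet}, to run the localized optional sampling at the a.s.-finite times $\tau_\delta$). Everything else is a direct transcription of \lemref{lem:matmet} together with the definition of a viscosity supersolution.
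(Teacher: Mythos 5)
Your proof is correct, and the first half is exactly the paper's argument: the paper also applies \lemref{lem:matmet} with $\mO=\sE$ to get $a\phi(x_0)-\tmG^*\phi(x_0)-f(x_0)\geq 0$ and then uses $u\geq g$ to upgrade this to the full HJB supersolution property. (Note in passing that the displayed inequality in \lemref{lem:matmet}(1) has its ``$\leq(\geq)$'' alternatives written in the opposite order to the earlier ``resp.''~clauses; you have read it the way the lemma's proof actually establishes it, which is the correct reading.) For the second conclusion $u\geq V$, the paper goes only through the comparison principle of \thmref{thm:maintheoremcompariosncb} — your ``alternative'' route — whereas your primary argument via optional sampling at $\tau\wedge n$ plus Fatou is a genuinely different and more self-contained derivation. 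It buys two things: it does not lean on \thmref{thm:maintheoremcompariosncb}, which the paper states without proof and whose hypotheses ($w_1\in USC$, $w_2\in LSC$, one bounded above and one below) one would otherwise have to check against $V$ and $u$; and it makes explicit the only delicate analytic point, namely that the uniform lower bound $M(t)\geq -C_u-\ninf{f}/a$ is what justifies Fatou and kills the term $e^{-an}u(X(n))$ on $\{\tau=\infty\}$. The one technicality your direct route quietly uses (as does the paper's lemma) is optional sampling for the supermartingale $M$ at bounded stopping times, which implicitly requires some path regularity of $t\mapsto M(t)$; since the hypothesis is simply that $M$ ``is a supermartingale,'' this is no worse than what the paper itself assumes. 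Either route is acceptable; yours is arguably the more robust of the two.
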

\begin{proof}
Since $\{M(t)\}_{t\geq 0}$ is a supermaringale, by \lemref{lem:matmet}, $u$ is a viscosity supersolution associated with $(\tmG^*,D(\tmG^*))$ to $$aw-\tmG^*w-f = 0.$$ Since $u\geq g$, $u$ is also a viscosity supersolution to
$$\min(aw-\tmG^*w-f,w-g)=0.$$
By the comparison principle (see \thmref{thm:maintheoremcompariosncb}), we have $u\geq V$.
\end{proof}

\begin{corollary}\label{cor:sub<value}
    Let $u\in USC(\sE)$ be bounded from above. Suppose there exists an open subset $\mO\in \sE$ such that its corresponding process $\{M(t\wedge \tau_\mO)\}_{t\geq 0}$ defined by \eqref{eqn:matdef} is a submartingale.
    \begin{enumerate}
        \item If $u(x)\leq g(x)$ for all $x\not\in \mO$, then $u(y)\leq V(y)$ for all $y\in \mO$, where $V$ is the value function defined by \eqref{eqn:valuefunction}.
        \item Additionally, suppose there exists a subset $\bar{\mO}\subseteq K_0\subseteq \sE$ such that $X$ satisfies $\bfP^{x}\big[X(\tau_\mO)\in K_0\big]=1$ for all $x\in \mO$. If $u(x)\leq g(x)$ for all $x\in K_0\setminus \mO$, then $u(y)\leq V(y)$ for all $y\in \mO$.
    \end{enumerate}
\end{corollary}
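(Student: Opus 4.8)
The plan is to prove both assertions by a single optional-sampling argument for the uniformly integrable submartingale $\{M(t\wedge\tau_\mO)\}_{t\geq 0}$, paralleling the existence proof of \thmref{thm:vsexist} rather than invoking the comparison principle (which is unavailable here, since $u$ is controlled only on $\mO$ and so is at best a subsolution on $\mO$, not globally). Write $\tau_\mO:=\inf\{t\geq 0; X(t)\notin\mO\}$ for the first exit time of $\mO$; since the filtration is right continuous and $\mO$ is open, $\tau_\mO$ is an $\mF_t$-stopping time and, by right continuity of the c\`adl\`ag paths together with $\mO$ open, $X(\tau_\mO)\notin\mO$ on $\{\tau_\mO<\infty\}$. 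Fix $y\in\mO$, so that $M(0)=u(y)$ $\bfP^y$-a.s. Because $\{M(t\wedge\tau_\mO)\}_{t\geq 0}$ is a $(\mF_t,\bfP^y)$ submartingale, its expectation is nondecreasing in $t$, giving $u(y)=\bfE^y[M(0)]\leq \bfE^y[M(t\wedge\tau_\mO)]$ for all $t\geq 0$. Uniform integrability then lets me pass to the limit in $L^1(\bfP^y)$ and obtain $u(y)\leq \bfE^y[M_\infty]$, where $M_\infty:=\lim_{t\to\infty}M(t\wedge\tau_\mO)$ exists $\bfP^y$-a.s.

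Next I would identify $M_\infty$ and bound it by the integrand of the objective functional evaluated at $\tau_\mO$. On $\{\tau_\mO<\infty\}$ the integral stabilises and $M_\infty=e^{-a\tau_\mO}u(X(\tau_\mO))+\int_0^{\tau_\mO}e^{-as}f(X(s))\diff s$. At the exit point $X(\tau_\mO)\notin\mO$, so in case (1) the hypothesis $u\leq g$ on $\sE\setminus\mO$ gives $u(X(\tau_\mO))\leq g(X(\tau_\mO))$; in case (2), combining $X(\tau_\mO)\notin\mO$ with the assumption $\bfP^{y}[X(\tau_\mO)\in K_0]=1$ yields $X(\tau_\mO)\in K_0\setminus\mO$, where again $u\leq g$. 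Hence, on $\{\tau_\mO<\infty\}$ one has $M_\infty\leq e^{-a\tau_\mO}g(X(\tau_\mO))+\int_0^{\tau_\mO}e^{-as}f(X(s))\diff s$. On $\{\tau_\mO=\infty\}$ the integral converges to $\int_0^\infty e^{-as}f(X(s))\diff s$ (as $|f|\leq\ninf{f}$), and since $u$ is bounded above the discounted term satisfies $\limsup_t e^{-at}u(X(t))\leq 0$; as $M_\infty$ already exists, this forces $M_\infty\leq\int_0^\infty e^{-as}f(X(s))\diff s$ there. Taking expectations, the right-hand side is exactly $J_y(\tau_\mO)$, whence $u(y)\leq\bfE^y[M_\infty]\leq J_y(\tau_\mO)\leq V(y)$, because $\tau_\mO\in\mT$. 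This is precisely the asserted inequality for every $y\in\mO$.

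The \textbf{main obstacle} is the careful treatment of the event $\{\tau_\mO=\infty\}$: one must show the discounted terminal contribution $e^{-at}u(X(t))$ is asymptotically nonpositive using only that $u$ is bounded above (not below), and must justify that $M_\infty$ is genuinely the $L^1$-limit of $M(t\wedge\tau_\mO)$, which is exactly where uniform integrability of the localized submartingale enters. A secondary point requiring care is the a.s.\ placement $X(\tau_\mO)\notin\mO$ for a c\`adl\`ag process exiting an open set, so that the boundary and jump-landing values are covered by the standing hypotheses on $g$, together with the measurability of $\tau_\mO$ (both standard, but worth stating). Once these are in place, cases (1) and (2) differ only in the single line explaining why $u(X(\tau_\mO))\leq g(X(\tau_\mO))$: the extra assumptions $\bar{\mO}\subseteq K_0$ and $\bfP^{y}[X(\tau_\mO)\in K_0]=1$ in (2) serve precisely to guarantee that the exit point lands in $K_0\setminus\mO$, where $u\leq g$ is known.
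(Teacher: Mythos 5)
Your argument is correct, but it takes a genuinely different route from the paper's. The paper stays inside the viscosity framework: it caps $u$ off $\mO$ by setting $u_-:=u$ on $\mO$ and $u_-:=u\wedge g$ on $\sE\setminus\mO$, checks $u_-\in USC(\sE)$ (using that $\mO$ is open, $\bar{\mO}\subseteq K_0$, $g$ is continuous and $u\leq g$ on $K_0\setminus\mO$), applies \lemref{lem:matmet} to get the subsolution property of $aw-\tmG^* w-f=0$ on $\mO$, observes that the capping makes $u_-$ a \emph{global} viscosity subsolution of $\min(aw-\tmG^* w-f,w-g)=0$ since the obstacle term is nonpositive off $\mO$, and concludes $u_-\leq V$ from \thmref{thm:maintheoremcompariosncb}. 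So the comparison principle is not ``unavailable'' as you assert: the capping device is exactly what turns data controlled only on $\mO$ into a global subsolution. Your route is instead purely probabilistic --- optional sampling for the stopped submartingale, identification of $\lim_{t}M(t\wedge\tau_\mO)$, and the chain $u(y)\leq \bfE^y\big[\lim_{t}M(t\wedge\tau_\mO)\big]\leq J_y(\tau_\mO)\leq V(y)$ --- which is more elementary, avoids \thmref{thm:maintheoremcompariosncb} (which the paper states without proof), and exhibits $\tau_\mO$ as an explicit stopping time realizing the bound. Two small repairs are needed: the corollary's hypothesis literally says only ``submartingale'', so either import the uniform integrability from the hypotheses of \lemref{lem:matmet} or, better, note that $M(t\wedge\tau_\mO)$ is bounded above by a constant (since $u$ is bounded above and $f$ is bounded), so the one-sided limit passage $u(y)\leq\bfE^y[\limsup_t M(t\wedge\tau_\mO)]$ follows from the reverse Fatou lemma with no uniform integrability at all; and on $\{\tau_\mO=\infty\}$ you need the convention $e^{-a\tau}g(X(\tau)):=0$ in \eqnref{eqn:payofffunction}, which the paper leaves implicit.
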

\begin{proof}
    As in \lemref{lem:matmet}, we give the proof of (2) and (1) by setting $K_0=\sE$. First define the function $u_-$ by
    \begin{align*}
        u_-(x):=\begin{cases}
        u(x)&\mbox{ for }x\in \mO\\
        u(x)\wedge g(x)&\mbox{ for }x\not\in\mO.
        \end{cases}
    \end{align*}
    Since $\mO$ is an open subset with $\bar{\mO}\subseteq K_0$, $g$ is a continuous function and $u(x)\leq g(x)$ for $x\in K_0\setminus \mO$, we have $u_-\in USC(\sE)$.

    Similarly with \corref{cor:sup>value}, by \lemref{lem:matmet}, $u_-$ is a viscosity subsolution to
    $$aw(x)-\tmG^*w(x)-f(x)=0\mbox{ for }x\in \mO.$$
    Since $u_-(x)\leq g(x)$ for all $x\not\in \mO$, then $u$ is a viscosity subsolution to
    $$\min(aw-\tmG^*w-f,w-g)=0.$$

    By the comparison principle (see \thmref{thm:maintheoremcompariosncb}), we have $u_-\leq V$ and then $u(x)\leq V(x)$ for all $x\in \mO$.

\end{proof}

Combing \corref{cor:sup>value} and \corref{cor:sub<value}, the following result suggests that the value function characterized in the proof of \cite[Theorem 3.1]{alili2005some} coincides with the viscosity solution to \eqref{eqn:vsg*}.
\begin{theorem}\label{thm:martingaleapproach}
Let $u\in \C_b(\sE)$ and $\mO\subseteq \sE$ be an open subset. Suppose there exists a subset $K_0$ such that $\bar{\mO}\subseteq K_0\subseteq \sE$ and $X$ satisfies $\bfP^{x}\big[X(\tau_\mO)\in K_0\big]=1$ for all $x\in \mO$. Additionally, suppose the following hold.
\begin{enumerate}
    \item $\{M(t\wedge\tau_\mO)\}_{t\geq 0}$ is a uniformly integrable martingale,
    \item $\{M(t)\}_{t\geq 0}$ is a supermartingale,
    \item $u\geq g$ and $u(x)= g(x)$ for $x\in K_0\setminus\mO$,
\end{enumerate}
Then, $u(x)=V(x)$ for all $x\in \mO$.
\end{theorem}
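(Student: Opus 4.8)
The plan is to combine the two one-sided estimates already established in Corollary~\ref{cor:sup>value} and Corollary~\ref{cor:sub<value}, so that $u$ is squeezed from above and below by the value function $V$ on the open set $\mO$. Under the stated hypotheses, $u\in\C_b(\sE)$ is in particular bounded from both above and below, so both corollaries are applicable provided I verify their respective hypotheses.

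First I would apply Corollary~\ref{cor:sup>value}. By hypothesis~(2), the process $\{M(t)\}_{t\geq0}$ defined by \eqref{eqn:matdef} is a supermartingale, and $u\in\C_b(\sE)\subseteq LSC(\sE)$ is bounded from below. Hypothesis~(3) gives $u\geq g$ on all of $\sE$. Corollary~\ref{cor:sup>value} then yields that $u$ is a viscosity supersolution to $\min(aw-\tmG^*w-f,w-g)=0$ and, crucially, that $u\geq V$ everywhere on $\sE$; in particular $u(x)\geq V(x)$ for all $x\in\mO$.

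Next I would apply part~(2) of Corollary~\ref{cor:sub<value} to obtain the reverse inequality on $\mO$. Here $u\in\C_b(\sE)\subseteq USC(\sE)$ is bounded from above, and hypothesis~(1) supplies the required submartingale property of $\{M(t\wedge\tau_\mO)\}_{t\geq0}$ (a martingale being in particular a submartingale). The set $K_0$ with $\bar{\mO}\subseteq K_0\subseteq\sE$ and the hitting condition $\bfP^{x}[X(\tau_\mO)\in K_0]=1$ for all $x\in\mO$ are exactly the standing assumptions of the theorem, matching those of Corollary~\ref{cor:sub<value}(2). Finally, the condition ``$u(x)=g(x)$ for $x\in K_0\setminus\mO$'' in hypothesis~(3) implies in particular $u(x)\leq g(x)$ on $K_0\setminus\mO$, which is precisely what Corollary~\ref{cor:sub<value}(2) requires. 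Thus $u(y)\leq V(y)$ for all $y\in\mO$.

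Combining the two inequalities gives $V(y)\leq u(y)\leq V(y)$, hence $u(y)=V(y)$ for all $y\in\mO$, completing the proof. I do not anticipate a genuine obstacle here, since the theorem is essentially a packaging of the two corollaries; the only point requiring mild care is checking that the one-sided boundary comparisons in hypothesis~(3) feed correctly into the two corollaries (the equality $u=g$ on $K_0\setminus\mO$ must simultaneously supply $u\geq g$ globally for the supersolution step and $u\leq g$ on $K_0\setminus\mO$ for the subsolution step), and that the martingale assumption in~(1) is used as a submartingale in one corollary while the supermartingale assumption~(2) is used in the other.
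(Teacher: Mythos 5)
Your proposal is correct and is exactly the paper's intended argument: the paper introduces Theorem~\ref{thm:martingaleapproach} with the phrase ``Combining Corollary~\ref{cor:sup>value} and Corollary~\ref{cor:sub<value}\dots'' and supplies no further proof, so your careful verification that hypothesis~(2) with $u\geq g$ feeds Corollary~\ref{cor:sup>value} (giving $u\geq V$ on $\sE$) while hypothesis~(1), read as a submartingale property, together with $u\leq g$ on $K_0\setminus\mO$ feeds Corollary~\ref{cor:sub<value}(2) (giving $u\leq V$ on $\mO$) is precisely what the paper leaves to the reader.
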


The above theorem gives a classical method to find the optimal stopping value function using the martingale characterization. It is traditionally used for one-dimensional process to find explicit solution for optimal stopping for diffusion. We should mention that the martingale approach usually does not require the continuity and boundedness of the reward functions $f$ and $g$. (See for example \cite{beibel2001optimal}.)

\section{Applications}\label{sec:levyprocess}

\subsection{Viscosity properties of value functions for optimal stopping problems}
In this section, we apply the results to study viscosity properties for optimal stopping problems for some processes satisfying \assref{ass:fscompact} and whose core fulfils the conditions of our main theorems. Let us mention that many traditional processes studied in the literature satisfy those assumptions. We revisit the optimal stopping using viscosity approach developed in the paper. To our knowledge, optimal stopping problems for  Brownian motion jumping at boundary and semi-Markov process have not been studied in the literature using viscosity approach. Recall that the objective function is given by
\begin{align}\label{eqn:valuefunction*}
    V(x):= \sup_{\tau}\bfE^x\Big[\int_0^\tau e^{-as}f(X(s))\diff s+e^{-a\tau}g(X(\tau))\Big].
\end{align}
Let $\sE$ be a space to be determined in each example. In this section, we always assume that $$a>0 \text{ and } f,g\in \C_b(\sE).$$ We will first use \thmref{thm:vsexist} to show that the value function given by \eqnref{eqn:valuefunction*} is a viscosity solution. Let us start with L\'{e}vy processes on the state space $\sE = \R^n$.

\subsubsection{L\'{e}vy Processes} \label{exa:levy} Here, we assume that $X=\{X(t)\}_{t\geq 0}$ is a L\'evy process on $\sE = \R^n$. It is known (see for example   \cite[Theorem 17.10]{kallenberg2006foundations}) that $X=\{X(t)\}_{t\geq 0}$ is a Feller process.
Its core $(\tmG_{\text{L\'evy}}, D(\tmG_{\text{L\'evy}}))$ is given by
\begin{align}\label{eqn:levygenerator}
\tmG_{\text{L\'evy}} w(x) =& \ell\cdot \triangledown w(x)+\frac{1}{2} div\,Q\,\triangledown w(x)\nonumber\\
&+\int_{\R^n\setminus \{0\}} \left(w(x+y)-w(x)-\triangledown w(x)\cdot y \ind_{|y|<1}\right)\nu (\diff y),
\end{align}
for $x\in \R^n$ and $w\in D(\tmG_{\text{L\'evy}}) := \C_0^\infty (\R^n)$, where $\ell\in \R^n$ is a vector, $Q \in \R^{n\times n}$ is a symmetric positive semi-definite matrix, $\nu$ is a positive Radon measure satisfying $\int_{\R^n\setminus \{0\}}\min{(|y|^2,1)\nu (\diff y)}<\infty$ and $\cz^\infty(\R^n)$ denotes the space of all infinitely differentiable functions and itself and all its derivatives belong to $\cz(\R^n)$.
We have the following result from \thmref{thm:maintheorem}.

\begin{proposition}\label{prop:levyvsexp}
	Assume that $X=\{X(t)\}_{t\geq 0}$ is a L\'evy process whose core  $(\tmG_{\text{L\'evy}}, D(\tmG_{\text{L\'evy}}))$ is described above. Then the value function $V$ given by \eqref{eqn:valuefunction*} is the unique viscosity solution $w\in \C_b(\R^n)$ associated with $(\tmG^*_{\text{L\'evy}}, D(\tmG^*_{\text{L\'evy}}))$ to
	\begin{equation}\label{eqn:levyvs}
	\min(a w - \tmG^*_{\text{L\'evy}}w-f, w-g)=0,
	\end{equation}
where $D(\tmG^*_{\text{L\'evy}})=\{v\in \C_*(\R^n); v-\ti{v}(\partial)\in\cz^\infty(\R^n) \}$.
\end{proposition}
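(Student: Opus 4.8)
The plan is to verify that the Lévy process $X$ on $\R^n$, together with the data $(a,f,g)$, meets every hypothesis of \assref{ass:fscompact}, and then to invoke \thmref{thm:maintheorem} directly. The operator $(\tmG^*_{\text{L\'evy}},D(\tmG^*_{\text{L\'evy}}))$ named in the statement is exactly the operator $(\tmG^*,D(\tmG^*))$ produced from the core $(\tmG_{\text{L\'evy}},D(\tmG_{\text{L\'evy}}))$ via \eqnref{eqn:g*define}, so no separate construction is needed and the proof reduces to a hypothesis check followed by a matching of operators.

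First I would record the three routine verifications. Condition (1) of \assref{ass:fscompact} holds because $\R^n$ with the Euclidean metric is a locally compact, separable metric space. Condition (2) holds because a Lévy process is a Feller process (as cited from \cite[Theorem 17.10]{kallenberg2006foundations}); its transition semigroup $\{\tmP_t\}_{t\geq 0}$ is therefore a Feller semigroup on $\cz(\R^n)$ with some infinitesimal generator $(\tmL,D(\tmL))$. Condition (3) is granted in the standing hypotheses of this section, namely $a>0$ and $f,g\in \C_b(\R^n)$.

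The only genuinely analytic point, and the one I expect to be the crux, is to justify that $(\tmG_{\text{L\'evy}},D(\tmG_{\text{L\'evy}}))$ with $D(\tmG_{\text{L\'evy}})=\cz^\infty(\R^n)$ really is a core of $(\tmL,D(\tmL))$ and that $\tmL$ acts on $\cz^\infty(\R^n)$ by the Lévy–Khintchine pseudo-differential expression \eqnref{eqn:levygenerator}. I would appeal to the classical theory of Lévy generators (see \cite{bottcher2013levy}): $\cz^\infty(\R^n)$ is contained in $D(\tmL)$, is dense in $\cz(\R^n)$, and is invariant under $\tmP_t$ by translation invariance together with the smoothing effect of convolution; by the standard core criterion (a dense, $\tmP_t$-invariant subspace of the domain is a core) it is a core, while the explicit form of $\tmL$ on $\cz^\infty(\R^n)$ follows by differentiating the characteristic exponent and using the Lévy measure condition $\int_{\R^n\setminus\{0\}}\min(|y|^2,1)\,\nu(\diff y)<\infty$. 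This secures condition (2) with the stated core.

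With \assref{ass:fscompact} in force, \thmref{thm:maintheorem} asserts that the value function $V$ is the unique viscosity solution $w\in\C_b(\R^n)$ associated with the operator $(\tmG^*,D(\tmG^*))$ built from $(\tmG_{\text{L\'evy}},D(\tmG_{\text{L\'evy}}))$ through \eqnref{eqn:g*define}. To close, I would note that substituting $D(\tmG_{\text{L\'evy}})=\cz^\infty(\R^n)$ into \eqnref{eqn:g*define} yields exactly $D(\tmG^*_{\text{L\'evy}})=\{v\in\C_*(\R^n);\, v-\ti{v}(\partial)\in\cz^\infty(\R^n)\}$ and $\tmG^*_{\text{L\'evy}}u=\tmG_{\text{L\'evy}}(u-\ti{u}(\partial))$, so the operators coincide and the claimed equation \eqref{eqn:levyvs} is precisely \eqref{eqn:vsaaa}. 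Because $\R^n$ is noncompact it is essential to route through \thmref{thm:maintheorem} rather than \thmref{thm:gvs=V}: the former handles the noncompact state space via the one-point compactification, and the construction of $\tmG^*$ automatically supplies the constant functions that \thmref{thm:gvs=V} would otherwise require in its domain.
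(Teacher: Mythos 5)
Your proposal is correct and follows essentially the same route as the paper, which simply cites \thmref{thm:maintheorem} after recording that a L\'evy process is Feller with core $(\tmG_{\text{L\'evy}},\cz^\infty(\R^n))$; you merely fill in the routine verification of \assref{ass:fscompact} and the identification of $(\tmG^*_{\text{L\'evy}},D(\tmG^*_{\text{L\'evy}}))$ with the operator from \eqnref{eqn:g*define}. One small remark: the invariance of $\cz^\infty(\R^n)$ under $\tmP_t$ needs only translation invariance (so that $\tmP_t$ commutes with derivatives), not any smoothing effect of convolution, which a general L\'evy semigroup need not possess.
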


\begin{remark}
    Similar optimal stopping problem was studied in  \cite{alili2005some,mordecki2002optimal}. In particular, the authors look at perpetual put options for one dimensional L\'{e}vy process with $f=0$ and $g(x)=K-e^{\beta x}$, where $K>0$ and $\beta>0$. More precisely, the value function has the following form
    \begin{align}
        V(x) := \sup_{\tau}\bfE^x\big[e^{-a\tau}(K-e^{\beta X(\tau)})^+\big].
    \end{align}
    Let us note that \cite{alili2005some} used a martingale approach similar to \thmref{thm:martingaleapproach} to prove that the value function is solution to a martingale problem.  Alternatively, we can use \propref{prop:levyvsexp} to show that the value function is the unique viscosity solution to the associated HJB equation.
\end{remark}

Let us now assume that the process $X=\{B(t)\}_{t\geq 0}$ is a one dimensional \textsl{standard Brownian motion}, that is, a Feller process with state space $\sE=\R$ and core $(\tmG_{BM},D(\tmG_{BM}))$ given by
\begin{align}\label{eqn:sbmgenerator}
\begin{split}
D(\tmG_{BM})&:=\{u\in \C_0(\R)\cap \C^2(\R);\, D_xu, D_{xx}u\in \cz(\R)\},\\
\tmG_{BM}u(x)&:=\frac{1}{2}D_{xx} u(x) \text{ for }x\in \R.
\end{split}
\end{align}
\thmref{thm:vsexist} gives us the freedom to choose larger domains than $D_0(\tmG^{BM})$, for example,
\begin{align}
D(\tmG^*_{BM}):=&\{u\in \C_*(\R)\cap\C^2 (\R);D_xu,D_{xx} u\in \C_0(\R)\}, \label{eqn:sbmgenerator11}\\
D(\tmG^{(b)}_{BM}):=&\{u\in\C_b(\R)\cap\C^2(\R);D_xu,D_{xx}u\in \C_b(\R)\}, \label{eqn:sbmgenerator12}\\
D(\tmG^{(p)}_{BM}):=&\{u\in \C^2 (\R); D_{xx}u\in \C_b(\R)\mbox{ and there exists } K>0 \nonumber\\
&\mbox{ such that }|u(x)|^2\leq K(1+|x|^{2})\mbox{ for all }x\in \R \}\label{eqn:sbmgenerator13}.
\end{align}
Using \thmref{thm:vsexist} and \thmref{thm:maintheorem}, we have the following result:
\begin{corollary}\label{cor:sdmex1}
	Assume that $X=\{B(t)\}_{t\geq 0}$ is a one dimensional \textsl{standard Brownian motion}. Then the value function $V$ given by \eqref{eqn:valuefunction*} is the unique viscosity solution $w\in\cz(\R)$ associated with $(\tmG_{BM}, D(\tmG^*_{BM}))$ (\resp{$(\tmG_{BM},D(\tmG^{(b)}_{BM}))$, $(\tmG_{BM}, D(\tmG^{(p)}_{BM}))$}) to
	\begin{equation}
	\min(a w - \tmG_{BM}w-f, w-g)=0.
	\end{equation}
\end{corollary}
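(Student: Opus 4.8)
The plan is to get existence from \thmref{thm:vsexist} and uniqueness from \thmref{thm:maintheorem}, and then to propagate both conclusions across the three domains using the inclusions
$$D(\tmG^*_{BM})\subseteq D(\tmG^{(b)}_{BM})\subseteq D(\tmG^{(p)}_{BM}),$$
on all of which $\tmG_{BM}$ acts as $\frac{1}{2}D_{xx}$. The first inclusion holds because a function converging at infinity is bounded and $\cz(\R)\subseteq\C_b(\R)$; the second because $u$ bounded gives $|u(x)|^2\leq\ninf{u}^2\leq K(1+|x|^2)$ while $D_{xx}u\in\C_b(\R)$ is already assumed.

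\textbf{Existence.} First I would check that $(\tmG_{BM},D(\tmG^*_{BM}))$, $(\tmG_{BM},D(\tmG^{(b)}_{BM}))$ and $(\tmG_{BM},D(\tmG^{(p)}_{BM}))$ are each $a$-generators of $B$ in the sense of \defref{def:agen}. For $w$ in any of these domains, It\^o's formula applied to $e^{-at}w(B(t))$, using $\tmG_{BM}w=\frac{1}{2}D_{xx}w$, makes the drift term in $S_w$ from \eqref{eqn:defasup} cancel, so that
\begin{equation*}
S_w(t)=-\int_0^t e^{-as}D_xw(B(s))\,\diff B(s),
\end{equation*}
which is a local martingale. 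To upgrade it to a uniformly integrable martingale I would bound its quadratic variation: for $D(\tmG^*_{BM})$ and $D(\tmG^{(b)}_{BM})$ the derivative $D_xw$ is bounded, so $\bfE^x\big[\int_0^\infty e^{-2as}(D_xw(B(s)))^2\,\diff s\big]\leq\ninf{D_xw}^2\int_0^\infty e^{-2as}\,\diff s<\infty$; for $D(\tmG^{(p)}_{BM})$ the bound on $D_{xx}w$ forces at most linear growth of $D_xw$, and combining this with $\bfE^x[B(s)^2]=x^2+s$ and the discount $e^{-2as}$ again yields a finite integral. Hence $S_w$ is $L^2$-bounded, so uniformly integrable, giving the $a$-generator property. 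Since $\tmG_{BM}w=\frac{1}{2}D_{xx}w\in\C(\R)$ in all three cases, \thmref{thm:vsexist} shows $V$ is a viscosity solution associated with each operator, and $V\in\cz(\R)$ by \thmref{thm:osi}.

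\textbf{Uniqueness for $D(\tmG^*_{BM})$.} Here I would observe that $(\tmG_{BM},D(\tmG^*_{BM}))$ is exactly the operator $(\tmG^*,D(\tmG^*))$ built from the core $(\tmG_{BM},D(\tmG_{BM}))$ via \eqref{eqn:g*define}: subtracting the limit at infinity maps $\C_*(\R)$ into $\cz(\R)$ without altering $D_xw,D_{xx}w$, so $u\in D(\tmG^*_{BM})$ iff $u-\ti{u}(\partial)\in D(\tmG_{BM})$, and $\tmG^*_{BM}u=\frac{1}{2}D_{xx}u$. \thmref{thm:maintheorem} then gives that $V$ is the unique viscosity solution in $\C_b(\R)$; since $\cz(\R)\subseteq\C_b(\R)$ and $V\in\cz(\R)$, it is in particular the unique one in $\cz(\R)$.

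\textbf{Uniqueness for the larger domains and the main obstacle.} The final step uses that enlarging the set of admissible test functions strengthens the viscosity sub/supersolution requirement: any $w\in\cz(\R)$ that is a viscosity solution associated with $(\tmG_{BM},D(\tmG^{(p)}_{BM}))$ or $(\tmG_{BM},D(\tmG^{(b)}_{BM}))$ remains one when the test functions are restricted to the smaller class $D(\tmG^*_{BM})$, hence equals $V$ by the previous paragraph. Combined with existence, this yields uniqueness in $\cz(\R)$ for all three operators. I expect the only genuine difficulty to be the $a$-generator verification in the polynomial-growth case, where $w$ and $D_xw$ are unbounded and one must lean on the discount factor $e^{-as}$ and the second-moment estimate $\bfE^x[B(s)^2]=x^2+s$ to secure a true uniformly integrable martingale; the identification of $D(\tmG^*_{BM})$ with the abstract $\tmG^*$-construction and the domain-monotonicity of the viscosity notion are routine.
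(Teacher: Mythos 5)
Your proposal is correct and follows essentially the same route as the paper: verify the $a$-generator property via It\^o's formula so that \thmref{thm:vsexist} gives existence, then identify $(\tmG_{BM},D(\tmG^*_{BM}))$ with the abstract $(\tmG^*,D(\tmG^*))$ of \eqref{eqn:g*define} so that \thmref{thm:maintheorem} gives uniqueness. Your treatment of the two larger domains --- passing uniqueness from $D(\tmG^*_{BM})$ upward via the monotonicity of the viscosity notion in the test-function class, and your explicit $L^2$-bound on the stochastic integral in the polynomial-growth case --- is in fact more careful than the paper's one-line assertion that all three domains ``correspond to $(\tmG^*,D(\tmG^*))$'', and fills in exactly the details the paper leaves implicit.
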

\begin{proof}
Let us first observe that $(\tmG_{BM}, D(\tmG^*_{BM}))$ corresponds to $(\tmG^*, D(\tmG^*))$ and $(\tmG_{BM}, D(\tmG_{BM}))$ corresponds to $(\tmG, D(\tmG))$ in \thmref{thm:maintheorem}. 
Let $w\in (\tmG_{BM}, D(\tmG^*_{BM}))$ (\resp{$(\tmG_{BM}, D(\tmG^{(b)}_{BM})), (\tmG_{BM}, D(\tmG_{BM}^{(p)}))$}).
	Using It\^{o}'s formula, the  process $\{S_w(t)\}_{t\geq 0}$ given by
	$$S_w(t) := w(X_0)-e^{-at}w(X(t))-\int_0^t e^{-as}\Big(aw(X(s))-\frac{1}{2}D_{xx} w(X(s))\Big)\diff s \mbox{ for }t\geq 0$$
	is a $(\mF_t,\bfP^x)$-uniformly integrable martingale for $a> 0$ and $x\in \R$. Using \defref{def:agen} the operator $(\tmG_{BM}, D(\tmG^*_{BM}))$ (\resp{$(\tmG_{BM}, D(\tmG^{(b)}_{BM}))$, $(\tmG^{BM}, D(\tmG^{(p)}_{BM}))$}) is an $a$-generator. Hence by \thmref{thm:vsexist}, the value function $V$ defined by \eqnref{eqn:valuefunction*} is a viscosity solution associated with $(\tmG^{BM}, D(\tmG^*_{BM}))$ (\resp{$(\tmG_{BM}, D(\tmG^{(b)}_{BM}))$, $(\tmG_{BM}, D(\tmG^{(p)}_{BM}))$}). The uniqueness follows from \thmref{thm:maintheorem} since $(\tmG_{BM}, D(\tmG^*_{BM}))$ (\resp{$(\tmG_{BM}, D(\tmG^{(b)}_{BM}))$, $(\tmG_{BM}, D(\tmG^{(p)}_{BM}))$}) corresponds to $(\tmG^*,D(\tmG^*))$.
\end{proof}

In the next section we consider examples of one dimensional diffusion processes on the positive half line $\sE=[0,\infty)$ that behave like a standard Brownian motion with different boundary behaviours at boundary $0$.

\subsubsection{Diffusion on $\sE=[0,\infty)$}\label{exa:onedimdiff}
Let $X=\{X(t)\}_{t\geq 0}$ be a diffusion process on $[0,\infty)$. Then the generator $(\tmG_{BC},D(\tmG_{BC}))$ of $X=\{X(t)\}_{t\geq 0}$ is given by
\begin{align}
\begin{split}
D(\tmG_{BC})&:=\{u\in \C_0([0,\infty))\cap\C^2([0,\infty));\,  D_xu,D_{xx}u\in \cz([0,\infty))\},\\
\tmG_{BC}u(x)&:=\frac{1}{2}D_{xx} u(x) \text{ for } x\in [0,\infty).
\end{split}
\end{align}
The operator $(\tmG_{BC},D(\tmG_{BC}))$ does not satisfy the positive maximum principle at $0$ unless we add some appropriate conditions at boundary $0$. Let us consider the following processes with appropriate domain
\begin{enumerate}
	\item \textsl{Reflected Brownian motion}: $D(\tmG_{ref}):=\{u\in D(\tmG_{BC});\,D_xu(0)=0\}$;
	\item \textsl{Sticking Brownian motion}: $D(\tmG_{stk}):=\{u\in D(\tmG_{BC});\, D_{xx}u(0)=0\}$;
	\item \textsl{Sticky reflecting Brownian motion}: $D(\tmG_{stkref}):=\{u\in D(\tmG_{BC});\, D_{xx}u(0)=c D_xu(0)\}$, where $c\in (0,\infty)$.
	\item \textsl{Brownian motion with jump at the boundary}: $D(\tmG_{jump}):=\{u\in D(\tmG_{BC});\, D_{xx}u(0)=\lambda \int_{[0,\infty)}(u(0)-u(y)) \mu (\diff y)\}$, where $\lambda>0$ and $\mu$ is a probability measure.
\end{enumerate}
We have the following result from \thmref{thm:vsexist} and \thmref{thm:maintheorem}:
\begin{proposition}
Assume that $X=\{X(t)\}_{t\geq 0}$ is a reflected Brownian motion (\resp{sticking Brownian motion, sticky reflecting Brownian motion}). Then the value function $V$ given by \eqref{eqn:valuefunction*} is a unique viscosity solution in $\C_b(\sE)$ associated with $(\tmG_{BC},D(\tmG_{ref}^*))$ (\resp{$(\tmG_{BC},D(\tmG_{stk}^*))$, $(\tmG_{BC},D(\tmG_{stkref}^*)))$, $(\tmG_{BC},D(\tmG_{jump}^*)))$.}
\end{proposition}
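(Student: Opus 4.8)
The plan is to reproduce, for each boundary behaviour, the argument of \corref{cor:sdmex1}: first verify that the operator $(\tmG_{BC},D(\tmG_{ref}))$ (and its analogues) is the core of a Feller semigroup so that \assref{ass:fscompact} is in force, then check that it is an $a$-generator in order to obtain existence from \thmref{thm:vsexist}, and finally invoke \thmref{thm:maintheorem} on the non-compact space $\sE=[0,\infty)$ to obtain uniqueness in $\C_b([0,\infty))$. Here the starred operator $(\tmG_{BC},D(\tmG_{ref}^*))$ plays the role of $(\tmG^*,D(\tmG^*))$ and the unstarred one that of $(\tmG,D(\tmG))$, so that the conclusion of \thmref{thm:maintheorem} applies verbatim.

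The first step carries the real content and is where I expect the main difficulty. I would show that each unstarred operator satisfies the three hypotheses of the Hille--Yosida--Ray theorem \thmref{thm:hyr}. Density of the domain in $\cz([0,\infty))$ and density of the range of $\lambda-\tmG_{BC}$ are classical for these diffusions; the delicate point is the positive maximum principle at the boundary $0$, which each boundary condition is designed to guarantee. I would check it case by case for $u$ in the domain attaining a nonnegative global maximum at $x_0$: at an interior point $x_0>0$ one has $D_{xx}u(x_0)\leq0$ as usual, while at $x_0=0$ the reflecting condition $D_xu(0)=0$ forces $D_{xx}u(0)\leq0$ through the expansion $u(x)=u(0)+\tfrac12 D_{xx}u(0)x^2+o(x^2)$, the sticking condition gives $D_{xx}u(0)=0$ outright, and the sticky reflecting condition combined with $D_xu(0)\leq0$ (a boundary maximum) and $c>0$ yields $D_{xx}u(0)=cD_xu(0)\leq0$. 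In each instance $\tmG_{BC}u(0)=\tfrac12 D_{xx}u(0)\leq0$, so the positive maximum principle holds and the operator is a Feller generator. Alternatively one may simply cite the classical construction of these boundary diffusions as Feller processes.

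The remaining steps are routine. To see that $(\tmG_{BC},D(\tmG_{ref}))$ is an $a$-generator I would argue as in \corref{cor:sdmex1}: It\^o's formula shows that the process $S_u(t)$ of \eqnref{eqn:defasup} is a local martingale, the boundary condition cancels the boundary local-time contribution, and $u,D_xu,D_{xx}u\in\cz$ supplies the uniform integrability making it a true martingale, so the operator is an $a$-generator for every $a>0$. The property passes to the starred domain because any $u\in D(\tmG_{ref}^*)$ differs from an element of $D(\tmG_{ref})$ by a constant and $\tmG_{BC}^*1=0$, exactly as in the proof of \thmref{thm:maintheorem}; hence $S_u$ for the starred operator equals the one for the core and is again a martingale. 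Existence of the viscosity solution then follows from \thmref{thm:vsexist}, and since $[0,\infty)$ is not compact, uniqueness in $\C_b([0,\infty))$ follows from \thmref{thm:maintheorem}.

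Finally, the Brownian motion with a jump at the boundary appears among the listed operators though not among the three named processes. Its boundary condition is of Wentzell type, and the positive maximum principle at $0$ holds precisely when the nonlocal term enters with the sign making $D_{xx}u(0)\leq0$ at a boundary maximum; granting that this operator is a Feller core, the $a$-generator and existence-uniqueness steps above apply without change, so the entire difficulty again lies in the boundary analysis of the first step.
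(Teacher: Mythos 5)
Your proposal is correct and follows essentially the same route as the paper, whose entire proof is the one\-line remark that these boundary diffusions are Feller processes (so that \assref{ass:fscompact} holds and \thmref{thm:vsexist} together with \thmref{thm:maintheorem} apply); you simply fill in the verification the paper delegates to the classical literature, in particular the case\-by\-case check of the positive maximum principle at $0$ and the $a$-generator property. Your observation that the nonlocal (jump) boundary condition must carry the sign making $D_{xx}u(0)\leq 0$ at a boundary maximum is apt, since the paper's two statements of that condition differ by exactly that sign.
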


\begin{proof}
It follows from the fact that the above processes are Feller processes. 
\end{proof}

Now, consider the reflected Brownian motion and define 
\begin{align*}
D(\tmG_{ref}^+):=& \{u\in \C_b([0,\infty))\cap\C^2([0,\infty)); D_xu,D_{xx}u\in \C_b([0,\infty)) \mbox{ and } D_xu(0)\geq 0\},\\
D(\tmG_{ref}^-):=& \{u\in \C_b([0,\infty))\cap\C^2([0,\infty)); D_xu,D_{xx}u\in \C_b([0,\infty)) \mbox{ and } D_x u(0)\leq 0\}.
\end{align*}
\begin{corollary}
Assume that $X=\{X(t)\}_{t\geq 0}$ is a reflected Brownian motion. Then the value function $V$ given by \eqref{eqn:valuefunction} is the unique function in $\cz(\R^+)$ which is both a viscosity supersolution associated with $(\tmG_{BC},D(\tmG^+_{ref}))$ and a viscosity subsolution associated with  $(\tmG_{BC},D(\tmG^-_{ref}))$.
\end{corollary}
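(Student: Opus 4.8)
The plan is to prove existence and uniqueness separately, the key analytic input being an It\^o--Tanaka computation that tracks the boundary local time of the reflected Brownian motion. Throughout write $\tmG_{BC}u=\frac{1}{2}D_{xx}u$, and recall that the reflected Brownian motion $X$ is a Feller process whose core is $(\tmG_{BC},D(\tmG_{ref}))$ with $D(\tmG_{ref})=\{u\in D(\tmG_{BC});\,D_xu(0)=0\}$, and that $V$ is the value function \eqref{eqn:valuefunction*} of this process. For existence I would show that $(\tmG_{BC},D(\tmG^+_{ref}))$ is an $a$-supergenerator and $(\tmG_{BC},D(\tmG^-_{ref}))$ is an $a$-subgenerator of $X$, after which \thmref{thm:vsexist} immediately yields that $V$ is a viscosity supersolution, resp.\ subsolution, associated with these operators, since $\tmG_{BC}$ maps each domain into $\C([0,\infty))$.

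To establish the $a$-(super/sub)generator property I use the Skorokhod decomposition $X_t=X_0+B_t+L_t$, where $B$ is a Brownian motion and $L$ is the non-decreasing boundary local time, supported on $\{X=0\}$. Applying It\^o's formula to $e^{-at}u(X_t)$ for $u\in\C^2$ with bounded first two derivatives, and using that $\int_0^t D_xu(X_s)\,\diff L_s=D_xu(0)L_t$ because $\diff L_s$ charges only $\{X_s=0\}$, the process $S_u$ of \eqref{eqn:defasup} collapses to
\begin{equation*}
S_u(t)=-\int_0^t e^{-as}D_xu(X_s)\,\diff B_s-D_xu(0)\int_0^t e^{-as}\,\diff L_s .
\end{equation*}
The stochastic integral is an $L^2$-bounded, hence uniformly integrable, martingale since $D_xu$ is bounded and $e^{-as}$ is square-integrable on $[0,\infty)$, while $\bfE^x\big[\int_0^\infty e^{-as}\,\diff L_s\big]<\infty$ bounds the finite-variation term by a fixed integrable random variable, giving uniform integrability of the whole family $\{S_u(t)\}_{t\geq0}$. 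Consequently, if $D_xu(0)\geq0$ the local-time term is non-increasing and $S_u$ is a uniformly integrable supermartingale, whereas if $D_xu(0)\leq0$ it is a submartingale. By \defref{def:agen} this is exactly the required generator property for the two enlarged domains, so \thmref{thm:vsexist} applies.

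For uniqueness the point is that both enlarged test classes contain the canonical one. From \eqref{eqn:g*define} one has $D(\tmG_{ref}^*)=\{u\in\C_*([0,\infty));\,u-\ti{u}(\partial)\in D(\tmG_{ref})\}$, and every such $u$ satisfies $u\in\C_*\subseteq\C_b$, $D_xu,D_{xx}u\in\cz\subseteq\C_b$ and $D_xu(0)=0$; hence $D(\tmG_{ref}^*)\subseteq D(\tmG^+_{ref})$ and $D(\tmG_{ref}^*)\subseteq D(\tmG^-_{ref})$, and on $D(\tmG_{ref}^*)$ one has $\tmG^*u=\frac{1}{2}D_{xx}(u-\ti{u}(\partial))=\frac{1}{2}D_{xx}u=\tmG_{BC}u$, so the two HJB equations coincide there. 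Now let $w\in\cz(\R^+)$ be a viscosity supersolution associated with $(\tmG_{BC},D(\tmG^+_{ref}))$ and a viscosity subsolution associated with $(\tmG_{BC},D(\tmG^-_{ref}))$; as $w$ is continuous it is both upper and lower semicontinuous. Restricting the defining inequalities to test functions lying in the smaller class $D(\tmG_{ref}^*)$ shows that $w$ is simultaneously a viscosity supersolution and subsolution associated with $(\tmG_{BC},D(\tmG_{ref}^*))=(\tmG^*,D(\tmG^*))$, i.e.\ a viscosity solution to \eqref{eqn:vsaaa}. Since $w\in\cz(\R^+)\subseteq\C_b(\R^+)$, \thmref{thm:maintheorem} forces $w=V$.

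The main obstacle is the existence step, and specifically the rigorous treatment of the boundary local time: one must justify that the only correction to Dynkin's formula is the single term $D_xu(0)L_t$ — so that its sign is governed solely by $D_xu(0)$ — and must confirm the finiteness of $\bfE^x\big[\int_0^\infty e^{-as}\,\diff L_s\big]$ that delivers uniform integrability of $S_u$ on the unbounded state space $[0,\infty)$. The uniqueness step, by contrast, is essentially a bookkeeping reduction to \thmref{thm:maintheorem} through the domain inclusions, the only delicate verification being that $\tmG^*$ and $\tmG_{BC}$ genuinely agree on the common test functions.
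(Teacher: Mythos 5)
Your proposal is correct and follows essentially the same route as the paper: establish that $(\tmG_{BC},D(\tmG^{+}_{ref}))$ and $(\tmG_{BC},D(\tmG^{-}_{ref}))$ are an $a$-supergenerator and $a$-subgenerator respectively, invoke \thmref{thm:vsexist} for the viscosity super/subsolution properties of $V$, and reduce uniqueness to \thmref{thm:maintheorem} by restricting test functions to $D(\tmG^{*}_{ref})\subseteq D(\tmG^{+}_{ref})\cap D(\tmG^{-}_{ref})$. The only difference is that you supply the Skorokhod-decomposition/local-time computation justifying the supermartingale and submartingale claims, which the paper simply asserts.
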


\begin{proof}
Since $w\in D(\tmG^+_{ref})$ (\resp{$D(\tmG^-_{ref})$}), the process $\{S_w(t)\}_{t\geq 0}$ given by
$$S_w(t)=w(X_0)-e^{-at}w(X(t))-\int_0^t e^{-as}\left(aw(X(s))-\tmG_{BC}w(X(s))\right)\diff s \mbox{ for }t\geq 0$$
is a $(\mF_t,\bfP^x)$ uniformly integrable supermartingale  (\resp{submartingale}). Hence, \thmref{thm:vsexist} suggests that the value function defined by \eqnref{eqn:valuefunction} is a viscosity supersolution (\resp{subsolution}) associated with $(\tmG_{BC},D(\tmG^+_{ref}))$ (\resp{$(\tmG_{BC},D(\tmG^-_{ref}))$}).
As for the uniqueness, we only need to show that it holds for the operator $(\tmG_{BC},D(\tmG^*_{ref}))$, where
$$
D(\tmG^*_{ref})=\{u\in \C_*([0,\infty))\cap\C^2([0,\infty));\,  D_xu, D_{xx}u\in \cz(\R^+) \text{ and } Du(0)=0\}.
$$
This follows from \thmref{thm:maintheorem}. Therefore, it leads to the desired result, since $(\tmG_{BC},D(\tmG^*_{ref}))$ can be seen as the restriction of $(\tmG_{BC},D(\tmG^+_{ref})\cap D(\tmG^-_{ref}))$ on $D(\tmG^*_{ref})$.
\end{proof}

\begin{remark}
In the above example, we consider the simplest cases of standard Brownian motion with the state space $[0,\infty)$. More generally, Feller \cite{feller1952parabolic,feller1954diffusion, feller1957generalized} constructs
Markov processes up to a specific regular boundary point $0$ with the boundary condition given by
	$$c_1 w(0)-c_2 Dw(0)+c_3 D_{xx}w(0)=0,$$
	for $c_1,c_2,c_3\geq 0$ with $c_1+c_2+c_3=1$. However, we have not considered the cases in which $c_1>0$, for example, the ``Dirichlet condition'' $w(0)=0$, or the ``Robin condition" $c_1 w(0)-c_2 Dw(0)=0$. The reason is that when $c_1>0$ the above Markov processes may be killed upon reaching $0$ and thus, does not coincide  with \defref{def:fellp} of Feller process.  Nevertheless, our method is still applicable as demonstrated in the following example.
\end{remark}

Let $(\tmG_{kill}, D(\tmG_{kill}))$ be an operator defined by
\begin{align}
\begin{split}
D(\tmG_{kill})&:=\{u\in \cz((0,\infty))\cap\C^2((0,\infty));\,D_xu,D_{xx}u\in \cz((0,\infty))\},\\
\tmG_{kill} u(x) &:=  \frac{1}{2}D_{xx}u(x) \text{ for }x\in (0,\infty).
\end{split}
\end{align}

Using Proposition \ref{prop:g*vs}, we have the corollary below:
 \begin{corollary}\label{cor:visckil1}
 	Suppose $f,g\in \cz((0,\infty))$ and $a>0$. Then there exists a unique viscosity solution  $w\in \cz((0,\infty))$ associated with $(\tmG_{kill},D(\tmG^{*}_{kill}))$ to
 	$$\min(a w-\tmG^{*}_{kill} w-f,w-g)=0,$$
 	where $D(\tmG^*_{kill}) = \{u\in \C_*((0,\infty))\cap\C^2((0,\infty));\,D_{xx}u\in \cz((0,\infty))\}$.
 	\end{corollary}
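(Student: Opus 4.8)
The plan is to obtain \corref{cor:visckil1} as a direct application of \propref{prop:g*vs}. For this it suffices to check that $(\tmG_{kill},D(\tmG_{kill}))$ is a core of a Feller semigroup on $\cz((0,\infty))$ --- concretely, the (sub-Markovian, hence non-conservative) semigroup of Brownian motion killed upon hitting the boundary $0$. Once this is established, since $\sE=(0,\infty)$ is not compact we do not need $1\in D(\tmG_{kill})$, and because $f,g\in\cz((0,\infty))\subseteq\C_*((0,\infty))$ the hypotheses of \propref{prop:g*vs} hold. The proposition then yields a unique viscosity solution $w\in\C_*((0,\infty))$ associated with $(\tmG^*_{kill},D(\tmG^*_{kill}))$, where $(\tmG^*_{kill},D(\tmG^*_{kill}))$ is the operator built from $(\tmG_{kill},D(\tmG_{kill}))$ through \eqnref{eqn:g*define}.

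To verify the core property I would invoke the Hille--Yosida--Ray theorem (\thmref{thm:hyr}) and check its three conditions for $(\tmG_{kill},D(\tmG_{kill}))$. Density of the domain is immediate since $\cc^\infty((0,\infty))\subseteq D(\tmG_{kill})$ and $\cc^\infty((0,\infty))$ is dense in $\cz((0,\infty))$. The positive maximum principle holds here precisely because the state space is the open interval $(0,\infty)$ and every $u\in D(\tmG_{kill})$ vanishes at both ends: if $u(x_0)=\sup_{x}u(x)\geq 0$ for some $x_0\in(0,\infty)$, then $x_0$ must be an interior maximiser, whence $D_{xx}u(x_0)\leq 0$ and $\tmG_{kill}u(x_0)=\tfrac12 D_{xx}u(x_0)\leq 0$. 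This is exactly the point of the remark preceding the corollary: no boundary condition at $0$ is needed because $0\notin\sE$. The remaining condition, density of the range of $\lambda-\tmG_{kill}$ for each $\lambda>0$, is handled by solving the resolvent equation $\lambda u-\tfrac12 D_{xx}u=h$ for $h$ in a dense subset of $\cz((0,\infty))$; the bounded Green's function of the killed process, assembled from the homogeneous solutions $\sinh(\sqrt{2\lambda}\,x)$ (vanishing at $0$) and $e^{-\sqrt{2\lambda}\,x}$ (decaying at $+\infty$), produces a solution $u$ that vanishes at both ends. By \thmref{thm:hyr} the closure of $(\tmG_{kill},D(\tmG_{kill}))$ then generates a Feller semigroup, so $(\tmG_{kill},D(\tmG_{kill}))$ is a core of it.

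With the core property in hand, \propref{prop:g*vs} applies and gives the unique $w\in\C_*((0,\infty))$ with boundary value $\ti{w}(\partial)=\max(\ti{f}(\partial),\ti{g}(\partial))$. Since $f,g\in\cz((0,\infty))$ both have limit $0$ at infinity, $\ti{f}(\partial)=\ti{g}(\partial)=0$, so $\ti{w}(\partial)=0$; equivalently $w$ vanishes at both $0^+$ and $+\infty$, i.e. $w\in\cz((0,\infty))$. This is exactly the asserted unique viscosity solution in $\cz((0,\infty))$ associated with $(\tmG^*_{kill},D(\tmG^*_{kill}))$ to $\min(aw-\tmG^*_{kill}w-f,w-g)=0$.

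I expect the main obstacle to be the range-density (resolvent) step together with the careful treatment of the killed boundary. One must pin down the boundary datum actually carried by the generator's domain: the domain consists of the resolvent images $\tmR_\lambda h$, which satisfy the Dirichlet condition $u(0^+)=0$ (this is what makes the semigroup sub-Markovian and keeps the positive maximum principle valid on the open interval) while in general $D_x u(0^+)\neq 0$; accordingly the relevant vanishing conditions are on $u$ and $D_{xx}u$, matching the description of $D(\tmG^*_{kill})$ in the statement. Everything else is a routine transcription of \propref{prop:g*vs} to this non-conservative, non-compact setting.
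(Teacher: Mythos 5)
Your proposal is correct and follows the same overall route as the paper: the entire content of the corollary is reduced to \propref{prop:g*vs}, and the only thing to check is that $(\tmG_{kill},D(\tmG_{kill}))$ is a core of a Feller semigroup. Where you differ is in how that core property is established. The paper disposes of it with a citation to Feller's classical work on one-dimensional diffusions; you instead verify the three hypotheses of the Hille--Yosida--Ray theorem (\thmref{thm:hyr}) directly, with the positive maximum principle following from the openness of $(0,\infty)$ and the range condition from the explicit Green's function of the killed process built from $\sinh(\sqrt{2\lambda}\,x)$ and $e^{-\sqrt{2\lambda}\,x}$. This buys a self-contained argument, and your closing remark about the boundary datum is not a side comment but actually load-bearing: the resolvent images $\tmR_\lambda h$ satisfy $u(0^+)=0$ but in general $D_xu(0^+)\neq 0$, so they do not lie in $D(\tmG_{kill})$ as the paper literally defines it (which demands $D_xu\in\cz((0,\infty))$, hence $D_xu(0^+)=0$). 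Indeed, if one insists on both $u(0^+)=0$ and $D_xu(0^+)=0$, two integrations by parts show that every function in the range of $\lambda-\tmG_{kill}$ annihilates the measure $e^{-\sqrt{2\lambda}\,y}\,\diff y$, so the range is not dense and the HYR argument would break down. Your resolution --- reading the domain so that only $u$ and $D_{xx}u$ are required to vanish at the boundary, consistent with the $D(\tmG^*_{kill})$ displayed in the corollary --- is the right one, and it makes your write-up more careful than the paper's one-line proof on precisely the point where care is needed. The final step, observing that $\ti{f}(\partial)=\ti{g}(\partial)=0$ forces $\ti{w}(\partial)=0$ and hence $w\in\cz((0,\infty))$, matches what \propref{prop:g*vs} delivers.
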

 \begin{proof}
  It is known (see for example \cite{feller1954diffusion}) that $(\tmG_{kill}, D(\tmG_{kill}))$ is the core of a Feller semigroup. Hence, the result follows from \propref{prop:g*vs}.
\end{proof}

\begin{remark}
Assume that $X=\{X(t)\}_{t\geq 0}$ is a standard Brownian motion. We show in \cite{Suhang2016b} that under additional assumptions, the unique viscosity solution given in \corref{cor:visckil1} is the value function to the following optimal stopping problem:
$$ V(x) = \sup_{\tau}\bfE^x\bigg[\int_0^{\tau\wedge \tau_0} e^{-as}f(X(s))\diff s+e^{-a\tau}g(X(\tau))\ind_{\tau<\tau_0}\bigg] \mbox{ for } x\in (0,\infty),$$
where $\tau_0 :=\inf\{t> 0; X(t)\not\in (0,\infty)\}$.
\end{remark}

%
%

 In the next section, we wish to establish viscosity properties of the value function of the optimal stopping problem \eqref{eqn:valuefunction*}, when $X$ is a diffusion with piecewise coefficients. Such problem with discontinuous function $f$ and $g=0$ was studied in \cite{belomestny2010optimal,ruschendorf2008class} using a ``modified" free boundary approach. The definition of viscosity solution given in \cite[Definiton 4.2 and 4.3]{belomestny2010optimal} does not ensure that the value function is the unique solution. In this paper, assuming that $f,g\in\C_b(\sE)$ and using different definition of viscosity solution, we show the viscosity property of the value function.

\subsubsection{Diffusion with piecewise coefficients}
 We start by constructing a diffusion process $X=\{X(t)\}_{t\geq 0}$ with piecewise coefficients. Let $\sigma$, $\rho$ and $\mu$ be three bounded real valued measurable functions.  Suppose $\sigma|_{\R\setminus J}\in \C^1_b(\R\setminus J)$ and $\mu|_{\R\setminus J},\rho|_{\R\setminus J}\in \C_b(\R\setminus J)$, where $J$ is a set in $\R$ without cluster points and contains all the discontinuous points of the functions $\sigma$, $\mu$ and $\rho$. In addition, suppose there exists $\lambda>0$ such that $\sigma,\mu>\lambda$.   We know from \cite[Propositions~2.1, 2.2 and 2.6]{lejay2015one} that there exists a Feller process $X$ with continuous paths whose infinitesimal generator is given by
\begin{align}\label{eqdpc1}
\begin{split}
D(\tmG_{pw})&:= \{w\in \cz(\R);\, D_xu, D_{xx} u \text{ exists in }\R\setminus J, \tmG_{pw}u\in \cz(\R)\notag\\
&\qquad\text{ and }\sigma(x^-)D_xu(x^-)=\sigma(x^+)D_xu(x^+) \text{ for all } x\in J\}.\\
\tmG_{pw} u(x) &:=\left\{
\begin{array}{ll}
\frac{\rho(x)}{2} D_x(\sigma(x) D_x w)x(x)+\mu x()D_xu(x_x )&\hbox{ for } x\in \R\setminus J,\\
\frac{\rho(x)}{2} D_x(\sigma D_x u)_x((x)x^+)+\mu D_xu(x^+) & \hbox{ for }x\in J,
\end{array}
\right.
\end{split}
\end{align}
As a consequence of \thmref{thm:maintheorem}, we have the following result.
\begin{corollary}\label{cor:skew1}
	Let $X=\{X(t)\}_{t\geq 0}$ be Feller process whose core $(\tmG_{pw},D(\tmG_{pw} ))$ is given by \eqref{eqdpc1}. Then the value function $V$ given by \eqref{eqn:valuefunction*} is the unique viscosity solution $w\in \C_b(\R)$ associated with $(\tmG_{pw}^*,D(\tmG_{pw}^* ))$, where
	\begin{align}
	D(\tmG^*_{pw}):=& \{u\in \C_*(\R);\, D_xu,D_{xx} u \text{ exists in }\R\setminus J, \tmG_{pw}u\in \cz(\R)\notag\\
	&\text{ and }\sigma(x^-)D_xu(x^-)=\sigma(x^+)D_xu(x^+) \text{ for all } x\in J\}.
	\end{align}
\end{corollary}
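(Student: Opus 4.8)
The plan is to reduce the statement directly to \thmref{thm:maintheorem}, whose sole hypothesis is \assref{ass:fscompact}. First I would check that \assref{ass:fscompact} holds in the present setting: the state space $\sE=\R$ is a locally compact, separable metric space; by \cite[Propositions~2.1, 2.2 and 2.6]{lejay2015one} the diffusion with piecewise coefficients $X$ is a Feller process whose infinitesimal generator is $(\tmG_{pw},D(\tmG_{pw}))$ as in \eqref{eqdpc1}, and — recalling from the preliminaries that the infinitesimal generator of a Feller semigroup is its own core — this operator serves as the core $(\tmG,D(\tmG))$ demanded by \assref{ass:fscompact}; finally $a>0$ and $f,g\in\C_b(\R)$ by the standing hypotheses of this section. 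Thus all three parts of \assref{ass:fscompact} are satisfied.

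The only genuine point to verify is that the operator $(\tmG_{pw}^*,D(\tmG_{pw}^*))$ written in the statement coincides with the operator $(\tmG^*,D(\tmG^*))$ produced from the core $(\tmG_{pw},D(\tmG_{pw}))$ through the construction \eqref{eqn:g*define}. Since $\R$ is not compact, for $u\in\C_*(\R)$ the extension value $\ti{u}(\partial)$ is the constant limit $C$ of $u$ at infinity. Subtracting $C$ leaves the first and second derivatives of $u$ unchanged on $\R\setminus J$, leaves the transmission condition $\sigma(x^-)D_xu(x^-)=\sigma(x^+)D_xu(x^+)$ unchanged (it involves only $D_xu$), and — because $\tmG_{pw}$ is a purely differential operator with no zeroth-order term — gives $\tmG_{pw}(u-C)=\tmG_{pw}u$. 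Moreover $u-C$ vanishes at infinity precisely when $u$ converges at infinity, so $u-C\in\cz(\R)$. Hence $u-\ti{u}(\partial)\in D(\tmG_{pw})$ if and only if $u$ lies in the set $D(\tmG_{pw}^*)$ of the statement, and on this domain $\tmG_{pw}^*u=\tmG_{pw}(u-\ti{u}(\partial))=\tmG_{pw}u$, which is exactly \eqref{eqn:g*define} specialized to $(\tmG,D(\tmG))=(\tmG_{pw},D(\tmG_{pw}))$.

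With these two identifications in hand, \thmref{thm:maintheorem} applies verbatim and yields that the value function $V$ of \eqref{eqn:valuefunction*} is the unique viscosity solution $w\in\C_b(\R)$ associated with $(\tmG_{pw}^*,D(\tmG_{pw}^*))$ to $\min(aw-\tmG_{pw}^*w-f,w-g)=0$, which is the assertion. I expect the only mild obstacle to be the bookkeeping in this domain identification, namely confirming that the differentiability requirements and the flux/transmission condition across the exceptional set $J$ are invariant under subtracting a constant; this is immediate since every such condition involves only derivatives of $u$, so no further analysis of the boundary behaviour at the points of $J$ is needed.
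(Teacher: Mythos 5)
Your proposal is correct and follows the same route the paper intends: the paper offers no written proof beyond "as a consequence of Theorem \ref{thm:maintheorem}", and your verification that \assref{ass:fscompact} holds (with the generator serving as its own core) and that $(\tmG_{pw}^*,D(\tmG_{pw}^*))$ is exactly the operator produced by \eqref{eqn:g*define} — because subtracting the constant limit at infinity leaves all derivatives, the transmission condition, and $\tmG_{pw}u$ unchanged — is precisely the bookkeeping the paper leaves implicit.
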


In particular, \cite[Chapter~VII, Exercise~1.23]{revuz2013continuous} provides an example of \textsl{Skew Brownian motion} with parameter $\beta\in (0,1)$. 
Heuristically speaking, it is constructed by a Brownian motion reflected at zero which enters the positive half line with probability $\frac{\beta+1}{2}$ (\resp{the negative half with probability $\frac{1-\beta}{2}$}) when it reaches zero. Its core is given by
\begin{align}\label{eqsbm1}
\begin{split}
D(\tmG_{skew}) &:= \{u\in \cz(\R);\,\, D_xu, D_{xx}u \mbox{ exists in }\R\setminus \{0\}\mbox{ and converges to }0 \mbox{ at infinity},\\
	&\qquad D_{xx} w(0^-)=D_{xx} w(0^+)\mbox{  and }
	\beta D_x w(0^+) =(1-\beta)D_x w(0^-)\},\\
\tmG_{skew} u(x) &:=\left\{
\begin{array}{ll}
\frac{1}{2}D_{xx}u(x)&\hbox{ for  } x\in \R\setminus\{0\},\\
\frac{1}{2}D_{xx}u (0^+)& \hbox{ for }x=0.
\end{array}
\right.
\end{split}
\end{align}
Again, \thmref{thm:maintheorem} yields the following result
\begin{corollary}\label{cor:skew2}
	Let $X=\{X(t)\}_{t\geq 0}$ be a skew Brownian motion with parameter $\beta\in (0,1)$. Then the value function \eqref{eqn:valuefunction} of the stopping problem \eqref{eqn:payofffunction}-\eqref{eqn:valuefunction}  is the unique viscosity solution $w\in \C_b(\R)$ associated with $( \tmG^*_{skew},D(\tmG^*_{skew}))$, where
	\begin{align}\label{eqsbm2}
	D(\tmG^*_{skew}) :=& \{u\in \C_*(\R);\,\, D_xu, D_{xx}u \mbox{ exists in }\R\setminus \{0\}\mbox{ and converges to }0 \mbox{ at infinity},\notag\\
	&\, D_{xx} u(0^-)=D_{xx} u(0^+)\mbox{  and }
	\beta D_xu(0^+) =(1-\beta)D_xu(0^-)\}.
	\end{align}
	
\end{corollary}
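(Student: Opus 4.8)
The plan is to deduce the statement directly from \thmref{thm:maintheorem}, applied with the core $(\tmG_{skew},D(\tmG_{skew}))$ of \eqnref{eqsbm1}. The first step is to confirm the hypotheses of that theorem, that is \assref{ass:fscompact}: the state space $\sE=\R$ is locally compact, separable and metric, while the standing assumptions of this section provide $a>0$ and $f,g\in\C_b(\R)$. What remains is that the skew Brownian motion is a Feller process whose generator admits $(\tmG_{skew},D(\tmG_{skew}))$ as a core; this is exactly the construction recalled from \cite[Chapter~VII, Exercise~1.23]{revuz2013continuous}, which I would invoke. (One may alternatively view the skew Brownian motion as a diffusion with piecewise coefficients, $\sigma$ being piecewise constant with $\sigma(0^-)/\sigma(0^+)=(1-\beta)/\beta$, so that the flux condition $\sigma(0^-)D_xu(0^-)=\sigma(0^+)D_xu(0^+)$ of \eqnref{eqdpc1} reduces to $\beta D_xu(0^+)=(1-\beta)D_xu(0^-)$ and \corref{cor:skew1} applies; I would keep the direct route as the main argument.)

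The second and only genuinely delicate step is to check that the pair $(\tmG^*_{skew},D(\tmG^*_{skew}))$ of the statement, with domain \eqnref{eqsbm2}, coincides with the operator $(\tmG^*,D(\tmG^*))$ manufactured from the core $(\tmG_{skew},D(\tmG_{skew}))$ through the general recipe \eqnref{eqn:g*define}. Since $\R$ is not compact, for $u\in\C_*(\R)$ the quantity $\ti{u}(\partial)$ is the common limit $C$ of $u$ at infinity, and $u\in D(\tmG^*)$ means $u-C\in D(\tmG_{skew})$. As $C$ is constant, $D_x(u-C)=D_xu$ and $D_{xx}(u-C)=D_{xx}u$, so each condition defining $D(\tmG_{skew})$ — the existence of the two derivatives off $0$, their vanishing at infinity, the matching $D_{xx}u(0^-)=D_{xx}u(0^+)$, and the gluing $\beta D_xu(0^+)=(1-\beta)D_xu(0^-)$ — is unchanged by subtracting $C$ and thus reproduces verbatim the conditions of \eqnref{eqsbm2}. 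The same invariance gives $\tmG^*u=\tmG_{skew}(u-C)=\frac12 D_{xx}u$ off $0$, which is $\tmG^*_{skew}u$. Hence the two operators are identical.

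Granting these two points, \thmref{thm:maintheorem} applies without modification and identifies the value function $V$ of \eqnref{eqn:valuefunction} as the unique viscosity solution $w\in\C_b(\R)$ associated with $(\tmG^*_{skew},D(\tmG^*_{skew}))$ to $\min(aw-\tmG^*_{skew}w-f,w-g)=0$, which is the claim. I expect the only real effort to lie in the Feller and core properties of the skew Brownian motion; once these are secured (by the cited construction, or via the piecewise-coefficient reduction), the constant-shift identification of the domains is routine and no comparison, penalisation, or martingale argument has to be redone, all of that being already encapsulated in \thmref{thm:maintheorem}.
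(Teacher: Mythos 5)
Your proposal is correct and matches the paper's route: the paper likewise treats the skew Brownian motion as a Feller process with core $(\tmG_{skew},D(\tmG_{skew}))$ (arising as a special case of the piecewise-coefficient diffusions) and obtains the corollary as a direct application of \thmref{thm:maintheorem}. Your additional verification that the domain \eqref{eqsbm2} coincides with the $(\tmG^*,D(\tmG^*))$ produced by \eqref{eqn:g*define} (constants not affecting the derivative and gluing conditions) is exactly the implicit step the paper leaves unstated.
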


\begin{remark}
Observe that $D(\tmG^*_{skew})$ in the above example does not contain any smooth function unless its derivative is $0$. Therefore, showing that a function has the viscosity property at $0$ means test functions $\phi$ as described in \defref{def:vs} are continuous but are not smooth at $0$.  This leads to additional technical difficulty  in the proof of the uniqueness when using the traditional method. This is due to the fact that this method is based on smoothness of test function and properties of elliptic or parabolic differential equations.
\end{remark}

\subsection{Perturbation} Perturbation is a powerful method to transform a known Feller process to a new Feller process. We first introduce the following lemma which enables to construct the Feller semigroup using perturbation.
\begin{lemma}\cite{bottcher2013levy}\label{lem:jcf}
Let $(\tmG,D(\tmG))$ be the infinitesimal generator of some Feller semigroup on $\cz(\sE)$. Assume that $B:\cz(\sE)\rightarrow\cz(\sE)$ and $B$ is bounded, that is, there exists $C>0$ such that  $\sup_{u\in \cz(\sE)}\frac{\ninf{Bu}}{\ninf{u}}\leq C$. Additionally suppose $(B,\cz(\sE))$ satisfies the positive maximum principle. Then, $(\tmL+B,D(\tmL))$ is the infinitesimal generator of some Feller semigroup on $\cz(\sE)$.
\end{lemma}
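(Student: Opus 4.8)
The plan is to verify the three hypotheses of the Hille--Yosida--Ray theorem (\thmref{thm:hyr}) for the operator $(\tmL+B,D(\tmL))$. Once these are checked, its closure generates a Feller semigroup on $\cz(\sE)$; and since $\tmL$ is closed by \corref{cor:hy} while $B$ is a bounded operator defined on all of $\cz(\sE)$, the sum $\tmL+B$ is already closed on the unchanged domain $D(\tmL+B)=D(\tmL)$. Hence $(\tmL+B,D(\tmL))$ coincides with its own closure and is itself the infinitesimal generator, which is exactly the assertion.

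The first two conditions are inherited essentially for free. Density of $D(\tmL+B)=D(\tmL)$ in $\cz(\sE)$ holds because $(\tmL,D(\tmL))$ already generates a Feller semigroup. For the positive maximum principle, suppose $w\in D(\tmL)$ satisfies $w(x_0)=\sup_{x\in\sE}w(x)\geq 0$. Then $\tmL w(x_0)\leq 0$ since $(\tmL,D(\tmL))$ obeys the positive maximum principle, and $Bw(x_0)\leq 0$ since $w\in\cz(\sE)$ and $(B,\cz(\sE))$ obeys the positive maximum principle by assumption; adding the two inequalities gives $(\tmL+B)w(x_0)\leq 0$.

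The substantive step is the range condition, and I would establish it by factoring the perturbed operator through the resolvent of the unperturbed one. Using the identity $\tmR_\lambda(\lambda-\tmL)v=v$ for $v\in D(\tmL)$ from \eqref{eqn:resolventproperties2}, a direct computation gives
\[ \lambda-(\tmL+B)=(\id-B\tmR_\lambda)(\lambda-\tmL)\qquad\text{on }D(\tmL). \]
Since $(\lambda-\tmL)$ is a bijection of $D(\tmL)$ onto $\cz(\sE)$ (again \corref{cor:hy}), it suffices to invert the bounded operator $\id-B\tmR_\lambda$ on $\cz(\sE)$. From the boundedness bound $\ninf{B}\leq C$ and the resolvent estimate $\ninf{\tmR_\lambda}\leq 1/\lambda$ of \eqref{eqn:resolventproperties233},
\[ \ninf{B\tmR_\lambda}\leq \ninf{B}\,\ninf{\tmR_\lambda}\leq \frac{C}{\lambda}, \]
which is strictly less than $1$ once $\lambda>C$. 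For such $\lambda$ the Neumann series $\sum_{n\geq 0}(B\tmR_\lambda)^n$ converges in operator norm and furnishes a bounded inverse of $\id-B\tmR_\lambda$; composing with $(\lambda-\tmL)$ shows that $\lambda-(\tmL+B)$ maps $D(\tmL)$ onto all of $\cz(\sE)$, so in particular its range is dense.

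This verifies the hypotheses of \thmref{thm:hyr}, and therefore $(\tmL+B,D(\tmL))$ is the infinitesimal generator of a Feller semigroup on $\cz(\sE)$. Surjectivity of $\lambda-(\tmL+B)$ for the remaining range $0<\lambda\leq C$ is then automatic by \corref{cor:hy} applied to this newly identified generator. The main obstacle is precisely the range/surjectivity step: its resolution rests on the quantitative bound $\ninf{\tmR_\lambda}\leq 1/\lambda$, which forces the perturbation $B\tmR_\lambda$ to become a strict contraction for large $\lambda$; the density of the domain and the stability of the positive maximum principle under the sum are routine.
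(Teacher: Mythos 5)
The paper does not actually prove this lemma: it is quoted from \cite{bottcher2013levy} and used as a black box, so there is no in-paper argument to compare against. Your proof is a correct, self-contained derivation and is essentially the standard bounded-perturbation argument for Feller generators: verify the Hille--Yosida--Ray hypotheses for $\tmL+B$, with density of the domain and the positive maximum principle inherited termwise, closedness preserved under the bounded perturbation (so the operator equals its own closure), and the range condition obtained from the factorization $\lambda-(\tmL+B)=(\id-B\tmR_\lambda)(\lambda-\tmL)$ on $D(\tmL)$ together with the Neumann series for $\id-B\tmR_\lambda$, valid because $\ninf{B\tmR_\lambda}\leq C/\lambda<1$ once $\lambda>C$. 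All of these steps are sound, and the factorization identity is correctly justified by \eqref{eqn:resolventproperties2}.

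One logical wrinkle deserves attention. As stated in the paper, condition (2) of \thmref{thm:hyr} requires density of the range of $\lambda-\tmG$ for \emph{all} $\lambda>0$, whereas your Neumann series only covers $\lambda>C$; your closing remark that surjectivity for $0<\lambda\leq C$ is ``automatic by \corref{cor:hy} applied to this newly identified generator'' presupposes that the generation theorem has already been applied, which is circular if one insists on the paper's literal hypothesis. The repair is standard and costs one sentence: the usual form of the Hille--Yosida--Ray theorem (including the version in \cite{bottcher2013levy}) only requires the range of $\lambda_0-\tmG$ to be dense for \emph{some} $\lambda_0>0$; equivalently, since the positive maximum principle makes $\tmL+B$ dissipative, the set of $\lambda>0$ for which $\lambda-(\tmL+B)$ has dense range is both open and closed in $(0,\infty)$, so density for one $\lambda$ propagates to all. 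With that observation inserted, your argument is complete.
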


Using this method, we provide constructions of Feller processes via perturbation. The first example is the Feller process with large jumps.

\subsubsection{Compound Poisson Operator} Let $\{X(t)\}_{t\geq 0}$ be a Feller process with the state space state space $[0,\infty)$ and core given by $(\tmG,D(\tmG))$. Define a bounded operator $B$ by
    \begin{align}
        Bu(x):= \lambda\int_0^\infty(u(x)-u(x-y))\diff \mu(y),
    \end{align}
    where $\mu$ is a probability distribution function defined on $(0,\infty)$ and $\lambda$ is the intensity parameter. Then by \lemref{lem:jcf}, $(\tmG+B,D(\tmG^*))$ is the infinitesimal generator of some Feller process $\{Y(t)\}_{t\geq 0}$. For example, let $\{B(t)\}_{t\geq 0}$ be a standard Brownian motion and $\mu(x)=1-e^{-\gamma x}$ be the distribution function of an exponential random variable with parameter $\gamma$. Let $\{X_b(t)\}_{t\geq 0}$ be a compound Poisson process with the intensity $\lambda >0$ and the jump height following an exponential distribution with parameter $\gamma$. Then, in this case, one can choose $\{Y(t),\mF_t^Y\}_{t\geq 0}$ as
    \begin{align}
        Y(t) = Y(0)+B(t)+X_b(t)\mbox{ for }t\geq 0
    \end{align}
    with core $(\tmG_{ref}+B,D(\tmG_{ref}))$, where $\mF_t^Y$ is the natural filtration of $\{Y(t)\}_{t\geq 0}$. Thus $\{Y(t)\}$ is still a Feller process. Hence viscosity solution approach can be used to characterise the value function of the optimal stopping problem of $\{Y(t)\}_{t\geq 0}$. 







\subsubsection{Semi-Markov Process}

Let $\{T_i\}_{i\in \mathbb{N}}$ be a sequence of independent and identical (i.i.d.) random variables with cumulative density distribution function $P$. $\{T_i\}_{i\in \mathbb{N}}$ can be seen as the interarrival time of some random event. Additionally, let $\{Y_i\}_{i\in \mathbb{N}}$ be a sequence of i.i.d random variables defined on $\R$ with distribution function $F$. Let $S_n:=\sum_{i=1}^{n} T_i$ for $n=0,1,\ldots$ and the renewal process $N(t):=\max\{n;S_n\leq t\}$. Let $\{X(t)\}_{t\geq 0}$ be
    \begin{align}\label{eqn:semimark}
        X(t):= x+\sum_{i=1}^{N(t)}Y_i \text{ for }t\geq 0,
    \end{align}
    where $x$ is the initial state.
    For example when the interarrival time is the exponential distribution, $\{X(t)\}_{t\geq 0}$ is a compound Poisson distribution which is a Markov process. However, if the interarrival time does not follow the exponential distribution, $\{X_t\}_{t\geq 0}$ is not a Markov process but a semi-Markov process. We want to analyze the optimal stopping problem of
    \begin{align}
            V_{semi}(x) := \sup_{\tau}\bfE^x\big[\int_0^\tau e^{-as}f(X(s))\diff s+e^{-a\tau}g(X(\tau))\big],
        \end{align}
    where $a>0$ and $f,g\in \C_b(\R)$.
    \begin{remark}
        Optimal stopping problems of semi-Markov process has been studied in \cite{boshuizen1993general,muciek2002optimal}. The work \cite{boshuizen1993general} provides several applications of semi-Markov processes in real life, for example, job search and shock model ( see \cite[Section 1]{boshuizen1993general}.) In this section, we want to solve optimal stopping problems using viscosity approach and not the iterative approach as in \cite{boshuizen1993general,muciek2002optimal}.
    \end{remark}

    Assume that $P$ is an absolutely continuous function and $p$ is its continuous density function on $[0,\infty)$. Define $Q(x):=p(x)/(1-P(x))$ for $x\in [0,\infty)$. In addition, assume that $\lim_{x\rightarrow \infty}\frac{p(x)}{1-P(x)}=C$. Then, $Q$ has a continuous extension $\bar{Q}$ on $[0,\infty]$. Examples are:
    \begin{enumerate}
        \item \textsl{Mixture exponential distribution}: $P(x):=\sum_{i=1}^{m}w_i(1-e^{-\lambda_i x})$, where $\sum_{i=1}^{m}w_i=1$, $w_i>0$, $\lambda_i>0$ and $m$ is some positive integral. Its density function is given by $p(x)=\sum_{i=1}^{m}w_i \lambda_i e^{-\lambda_i x}$. Thus, $\lim_{x\rightarrow\infty}Q(x)=\lim_{x\rightarrow\infty}\frac{p(x)}{1-P(x)}=\min_{i=1,2,\ldots,m}\lambda_i$.
        \item Generalized beta prime distribution: Here, $P(x):= \frac{x}{1+x}$ and $p(x):=\frac{1}{(1+x)^2}$. Thus, $Q(x)=\frac{1}{1+x}$ for $[0,\infty)$.
    \end{enumerate}

    Let $\{\xi(t)\}_{t\geq 0}$ be the time from the last jumps of $\{X(t)\}_{t\geq 0}$ (for example if  $S_n$ is the time of the last jump at time $t$, $\xi(t)=t-S_n$). Then, the two dimensional process  $\{\xi(t),X(t)\}_{t\geq 0}$ is a Markov process (see for example \cite[Lemma 2, p290]{gihman1975theory}). Its  infinitesimal generator is defined by
    \begin{align}\label{eqn:gensemimark}
        \begin{split}
            D(\tmG) &:= \{u\in\cz([0,\infty]\times \R); u(\cdot,x)\in \C^1([0,\infty]) \mbox{ for all }x\in \R \mbox{ and }Du(\infty)=0\},\\
            \tmG u(s,x) &:= D_su(s,x)+Q(s)\int_\R(u(0,x+y)-u(s,x))\diff F(y) \mbox{ for }s\in [0,\infty]\mbox{ and }x\in \R.
        \end{split}
    \end{align}

    \begin{proposition}
        Assume that $X$ is a semi-Markov process defined by \eqref{eqn:semimark}.
        \begin{enumerate}
            \item There exists a unique viscosity solution $w\in \C_b([0,\infty]\times\R)$ associated with $(\tmG^*,D(\tmG^*))$ defined by \eqref{eqn:gensemimark} to
                \begin{align}
                    \min(a w-\tmG^* w-\bar{f},w-\bar{g})=0,
                \end{align}
            where $\bar{f}(s,x) = f(x)$ and $\bar{g}(s,x)=g(x)$ for all $s\in [0,\infty]$ and $x\in \R$.
            \item The value function can be characterized by $V(x)=w(0,x)$.
            \item Let $\{\xi(t)\}_{t\geq 0}$ be the time from the last jump. Let $\gamma(x) := \inf\{s\in[0,\infty] ; w(s,x)=g(x)\}$. Then the optimal stopping time is
            \begin{align}
                \tau^*:= \inf\{t\in [0,\infty); \xi(t) = \gamma(X(t))\}.
            \end{align}
        \end{enumerate}
    \end{proposition}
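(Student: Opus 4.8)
The plan is to lift the (non-Markovian) semi-Markov process $X$ of \eqnref{eqn:semimark} to the two-dimensional Markov process $\{(\xi(t),X(t))\}_{t\geq 0}$ and then invoke the general theory of the paper. The first and most substantial step is to verify that $(\xi,X)$ is a Feller process on the locally compact space $\sE=[0,\infty]\times\R$ whose infinitesimal generator, hence core, is exactly $(\tmG,D(\tmG))$ of \eqnref{eqn:gensemimark}. To this end I would split $\tmG=\tmG_0+B$, where $\tmG_0 u(s,x):=D_s u(s,x)$ generates the deterministic age-translation semigroup $P_t^0 u(s,x)=u(s+t,x)$ on $[0,\infty]$ (with $\infty$ absorbing), a Feller semigroup whose domain is precisely $D(\tmG)$, and $Bu(s,x):=Q(s)\int_\R\big(u(0,x+y)-u(s,x)\big)\,\diff F(y)$. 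Since $\bar{Q}$ extends continuously to $[0,\infty]$, the operator $B$ is bounded on $\cz(\sE)$, maps into $\cz(\sE)$, and satisfies the positive maximum principle: if $u$ attains a nonnegative maximum at $(s_0,x_0)$ then $u(0,x_0+y)\le u(s_0,x_0)$ for all $y$ and $Q(s_0)\ge 0$, whence $Bu(s_0,x_0)\le 0$. Thus \lemref{lem:jcf} shows that $(\tmG,D(\tmG))$ generates a Feller semigroup, and identifying the associated process with $(\xi,X)$ via the Markov property cited from Gihman–Skorohod closes this step.

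Once the Feller property is established, \assref{ass:fscompact} holds for $(\xi,X)$ on $\sE=[0,\infty]\times\R$ with data $\bar{f},\bar{g}\in\cb(\sE)$ (they inherit continuity and boundedness from $f,g\in\cb(\R)$), so part (1) is a direct application of the main results. Indeed, \thmref{thm:maintheorem} yields that the two-dimensional value function
$$W(s,x):=\sup_\tau \bfE^{(s,x)}\Big[\int_0^\tau e^{-ar}\bar{f}(\xi(r),X(r))\,\diff r+e^{-a\tau}\bar{g}(\xi(\tau),X(\tau))\Big]$$
is the unique viscosity solution $w\in\cb(\sE)$ associated with $(\tmG^*,D(\tmG^*))$ to $\min(aw-\tmG^* w-\bar{f},w-\bar{g})=0$, with existence supplied by \thmref{thm:vsexist} after checking that $(\tmG^*,D(\tmG^*))$ is an $a$-generator. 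Setting $w:=W$ proves the claim.

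For part (2) I would use that the semi-Markov process started from $x$ corresponds to $(\xi,X)$ started from $(0,x)$, since $\xi(0)=0$ by construction ($S_0=0$). Because $\bar{f}$ and $\bar{g}$ are independent of the age coordinate, the objective functionals for the two formulations coincide, and the two processes generate the same filtration so their admissible stopping-time families agree; hence $V(x)=W(0,x)=w(0,x)$. Part (3) then follows from \thmref{thm:obtainost}, which gives the optimal stopping time for the lifted problem as $\tau^*=\inf\{t\ge 0:\,w(\xi(t),X(t))=g(X(t))\}$ (using $\bar{g}(\xi,X)=g(X)$). To rewrite this as $\inf\{t:\xi(t)=\gamma(X(t))\}$ I would exploit the path structure: on each inter-jump interval $[S_n,S_{n+1})$ the component $X$ is frozen at a value $x'$ while $\xi(t)=t-S_n$ increases continuously from $0$, so the first age at which $w(\xi(t),x')=g(x')$ is exactly $\gamma(x')=\inf\{s:\,w(s,x')=g(x')\}$, the infimum being attained because $w-g$ is continuous; a two-sided comparison of the two hitting times then gives equality.

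The step I expect to be the main obstacle is the first one: rigorously establishing the Feller property and the exact identification of the generator and its core $(\tmG,D(\tmG))$ with the lifted process, including the absorbing behaviour at $s=\infty$ and the role of the decay condition $Du(\infty)=0$ in pinning down $D(\tmG_0)$. Once this identification is in place, parts (1)–(3) follow from \thmref{thm:maintheorem}, \thmref{thm:vsexist}, \thmref{thm:obtainost} and the elementary path analysis sketched above.
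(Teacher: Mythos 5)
Your proposal follows essentially the same route as the paper: decompose $\tmG=D_s+B$, verify that $B$ is bounded, maps $\cz([0,\infty]\times\R)$ into itself and satisfies the positive maximum principle so that \lemref{lem:jcf} yields the Feller property of the lifted process $(\xi,X)$, then identify $V(x)=W(0,x)$ via the coincidence of filtrations and apply \thmref{thm:maintheorem} and \thmref{thm:obtainost}. The only difference is that you spell out the inter-jump path analysis justifying the rewriting of the optimal stopping time in terms of $\gamma$, which the paper leaves implicit; this is a welcome elaboration, not a different argument.
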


    \begin{proof}
        First, we prove that \eqref{eqn:gensemimark} is an infinitesimal generator of some Feller semigroup. Since $(D_s,D(\tmG))$ is the generator of some Feller semigroup, by \lemref{lem:jcf}, we only need to prove: (i)  $B$ is defines from $\cz(\sE)$ to $\cz(\sE)$, (ii) $B$ is bounded and (iii) $B$ satisfies the positive maximum principle, where
        \begin{align}
        \begin{split}
            Bu(s,x) &:= Q(s)\int_\R(u(0,x+y)-u(s,x))\diff F(y)\mbox{ for }s\in [0,\infty]\mbox{ and }x\in \R.
        \end{split}
        \end{align}
        Let $u\in \cz([0,\infty]\times \R)$. We have
        \begin{enumerate}
            \item[(i)] $\int_R u_0(x+y)\diff y\in \cz(\R)$, where $u_0(x):=u(0,x)$ for $x\in \R$. Then $\int_\R(u(0,x+y)-u(s,x))\diff F(y)=\int_\R u(0,x+y)\diff y-u(s,x)$ implies that $B:\cz([0,\infty]\times \R)\rightarrow\cz([0,\infty]\times \R)$ follows since $Q\in \C_b([0,\infty])$ and $Q\geq 0$.
            \item[(ii)] $|\int_\R u(0,x+y)\diff y-u(s,x)|\leq 2\ninf{u}$ and $Q$ is bounded. Then $B$ is bounded.
            \item[(iii)] If $(s_0,x_0)$ is the global maximum point and $u(s_0,x_0)\geq0$, $Bu(s_0,x_0)=Q(s_0)\int_\R(u(0,x+y)-u(s,x))\diff y\leq 0$.
        \end{enumerate}
        Therefore, $(\tmG,D(\tmG))$ is a Feller generator. Furthermore, define
        \begin{align}
            W(\xi,x):=\sup_{\tau}\bfE^{\xi,x}\big[\int_0^\tau e^{-as}\bar{f}(\xi(s),X(s))\diff s+e^{-a\tau}\bar{g}(\xi(t),X(\tau))\big].
        \end{align}
        Since the semi-Markov process $\{X(t)\}_{t\geq 0}$ and the Markov process $\{\xi(t),X(t)\}_{t\geq 0}$ have the same filtration and probability measure, we have $W(0,x)=V(x)$ for $x\in \R$. Since $(\tmG,D(\tmG))$ is the generator of the Feller process $\{\xi(t),X(t)\}_{t\geq 0}$, we can use \thmref{thm:maintheorem} to show (1) and (2) and \thmref{thm:obtainost}  to show (3).
    \end{proof}

\begin{remark}
In this example, we have not derived an explicit value function for the optimal stopping problem. However, in \cite{Dai20172}, we suggest an iterative scheme to find the value function. 
\end{remark}


\section{Explicit solutions}\label{secexpsolu}
In this section, we apply the results obtained in Section 7 to explicitly derive the solution to the following optimal stopping problem: Find $\tau^*$ such that
\begin{align}\label{eqn:osvfref}
V(x) := \bfE^x[e^{-a\tau^*}g(X(\tau^*))]=\sup_{\tau}\bfE^x\big[e^{-a\tau}g(X(\tau))\big]  \text{ for } x\in [0,\infty),
\end{align}
where $g(x)=(c_2-x)^+-(c_1-x)^+$ with $c_1<c_2\in \mathbb{R}$ and $\{X\}_{t\geq 0}$ is a process to be described. $g(x)$ can be understood as the straddle option which is the difference of two options.
\subsection{Reflected Brownian Motion}\label{refsecappli22} In this section, let $c_1,c_2\in \mathbb{R}$ with $c_1<c_2$ and suppose $\{X(t)\}_{t\geq 0}$ is a reflected Brownian motion reflected at $0$ with state space $E = [0,\infty)$ with core
\begin{align}
\begin{split}
D(\tmG_{ref}) :=& \{u\in C^2_0([0,\infty)); D_xu(0)=0\},\\
\tmG_{ref}u(x):=& \frac{1}{2}D_{xx} u(x)\text{ for }x\in [0,\infty).
\end{split}
\end{align}
Our aim is to find the explicit optimal stopping time of problem \eqref{eqn:osvfref} based on \thmref{thm:maintheorem}. The following corollary is a direct consequence.
\begin{proposition}\label{cor:valrefvs}
	The value function $V$ given by \eqref{eqn:osvfref} is the unique viscosity solution $w\in \C_b([0,\infty))$ associated with $(\tmG_{ref}^*,D(G_{ref}^*))$ to
	\begin{align}\label{eqn:refvsc1c2}
	    \min(aw-\frac{1}{2}D_{xx} w,w-g)=0,
	\end{align}
	where $g(x)=(c_1-x)^+-(c_2-x)^+ \text{ for } x\in [0,\infty).$
\end{proposition}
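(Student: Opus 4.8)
The plan is to derive the statement as an immediate application of \thmref{thm:maintheorem}, so the work reduces to checking that its hypotheses hold in the present setting and to identifying the resulting HJB equation. First I would verify \assref{ass:fscompact}: the state space $[0,\infty)$ is a locally compact, separable metric space under the Euclidean metric; the reflected Brownian motion is a Feller process, as already recorded in \secref{exa:onedimdiff}, whose Feller semigroup admits the core $(\tmG_{ref},D(\tmG_{ref}))$ displayed above; and the discount rate satisfies $a>0$. It remains to confirm that the reward data lie in $\C_b([0,\infty))$. The problem \eqref{eqn:osvfref} has vanishing running reward $f\equiv 0\in\C_b([0,\infty))$, and the terminal reward $g(x)=(c_1-x)^+-(c_2-x)^+$ is continuous as a difference of continuous functions; moreover $g$ equals the constant $c_1-c_2$ for $x\le c_1$, equals $x-c_2$ on $[c_1,c_2)$, and vanishes for $x\ge c_2$, so $|g|\le c_2-c_1$ and $g\in\C_b([0,\infty))$.

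Second I would identify the equation. By the construction of $(\tmG^*,D(\tmG^*))$ we have $\tmG_{ref}^*u=\tmG_{ref}(u-\ti{u}(\partial))=\frac{1}{2}D_{xx}(u-\ti{u}(\partial))=\frac{1}{2}D_{xx}u$, since subtracting the constant $\ti{u}(\partial)$ does not affect the second derivative. Hence, with $f\equiv 0$, the HJB equation $\min(aw-\tmG_{ref}^*w-f,\,w-g)=0$ of \thmref{thm:maintheorem} is precisely \eqref{eqn:refvsc1c2}.

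With these verifications in hand, \thmref{thm:maintheorem} applies verbatim and yields that $V$ defined by \eqref{eqn:osvfref} is the unique viscosity solution $w\in\C_b([0,\infty))$ associated with $(\tmG_{ref}^*,D(\tmG_{ref}^*))$ to \eqref{eqn:refvsc1c2}, which is the assertion. There is no genuine analytic obstacle here: the substantive content (existence through the $a$-generator property together with Dynkin's formula, and uniqueness through the comparison argument on the one-point compactification) is already discharged in the proof of \thmref{thm:maintheorem}. The only points demanding attention are the boundedness and continuity of the straddle payoff $g$ and the observation that the reflecting (Neumann-type) boundary behaviour at $0$ is already encoded in $D(\tmG_{ref})$, so that no separate boundary analysis is required.
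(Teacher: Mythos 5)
Your proposal is correct and follows exactly the route the paper takes: the paper's own proof is the one-line observation that the result ``directly follows from \thmref{thm:maintheorem} by setting $f=0$'' and taking $g$ to be the straddle payoff, and you have simply filled in the routine verifications (that $[0,\infty)$ is locally compact and separable, that the reflected Brownian motion is Feller with the stated core, that $g$ is bounded and continuous, and that $\tmG_{ref}^*u=\tfrac{1}{2}D_{xx}u$ since subtracting the constant $\ti{u}(\partial)$ does not change the second derivative). The only minor point worth noting is that the paper is internally inconsistent about the sign of $g$ between \eqref{eqn:osvfref} and the proposition statement, but this is immaterial since either convention gives $g\in\C_b([0,\infty))$.
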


\begin{proof}
	This result directly follows from \thmref{thm:maintheorem} by setting $f=0$ and $g(x)=(c_2-x)^+-(c_1-x)^+$ for $x\in [0,\infty)$.
\end{proof}

In order to find $\tau^*$, we first need to compute $V$ explicitly as shown below.

\begin{corollary}\label{cor:exvfrefc1c2} Let $X$ be a reflected Brownian motion reflected at $0$.
 Let $C$ define by
		\begin{align}\label{eqn:finddref}
		C := \min\{p>0; p(e^{\sqrt{2a}}+e^{\sqrt{-2a}})\geq g(x)\},
		\end{align}
		where $a$ is the discount rate. Then, the value function $V=w$, where
\begin{align}\label{eqn:vfrefc1c2}
w(x):=
\begin{cases}
C(e^{\sqrt{2a}x}+e^{-\sqrt{2a}x}) &\text{ for } x\in [c_1,x^*),\\
g(x) &\text{ for } x\in [x^*,\infty),
\end{cases}
\end{align}
and
	\begin{align}\label{eqn:refx*}
		x^* = \min\{x;C(e^{\sqrt{2a}x}+e^{-\sqrt{2a}x})=g(x)\}.
	\end{align}
Additionally, the optimal stopping time is $\tau^*=\left\{t\geq 0;X(t)\in [x^*,c_2]\right\}.$
\end{corollary}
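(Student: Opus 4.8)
The plan is to verify that the explicit $w$ in \eqref{eqn:vfrefc1c2} is the value function by exhibiting it as a solution of the variational inequality \eqref{eqn:refvsc1c2} and then invoking the uniqueness already established in \propref{cor:valrefvs}. First I would derive the candidate by the standard free-boundary (smooth-fit) ansatz. In the continuation region $\mO$ the value must solve $aw-\tfrac12 D_{xx}w=0$, whose general solution is $A e^{\sqrt{2a}x}+Be^{-\sqrt{2a}x}$; since $\mO$ contains the reflecting boundary $0$, the Neumann condition $D_xw(0)=0$ is forced, and this selects the even combination $w(x)=C\big(e^{\sqrt{2a}x}+e^{-\sqrt{2a}x}\big)$, matching the first line of \eqref{eqn:vfrefc1c2}. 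In the stopping region one sets $w=g$, and matching at the free point $x^{*}$ through continuity $w(x^{*})=g(x^{*})$ together with the smooth-fit condition $D_xw(x^{*})=D_xg(x^{*})$ determines the two unknowns $C$ and $x^{*}$, yielding \eqref{eqn:finddref} and \eqref{eqn:refx*}. I would check that this system has a unique solution with $x^{*}$ in the expected range, so that $w$ is well defined, continuous, and $C^{1}$ across $x^{*}$.

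Next I would verify optimality through the martingale characterisation of \thmref{thm:martingaleapproach} (equivalently \corref{cor:sup>value} and \corref{cor:sub<value}). With $f=0$ put $M(t)=e^{-at}w(X(t))$. By the It\^o--Tanaka formula for the reflected Brownian motion, $\diff M(t)=e^{-at}D_xw(X(t))\,\diff B(t)+e^{-at}\big(\tfrac12 D_{xx}w-aw\big)(X(t))\,\diff t+e^{-at}D_xw(0)\,\diff L(t)$, where $L$ is the local time at $0$; on $\mO$ the drift term vanishes because $w$ solves the ODE there, and the local-time term vanishes precisely because $D_xw(0)=0$, so $\{M(t\wedge\tau_\mO)\}_{t\ge 0}$ is a uniformly integrable martingale. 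Globally I would check the two inequalities $w\ge g$ on $\mO$ and $aw-\tfrac12 D_{xx}w\ge 0$ on the stopping region, which make $\{M(t)\}_{t\ge 0}$ a supermartingale with $w=g$ off $\mO$. Taking $K_0=\sE$ in \thmref{thm:martingaleapproach} then gives $w=V$ on $\mO$, while on the stopping region $w=g=V$ by construction; uniqueness from \propref{cor:valrefvs} rules out any competing solution.

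Finally, \thmref{thm:obtainost} identifies the optimal stopping time as $\tau^{*}=\inf\{t\ge 0;\,V(X(t))=g(X(t))\}$, i.e.\ the first entry of $X$ into the stopping region; since reflected Brownian motion has continuous paths and the continuation region abuts $x^{*}$, the process first meets the stopping set at the level $x^{*}$, which yields $\tau^{*}=\inf\{t\ge 0;\,X(t)\in[x^{*},c_2]\}$. I expect the main obstacle to be the verification step rather than the algebra for the constants: one must confirm that $w\ge g$ and $aw-\tfrac12 D_{xx}w\ge 0$ hold throughout, paying attention to the kinks of the piecewise-linear $g$ at $c_1$ and $c_2$ (where $w$ is only a viscosity, not a classical, solution) and to the correct placement of $x^{*}$ so that the smooth-fit system is solvable. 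The reflecting boundary is handled cleanly only because the even $\cosh$ solution automatically satisfies $D_xw(0)=0$, which is exactly what makes the local-time contribution to $\diff M(t)$ disappear.
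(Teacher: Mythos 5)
Your verification strategy is sound but follows a genuinely different route from the paper. The paper does \emph{not} use the martingale characterisation of \secref{structsec} here: it checks the viscosity sub- and supersolution inequalities for $w$ directly against test functions $\phi\in D(\tmG^*_{ref})$, splitting into the cases $x\in[0,x^*)$, $x=x^*$ and $x>x^*$. The reflecting boundary is handled by noting that every admissible test function satisfies $D_x\phi(0)=0$, so $D_x(\phi-w)(0)=0$ and the second-order comparison $D_{xx}\phi(0)\le D_{xx}w(0)$ still holds at the boundary; the free point $x^*$ is handled not by smooth fit but by introducing the globally dominating smooth function $w_0(x)=C(e^{\sqrt{2a}x}+e^{-\sqrt{2a}x})$ and transferring the test-function comparison from $w$ to $w_0$. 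Correspondingly, the paper's $C$ is defined as the \emph{minimal constant} making the cosh profile dominate $g$ everywhere (a minimal-majorant condition), not via the smooth-fit system you propose; the two coincide when the tangency point $x^*$ falls where $g$ is differentiable, but the majorant definition is what the Case-2 argument actually uses. Once the viscosity property is verified, uniqueness (\propref{cor:valrefvs}, hence \thmref{thm:maintheorem}) gives $V=w$, and \thmref{thm:obtainost} gives $\tau^*$ exactly as you say. Your route via \thmref{thm:martingaleapproach} and It\^o--Tanaka is a legitimate alternative and is probabilistically more transparent, but it shifts the burden onto exactly the points you flag and leave open: you must apply a generalized It\^o formula to a function that is only $C^1$ at $x^*$ and only piecewise $C^1$ where $w=g$, and you must check that the local-time contribution at the kink of $g$ in the stopping region has the favourable sign for the supermartingale property (this depends on the convexity of the kink, i.e.\ on the sign conventions for $g$, which the paper itself states inconsistently). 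The paper's test-function argument avoids all stochastic calculus on the non-smooth $w$, which is precisely what the viscosity framework buys; your argument, if completed, would additionally exhibit the classical verification structure but does not shorten the proof.
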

\begin{proof}
	Let us show that $w$ defined by \eqref{eqn:vfrefc1c2} is a viscosity solution. By definition of $C$ in \eqref{eqn:finddref}, $w(x)\geq g(x)$ for $x\in [0,x^*)$. Using \eqref{eqn:vfrefc1c2}, we get $w\geq g$. In what follows, we show the viscosity property for different values of $x$.
	
\noindent \textbf{Case 1.} Assume that $x\in[0,x^*)$. It is clear from \eqref{eqn:vfrefc1c2} that $w$ is twice differentiable at $x$ and we have
		\begin{align}\label{eqn:solverefc1c2}
		aw(x)-\frac{1}{2}D_{xx} w(x) = 0 \text{ for }x\in [0,x^*).
		\end{align}
		Since $w(x) \geq g(x)$ for $x\in [0,\infty)$, we have
		$$\min(aw(x)-\frac{1}{2}D_{xx} w(x),w(x)-g(x))=0.$$
		Let $\phi\in D(G^*_{ref})$ such that $\phi-w$ has a maximum (\resp{minimum}) at $x$ with $\phi(x)-w(x)=0$. We first show that  $D_{xx}\phi(x)\leq (\geq) D_{xx} w(x)$.  Assume that $x\in (0,x^*)$. Then $x$ is an interior point. Since $w$ is twice differentiable at $x$ and $\phi-w$ has a maximum (\resp{minimum}) at $x$,  we have $D_{xx}\phi(x)\leq (\geq) D_{xx} w(x)$. Assume now that $x=0$. Since $\phi\in D(\tmG^*_{ref})$, we have $D\phi(0)=0$. Using $Dw(0)=0$, we have $D(\phi-w)(0)=0$. Furthermore, since $\phi-w$ has a maximum (\resp{minimum}) at $x=0$, it follows that $D_{xx}(\phi-w)(0)\leq (\geq) 0$. Therefore,
		\begin{align*}
		&\min(a\phi(0)-\frac{1}{2}D_{xx} \phi(0),\phi(0)-g(0))\\
		\geq(\leq)& \min(a\phi(0)-\frac{1}{2}D_{xx} w(0),\phi(0)-g(0))\\
		=&0.
		\end{align*}
		Hence, $w$ satisfies viscosity property at $x$.
		
\noindent \textbf{Case 2.} Assume that $x=x^*$ Since $w(x^*)=g(x^*)$, the viscosity subsolution property is satisfied. Then, we only need to show the viscosity supersolution property. \\
Let $\phi\in D(\tmG^*_{ref})$ such that $\phi-w$ has a maximum at $x^*$ with $\phi(x^*)-w(x^*)=0$.
Define $w_0(x):= C(e^{\sqrt{2a}x}+e^{-\sqrt{2a}x})$. By \eqref{eqn:finddref} and \eqref{eqn:vfrefc1c2}, we have $w_0(x)\geq w(x)$ for all $x\in [0,\infty)$ and $\phi(x^*)=w(x^*)=w_0(x^*)$.
It implies that $\phi-w_0$ also has a maximum at $x^*$ with $\phi(x^*)-w_0(x^*)=\phi(x^*)-w(x^*)=0$. Hence, since $\phi-w_0$ is twice differentiable and $x^*$ is interior point, $D_{xx}(\phi-w_0)\leq 0$. Therefore,
		\begin{align*}
		& \min\left(a\phi(x^*)-\frac{1}{2}D_{xx} \phi(x^*),\phi(x^*)-g(x^*)\right)\\
		\geq& \min\left(aw_0(x^*)-\frac{1}{2}D_{xx} w_0(x^*),0\right)\\
		= &0.
		\end{align*}
		Then, the viscosity supersolution property is satisfied.\\
\noindent \textbf{Case 3} Assume that $x>x^*$. Since $w(x)=g(x)$, we only need to show the viscosity supersolution. It can be proved similarly with Case 1. The result follows by uniqueness of the viscosity solution (\thmref{thm:maintheorem}.)\\
Moreover, the optimal stopping time can be obtained using \thmref{thm:obtainost}.
\end{proof}




Next, we consider a standard Brownian motions with jumps at the boundary $0$.

\subsection{Brownian motion with jump at boundary}

Let $\{X(t)\}_{t\geq 0}$ be a standard Brownian motion which has nonlocal behavior at $0$ and state space $E = [0,\infty)$. Then $\{X(t)\}_{t\geq 0}$ is a Feller process 
whose core is defined by (see for example \cite{taira2004semigroups})
\begin{align}\label{eqn:genjb}
    \begin{split}
    D(\tmG_{jump})&:= \{u\in \C^2_0([0,\infty)); D_{xx} u(0)=2\lambda\int_0^\infty (u(y)-u(0))\diff F(y)\},\\
    \tmG_{jump} u(x) &:= \frac{1}{2}D_{xx} u(x) \mbox{ for }x\in [0,\infty),
    \end{split}
\end{align}
where $\lambda$ is a positive constant and $F$ is a probability distribution function on $(0,\infty)$. The process stays at zero for a positive length of exponential waiting time with parameter $\lambda$ and then jump back to a random point in $(0,\infty)$ with a probability defined by the distribution function $F$. Let $V_{jump}$ be the value function of the optimal stopping problem \eqref{eqn:osvfref}. Then, we have the following result:

\begin{proposition}\label{explisoltBMjb}
    Suppose there exists a solution such that $u(x) = C_1 e^{-\sqrt{2a}x}+C_2 e^{\sqrt{2a}x}$ for $x\in [0,\infty)$, where $C_1,C_2\in \R$ and satisfy
    \begin{enumerate}
        \item $u\geq g$,
        \item There exists $x^*\in [0,\infty)$ such that $u(x^*)=g(x^*)$,
        \item $g$ is a viscosity supersolution to
        $$aw(x)-\frac{1}{2}D_{xx}w(x)= 0 \mbox{ for }x\in (x_*,\infty),$$
        \item $a(C_1+C_2)=\lambda\int_0^{x^*}u(y)\diff F(y)+\lambda\int_{x^*}^\infty g(y)\diff F(y)-\lambda u(0)$.
    \end{enumerate}
    Then,
    \begin{align}
        V_{jump}(x)=u_-(x):=\begin{cases}
        u(x) &\text{ for }x\in [0,x^*),\\
        g(x) &\text{ for }x\in [x^*,\infty).
        \end{cases}
    \end{align}
\end{proposition}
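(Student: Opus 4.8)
The plan is to show that $u_-$ is the unique bounded viscosity solution associated with $(\tmG^*_{jump},D(\tmG^*_{jump}))$ to $\min(aw-\frac{1}{2}D_{xx}w,w-g)=0$, and then invoke \thmref{thm:maintheorem} to identify it with $V_{jump}$. Since $f=0$ and $\{X(t)\}_{t\geq 0}$ is a Feller process with core $(\tmG_{jump},D(\tmG_{jump}))$, \assref{ass:fscompact} holds and \thmref{thm:maintheorem} applies. First I would record the structural facts: $u_-$ is continuous (the two branches agree at $x^*$ by hypothesis (2)), bounded (it is continuous on the compact $[0,x^*]$ and equals the bounded payoff $g$ on $[x^*,\infty)$), and satisfies $u_-\geq g$ everywhere (hypothesis (1) on $[0,x^*)$, with equality beyond $x^*$). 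I would also recall that for $\phi\in D(\tmG^*_{jump})$ one has $\tmG^*_{jump}\phi=\frac{1}{2}D_{xx}\phi$ together with the \emph{nonlocal} constraint $\frac{1}{2}D_{xx}\phi(0)=\lambda\int_0^\infty(\phi(y)-\phi(0))\,\diff F(y)$, so that the whole subtlety is concentrated at the boundary point $0$.

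Next I would verify the viscosity property away from $0$, which is routine and mirrors \corref{cor:exvfrefc1c2}. On $(0,x^*)$ the function $u_-=u$ is $C^2$ and solves $au-\frac{1}{2}D_{xx}u=0$ classically; comparing second derivatives of a test function touching $u_-$ from above (\resp{below}) at such an interior point gives the sub- (\resp{super-}) solution inequality, and $u_--g\geq 0$ closes the $\min$. On $(x^*,\infty)$ we have $u_-=g$: the subsolution inequality is immediate since $u_--g=0$, while the supersolution inequality follows directly from hypothesis (3), because a test function touching from below near an interior point $x>x^*>0$ is unconstrained by the boundary condition at $0$. At the free boundary $x=x^*$ the subsolution side is again immediate; for the supersolution side I would use that $u(x)=C_1e^{-\sqrt{2a}x}+C_2e^{\sqrt{2a}x}$ solves the ODE on all of $[0,\infty)$ and dominates $u_-$ with $u(x^*)=u_-(x^*)$, so any $\phi$ touching $u_-$ from below at $x^*$ also touches the smooth $u$ from below there, whence $D_{xx}\phi(x^*)\leq D_{xx}u(x^*)$ and $a\phi(x^*)-\frac{1}{2}D_{xx}\phi(x^*)\geq au(x^*)-\frac{1}{2}D_{xx}u(x^*)=0$.

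The crux is the boundary point $x=0$, and this is where hypothesis (4) enters. Using $u(0)=C_1+C_2$ and splitting the integral exactly at $x^*$, hypothesis (4) rewrites as the identity $au_-(0)=\lambda\int_0^\infty(u_-(y)-u_-(0))\,\diff F(y)$; that is, $u_-$ itself satisfies the boundary-modified equation at $0$. Now let $\phi\in D(\tmG^*_{jump})$ touch $u_-$ from below at $0$ (the supersolution test), so $\phi\leq u_-$ with $\phi(0)=u_-(0)$, hence $\phi(y)-\phi(0)\leq u_-(y)-u_-(0)$ pointwise; invoking the nonlocal constraint for $\phi$,
\[
a\phi(0)-\tfrac{1}{2}D_{xx}\phi(0)=au_-(0)-\lambda\int_0^\infty(\phi(y)-\phi(0))\,\diff F(y)\geq au_-(0)-\lambda\int_0^\infty(u_-(y)-u_-(0))\,\diff F(y)=0,
\]
while $\phi(0)-g(0)=u_-(0)-g(0)\geq 0$, giving the supersolution inequality. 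The subsolution test at $0$ reverses the inequality ($\phi\geq u_-$ makes the nonlocal average larger, so $a\phi(0)-\frac{1}{2}D_{xx}\phi(0)\leq 0$), yielding $\min\leq 0$. The monotonicity of the nonlocal averaging operator under the one-sided touching of the test function is precisely what makes this work, and I expect this boundary analysis — rather than the interior or free-boundary cases — to be the main obstacle, since the nonlocal term depends on the global values of $\phi$ and cannot be controlled by a local jet. Having checked all cases, $u_-\in\C_b([0,\infty))$ is a viscosity solution, and the uniqueness in \thmref{thm:maintheorem} forces $u_-=V_{jump}$.
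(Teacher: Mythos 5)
Your proposal is correct and follows essentially the same route as the paper's proof: reduce to verifying that $u_-$ is a bounded viscosity solution associated with $(\tmG^*_{jump},D(\tmG^*_{jump}))$, split into the cases $x_0\in(0,x^*)$, $x_0=x^*$, $x_0\in(x^*,\infty)$, $x_0=0$, use hypothesis (3) in the stopping region and hypothesis (4) together with the nonlocal boundary constraint at $0$, and conclude by the uniqueness in Theorem~\ref{thm:maintheorem}. If anything, your treatment is slightly more careful than the paper's at two points: at $x=0$ you state the supersolution inequality with the correct sign (the paper's displayed chain reads ``$\leq$'' where monotonicity of the nonlocal average under $\phi\leq u_-$ actually gives ``$\geq$''), and at $x^*$ you compare the test function against the globally smooth $u$ rather than against the kinked $u_-$.
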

\begin{proof}
Using \thmref{thm:maintheorem}, we only need to show that $u_-$ is a viscosity solution associated to $(\tmG_{jump}^*,D(\tmG_{jump}^*)$. We only prove the viscosity supersolution property and subsolution property can be shown similarily. Since $u_-\geq g$, we simply need to show that for any $\phi\in D(\tmG^*_{jump})$ such that $\phi\leq u_-$ and $\phi(x_0) = u_-(x_0)$, we have simply
\begin{align}\label{eqn:phi<jb}
a\phi(x_0)-\frac{1}{2}D_{xx}\phi(x_0)\geq 0.
\end{align}
\textbf{Case 1.} Suppose $x_0\in (x^*,\infty)$. \eqref{eqn:phi<jb} follows by the condition (3).\\
\noindent \textbf{Case 2.} Suppose $x_0=x^*$. Since $\phi-u_-$ has a global maximum at $x^*$ by condition (1) and (2) and $\phi$ and $u_-$ are twice differentiable at $x_0$, we have $D_{xx}\phi(x_0)\leq D_{xx}u_-(x_0)$. \\
\noindent \textbf{Case 3.} Suppose $x_0\in (0,x^*)$. Since $u(x)=u_-(x)$ for all $x\in (0,x^*)$ and $au-\frac{1}{2} D_{xx}u=0$, \eqref{eqn:phi<jb} holds.\\
\noindent\textbf{Case 4.} Suppose $x_0=0$. By the definition of $D(\tmG^*_{jump})$, we have
\begin{align*}
    a\phi(0)-\frac{1}{2}D_{xx}\phi(0)&=a\phi(0)-\lambda\int_0^\infty \phi(x)\diff F(x)+\lambda \phi(0)\\
    &\leq au_-(0)-\lambda\int_0^\infty u_-(x)\diff F(x)+\lambda u_-(0)\\
    &=0,
\end{align*}
where the first inequality follows from $u_-\geq \phi$ and $u_-(0)=\phi(0)$ and the last equality from condition (4). Hence, \eqref{eqn:phi<jb} holds when $x_0=0$. Therefore, we conclude that $u_-$ is a viscosity supersolution. The case of the viscosity subsolutionccan be shown analogously.
\end{proof}
The following figure shows the evolution of the value function with fixed jump size at boundary.

	\begin{figure}[H]
		\caption{Value functions against initial state}\label{fig:vfc0}
		\centering
		\includegraphics[scale=0.6]{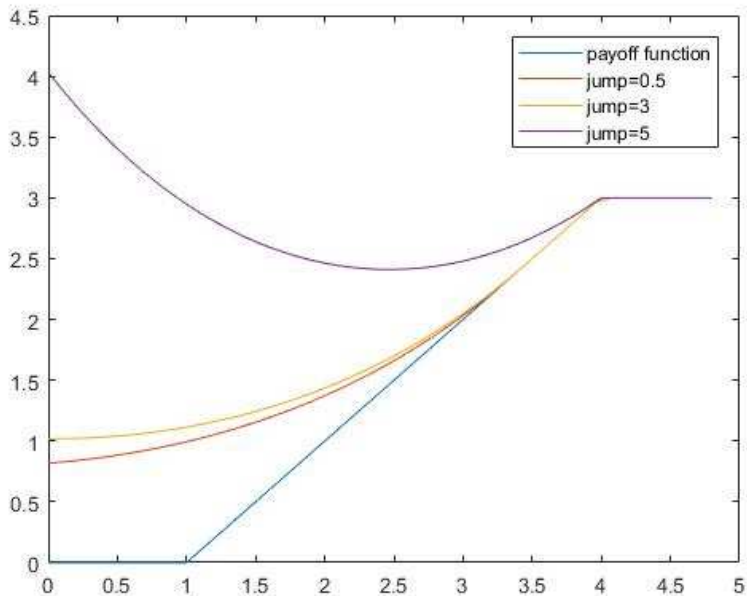}
	\end{figure}
In Figure~\ref{fig:vfc0}, we assume that the jump size is fixed at $0.5$ (respectively $3$ and $5$) and the parameter $\lambda=1$. The graph shows that the value function and exercise point increases with the jump size. We can also mention that the construction of the value function by the viscosity solution can generally be used under weaker condition as compared to the smooth fit principle. Since $g$ is not differential, the smooth fit principle may failed for example if the jump size is equal to $5$.

\subsection{Regime switching boundary}
In order to construct a regime switching boundary Feller diffusion, we first construct a regime switching Feller process. Let $\mathcal{S}:=\{1,2,\ldots,N\}$ be a finite discrete space, where $N$ is a positive integer. Let $(\mathcal{A}_i,D(\mathcal{A}_i))$ be the infinitesimal generators of some Feller semigroups on $\cz(\sE)$. Then,  define the operator $(\mathcal{A},D(\mathcal{A}))$ as follows:
\begin{align}
    \begin{split}
    D(\mathcal{A}_{regime}) &:= \{u\in\cz(S\times \sE); u(i,\cdot)\in D(\tmG_i)\},\\
    \mathcal{A}_{regime} u(i,x) &:= \mathcal{A}_i u_i(x)\mbox{ for }i\in S\mbox{ and }x\in \sE,
    \end{split}
\end{align}
where $u_i(x) := u(i,x)$.
By Hille-Yosida theorem, the above generator is the infinitesimal generator of some Feller semigroup. In addition, define the bounded operator
\begin{align}
    \mathcal{F}_{regime}u(i,x) := \sum_{j\in N}q_{ij}(x)(u(j,x)-u(i,x)),
\end{align}
where $q_{ij}\in \C_b(\sE)$ and $q_{ij}\geq 0,\,\, i,j\in \mathbb{N}$. Since $\mathcal{F}_{regime}$ satisfies the positive maximum principle and $\mathcal{F}_{regime}:\cz(\sE)\rightarrow\cz(\sE)$, the operator $((\mathcal{A}_{regime}+\mathcal{F}_{regime},D(\mathcal{A}_{regime})))$ is the infinitesimal generator of some Feller semigroup.

Next, we construct a regime switching boundary Feller diffusion, that is, the boundary condition is affected by a Markov chain $\{Z(t)\}_{t\geq 0}$ with the state space $\{1,2\}$. The intensity matrix of the chain is given by
\[\begin{bmatrix}
-q_1 & q_1\\ q_2 & -q_2
\end{bmatrix},\]
where $q_1,q_2>0$. Let $\{Z(t),X(t)\}_{t\geq 0}$ be a Feller process on the state space $\{1,2\}\times [0,\infty)$. $\{X(t)\}_{t\geq 0}$  is a one-side diffusion which  behaves like Brownian motion in $(0,\infty)$ but is modulated at $0$. More precisely, when $X(t)$ touches $0$, it either become a sticky Brownian motion or reflected Brownian motion. We denote by $Z(t)=1$ the state for sticky Brownian motion and $Z(t)=2$ the tate for reflected Brownian motion. Its infinitesimal generator $(\tmG,D(\tmG))$ is defined by:
\begin{align}
\begin{split}
    D(\tmG)&:=\{u\in \cz(\{1,2\}\times[0,\infty); u_i\in\C^2([0,\infty))\mbox{ for  }i=1,2,\\
    &\qquad D_{xx}u(1,0)=0 \mbox{ and }D_{x}u(2,0)=0\},\\
    \tmG u(i,x)&:=\frac{1}{2}D_{xx}u(i,x)+q_i u(3-i,x)-q_i u(i,x)\mbox{ for }i=1,2\mbox{ and }x\in [0,\infty),
\end{split}
\end{align}
where $u_i(x)=u(i,x)$ for all $(i,x)\in\{1,2\}\times [0,\infty)$. As a consequence of \thmref{thm:maintheorem}, we have the following characterisation of the value function for the optimal stopping problem \eqref{eqn:osvfref}:

\begin{corollary}There exists a unique pair of viscosity solution $V_1, V_2 \in \C_b([0,\infty))$ such that $V_1$ is a viscosity solution associated with $(\tmG_1,D(\tmG_1))$ to
$$\min((a+q_1)w-\tmG_1 w-q_1 V_2,w-g(1,\cdot))=0,$$
and $V_2$ is a viscosity solution associated with $(\tmG_2,D(\tmG_2))$ to
$$\min((a+q_2)w-\tmG_2 w-q_2 V_1,w-g(2,\cdot))=0.$$

Additionally, assume that $u(i^*,\cdot)$ is a viscosity supersolution to
$$ag(i^*,x)-\tmG_{i^*}g(i^*,x)+q_{i^*}(g(3-i^*,x)-g(i^*,x))=0, \mbox{ for }x\in (x^*,\infty).$$ Then, $V(i^*,x)=u(i^*,x)$ for $x\in [0,x^*)$ and $V(i^*,x)=g(i^*,x)$ for $x\in [x^*,\infty)$. Furthermore,

  $V(3-i_*,\cdot)$ is the unique viscosity solution associated with $(\tmG_{3-i^*},D(\tmG_{3-i^*}))$ to
    \begin{align*}
        \min((a+q_{3-i^*})w-\tmG^*_{3-i^*}w-q_{3-i^*}V(i^*,\cdot),w-g(3-i^*,\cdot))=0
    \end{align*}
 \end{corollary}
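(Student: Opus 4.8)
The plan is to treat the pair $(V_1,V_2)$ as the single value function of the optimal stopping problem \eqref{eqn:osvfref} for the two-dimensional Feller process $\{Z(t),X(t)\}_{t\geq 0}$ on the product state space $\{1,2\}\times[0,\infty)$, apply \thmref{thm:maintheorem} there, and then read off each regime. First I would record that, by the perturbation construction preceding the statement, $(\tmG,D(\tmG))$ is the core of a Feller semigroup, so with $f=0$ and terminal reward $g$ the value function $V$ on the product space is, by \thmref{thm:maintheorem}, the unique viscosity solution in $\C_b$ associated with $(\tmG^*,D(\tmG^*))$ to $\min(aw-\tmG^* w,w-g)=0$. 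Setting $V_i:=V(i,\cdot)$ and using the componentwise form $\tmG u(i,x)=\frac{1}{2}D_{xx}u(i,x)+q_i(u(3-i,x)-u(i,x))$, the identity $(aw-\tmG w)(i,\cdot)=(a+q_i)V_i-\tmG_i V_i-q_i V_{3-i}$ converts the single equation into precisely the announced coupled system; existence of each $V_i$ as a viscosity solution for the core $(\tmG_i,D(\tmG_i))$ follows from \thmref{thm:vsexist}, while uniqueness of the pair is inherited from the uniqueness of $V$ on the product space.

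For the explicit description of $V(i^*,\cdot)$ I would mimic \corref{cor:exvfrefc1c2} and \propref{explisoltBMjb}: define the candidate $\hat V_{i^*}$ equal to $u(i^*,\cdot)$ on $[0,x^*)$ and to $g(i^*,\cdot)$ on $[x^*,\infty)$, and check that it is a viscosity solution of the regime-$i^*$ equation by going through the standard regions. On the continuation region $(0,x^*)$ the function $u(i^*,\cdot)$ satisfies the governing ODE by construction, so the viscosity inequalities are immediate; at the contact point $x^*$ the identity $\hat V_{i^*}=g(i^*,\cdot)$ furnishes the subsolution inequality, and a one-sided comparison of second derivatives (as in Case~2 of \corref{cor:exvfrefc1c2}) furnishes the supersolution inequality; on the stopping region $(x^*,\infty)$ the assumed supersolution property of $g(i^*,\cdot)$ gives the required inequality; and at the modulated boundary $x=0$ the constraint on admissible test functions ($D_{xx}\phi(0)=0$ in the sticky regime, $D_x\phi(0)=0$ in the reflecting regime) together with the boundary condition carried by $u(i^*,\cdot)$ closes the estimate, exactly as in Case~4 of \propref{explisoltBMjb}. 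The uniqueness from the first paragraph then forces $V(i^*,\cdot)=\hat V_{i^*}$.

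Finally, with $V(i^*,\cdot)$ identified, the equation for $V(3-i^*,\cdot)$ becomes a genuine single-regime optimal stopping HJB equation: its generator is $\tmG^*_{3-i^*}$, its discount rate is $a+q_{3-i^*}>0$, and its running reward $q_{3-i^*}V(i^*,\cdot)$ now belongs to $\C_b([0,\infty))$, so \thmref{thm:maintheorem} (equivalently \propref{prop:g*vs}) applied to the regime-$(3-i^*)$ diffusion yields existence and uniqueness of $V(3-i^*,\cdot)$. I expect the decisive difficulty to lie in the second step, at the two distinguished points: the boundary $0$, where the admissible test functions are constrained by the sticky or reflecting condition and are not smooth, and the free-boundary point $x^*$, where only one-sided smooth fit is available. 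A further subtlety is that the coupling through the source term $q_{i^*}V(3-i^*,\cdot)$ prevents solving the continuation-region ODE in closed form independently of the other regime, so the verification must be carried out at the level of viscosity inequalities rather than by explicit computation.
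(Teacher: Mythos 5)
The paper itself supplies no proof of this corollary --- it is asserted as ``a consequence of \thmref{thm:maintheorem}'' applied to the two--dimensional Feller process $\{Z(t),X(t)\}_{t\geq 0}$ --- and your proposal follows exactly that intended route: uniqueness of the product--space viscosity solution, componentwise rewriting of $aw-\tmG w$ as $(a+q_i)w_i-\tmG_i w_i-q_iw_{3-i}$, a region--by--region verification in the style of \corref{cor:exvfrefc1c2} and \propref{explisoltBMjb} for the explicit part, and a second application of \thmref{thm:maintheorem} with $f=q_{3-i^*}V(i^*,\cdot)\in\C_b([0,\infty))$ for the last claim. So in substance you are doing what the paper does, only more explicitly.

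One step you pass over deserves a genuine argument: the translation between the product--space viscosity property of $V$ and the componentwise coupled system is not symmetric. Going from a pair $(W_1,W_2)$ solving the coupled system to a product--space solution is clean, because any product test function $\phi$ restricts to an admissible regime--$i$ test function and the sign of $q_i(\phi(3-i,x_0)-W_{3-i}(x_0))$ works in your favour. Going the other way --- showing that $V_i$ solves the regime--$i$ equation with source $q_iV_{3-i}$ --- you must extend a given regime--$i$ test function $\phi_i\in D(\tmG_i)$ to a product test function whose regime--$(3-i)$ component lies on the correct side of $V_{3-i}$ \emph{and} takes a value at $x_0$ within $\varepsilon$ of $V_{3-i}(x_0)$, while still satisfying the regime--$(3-i)$ boundary condition at $0$; since $V_{3-i}$ is merely continuous, this requires an approximation argument (and a little extra care when $x_0=0$), after which one lets $\varepsilon\to 0$ in the resulting inequality. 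This is routine but not automatic, and it is the only place where your plan needs to be filled in before it constitutes a complete proof.
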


In order to derive explicit value function, we define fundamental solutions for optimal stopping problem.
Let
\begin{align}
    u_k(i,x) &= \alpha_{ik} e^{\beta_k x}\\
    v_j(i,x)&:=\begin{cases}
    q_i\tmR^{(j)}_{a+q_i} g_{3-i}(x)\text{ for }i= j\\
    g_{i}(x)\text{ for }i\neq j,
    \end{cases}\\
    w_{j1}(i,x)&:=\begin{cases}
    e^{\gamma_j x}\text{ for }i= j\\
    0\text{ for }i\neq j,
    \end{cases}\\
    w_{j2}(i,x)&:=\begin{cases}
    e^{-\gamma_j x}\text{ for }i= j\\
    0\text{ for }i \neq j,
    \end{cases}
\end{align}
where $i,j=1,2$, $k=1,2,3,4$ $\beta_1=\sqrt{2a}$, $\beta_2=-\sqrt{2a}$, $\beta_3=\sqrt{2(a+q_1+q_2}$ and $\beta_4 = -\sqrt{2(a+q_1+q_2)}$, $\alpha_{1k}=1$  and $\alpha_{2k} = \frac{q_1}{a+q_1-\frac{1}{2}z_j^2}$ and $\gamma_j=\sqrt{2(a+q_j)}$.

\begin{lemma}\label{lem:rsvpl}
The following hold for $j=1,2$:
\begin{enumerate}
    \item For any $A_j\in \R$, $u:=\sum_{j=1}^4 A_j u_j$ is a solution to
    \begin{align}
        au(i,x)-\tmG^* u(i,x)=0 \text{ for }(i,x)\in \{1,2\}\times(0,\infty)
    \end{align}
    \item For any $B_k\in \R$, $w:=\sum_{k=1}^2 B_k w_{jk}+v_j$ is a solution to
    \begin{align}
        aw(i,x)-\tmG^* w(i,x)=0 \text{ for }(i,x)\in \{j\}\times (0,\infty),
    \end{align}
\end{enumerate}
\end{lemma}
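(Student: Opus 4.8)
The plan is to reduce both claims to the verification of a constant-coefficient linear system of ODEs on the open half-line $(0,\infty)$. On the interior the operator acts locally, so that $\tmG^* u(i,x)=\tfrac12 D_{xx}u(i,x)+q_i\big(u(3-i,x)-u(i,x)\big)$, and the equation $aw-\tmG^* w=0$ becomes
\[
(a+q_i)\,w(i,x)-\tfrac12 D_{xx}w(i,x)-q_i\,w(3-i,x)=0,\qquad i=1,2.
\]
Everything then follows by substitution and linear superposition, using the resolvent identity \eqnref{eqn:resolventproperties2} to supply a particular solution.

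For claim (1) I would insert the ansatz $u(i,x)=\alpha_i e^{\beta x}$ into the displayed system, which yields $\big(a+q_i-\tfrac12\beta^2\big)\alpha_i=q_i\,\alpha_{3-i}$ for $i=1,2$; that is, $\tfrac12\beta^2$ must be an eigenvalue of
\[
M=\begin{pmatrix} a+q_1 & -q_1\\ -q_2 & a+q_2\end{pmatrix},
\]
with eigenvector $(\alpha_1,\alpha_2)$. The characteristic polynomial is $\lambda^2-(2a+q_1+q_2)\lambda+a(a+q_1+q_2)=0$, whose discriminant collapses to the perfect square $(q_1+q_2)^2$, giving the two roots $\lambda=a$ and $\lambda=a+q_1+q_2$. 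Hence $\beta\in\{\pm\sqrt{2a},\,\pm\sqrt{2(a+q_1+q_2)}\}=\{\beta_1,\beta_2,\beta_3,\beta_4\}$, and $(\alpha_{1k},\alpha_{2k})$ is precisely the associated eigenvector, normalized by $\alpha_{1k}=1$. Since the operator $a-\tmG^*$ is linear on the interior, any $u=\sum_{j=1}^4 A_j u_j$ then solves the homogeneous system, which is claim (1).

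For claim (2) I would fix $j$ and exploit the way $v_j$, $w_{j1}$, $w_{j2}$ are defined off the diagonal: because $w_{j1}(i,\cdot)=w_{j2}(i,\cdot)=0$ and $v_j(i,\cdot)=g_i$ for $i\neq j$, one has $w(3-j,x)=g_{3-j}(x)$. Feeding this into the $i=j$ component decouples the system into the single scalar inhomogeneous ODE
\[
(a+q_j)\,w(j,x)-\tfrac12 D_{xx}w(j,x)=q_j\,g_{3-j}(x).
\]
Its homogeneous solutions are $e^{\pm\gamma_j x}$ with $\tfrac12\gamma_j^2=a+q_j$, i.e. $\gamma_j=\sqrt{2(a+q_j)}$, which are exactly $w_{j1}$ and $w_{j2}$; and a particular solution is $v_j(j,\cdot)=q_j\,\tmR^{(j)}_{a+q_j}g_{3-j}$, since by \eqnref{eqn:resolventproperties2} applied to the single-regime generator $\tfrac12 D_{xx}$ at $\lambda=a+q_j$ we have $\big((a+q_j)-\tfrac12 D_{xx}\big)\tmR^{(j)}_{a+q_j}g_{3-j}=g_{3-j}$, and multiplying by $q_j$ produces the right-hand side. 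Superposition $w=\sum_{k=1}^2 B_k w_{jk}+v_j$ then gives claim (2).

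I do not expect a genuine obstacle here: the argument is elementary linear algebra plus the resolvent property, and the two points deserving care are purely computational, namely recognizing that the characteristic discriminant factors as $(q_1+q_2)^2$ (which is what yields the closed-form exponents $\beta_k$) and confirming that the resolvent term is the correct particular solution via \eqnref{eqn:resolventproperties2}. The only bookkeeping I would double-check is the normalization of the eigenvectors $(\alpha_{1k},\alpha_{2k})$ against both rows of $M-\tfrac12\beta_k^2 I$, to make sure the two scalar relations are mutually consistent for each root.
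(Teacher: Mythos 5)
Your proposal is correct and is exactly the argument the paper intends: its own proof consists of the single sentence that the result ``follows from direct computations given the parameters,'' and your computation is that verification carried out in full (the eigenvalue reduction with discriminant $(q_1+q_2)^2$ for claim (1), and the decoupling plus resolvent identity \eqnref{eqn:resolventproperties2} for claim (2)). The consistency check you flag at the end is worth performing: row one of $M-\tfrac12\beta_k^2 I$ forces $\alpha_{2k}=\bigl(a+q_1-\tfrac12\beta_k^2\bigr)/q_1$, equivalently $q_2/\bigl(a+q_2-\tfrac12\beta_k^2\bigr)$ from row two, whereas the paper's displayed $\alpha_{2k}=q_1/\bigl(a+q_1-\tfrac12 z_j^2\bigr)$ agrees with this only for $k=1,2$ (where both equal $1$) and for $k=3,4$ gives $-q_1/q_2$ in place of the correct $-q_2/q_1$; this is evidently a typo in the paper's definition of $u_k$, and with the genuine eigenvector your proof of claim (1) goes through as written.
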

\begin{proof}
The result simply follows from direct computations given the parameters. 	
	\end{proof}
	The subsequent result can be seen as a verification theorem for the value function.
\begin{proposition}\label{prop:rsexpli}
Assume that there exist $0\leq x_1^*\leq x_2^*<\infty$, $A_j\in \R$, $B_k\in \R$ for $j=1,2,3,4$ and $k=1,2$ such that the function
        \begin{align}
            u(i,x):=\begin{cases}
            \sum_{j=1}^4 A_j u_j(i,x)\text{ for }(i,x)\in \{1,2\}\times [0,x_1^*),\\
            \sum_{k=1}^2 B_kw_{jk} +v_j \text{ for } \{1,2\}\times [x_1^*,x_2^*),\\
            g(i,x) \text{ for }\{1,2\}\times [x_2^*,\infty),
            \end{cases}
        \end{align}
satisfies
\begin{enumerate}
    \item $u\geq g$,
    \item $u\in \C_b(\{1,2\}\times[0,\infty)$,
    \item $u$ is a viscosity solution to
    \begin{align}
        \min (a u(i,x)-\tmG^*u(i,0),u(i,x)-g(i,x))=0\text{ for }(i,x)\in \{(1,0),(2,0),(j,x_1^*)\}
    \end{align}
    \item $u$ is a viscosity supersolution to
    \begin{align}
        a u(i,x)-\tmG^* u(i,x)=0\text{ for }(i,x)\in \{j\}\times [x_2^*,\infty)\cup\{3-j\}\times [x_1^*,\infty)
    \end{align}
\end{enumerate}
Then, the value function $V=u$.
\end{proposition}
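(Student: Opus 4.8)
The plan is to reduce everything to the uniqueness result \thmref{thm:maintheorem}. Since $\{Z(t),X(t)\}_{t\geq 0}$ is a Feller process satisfying \assref{ass:fscompact}, its value function $V$ is the \emph{unique} viscosity solution in $\C_b(\{1,2\}\times[0,\infty))$ of $\min(aw-\tmG^* w,w-g)=0$ (here $f=0$). Hence it suffices to prove that the candidate $u$ is itself a viscosity solution in $\C_b$ of this equation; uniqueness then forces $V=u$. Continuity and boundedness of $u$ are exactly hypothesis (2), so $u$ is an admissible function for the viscosity test, and it remains only to verify the sub- and supersolution inequalities at every point $(i,x)\in\{1,2\}\times[0,\infty)$. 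Throughout I read off from condition (4) that regime $3-j$ stops at $x_1^*$ and regime $j$ stops at the larger boundary $x_2^*$.

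First I would treat the interior of each continuation region. On $\{1,2\}\times(0,x_1^*)$ the function $u=\sum_{j}A_j u_j$ solves $au-\tmG^* u=0$ classically by \lemref{lem:rsvpl}(1), and on the regime-$j$ slice $\{j\}\times(x_1^*,x_2^*)$ the function $u=\sum_k B_k w_{jk}+v_j$ solves it by \lemref{lem:rsvpl}(2) (there the coupling term reads $u(3-j,\cdot)=v_j(3-j,\cdot)=g_{3-j}$, consistent with regime $3-j$ being already stopped). At such an interior point I would run the comparison argument of \lemref{lem:classical is viscosity} adapted to the coupled operator: if $\phi\in D(\tmG^*)$ and $\phi-u$ has a global minimum (resp.\ maximum) at $(i,x_0)$ with equality, then $D_{xx}\phi(i,x_0)\geq D_{xx}u(i,x_0)$ (resp.\ $\leq$) and $\phi(3-i,x_0)\geq u(3-i,x_0)$ (resp.\ $\leq$). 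Since $q_i>0$, both inequalities propagate through $\tmG^*\phi(i,x_0)=\tfrac12 D_{xx}\phi(i,x_0)+q_i\phi(3-i,x_0)-q_i\phi(i,x_0)$ to give $a\phi-\tmG^*\phi\leq 0$ (resp.\ $\geq 0$) at $(i,x_0)$; the supersolution branch additionally uses $u\geq g$ from hypothesis (1). This establishes the viscosity property on the open continuation regions.

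Next I would dispatch the stopping regions, where $u=g$: the subsolution inequality holds trivially because $u-g=0$ makes the minimum nonpositive, while the supersolution inequality on $\{j\}\times[x_2^*,\infty)\cup\{3-j\}\times[x_1^*,\infty)$ is precisely hypothesis (4). Note these closed half-lines already contain the free-boundary points $(j,x_2^*)$ and $(3-j,x_1^*)$. The only points left over are the three distinguished points $(1,0)$, $(2,0)$ and $(j,x_1^*)$, at which the viscosity property is granted outright by hypothesis (3). Assembling the continuation regions, the stopping regions, and these three points shows that $u$ is a viscosity solution, and \thmref{thm:maintheorem} then yields $V=u$.

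I expect the genuine obstacle to be the supersolution check at the free boundaries, i.e.\ showing that a test function touching $u=g$ from below there satisfies $a\phi-\tmG^*\phi\geq 0$; this is the analogue of Case~2 in \corref{cor:exvfrefc1c2} and rests on comparing $D_{xx}\phi$ with the second derivative of the adjacent smooth continuation piece, while also respecting the sticky/reflecting constraints $D_{xx}u(1,0)=0$ and $D_xu(2,0)=0$ encoded in $D(\tmG^*)$. However, hypotheses (3) and (4) are formulated precisely to encapsulate these singular verifications, so the remaining effort is the careful bookkeeping of confirming that every $(i,x)$ falls into exactly one of the cases above and that the coupling term is handled with the correct sign of $q_i$.
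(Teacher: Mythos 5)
Your proposal is correct and follows essentially the same route as the paper: decompose the state space into the open continuation regions (handled by \lemref{lem:rsvpl} plus the classical-to-viscosity conversion in the spirit of \lemref{lem:classical is viscosity}), the stopping half-lines (subsolution trivial from $u=g$, supersolution from hypotheses (1) and (4)), and the finitely many distinguished points covered by hypothesis (3), then conclude $V=u$ by the uniqueness in \thmref{thm:maintheorem}. The extra detail you supply on propagating the test-function inequalities through the coupling term $q_i(\phi(3-i,\cdot)-\phi(i,\cdot))$ is a welcome elaboration of a step the paper leaves implicit, but it is not a different argument.
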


\begin{proof}
To show that $u$ is the viscosity solution, we divide the state space into 3 cases,
\begin{enumerate}
    \item[(i)] For $(i,x)\in \{1,2\}\times (0,x_1^*)\cup\{j\}\times(x_1^*,x_2^*)$ , the viscosity property is given by \lemref{lem:rsvpl}
    \item[(ii)] For $(i,x)\in \{(1,0),(2,0),(j,x_1^*)\}$, the viscosity property follows from condition (3)
    \item[(iii)] For $(i,x)\in \{j\}\times [x_2^*,\infty)\cup\{3-j\}\times [x_1^*,\infty)$, the viscosity property follows from condition (1) and condition (4).
\end{enumerate}
\end{proof}

Using \propref{prop:rsexpli}, we need to find $A_j,\,j=1,2,3,4, B_k,\, k=1,2,\, x_1^*$ and $x_2^*$ such that the viscosity property is satisfied at the following $5$ points; $\{(1,0),(2,0),(1,x_1^*),(1,x_2^*),(2,x_1^*)\}$ (respectively $\{(1,0),(2,0),(2,x_1^*),(2,x_2^*),(1,x_1^*)\}$), and the continuity property is satisfied at the following $3$ points $\{(1,x_1^*),(1,x_2^*),(2,x_1^*)\}$ (respectively $\{(2,x_1^*),(2,x_2^*),(1,x_1^*)\}$).

We can then derive the explicit expression of the value function as follows:
\begin{corollary}\label{corregFdi}
Let $A_j$, $B_k$,  $c_1<x_1^*<x_2^*\leq c_2$, $l\in \{1,2\}$ such that
\begin{align}
    \begin{cases}
    \sum_j A_j u_x(1,0)=0,\\
    \sum_j A_j u_{xx}(2,0)=0,\\
    \sum_j A_j u_j(l,x_1^*) = \sum_k B_k w_{lk}(1,x_1^*)+v_l(x_1^*),\\
    \sum_j A_j u_{j}(3-l,x_1^*) = g(3-l,x_1^*),\\
    \sum_k B_k w_{lk}(,x_2^*)+v_{3-l}(x_2^*)=g(l,x_2^*),\\
    \sum_j A_j (u_j)_{x}(l,x_1^*) = \sum_k B_k (w_{lk})_{x}(1,x_1^*)+(v_l)_{x}(x_1^*)
    \end{cases}
\end{align}
and $\sum_j A_j u_j-g$ has a local minimum at $(3-l,x_1^*)$ and $\sum_j B_k u_k+v_l-g$ has a local minimum at $(l,x_2^*)$. If $u\geq g$, then $u$ is the value function.
\end{corollary}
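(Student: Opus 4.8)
The plan is to check, one by one, that the piecewise function $u$ displayed in the statement satisfies the four hypotheses of \propref{prop:rsexpli}; once this is done, that proposition yields $V=u$ directly. The six scalar equations together with the two local‑minimum requirements are precisely the conditions that force these hypotheses, so the proof is essentially a verification, the only genuine work being at the free boundaries and at the spatial boundary $x=0$, where the regime coupling and the non‑smoothness of the admissible test functions interact.

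First I would dispose of the structural hypotheses. Hypothesis (1) of \propref{prop:rsexpli}, that $u\geq g$, is assumed outright. For hypothesis (2), continuity of $u$ on $\{1,2\}\times[0,\infty)$ only has to be checked across the interfaces $x=x_1^*$ and $x=x_2^*$ in each regime, since each piece is smooth on its own cell. The third, fourth and fifth equations of the system match the adjacent pieces at $(l,x_1^*)$, $(3-l,x_1^*)$ and $(l,x_2^*)$; on the remaining interface both adjacent pieces already coincide with $g$ by the construction of $v_j$ and $w_{jk}$, which vanish off the diagonal regime. Boundedness is immediate because the continuation region $[0,x_2^*]$ is compact while $u=g\in\C_b$ beyond it. This simultaneously handles the interior of the problem, namely case (i) in the proof of \propref{prop:rsexpli}: by \lemref{lem:rsvpl}, $u$ is a classical, hence viscosity, solution of $aw-\tmG^* w=0$ on $\{1,2\}\times(0,x_1^*)$ and on $\{l\}\times(x_1^*,x_2^*)$, and together with $u\geq g$ this gives the full minimum equation there.

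It remains to verify the viscosity property at the finitely many exceptional points, which splits into two tasks. On the stopping cells $\{l\}\times[x_2^*,\infty)$ and $\{3-l\}\times[x_1^*,\infty)$ one has $u=g$, so the subsolution inequality is automatic and only the supersolution inequality (hypothesis (4)) is needed; this is supplied by the assumption that $\sum_jA_ju_j-g$ and the Region‑II solution $\sum_kB_kw_{lk}+v_l-g$ attain local minima at $(3-l,x_1^*)$ and $(l,x_2^*)$ respectively, which forces any test function touching $g$ from below to inherit the supersolution inequality from the adjacent classical solution. The boundary points $(1,0)$ and $(2,0)$ and the interior free‑boundary point $(l,x_1^*)$ constitute hypothesis (3): the first two equations, which encode the sticky and reflecting conditions satisfied by every $\phi\in D(\tmG^*)$ at $0$, allow $\tmG^*\phi$ to be compared with $\tmG^* u$ at the boundary, while the sixth equation is the smooth‑fit condition that glues the two classical pieces of regime $l$ in a $C^1$ fashion at $x_1^*$. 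I expect the main obstacle to be precisely the verification at the non‑smooth free boundaries $(3-l,x_1^*)$ and $(l,x_2^*)$: because $D(\tmG^*)$ contains essentially no smooth test functions adapted to the kink, and because $\tmG^*$ couples the two regimes through the terms $q_i(u(3-i,x)-u(i,x))$, one must argue that a merely continuous test function cannot defeat the one‑sided second‑derivative bounds guaranteed by the local‑minimum hypotheses — it is here, rather than through a naive smooth‑fit identity, that the supersolution inequality is secured.
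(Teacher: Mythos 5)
Your proposal is correct and follows exactly the route the paper intends: the corollary is presented as a direct application of \propref{prop:rsexpli}, with the six equations supplying continuity at the three interface points $(l,x_1^*)$, $(l,x_2^*)$, $(3-l,x_1^*)$ and the viscosity property at the five exceptional points $(1,0)$, $(2,0)$, $(l,x_1^*)$, $(l,x_2^*)$, $(3-l,x_1^*)$, while \lemref{lem:rsvpl} handles the interior and the two local-minimum hypotheses supply the supersolution inequality on the stopping region. The paper gives no further detail beyond this reduction, so your verification matches (and slightly elaborates on) its argument.
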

For fixed numerical values of $c_1, c_2,q_1, q_2,$ and $a$, we show in the next example that we can find the above parameters $A_j,\,j=1,2,3,4, B_k,\, k=1,2,\, x_1^*$ and $x_2^*$ and thus derive the value function.

 Assume that $a=0.1$. Figure~\ref{fig:vfc} depicts the evolution of the value functions
\begin{align}
    V(x) = \sup_{\tau}\bfE^{i,x}\Big[e^{-a\tau}\Big\{(X(t)-c_1)^+-(X(t)-c_2)^+\Big\}\Big],
\end{align}
where $c_1=1$ and $c_2=4$ for regime switching diffusion with reflected boundary and sticky boundary with the intensity matrix
\begin{align*}
\begin{bmatrix}-q_1& q_1\\ q_2&-q_2
\end{bmatrix}
=
\begin{bmatrix}
-0.1&0.1\\0.1&-0.1
\end{bmatrix},
\end{align*}
where state 1 represents the reflected boundary and state 2 represents the sticky boundary.
	\begin{figure}[H]
		\caption{Value functions against initial state}\label{fig:vfc}
		\centering
		\includegraphics[scale=0.6]{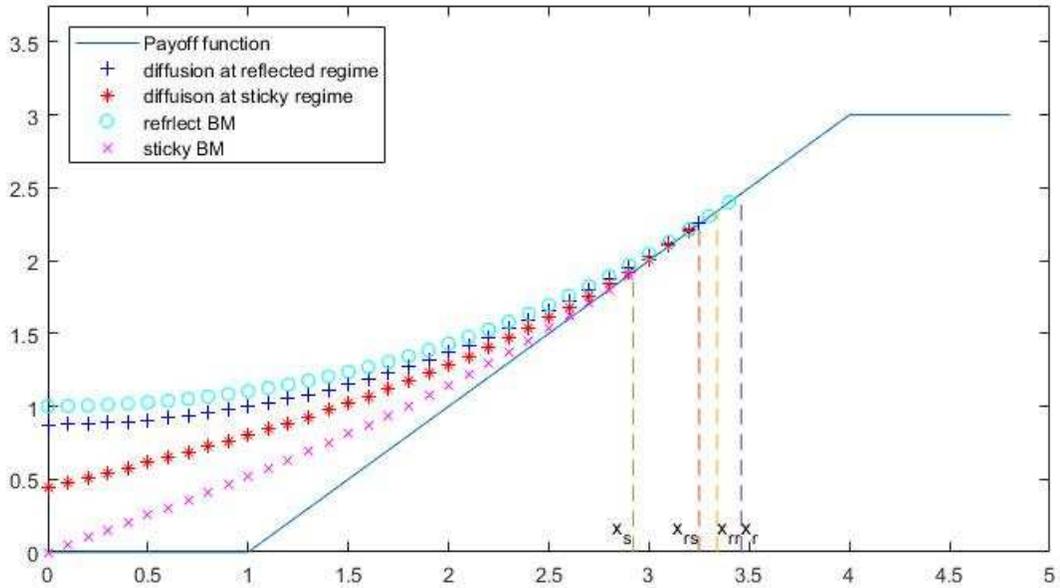}
	\end{figure}
$x_s$, $x_{rs}$, $x_{rr}$ and $x_r$ are the exercise points in the cases of sticky Brownian motion, diffusion at sticky regime, diffusion at reflected regime and reflected Brownian motion, respectively.  Figure~\ref{fig:vfc} depicts the evolution of the value function of reflected Brownian motion and sticky Brownian motion with regime switching respectively. The sticky Brownian motion has an absorbing point at $0$ and the payoff function at $0$ equals $0$. This means that the value function of the optimal stopping problem for sticky Brownian motion at $0$ is $0$ which is smaller than that of the reflected Brownian motion at $0$. Therefore, the exercise points $x_r$ for reflected Brownian motion is larger than that of the sticky Brownian motion $x_s$. The graph also shows that the value function of this regime switching process will stay between the above two value functions. This is in line with the intuition.  Additionally, the graph shows that the exercise points $x_{rs}$ and $x_{rr}$ are between $x_s$ and $x_r$.

\color{black}
\appendix

\section{Proof of \thmref{thm:osi}}
\noindent \textbf{(1)} We first prove (1) in \thmref{thm:osi}, that is, there exists a unique solution  $v_\lambda\in D(\tmL)$ to \eqnref{eqn:osipenvar} for each $\lambda>0$. Define the penalty function $v_\lambda$ as the solution to
\begin{equation}\label{eqn:osipen}
v_\lambda = \tmR_a(f+\lambda(g-v_\lambda)^+) .
\end{equation}
We start by showing that \eqref{eqn:osipen} has a unique solution in $\cz(\sE)$ in the following lemma.

\begin{lemma}\label{lem:osipen}
Suppose \assref{ass:fscompact} holds. For any $\lambda>0$, \eqref{eqn:osipen} admits a unique solution $v_\lambda\in \cz({\sE})$. Additionally, the solution to  \eqref{eqn:osipen} is equivalent to the solution to the following equation
\begin{equation}\label{eqn:resolventab1}
 v_\lambda = \tmR_{a+\lambda}(f+\lambda(g-v_\lambda)^++\lambda v_\lambda).
\end{equation}
\end{lemma}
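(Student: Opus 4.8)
The plan is to route everything through the equivalent formulation \eqref{eqn:resolventab1}, whose right-hand side is a genuine contraction, and then to transfer existence and uniqueness back to \eqref{eqn:osipen} using the resolvent identity \eqref{eqn:resolventidentity}. So I would first prove the equivalence of the two equations, and only afterwards solve \eqref{eqn:resolventab1} by a fixed-point argument.

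For the equivalence, I would isolate the following purely linear fact: for a \emph{fixed} function $h\in\cz(\sE)$ one has $v=\tmR_a h$ if and only if $v=\tmR_{a+\lambda}(h+\lambda v)$. The forward implication is a short computation with \eqref{eqn:resolventidentity}: choosing the resolvent parameters $a+\lambda$ and $a$ gives $\tmR_{a+\lambda}h-\tmR_a h=-\lambda\,\tmR_{a+\lambda}\tmR_a h=-\lambda\,\tmR_{a+\lambda}v$, and hence $\tmR_{a+\lambda}(h+\lambda v)=\tmR_{a+\lambda}h+\lambda\,\tmR_{a+\lambda}v=\tmR_a h=v$. For the converse I would observe that $w\mapsto\tmR_{a+\lambda}(h+\lambda w)$ is affine with Lipschitz constant $\lambda\,\ninf{\tmR_{a+\lambda}}\le\lambda/(a+\lambda)<1$ by \eqref{eqn:resolventproperties233}, so it admits a unique fixed point, which by the forward direction must be $\tmR_a h$. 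Applying this with the choice $h=f+\lambda(g-v)^+$ shows that $v$ solves \eqref{eqn:osipen} precisely when it solves \eqref{eqn:resolventab1}.

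Next I would prove existence and uniqueness via a contraction on \eqref{eqn:resolventab1}. The decisive observation is the identity $\lambda(g-v)^++\lambda v=\lambda\max(g,v)$, so that \eqref{eqn:resolventab1} reads $v=S(v):=\tmR_{a+\lambda}\bigl(f+\lambda\max(g,v)\bigr)$. Since $v\mapsto\max(g,v)$ is nonexpansive in the supremum norm, the map $v\mapsto f+\lambda\max(g,v)$ is $\lambda$-Lipschitz, and composing with $\tmR_{a+\lambda}$ and using \eqref{eqn:resolventproperties233} yields
\begin{equation*}
\ninf{S(v_1)-S(v_2)}\le\frac{\lambda}{a+\lambda}\,\ninf{v_1-v_2},
\end{equation*}
so $S$ is a contraction on the Banach space $\cz(\sE)$ with factor $\lambda/(a+\lambda)<1$. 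The Banach fixed-point theorem then produces a unique $v_\lambda\in\cz(\sE)$ solving \eqref{eqn:resolventab1}, and by the equivalence above $v_\lambda$ is also the unique solution of \eqref{eqn:osipen}.

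The main obstacle is exactly what makes the detour through \eqref{eqn:resolventab1} necessary: a direct contraction argument on \eqref{eqn:osipen} fails, because $v\mapsto\tmR_a(f+\lambda(g-v)^+)$ has Lipschitz constant only $\lambda\,\ninf{\tmR_a}\le\lambda/a$, which is no longer $<1$ once $\lambda\ge a$. Shifting the resolvent parameter from $a$ to $a+\lambda$, while compensating with the extra $+\lambda v$ term, is precisely what restores a contraction factor $\lambda/(a+\lambda)<1$ that is uniform in $\lambda$; the collapse $(g-v)^++v=\max(g,v)$ is what makes the nonexpansiveness transparent. A secondary technical point to check is that $S$ maps $\cz(\sE)$ into itself: one verifies that $\max(g,v)\in\cz(\sE)$ whenever $g,v\in\cz(\sE)$ and invokes the Feller property so that $\tmR_{a+\lambda}$ preserves $\cz(\sE)$, which in particular is automatic when $\sE$ is compact since then $\C_b(\sE)=\cz(\sE)$.
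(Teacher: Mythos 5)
Your proposal is correct and follows essentially the same route as the paper: establish the equivalence of \eqref{eqn:osipen} and \eqref{eqn:resolventab1} via the resolvent identity \eqref{eqn:resolventidentity}, then solve \eqref{eqn:resolventab1} by the Banach fixed-point theorem using the collapse $(g-v)^++v=\max(g,v)$ and the bound $\ninf{\tmR_{a+\lambda}}\leq 1/(a+\lambda)$. The only cosmetic difference is that you obtain the converse direction of the equivalence from uniqueness of the fixed point of the affine map $w\mapsto\tmR_{a+\lambda}(h+\lambda w)$, whereas the paper applies the resolvent identity a second time directly; both are valid.
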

\noindent(Equivalence here means that, the solution to one is also a solution to the other, vice versa.)

\begin{proof}
We first show that the solution to \eqref{eqn:osipen} is equivalent to the solution to \eqref{eqn:resolventab1}. Let $v_\lambda$ be the solution to \eqref{eqn:osipen} in $\cz(\sE)$. Using the resolvent identity equation \eqnref{eqn:resolventidentity}, we obtain
\begin{align}\label{eqn:resolventab1explain}
\tmR_{a+\lambda}(f+\lambda(g-v_\lambda)^+)-\tmR_{a}(f+\lambda(g-v_\lambda)^+) = -\lambda\tmR_{a+\lambda}\tmR_{a} (f+\lambda(g-v_\lambda)^+).
\end{align}
Combining \eqref{eqn:osipen} and \eqref{eqn:resolventab1explain}, we have
$$\tmR_{a+\lambda}(f+\lambda(g-v_\lambda)^+)-v_{\lambda} = -\lambda\tmR_{a+\lambda}v_\lambda.$$
Therefore, $v_\lambda$ is also a solution to \eqnref{eqn:resolventab1}.\\
Now, let $v_\lambda$ be a solution to \eqnref{eqn:resolventab1}. Using once more \eqnref{eqn:resolventidentity}, we have
\begin{align}\label{eqn:resolventab2explain}
&\tmR_{a+\lambda}(f+\lambda(g-v_\lambda)^++\lambda v_\lambda)-\tmR_{a}(f+\lambda(g-v_\lambda)^++\lambda v_\lambda)\nonumber\\
=&-\lambda\tmR_{a}\tmR_{a+\lambda}(f+\lambda(g-v_\lambda)^++\lambda v_\lambda).
\end{align}
Combining \eqref{eqn:resolventab1} and \eqref{eqn:resolventab2explain} yields
$$v_\lambda-\tmR_{a}(f+\lambda(g-v_\lambda)^++\lambda v_\lambda) = -\lambda\tmR_a v_\lambda.$$
Hence, $v_\lambda$ is also a solution to \eqref{eqn:osipen}.

In order to complete the proof, it is enough to show that \eqref{eqn:resolventab1} has the unique solution. Define a new operator $\mathcal{Z}$ as  follows:
$$\mathcal{Z}w := \tmR_{a+\lambda}(f+\lambda(g-w)^++\lambda w).$$
We have that $f,g \in \cz(\sE)$ and the resolvent operator maps from $\cz(\sE)$ to $\cz(\sE)$. Let $w\in \cz(\sE)$, then $\mathcal{Z}w$ is also in $\cz(\sE)$. Furthermore, let $w_1,w_2\in\cz{(\sE)}$. Using the linearity of the resolvent and the fact that $(g-w_i)^++w_i = \max(g,w_i)$ for $i=1,2$, we have
\begin{eqnarray*}
\ninf{\mathcal{Z}w_1-\mathcal{Z}w_2}&=&\ninf{\tmR_{a+\lambda}(f+\lambda(g-w_1)^++\lambda w_1)-\tmR_{a+\lambda}(f+\lambda(g-w_2)^++\lambda w_2)}\\
&=& \ninf{\lambda\tmR_{a+\lambda}(\max{(g,w_1)}-\max{(g,w_2)})}\\
&\leq& \frac{\lambda}{a+\lambda}\ninf{\max{(g,w_1)}-\max{(g,w_2)})}\\
&\leq& \frac{\lambda}{a+\lambda}\ninf{w_1-w_2},
\end{eqnarray*}
where the inequality comes from \eqref{eqn:resolventproperties233}. Hence, $\mathcal{Z}$ is a contraction mapping from $\cz(\sE)$ to $\cz(\sE)$. By Banach fixed point theorem, the equation $w = \mathcal{Z}w$ (which is the same as \eqnref{eqn:resolventab1}) has a unique solution $w\in \cz(\sE)$, that we denote by $v_\lambda$.
\end{proof}

 Recall that the operator $(\lambda-\tmL)$ is a bijection of $D(\tmL) $ to $\cz(\sE)$ and its inverse is the resolvent $\tmR_\lambda$ (see \corref{cor:hy}). The solution to \eqref{eqn:osipen} is equivalent to the solution to \eqref{eqn:osipenvar}. It remains to show that $v_\lambda \in D(\tmL)$. We have shown that \eqref{eqn:osipen} has the unique solution $v_\lambda$ in $\cz(\sE)$ such that $f+\lambda(g-w)^+\in \cz(\sE)$. Therefore, \textbf{(1)} in \thmref{thm:osi} is proved.

\noindent \textbf{(2)} Let $v_\lambda$ be the unique solution in $D(\tmL)$ to \eqref{eqn:osipenvar} for $\lambda>0$. We prove that the sequence of penalty functions $\{v_\lambda\}_{\lambda>0}$ converges uniformly to the value function $V$ in $\cz(\sE)$ as $\lambda\rightarrow \infty$. We need the following two preliminary lemmas.

\begin{lemma}\label{lem:vnu}
Suppose \assref{ass:fscompact} holds. Let $\{g_n\}_{n\mN}$ be a sequence  in $\cz(\sE)$ such that
\begin{align}
\ninf{g-g_n}\leq \frac{1}{n}.
\end{align} Define a sequence of the corresponding value functions $\{V_n\}_{n\in\mN}$ by
\begin{equation}\label{eqn:vnu}
V_n(x) := \sup_{\tau\in\mathcal{T}} J^{(n)}_x(\tau) \text{ for }x\in \sE\text{ and } n\in\mN,
\end{equation}
where $J^{(n)}_x(\tau) = \bfE^x\left[\int_{0}^{\tau}{e^{-as}f(X(s))ds}+e^{-a\tau}g_n(X(\tau))\right].$ Then, $V_n$ converges to $V$ defined by \eqref{eqn:valuefunction}  uniformly as $n\rightarrow \infty$.
\end{lemma}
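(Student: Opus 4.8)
The plan is to exploit the fact that $V$ and $V_n$ share the same running reward $f$ and the same discount rate $a$, so that the only discrepancy between the two optimal stopping problems comes from the terminal reward. First I would fix an arbitrary stopping time $\tau\in\mT$ and an initial point $x\in\sE$ and subtract the two objective functionals. Because the integral term involving $f$ is identical in $J_x(\tau)$ and $J^{(n)}_x(\tau)$, it cancels, leaving
\begin{equation*}
J_x(\tau)-J^{(n)}_x(\tau)=\bfE^x\big[e^{-a\tau}\big(g(X(\tau))-g_n(X(\tau))\big)\big].
\end{equation*}

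Next I would bound this quantity in absolute value, uniformly in both $\tau$ and $x$. Using the hypothesis $\ninf{g-g_n}\leq \tfrac{1}{n}$ together with $a>0$ (so that $e^{-a\tau}\leq 1$ pointwise, with the usual convention $e^{-a\tau}=0$ on $\{\tau=\infty\}$ where the terminal reward vanishes by discounting), I obtain
\begin{equation*}
\big|J_x(\tau)-J^{(n)}_x(\tau)\big|\leq \bfE^x\big[e^{-a\tau}\big]\cdot\ninf{g-g_n}\leq \frac{1}{n}.
\end{equation*}
The key point is that this estimate holds for \emph{every} $\tau\in\mT$ and every $x\in\sE$ with the same constant $1/n$.

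Finally, I would transfer this uniform bound to the value functions by means of the elementary inequality $\big|\sup_{\tau}A(\tau)-\sup_{\tau}B(\tau)\big|\leq \sup_{\tau}|A(\tau)-B(\tau)|$, valid for any two bounded real-valued functions $A,B$ on $\mT$; applying it with $A=J_x(\cdot)$ and $B=J^{(n)}_x(\cdot)$ gives $|V(x)-V_n(x)|\leq \tfrac1n$ for each $x\in\sE$. Taking the supremum over $x$ yields $\ninf{V-V_n}\leq \tfrac1n\to 0$, which is exactly the asserted uniform convergence. The argument is essentially immediate once the cancellation of the $f$-term is observed; the only mild subtlety—and thus the one place I would be careful—is the behaviour at $\tau=\infty$, where both terminal contributions disappear because of the strictly positive discount rate, so that the bound continues to hold trivially there.
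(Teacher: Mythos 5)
Your proof is correct and follows essentially the same route as the paper: both arguments rest on the cancellation of the $f$-term and the uniform bound $\bfE^x[e^{-a\tau}]\,\ninf{g-g_n}\leq \tfrac1n$. The only difference is presentational — the paper unrolls the inequality $|\sup_\tau A(\tau)-\sup_\tau B(\tau)|\leq\sup_\tau|A(\tau)-B(\tau)|$ explicitly via $\varepsilon$-optimal stopping times, while you invoke it directly.
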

\begin{proof}
Let $x\in \sE$, $n\in\mN$ and $\varepsilon>0$. Define a $\varepsilon$-optimal stopping time $\tau_\varepsilon^*$ such that
\begin{equation}\label{eqn:tauvarepsilon}
V(x)-\varepsilon\leq J_x(\tau_\varepsilon^*).
\end{equation}
Therefore, we have
\begin{align*}
V(x) &\leq  \bfE^{x}\bigg[\int_{0}^{\tau_\varepsilon^*}e^{-as}f(X(s))ds+e^{-a\tau_\varepsilon^*}g(X(\tau_\varepsilon^*))\bigg]+\varepsilon\\
&\leq \bfE^{x}\bigg[\int_{0}^{\tau_\varepsilon^*}e^{-as}f(X(s))ds+e^{-a\tau_\varepsilon^*}(g_n(X(\tau_\varepsilon^*))+\ninf{g-g_n})\bigg]+\varepsilon\\
&\leq J_x^{(n)}(\tau_\varepsilon^*)+\frac{1}{n}+\varepsilon\leq V_n(x)+\frac{1}{n}+\varepsilon.
\end{align*}
Since $\varepsilon$ is an arbitrary positive constant, $V(x)-V_n(x)\leq \frac{1}{n}$.
On the other hand, we can find a stopping time $\tau_\varepsilon^{*(n)}$ for $V_n$ such that $V_n(x)-\varepsilon\leq J^{(n)}_x(\tau_\varepsilon^{*(n)}).$ One also obtains $V_n(x)-V(x)\leq \frac{1}{n}$ similarly.
Therefore, we have
\begin{align}\label{eqn:vnulambdatovlambda1}
\ninf{V-V_n}\leq \frac{1}{n}.
\end{align}
Then, the proof is completed.
\end{proof}

\begin{lemma} \label{lem:osipencon}
Suppose \assref{ass:fscompact} holds. Let $v_\lambda$ be the solution to \eqnref{eqn:osipen} for each $\lambda>0$. Then, $v_\lambda$ satisfies
\begin{equation}\label{eqn:osipenieq}
v_\lambda(x) = \sup_{\tau}\bfE^x\bigg[\int_{0}^{\tau}{e^{-as}f(X(s))ds}+e^{-a\tau}(g-(g-v_{\lambda})^+)(X(\tau))\bigg].
\end{equation}
Additionally, $V\geq v_\lambda$.
\end{lemma}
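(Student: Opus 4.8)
The plan is to recognise the bracketed terminal reward as $g-(g-v_\lambda)^+=\min(g,v_\lambda)=:h$, which lies in $\cb(\sE)$ because $g\in\cb(\sE)$ and $v_\lambda\in\cz(\sE)$, and then to identify the right-hand side of \eqref{eqn:osipenieq} as the value function $\hat V$ of an auxiliary optimal stopping problem with running reward $f$ and terminal reward $h$. Since \eqref{eqn:osipen} together with \corref{cor:hy} gives $v_\lambda\in D(\tmL)$ with $(a-\tmL)v_\lambda=f+\lambda(g-v_\lambda)^+$, I would first apply Dynkin's formula to the discounted process $e^{-at}v_\lambda(X(t))$. As $v_\lambda$ and $(a-\tmL)v_\lambda$ are bounded and $a>0$, the associated martingale is uniformly integrable, so optional sampling at an arbitrary $\tau\in\mT$ (including $\tau=\infty$, with the convention that the discounted terminal term is $0$ there) yields the fundamental identity
\begin{equation*}
v_\lambda(x)=\bfE^x\Big[\int_0^\tau e^{-as}f(X(s))\diff s+e^{-a\tau}v_\lambda(X(\tau))\Big]+\lambda\,\bfE^x\Big[\int_0^\tau e^{-as}(g-v_\lambda)^+(X(s))\diff s\Big].
\end{equation*}
I would then establish $v_\lambda\geq\hat V$ and $v_\lambda\leq\hat V$ separately.

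For $v_\lambda\geq\hat V$: the penalty term in the identity is nonnegative and $v_\lambda\geq\min(g,v_\lambda)=h$ pointwise, so discarding the penalty term and using $v_\lambda(X(\tau))\geq h(X(\tau))$ gives $v_\lambda(x)\geq\bfE^x[\int_0^\tau e^{-as}f(X(s))\diff s+e^{-a\tau}h(X(\tau))]$ for every $\tau\in\mT$; taking the supremum over $\tau$ yields $v_\lambda\geq\hat V$.

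For $v_\lambda\leq\hat V$: I would substitute the candidate stopping time $\tau^*:=\inf\{t\geq0;\,v_\lambda(X(t))\leq g(X(t))\}$ into the identity. On $\{s<\tau^*\}$ one has $v_\lambda(X(s))>g(X(s))$, hence $(g-v_\lambda)^+(X(s))=0$ and the penalty term vanishes; at $\tau^*$ the inequality $v_\lambda(X(\tau^*))\leq g(X(\tau^*))$ forces $v_\lambda(X(\tau^*))=\min(g,v_\lambda)(X(\tau^*))=h(X(\tau^*))$. The identity then reduces to $v_\lambda(x)=\bfE^x[\int_0^{\tau^*}e^{-as}f(X(s))\diff s+e^{-a\tau^*}h(X(\tau^*))]\leq\hat V(x)$. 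Combining the two inequalities gives $v_\lambda=\hat V$, which is precisely \eqref{eqn:osipenieq}. Finally, since $h=\min(g,v_\lambda)\leq g$, replacing $h$ by $g$ only increases every payoff, so $v_\lambda=\hat V\leq\sup_\tau J_x(\tau)=V$, establishing $V\geq v_\lambda$.

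The step I expect to be most delicate is the lower-bound direction at $\tau^*$: justifying that the penalty integral vanishes on $[0,\tau^*)$ and that $v_\lambda(X(\tau^*))\leq g(X(\tau^*))$ on $\{\tau^*<\infty\}$. Since $X$ is \emph{càdlàg} and $v_\lambda,g$ are continuous, the process $t\mapsto v_\lambda(X(t))-g(X(t))$ is \emph{càdlàg}, and $\tau^*$ is the debut of the closed set $(-\infty,0]$ for it; right-continuity then delivers the required inequality at $\tau^*$, while strict positivity on $[0,\tau^*)$ makes the penalty integral zero up to a $\diff s$-null set. Care must also be taken on $\{\tau^*=\infty\}$, where the discounted terminal term tends to $0$ in $L^1$ by boundedness of $v_\lambda$ and $a>0$, so that both the fundamental identity and the estimates remain valid.
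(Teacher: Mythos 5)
Your proposal is correct and follows essentially the same route as the paper: Dynkin's formula plus optional sampling to obtain the fundamental identity, discarding the nonnegative penalty term for the inequality $v_\lambda\geq\hat V$, and substituting the debut time $\inf\{t\geq0;\,v_\lambda(X(t))\leq g(X(t))\}$ (the paper's $\sigma^*$) so that the penalty vanishes and the terminal reward equals $\min(g,v_\lambda)$, with the final comparison $\min(g,v_\lambda)\leq g$ giving $V\geq v_\lambda$. Your extra attention to right-continuity at $\tau^*$ and to the event $\{\tau^*=\infty\}$ only makes explicit what the paper leaves implicit.
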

\begin{proof}
Let  $x\in\sE$, $\lambda>0$ and $\tau$ be a $\mF_t$-stopping time. We know from \textbf{(1)} in \thmref{thm:osi} that $v_\lambda\in D(\tmL)$. Using Dynkin's formula and optional stopping theorem,
\begin{align}\label{eqn:proofosipencon1}
v_\lambda(x) &= \bfE^x\Big[\int_{0}^{\tau}{e^{-as}(f(X(s))+\lambda(g-v_\lambda)^+(X(s)))ds}+e^{-a\tau}v_\lambda(X(\tau))\Big].\\
&\geq \bfE^x\bigg[\int_{0}^{\tau}{e^{-as}f(X(s))ds}+e^{-a\tau}(g-(g-v_{\lambda})^+)(X(\tau))\bigg].\nonumber
\end{align}
Taking the supremum on both sides, we obtain
\begin{equation}\label{eqn:vlambdaproof}
v_\lambda(x)\geq \sup_{\tau}\bfE^x\Big[\int_{0}^{\tau}{e^{-as}f(X(s))ds}+e^{-a\tau}(g-(g-v_{\lambda})^+)(X(\tau))\Big].
\end{equation}
In order to prove the equality,  define the stopping time $\sigma^*$ by $ \sigma^*:= \inf\{s\geq 0 ;\;v_\lambda(X(s))\leq g(X(s))\}$. Since $\{X(t)\}_{t\geq 0}$ is right continuous and $v_\lambda$ and $g$ are continuous, we have $v_\lambda(X(\sigma^*))\leq g(X(\sigma^*))$. Using the preceding and \eqnref{eqn:proofosipencon1}, we have
\begin{align*}
v_\lambda(x) &= \bfE^x\Big[\int_{0}^{\sigma^*}{e^{-as}(f(X(s))+\lambda(g-v_\lambda)^+(X(s)))ds}+e^{-a\sigma^*}v_\lambda(X(\sigma^*))\Big]\\
&= \bfE^x\Big[\int_{0}^{\sigma^*}{e^{-as}f(X(s)ds}+e^{-a\sigma^*}(g-(g-v_{\lambda})^+)(X(\sigma^*))\Big].
\end{align*}
Hence, \eqref{eqn:osipenieq} is proved. Furthermore, since $g-(g-v_\lambda)^+\leq g$, \eqref{eqn:osipenieq} implies $V\geq v_\lambda$ for all $\lambda>0$. The proof is completed.
\end{proof}

\begin{lemma}\label{lem:gdvltov}
Suppose \assref{ass:fscompact} holds. Assume in addition that $g\in D(\tmL)$. Let $v_\lambda$ be the solution to \eqref{eqn:osipenvar} for $\lambda>0$. $v_\lambda$ converges to $V$ uniformly as $\lambda\to\infty$ and hence $V\in \cz(\sE)$.
\end{lemma}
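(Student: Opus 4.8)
The plan is to reduce the whole statement to a single uniform estimate on the penalty term $(g-v_\lambda)^+$. By \lemref{lem:osipencon} I already have, for every $\lambda>0$, both the inequality $V\geq v_\lambda$ and the representation \eqnref{eqn:osipenieq}, namely
\[
v_\lambda(x)=\sup_{\tau}\bfE^x\Big[\int_0^\tau e^{-as}f(X(s))\,\diff s+e^{-a\tau}\big(g-(g-v_\lambda)^+\big)(X(\tau))\Big].
\]
Comparing this with the value function \eqref{eqn:valuefunction} and using $e^{-a\tau}\le 1$, for every stopping time $\tau$ the two integrands differ only by the term $e^{-a\tau}(g-v_\lambda)^+(X(\tau))$, so that $0\le V(x)-v_\lambda(x)\le \ninf{(g-v_\lambda)^+}$ for all $x\in\sE$. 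Hence it suffices to show that $\ninf{(g-v_\lambda)^+}\to 0$ as $\lambda\to\infty$, and in fact I will produce an explicit rate.

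The key step is to bound $(g-v_\lambda)^+$ using the extra hypothesis $g\in D(\tmL)$ together with the positive maximum principle. Since $v_\lambda\in D(\tmL)$ by part (1) of \thmref{thm:osi} and $g\in D(\tmL)$, the function $u:=g-v_\lambda$ lies in $D(\tmL)\subseteq\cz(\sE)$ and vanishes at infinity. If $\sup_\sE u\le 0$ there is nothing to prove; otherwise the supremum is attained at some $x_0\in\sE$ with $u(x_0)>0$. The positive maximum principle for $(\tmL,D(\tmL))$ (a Feller generator, see \thmref{thm:hyr}) gives $\tmL u(x_0)\le 0$, i.e. $\tmL v_\lambda(x_0)\ge \tmL g(x_0)$. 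At $x_0$ we have $(g-v_\lambda)^+(x_0)=u(x_0)$, so the penalty equation \eqnref{eqn:osipenvar} reads $a v_\lambda(x_0)-\tmL v_\lambda(x_0)-f(x_0)=\lambda u(x_0)$, and substituting $a v_\lambda(x_0)=a g(x_0)-a u(x_0)$ and the inequality for $\tmL v_\lambda(x_0)$ yields
\[
(\lambda+a)\,u(x_0)\le a g(x_0)-\tmL g(x_0)-f(x_0)\le \ninf{a g-\tmL g-f}=:M.
\]
Therefore $\ninf{(g-v_\lambda)^+}=u(x_0)\le M/(\lambda+a)$, where $M<\infty$ precisely because $g\in D(\tmL)$ and $f\in\C_b(\sE)$.

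Combining the two displays gives $0\le V-v_\lambda\le M/(\lambda+a)$ uniformly on $\sE$, so $v_\lambda\to V$ uniformly as $\lambda\to\infty$. Finally, each $v_\lambda\in\cz(\sE)$ and $\cz(\sE)$ is closed under uniform convergence (it is a Banach space under $\ninf{\cdot}$), whence $V\in\cz(\sE)$, completing the proof. The main obstacle is the key estimate of the second paragraph: one must verify that $u=g-v_\lambda$ genuinely belongs to $D(\tmL)$ so that the positive maximum principle applies, and that its positive supremum is attained (guaranteed here by the vanishing-at-infinity property of $\cz(\sE)$); everything else is bookkeeping built on \lemref{lem:osipencon}. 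Note that no monotonicity of $\lambda\mapsto v_\lambda$ is needed for this argument, and the hypothesis $g\in D(\tmL)$ is used only to make $M$ finite — the general case $g\in\C_b(\sE)$ is then recovered by approximating $g$ in $\cz(\sE)$ and invoking \lemref{lem:vnu}.
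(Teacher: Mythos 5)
Your proof is correct, and the reduction in your first paragraph ($0\le V-v_\lambda\le \ninf{(g-v_\lambda)^+}$ via \lemref{lem:osipencon}, then closedness of $\cz(\sE)$ under uniform limits) is exactly the paper's. Where you genuinely diverge is in the key estimate $\ninf{(g-v_\lambda)^+}\le \ninf{ag-\tmL g-f}/(a+\lambda)$. The paper obtains it analytically: writing $g-v_\lambda=\tmR_a\bigl((a-\tmL)(g-v_\lambda)\bigr)$, substituting the penalty equation, shifting to $\tmR_{a+\lambda}$ via the resolvent identity, and then using positivity of the resolvent together with the contraction bound \eqref{eqn:resolventproperties233}. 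You instead evaluate the penalty equation pointwise at a maximizer $x_0$ of $u=g-v_\lambda$ (which exists because $u\in\cz(\sE)$, so a positive supremum is attained) and invoke the positive maximum principle to get $\tmL v_\lambda(x_0)\ge \tmL g(x_0)$, which yields $(\lambda+a)u(x_0)\le \ninf{ag-\tmL g-f}$ — the same constant and the same rate. Your route is arguably more elementary (no resolvent identity, no positivity of $\tmR_{a+\lambda}$, only the maximum principle that the paper already uses elsewhere), while the paper's route avoids any appeal to attainment of suprema and stays entirely within operator calculus, which is the style it reuses in the approximation step \eqref{eqn:vnulambdatovlambda}. Both are valid; your closing remark about recovering general $g\in\C_b(\sE)$ by approximation correctly anticipates how the paper completes the proof of \thmref{thm:osi} after this lemma, though it is not needed for the lemma itself.
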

\begin{proof}
Let $x\in\sE$ and $\lambda>0$. Using \eqnref{eqn:osipenieq}, we get
\begin{align*}
v_\lambda(x) &= \sup_{\tau}{\bfE^x\Big[ \int_{0}^{\tau}{e^{-as}f(X(s))ds}+e^{-a\tau}(g-(g-v_{\lambda})^+)(X(\tau)) \Big]}\nonumber\\
&\geq \sup_{\tau}{\bfE^x\Big[ \int_{0}^{\tau}{e^{-as}f(X(s))ds}+e^{-a\tau}g(X(\tau)) \Big]} -\sup_{\tau}{\bfE^x\Big[ e^{-a\tau}(g-v_{\lambda})^+(X(\tau)) \Big]}\nonumber\\
&\geq V(x) - \sup_{\tau}{\bfE^x\left[ (g-v_{\lambda})^+(X(\tau)) \right]} \nonumber\\
&\geq V(x) - \ninf{(g-v_\lambda)^+}.
\end{align*}
Additionally, we have from \lemref{lem:osipencon} that $V\geq v_\lambda$ for all $\lambda>0$. Then,
\begin{equation}\label{eqn:proofinequalityvvlambda}
\ninf{V-v_\lambda}\leq \ninf{(g-v_\lambda)^+}.
\end{equation}
Furthermore, since $v_\lambda\in D(\tmL)$ by \textbf{(1)} in \thmref{thm:osi}, $g-v_\lambda\in D(\tmL)$ and thus $$g-v_\lambda = \tmR_a((a-\tmL)(g-v_\lambda)).$$
Hence, using  \eqnref{eqn:osipenvar} and similar argument as in \eqref{eqn:resolventab1explain}, we get
\begin{align}\label{eqn:corprf}
g-v_\lambda&=\tmR_a((a-\tmL )g-(a-\tmL)v_\lambda)\nonumber\\
&=\tmR_a((a-\tmL )g-f-\lambda(g-v_\lambda)^+)\nonumber\\
&= \tmR_{a+\lambda}((a-\tmL )g-f-\lambda(g-v_\lambda)^++\lambda(g-v_\lambda))\nonumber\\
&\leq  \tmR_{a+\lambda}((a-\tmL )g-f-\lambda(g-v_\lambda)^++\lambda(g-v_\lambda)^+)\\
&\leq \frac{\ninf{(a-\tmL )g-f}}{a+\lambda}.\nonumber
\end{align}
Thus, it follows from \eqnref{eqn:proofinequalityvvlambda} that
\begin{align}\label{eqn:vnulambdatovlambda2}
\ninf{V-v_\lambda}\leq\ninf{(g-v_\lambda)^+}\leq  \frac{\ninf{(a-\tmL )g-f}}{a+\lambda}\leq \frac{M}{a+\lambda},
\end{align}
where $M>0$ is a constant. Hence, $v_\lambda$ converges to $V$ uniformly as $\lambda\rightarrow\infty$. The proof of \textbf{(1)} in \thmref{thm:osi} is completed.
\end{proof}
It remains to show that the conclusion of \lemref{lem:gdvltov} is also true for any $g\in \cz(\sE)$. Let $g\in \cz(\sE)$. $D(\tmL)$ is dense in $\cz(\sE)$ (see  \thmref{thm:hyr}). Thus there exists a sequence $\{g_n\}_{n\in\mN}$ in $D(\tmL)$ uniformly converging to $g$ as $n\rightarrow \infty$ such that $\ninf{g_n-g}\leq 1/n$ for any $n\in \mN$. Using \lemref{lem:gdvltov}, the sequence of the value functions $\{V_n\}_{n\in\mN}$ defined by \eqnref{eqn:vnu} is in $\cz(\sE)$. Using \lemref{lem:vnu}, $\{V_n\}_{n\in \mN}$ converges to $V$ uniformly as $n\rightarrow\infty$. Therefore, $V\in\cz(\sE)$.

Moreover, let $v_{n,\lambda}$ be the solution to \eqref{eqn:osipenvar} after replacing $g$ by $g_n$ for each $n\in \mN$. We prove that $v_{n,\lambda}$ converges to $v_\lambda$ uniformly as $n\rightarrow \infty$.  Using \eqref{eqn:resolventab1},
\begin{align*}
v_{n,\lambda}-v_\lambda &= \tmR_{a+\lambda}(f+\lambda(g_n-v_{n,\lambda})^++\lambda v_{n,\lambda})-\tmR_{a+\lambda}(f+\lambda(g-v_\lambda)^++\lambda v_\lambda)\\
& =  \lambda \tmR_{a+\lambda}(\max(g_n,v_{n,\lambda})-\max(g,v_\lambda))\\
&\leq \lambda \tmR_{a+\lambda}(\max(g_n-g,v_{n,\lambda}-v_\lambda))\\
&\leq \frac{\lambda}{a+\lambda} \ninf{\max(g_n-g,v_{n,\lambda}-v_\lambda)}\\
&\leq \frac{\lambda}{a+\lambda}\max(\ninf{g-g_n},\ninf{v_{n,\lambda}-v_\lambda}).
\end{align*}
Similarly, we also have
\begin{align*}
v_\lambda-v_{n,\lambda} &= \tmR_{a+\lambda}(f+\lambda(g-v_\lambda)^++\lambda v_\lambda)-\tmR_{a+\lambda}(f+\lambda(g_n-v_{n,\lambda})^++\lambda v_{n,\lambda})\\
&\leq \frac{\lambda}{a+\lambda}\max(\ninf{g-g_n},\ninf{v_{n,\lambda}-v_\lambda}).
\end{align*}
Thus
\begin{align*}
\ninf{v_\lambda-v_{n,\lambda}}\leq  \frac{\lambda}{a+\lambda}\max(\ninf{g-g_n},\ninf{v_{n,\lambda}-v_\lambda}).
\end{align*}
Since $\frac{\lambda}{a+\lambda}<1$, it follows that
\begin{eqnarray}\label{eqn:vnulambdatovlambda}
\ninf{v_{n,\lambda}-v_\lambda} \leq \ninf{g_n-g}\leq \frac{1}{n}.
\end{eqnarray}
Then, $v_{n,\lambda}$ converges to $v_\lambda$ uniformly as $n\rightarrow \infty$.

 Now, let $\varepsilon>0$ and choose an integer $n_0>\frac{4}{\varepsilon}$. Hence, combing \eqref{eqn:vnulambdatovlambda1}, \eqref{eqn:vnulambdatovlambda2} and  \eqref{eqn:vnulambdatovlambda} yields
\begin{align}
\ninf{V-v_\lambda}&\leq \ninf{V-V_{n_0}}+\ninf{v_{n_0,\lambda}-v_\lambda}+\ninf{V-v_{n_0,\lambda}}\nonumber\\
&\leq \frac{1}{n_0}+\frac{1}{n_0}+\frac{M_{n_0}}{a+\lambda}\leq \frac{\varepsilon}{2}+\frac{M_{n_0}}{a+\lambda},
\end{align}
where $M_{n_0}=\ninf{(a-\tmL)g_{n_0}-f}$. Therefore, $\ninf{V-v_\lambda}\leq \varepsilon$ for any $\lambda>\frac{2M_{n_0}}{\varepsilon}$. Thus, the proof is completed.

\section{Proof of \lemref{lem:coreoperator}}
	Let $\{\mP_t\}_{t\geq 0}$ be the Feller semigroup defined by \eqnref{eqn:extfs}, that is for any $w\in \C(\sO)$,
	\begin{equation}\label{eqn:extfs1}
	\mP_t w:= w(\partial) +\reallywidetilde{\tmP_t ((w-w(\partial))|_\sE)}.
	\end{equation}
	By \defref{def:infgendef}, its infinitesimal generator  $(\mL,D(\mL))$ can be defined by:
	\begin{align}
	\mL w:=& \lim_{t\rightarrow 0^+}\frac{\mP_t w- w}{t} \mbox{  for each }w \in D(\mL),\label{eqn:mLdefine00}\\
	D(\mL):=& \{w\in \C(\sO); \lim_{t\rightarrow 0^+}\frac{\mP_t w- w}{t} \mbox{ exists in }\C(\sO)\} .\label{eqn:mLdefine10}
	\end{align}
	Let $D_0$ be a domain defined by
	\begin{equation}\label{eqn:defd0}
	D_0: = \{w\in \C(\sO); (w-w(\partial))|_\sE\in D(\tmL)\}.
	\end{equation}
	We show that  $D(\mL)=D_0$ by double inclusion. We first prove that $D_0 \subseteq D(\mL)$. Let $w\in D_0\subseteq \C(\sO)$. We have  by \eqnref{eqn:extfs1} restricted on $\sE$ that
	\begin{eqnarray}\label{eqn:prooflimitpp}
	\lim_{t\rightarrow 0^+} \frac{(\mP_t w)|_\sE - w|_\sE}{t} = \lim_{t\rightarrow 0^+} \frac{\tmP_t( (w-w(\partial))|_E)+w(\partial) - w|_\sE}{t}.
	\end{eqnarray}
	Since $w\in D_0$, we have $(w - w(\partial))|_{\sE}\in D(\tmL)$, then the limit on the right hand side of \eqnref{eqn:prooflimitpp} exists in $\cz(\sE)$ and we can write
	\begin{equation}\label{eqn:prooflimitpp1}
	\lim_{t\rightarrow 0^+} \frac{(\mP_t w)|_\sE - w|_\sE}{t}= \tmL ( (w-w(\partial))|_E)\in \cz(\sE).
	\end{equation}
	In addition, using \eqnref{eqn:extfs1} and the fact that $(w-w(\partial))|_\sE\in \cz(\sE)$, $\mP_t w(\partial) = w(\partial)$ for all $t\geq 0$. Hence, we know that
	\begin{equation}\label{eqn:prooflimitpp11}
	\lim_{t\rightarrow 0^+} \frac{\mP_t w(\partial) - w(\partial)}{t}= 0.
	\end{equation}
	Putting \eqref{eqn:prooflimitpp1} and \eqref{eqn:prooflimitpp11} together yields for any $w\in D_0$,
	$\underset{t\rightarrow 0^+}{\lim}\frac{\mP_t w- w}{t}$ exists in $\C(\sO)$ and by the definition of the extension, we get
	\begin{equation}\label{eqn:prooflimitpp2}
	\lim_{t\rightarrow 0^+}\frac{\mP_t w- w}{t} = \reallywidetilde{\tmL  ((w-w(\partial))|_\sE)} \mbox{ exists in }\C(\sO)\mbox{  for any }w\in D_0.
	\end{equation}
	Thus, $D_0 \subseteq  D(\mL)$.
	
	Let us now prove that $D(\mL)\subseteq D_0$. Choose $w\in D(\mL)$. Then, since for such $w$, the limit of \eqnref{eqn:mLdefine00} exists in $\C(\sO)$, it follows that the limit on the right hand side of \eqnref{eqn:prooflimitpp} also exists. In addition, using \eqref{eqn:mLdefine10} and \eqref{eqn:prooflimitpp11} respectively, we have $\mL w\in \C(\sO)$ and $\mL w(\partial) = 0$ and thus the limit
	$$\lim_{t\rightarrow 0^+}\frac{\tmP_t ((w-w(\partial))|_E)+ (w-w(\partial))|_E}{t} \text{ exists in } \cz(\sE).$$
	Therefore, due to the fact that $(w-w(\partial))|_\sE\in\cz(\sE)$, we have $(w-w(\partial))|_\sE\in D(\tmL)$ which means $w\in D_0$.
	We can conclude that $D(\mL) = D_0$ and $\mL$ is given by \eqnref{eqn:prooflimitpp2}, that is \eqref{eqn:mLdefine} and \eqref{eqn:mLdefine1} are proved.
	
	Then, it is reasonable to define the restriction of $(\mL,D(\mL))$ on $D(\mG)$ by \eqref{eqn:tgdefine0}.
	
	Let us now show that $(\mG,D(\mG))$ is the core of $(\mL,D(\mL))$. Suppose that there is a sequence $\{w_n\}_{n\in\mN}$ in $D(\mG)$ satisfying $w_n\rightarrow u$ and $\mG w_n \rightarrow v$ uniformly in $\C(\sO)$. It is enough to prove that $u\in  D(\mL)$ and $v = \mL u$. Using \eqnref{eqn:tgdefine0}, the sequence $\{w_n^*\}_{n\in\mN}$ defined by
	 $$w_n^*:=(w_n-w_n(\partial))|_\sE \text{ for } n\in \mN$$
	  belongs to $D(\tmG)$ and satisfies $w^*_n\rightarrow (u-u(\partial))|_\sE$ and $\tmG w^*_n \rightarrow v|_\sE$ uniformly in $\cz(\sE)$. In addition, since $(\tmG,D(\tmG))$ is the core of $(\tmL,D(\tmL))$, it follows that $(u-u(\partial))|_\sE\in D(\tmL)$ and $v|_\sE = \tmL((u-u(\partial))|_\sE)$. Therefore, using \eqnref{eqn:mLdefine00}, $u\in  D(\mL)$ and $v = \mL u$. The proof is completed.

\bibliographystyle{apa}
\bibliography{Manuscript}

\begin{thebibliography}{}

\bibitem[\protect\astroncite{Alili et~al.}{2005}]{alili2005some}
Alili, L., Kyprianou, A.~E., et~al. (2005).
\newblock Some remarks on first passage of l{\'e}vy processes, the american put
  and pasting principles.
\newblock {\em The Annals of Applied Probability}, 15(3):2062--2080.

\bibitem[\protect\astroncite{Bassan and Ceci}{2002a}]{bassan2002optimal}
Bassan, B. and Ceci, C. (2002a).
\newblock Optimal stopping problems with discontinous reward: Regularity of the
  value function and viscosity solutions.
\newblock {\em Stochastics: An International Journal of Probability and
  Stochastic Processes}, 72(1-2):55--77.

\bibitem[\protect\astroncite{Bassan and Ceci}{2002b}]{bassan2002regularity}
Bassan, B. and Ceci, C. (2002b).
\newblock Regularity of the value function and viscosity solutions in optimal
  stopping problems for general markov processes.
\newblock {\em Stochastics: An International Journal of Probability and
  Stochastic Processes}, 74(3-4):633--649.

\bibitem[\protect\astroncite{Beibel and Lerche}{2001}]{beibel2001optimal}
Beibel, M. and Lerche, H.~R. (2001).
\newblock Optimal stopping of regular diffusions under random discounting.
\newblock {\em Theory of Probability \& Its Applications}, 45(4):547--557.

\bibitem[\protect\astroncite{Belomestny et~al.}{2010}]{belomestny2010optimal}
Belomestny, D.~V., R{\"u}schendorf, L., and Urusov, M.~A. (2010).
\newblock Optimal stopping of integral functionals and a “no-loss” free
  boundary formulation.
\newblock {\em Theory of Probability \& Its Applications}, 54(1):14--28.

\bibitem[\protect\astroncite{Bensoussan and
  Lions}{1978}]{bensoussan1978applications}
Bensoussan, A. and Lions, J.-L. (1978).
\newblock {\em Applications des in{\'e}quations variationnelles en contr{\^o}le
  stochastique}.
\newblock Dunod.

\bibitem[\protect\astroncite{Blumenthal and
  Getoor}{2007}]{blumenthal2007markov}
Blumenthal, R.~M. and Getoor, R.~K. (2007).
\newblock {\em Markov processes and potential theory}.
\newblock Courier Corporation.

\bibitem[\protect\astroncite{Boshuizen and
  Gouweleeuw}{1993}]{boshuizen1993general}
Boshuizen, F.~A. and Gouweleeuw, J.~M. (1993).
\newblock General optimal stopping theorems for semi-{M}arkov processes.
\newblock {\em Advances in applied probability}, 25(4):825--846.

\bibitem[\protect\astroncite{B{\"o}ttcher et~al.}{2013}]{bottcher2013levy}
B{\"o}ttcher, B., Schilling, R., and Wang, J. (2013).
\newblock L{\'e}vy-type processes: Construction, approximation and sample path
  properties. l{\'e}vy matters iii.
\newblock {\em Lecture Notes in Mathematics}, 2099.

\bibitem[\protect\astroncite{Costantini and Kurtz}{2015}]{costantini2015}
Costantini, C. and Kurtz, T. (2015).
\newblock Viscosity methods giving uniqueness for martingale problems.
\newblock {\em Electron. J. Probab.}, 20:27 pp.

\bibitem[\protect\astroncite{Dai and Menoukeu~Pamen}{2018}]{Suhang2016b}
Dai, S. and Menoukeu~Pamen, O. (2018).
\newblock Viscosity {S}olutions for {O}ptimal {S}topping {P}roblem of {F}eller
  {P}rocess associated with {Multiplicative} {F}unctionals.
\newblock {\em Work in progress}.

\bibitem[\protect\astroncite{Dai and Menoukeu~Pamen}{2019}]{Dai20172}
Dai, S. and Menoukeu~Pamen, O. (2019).
\newblock Iterative {O}ptimal {S}topping {M}ethod and its {A}pplications.
\newblock {\em Work in progress}.

\bibitem[\protect\astroncite{Feller}{1952}]{feller1952parabolic}
Feller, W. (1952).
\newblock The parabolic differential equations and the associated semi-groups
  of transformations.
\newblock {\em Annals of Mathematics}, 55(3):468--519.

\bibitem[\protect\astroncite{Feller}{1954}]{feller1954diffusion}
Feller, W. (1954).
\newblock Diffusion processes in one dimension.
\newblock {\em Transactions of the American Mathematical Society}, 77(1):1--31.

\bibitem[\protect\astroncite{Feller}{1957}]{feller1957generalized}
Feller, W. (1957).
\newblock Generalized second order differential operators and their lateral
  conditions.
\newblock 1(4):459--504.

\bibitem[\protect\astroncite{Fleming and Soner}{2006}]{fleming2006controlled}
Fleming, W.~H. and Soner, H.~M. (2006).
\newblock {\em Controlled Markov processes and viscosity solutions}, volume~25.
\newblock Springer Science \& Business Media.

\bibitem[\protect\astroncite{Gihman and Skorohod}{1975}]{gihman1975theory}
Gihman, I. and Skorohod, A.~V. (1975).
\newblock The theory of stochastic processes {I}.

\bibitem[\protect\astroncite{Kallenberg}{2006}]{kallenberg2006foundations}
Kallenberg, O. (2006).
\newblock {\em Foundations of modern probability}.
\newblock Springer Science \& Business Media.

\bibitem[\protect\astroncite{Lejay et~al.}{2015}]{lejay2015one}
Lejay, A., Len{\^o}tre, L., and Pichot, G. (2015).
\newblock One-dimensional skew diffusions: explicit expressions of densities
  and resolvent kernel.
\newblock {\em Research {R}eport, {I}nria {R}ennes-{B}retagne {A}tlantique;
  {I}nria {N}ancy-{G}rand {E}st}.

\bibitem[\protect\astroncite{Mordecki}{2002}]{mordecki2002optimal}
Mordecki, E. (2002).
\newblock Optimal stopping and perpetual options for l{\'e}vy processes.
\newblock {\em Finance and Stochastics}, 6(4):473--493.

\bibitem[\protect\astroncite{Muciek}{2002}]{muciek2002optimal}
Muciek, B.~K. (2002).
\newblock Optimal stopping of a risk process: model with interest rates.
\newblock {\em Journal of applied probability}, 39(2):261--270.

\bibitem[\protect\astroncite{{\O}ksendal and Sulem}{2007}]{oksendal2005applied}
{\O}ksendal, B.~K. and Sulem, A. (2007).
\newblock {\em Applied stochastic control of jump diffusions}, volume Third.
\newblock Springer, Berlin Heidelberg.

\bibitem[\protect\astroncite{Palczewski and
  Stettner}{2010a}]{palczewski2010finite}
Palczewski, J. and Stettner, {\L}. (2010a).
\newblock Finite horizon optimal stopping of time-discontinuous functionals
  with applications to impulse control with delay.
\newblock {\em SIAM Journal on Control and Optimization}, 48(8):4874--4909.

\bibitem[\protect\astroncite{Palczewski and Stettner}{2010b}]{janluk2010}
Palczewski, J. and Stettner, L. (2010b).
\newblock Finite horizon optimal stopping of time-discontinuous functionals
  with applications to impulse control with delay.
\newblock {\em SIAM Journal on Control and Optimization}, 48(8):4874--4909.

\bibitem[\protect\astroncite{Palczewski and
  Stettner}{2011}]{palczewski2011stopping}
Palczewski, J. and Stettner, {\L}. (2011).
\newblock Stopping of functionals with discontinuity at the boundary of an open
  set.
\newblock {\em Stochastic Processes and their Applications},
  121(10):2361--2392.

\bibitem[\protect\astroncite{Palczewski and
  Stettner}{2014}]{palczewski2014infinite}
Palczewski, J. and Stettner, {\L}. (2014).
\newblock Infinite horizon stopping problems with (nearly) total reward
  criteria.
\newblock {\em Stochastic Processes and their Applications},
  124(12):3887--3920.

\bibitem[\protect\astroncite{Peskir and Shiryaev}{2006a}]{peskirshiryaev06}
Peskir, G. and Shiryaev, A. (2006a).
\newblock {\em Optimal {S}topping and {F}ree-{B}oundary {P}roblems}.
\newblock Birkh\"auser.

\bibitem[\protect\astroncite{Peskir and Shiryaev}{2006b}]{peskir2006optimal}
Peskir, G. and Shiryaev, A. (2006b).
\newblock {\em Optimal stopping and free-boundary problems}.
\newblock Springer.

\bibitem[\protect\astroncite{Revuz and Yor}{2013}]{revuz2013continuous}
Revuz, D. and Yor, M. (2013).
\newblock {\em Continuous martingales and Brownian motion}, volume 293.
\newblock Springer Science \& Business Media.

\bibitem[\protect\astroncite{Robin}{1978}]{robin1978controle}
Robin, M. (1978).
\newblock {\em Contr{\^o}le impulsionnel des processus de Markov}.
\newblock PhD thesis, Universit{\'e} Paris Dauphine-Paris IX.

\bibitem[\protect\astroncite{R{\"u}schendorf and
  Urusov}{2008}]{ruschendorf2008class}
R{\"u}schendorf, L. and Urusov, M.~A. (2008).
\newblock On a class of optimal stopping problems for diffusions with
  discontinuous coefficients.
\newblock {\em The Annals of Applied Probability}, 18(3):847--878.

\bibitem[\protect\astroncite{Schilling}{1998}]{schilling1998conservativeness}
Schilling, R.~L. (1998).
\newblock Conservativeness and extensions of feller semigroups.
\newblock {\em Positivity}, 2(3):239--256.

\bibitem[\protect\astroncite{Stettner}{2011}]{stettner2011penalty}
Stettner, L. (2011).
\newblock Penalty method for finite horizon stopping problems.
\newblock {\em SIAM Journal on Control and Optimization}, 49(3):1078--1099.

\bibitem[\protect\astroncite{Stettner and Zabczyk}{1980}]{Stettner1980}
Stettner, L. and Zabczyk, J. (1980).
\newblock {\em Stochastic version of a penalty method}, pages 179--183.
\newblock Springer Berlin Heidelberg, Berlin, Heidelberg.

\bibitem[\protect\astroncite{Stettner and Zabczyk}{1983}]{stettner1983optimal}
Stettner, {\L}. and Zabczyk, J. (1983).
\newblock Optimal stopping for feller markov processes.
\newblock {\em Preprint}, 284.

\bibitem[\protect\astroncite{Taira}{2004}]{taira2004semigroups}
Taira, K. (2004).
\newblock {\em Semigroups, boundary value problems and Markov processes}.
\newblock Springer.

\end{thebibliography}

\end{document}